\begin{document}
\theoremstyle{plain}
\newtheorem{Thm}{Theorem}
\newtheorem{Cor}[Thm]{Corollary}
\newtheorem{Ex}[Thm]{Example}
\newtheorem{Con}[Thm]{Conjecture}
\newtheorem{Main}{Main Theorem}
\newtheorem{Lem}[Thm]{Lemma}
\newtheorem{Prop}[Thm]{Proposition}

\theoremstyle{definition}
\newtheorem{Def}[Thm]{Definition}
\newtheorem{Note}[Thm]{Note}
\newtheorem{Question}[Thm]{Question}

\theoremstyle{remark}
\newtheorem{notation}[Thm]{Notation}
\renewcommand{\thenotation}{}

\errorcontextlines=0
\renewcommand{\rm}{\normalshape}%
\newcommand{\transv}{\mathrel{\text{\tpitchfork}}}
\makeatletter
\newcommand{\tpitchfork}{%
  \vbox{
    \baselineskip\z@skip
    \lineskip-.52ex
    \lineskiplimit\maxdimen
    \m@th
    \ialign{##\crcr\hidewidth\smash{$-$}\hidewidth\crcr$\pitchfork$\crcr}
  }%
}
\makeatother

\title[Carath\'eodory Conjecture]%
   {Proof of the\\ Carath\'eodory Conjecture}
\author{Brendan Guilfoyle}\address{Brendan Guilfoyle\\
          Department of Computing and Mathematics\\
          Institute of Technology, Tralee \\
          Clash \\
          Tralee  \\
          Co. Kerry \\
          Ireland.}
\email{brendan.guilfoyle@@ittralee.ie}
\author{Wilhelm Klingenberg}
\address{Wilhelm Klingenberg\\
 Department of Mathematical Sciences\\
 University of Durham\\
 Durham DH1 3LE\\
 United Kingdom}
\email{wilhelm.klingenberg@@durham.ac.uk }

\date{31st July 2013}

\maketitle

\tableofcontents

\newpage

\section{{\bf Introduction}}

In this paper we prove a conjecture attributed\footnote{Hans Hamburger, Berliner Mathematische Gesellschaft, Berlin, 26th March 1924.} 
to Constantin Carath\'eodory:

\vspace{0.1in}

\noindent{\bf Main Theorem.} {\it Every closed strictly convex surface in Euclidean 3-space has at least two umbilic points.}

\vspace{0.1in}

Recall that an umbilic point is a point where the
second fundamental form of the surface (represented by a symmetric 2-by-2 matrix) has a double
eigenvalue. Since the eigen-directions of the second fundamental form determine a foliation of the surface with singularities precisely at 
the umbilic points, for topological reasons a closed convex surface must have at least one umbilic point.

While the conjecture applies to $C^2$-smooth surfaces, we prove it for $C^{3+\alpha}$-smooth surfaces. 
Our proof depends upon a reformulation in terms of
complex points on Lagrangian surfaces in the space of oriented geodesics of Euclidean 3-space ${\mathbb E}^3$, which is identified with $TS^2$. 
Here complex and Lagrangian refer to the
neutral K\"ahler structure on $TS^2$ introduced by the authors in \cite{gak4} and neutral means that the signature of the metric is $(++--)$.

More specifically, the reformulated conjecture states that every closed Lagrangian section of $TS^2$ has at least two complex points. In this paper we
prove this conjecture for $C^{2+\alpha}$ sections.

We first show that a $C^{2+\alpha}$-smooth Lagrangian section of $TS^2$ with just one complex point, if such exists, lies in an open subset ${\mathcal U}$
of a Banach manifold. Surjectivity of the Cauchy-Riemann operator implies that, in a dense open subset of ${\mathcal U}$, the dimension 
of the space of holomorphic discs with edge lying on a Lagrangian section is determined by the Keller-Maslov class of the edge curve. 

The neutral geometry introduced by the authors in \cite{gak4} identifies the Keller-Maslov index with the number of complex points on the 
boundary surface enclosed by the edge curve. Thus there cannot exist a holomorphic disc with edge enclosing regions without complex points on a
$C^{2+\alpha}$ generic Lagrangian boundary surface near to the section with only one complex point. 

Every $C^{2+\alpha}$ Lagrangian boundary surface 
near to a section with only one complex point, should such exist, contains a totally real Lagrangian hemisphere.  Therefore, were the
Conjecture false, there would exist a totally real Lagrangian hemisphere which could not be the boundary for any holomorphic disc.

But we prove that it is possible to attach a holomorphic disc to {\it any} $C^{2+\alpha}$ totally real Lagrangian hemisphere. 
This implies that the set of $C^{2+\alpha}$-smooth Lagrangian sections of $TS^2$ with 
just one complex point must be empty. Noting the drop in differentiability going from ${\mathbb E}^3$ to $TS^2$, we have therefore proven
the Carath\'eodory conjecture for $C^{3+\alpha}$-smooth surfaces in ${\mathbb E}^3$.

The existence of a holomorphic disc uses mean curvature flow with boundary and a compactness result on spaces of $J$-holomorphic discs. The 
flow requires $C^{2+\alpha}$-smoothness of the boundary condition for long-time existence. 

Our proof is organised as follows. The next section contains the reformulation and proof of the Main Theorem. In the following sections we supply the 
technical details of the proof. In section 3 we consider mean curvature flow in indefinite spaces of any dimension. The neutral geometry of $TS^2$ 
is summarized in section 4, while mean curvature flow with boundary in $TS^2$ is considered in detail in section 5. Long-time and short-time 
existence for the flow, along with the  existence of 
holomorphic discs with Lagrangian boundary conditions, which completes the proof, are established in the section 6. In the final section
we make some concluding remarks on the Conjecture and our proof.

\vspace{0.1in}

\noindent{\bf Note}: 
Here and throughout we refer to the {\em edge} of the flowing disc and reserve the phrase {\em boundary surface} or {\em boundary condition} 
for the fixed surface on which the edge of the flowing disc is restricted to lie.

\vspace{0.2in}


\section{{\bf Strategy and Proof}}

In this section we give a reformulation of the Carath\'eodory conjecture in terms of complex points on Lagrangian surfaces
in $TS^2$. We then prove the reformulated Conjecture, referring to results established in later sections.

\subsection{Reformulation of the Conjecture in $TS^2$}\label{s:reform}

It is well-known that the space of oriented geodesics in Euclidean 3-space ${\mathbb E}^3$ may be identified with the total space of the tangent 
bundle to the 2-sphere. 
This 4-manifold is endowed with a natural neutral K\"ahler structure
$({\mathbb J},\Omega,{\mathbb G})$ which is invariant under the action induced on $TS^2$ by the Euclidean group acting on
${\mathbb E}^3$ \cite{gak4}. 

Throughout this paper, we denote the neutral K\"ahler surface $(TS^2,{\mathbb J}$,$\Omega$,${\mathbb G})$ simply by $TS^2$.
The term neutral refers to the fact that the metric ${\mathbb G}$ is indefinite, having signature $(++--)$.
We now briefly summarize some properties of this structure, which will be considered more fully in section \ref{s:neutral}.  

Given an oriented $C^k$-smooth surface S in ${\mathbb E}^3$ with
$k\geq 1$, the set of oriented lines normal to S forms a
$C^{k-1}$-smooth surface $\Sigma$ in $TS^2$. Such a surface
$\Sigma$ is Lagrangian: $\Omega|_\Sigma=0$. Indeed, the well-known converse
holds by Frobenius integrability (see for example \cite{gak2}):

\begin{Prop}\label{p:lag}
A surface $\Sigma$ in $TS^2$ is Lagrangian iff there exists a surface S in ${\mathbb E}^3$ which is orthogonal to the oriented lines of
$\Sigma$.
\end{Prop}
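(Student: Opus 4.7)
The plan is to work in local coordinates adapted to a line congruence and reduce the biconditional to the integrability of a first-order PDE. Locally, I would parametrize $\Sigma$ by $(s,t)\mapsto\gamma(s,t)$, writing each oriented line as $\gamma(s,t)=\{p(s,t)+r\,u(s,t):r\in\mathbb{R}\}$ with $u\in S^2$ the direction and $p\in u^{\perp}$ the foot of perpendicular from the origin. In these coordinates, $(u,p)$ gives the standard identification with $TS^2$, and the canonical symplectic form pulls back to $\Sigma$ as a multiple of $(\partial_s p\cdot\partial_t u-\partial_t p\cdot\partial_s u)\,ds\wedge dt$, using the conventions recalled in \cite{gak4} and summarized in section \ref{s:neutral}.

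For the easy direction, I would suppose that a surface $S$ orthogonal to all lines of $\Sigma$ exists, parametrize $S$ as $S(s,t)=p(s,t)+r(s,t)u(s,t)$ for some smooth $r$, and use orthogonality $\partial_s S\cdot u=\partial_t S\cdot u=0$. After differentiating $u\cdot u=1$ to eliminate the $r\,\partial u\cdot u$ terms, these two equations become $\partial_s r=-\partial_s p\cdot u$ and $\partial_t r=-\partial_t p\cdot u$. Comparing mixed partials and cancelling the symmetric term $\partial_s\partial_t p\cdot u$ shows at once that $\partial_s p\cdot\partial_t u=\partial_t p\cdot\partial_s u$, so $\Omega|_{\Sigma}=0$.

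For the converse I would run the same calculation in reverse. Given a Lagrangian $\Sigma$, the first-order system $\partial_s r=-\partial_s p\cdot u$, $\partial_t r=-\partial_t p\cdot u$ is exactly a Pfaffian system on the $(s,t,r)$-space; its Frobenius integrability condition is the symmetry $\partial_s p\cdot\partial_t u=\partial_t p\cdot\partial_s u$, which by the formula for $\Omega|_{\Sigma}$ is equivalent to $\Sigma$ being Lagrangian. Hence a local solution $r(s,t)$ exists, and $S(s,t):=p(s,t)+r(s,t)u(s,t)$ is the desired orthogonal surface; independence of the choice of local parametrization is automatic since any two solutions differ by a constant shift along the line, giving a parallel orthogonal surface.

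The main obstacle, and the only substantive content of the argument, is the identification of the Euclidean integrability condition $\partial_s p\cdot\partial_t u=\partial_t p\cdot\partial_s u$ with the coordinate expression of $\Omega|_{\Sigma}$; once that bridge is in place the rest is Frobenius. I would therefore defer the explicit coordinate form of $\Omega$ to section \ref{s:neutral} and cite it here, as the paper does with \cite{gak2}. A final remark is that the construction is purely local, so for a complete global $S$ one would need $\Sigma$ to be simply connected or to argue up to covers; but this is harmless for the subsequent applications, which are themselves local around a section.
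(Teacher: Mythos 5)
Your proposal is correct and follows essentially the same route as the paper (and the cited \cite{gak2}): the Lagrangian condition is identified with the integrability condition for the support-function system, and Frobenius/Poincar\'e then produces the orthogonal surface, unique up to the parallel shift $r\mapsto r+C$. Your real $(u,p)$ computation is just the invariant form of what the paper records in holomorphic coordinates as the existence of $r$ with $\bar{\partial}r=2F/(1+\xi\bar{\xi})^2$ together with the explicit parametrization (\ref{e:mt}), so no substantive difference remains.
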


Given one surface $S$ in ${\mathbb E}^3$  orthogonal  to the oriented lines of $\Sigma$, we have a 1-parameter family of parallel surfaces 
which are also orthogonal. Moreover, a point on $S$ is umbilic iff the corresponding points on the parallel surfaces are also umbilic. 
Indeed, it is precisely this property that allows us to reformulate the Conjecture entirely in $TS^2$, as we now show.

Let $S$ be an oriented surface in ${\mathbb E}^3$, and  $\Sigma$ the corresponding surface in $TS^2$ formed by the oriented 
normal lines to $S$. The canonical projection
$\pi:TS^2\rightarrow S^2$ restricted to $\Sigma$ is just the Gauss map of the surface, and so we have:

\begin{Prop}\label{p:Gauss}
The surface $S$ is non-flat (has non-zero Gauss curvature) iff the Lagrangian surface
$\Sigma$ is the graph of a section of the bundle $\pi:TS^2\rightarrow S^2$. In particular, the surface $\Sigma$ in
$TS^2$ formed by the oriented normal lines of a convex
surface S is the graph of a section.
\end{Prop}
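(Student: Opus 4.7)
My plan is to identify the restriction $\pi|_\Sigma$ with the Gauss map of $S$, reducing the proposition to classical facts about the differential of the Gauss map.

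First, I would set up the parametrization. Define the natural map $\nu : S \to TS^2$ sending a point $p \in S$ to the oriented line through $p$ with direction equal to the unit normal $N(p)$. By definition of $\Sigma$, the image of $\nu$ is exactly $\Sigma$. The composition with the bundle projection then equals the Gauss map:
\[
\pi \circ \nu = N : S \to S^2.
\]
Through this identification, $\pi|_\Sigma : \Sigma \to S^2$ is a local diffeomorphism at $\nu(p)$ if and only if $N$ is a local diffeomorphism at $p$.

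Next I would invoke the classical computation: the differential $dN_p$ is (up to sign) the Weingarten map, and $\det(dN_p) = K(p)$, the Gaussian curvature at $p$. Consequently $N$ is a local diffeomorphism everywhere iff $K(p) \neq 0$ for all $p \in S$, i.e.\ iff $S$ is non-flat. Since $\Sigma$ being (locally) the graph of a section over $S^2$ is precisely the condition that $\pi|_\Sigma$ be a local diffeomorphism, the first equivalence follows.

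For the second statement, if $S$ is closed and strictly convex then $K>0$ everywhere, so the first part already makes $\Sigma$ locally a graph. To upgrade this to a global section I would appeal to the classical theorem of Hadamard stating that the Gauss map of a closed strictly convex surface is a global diffeomorphism $N : S \to S^2$. Then $\pi|_\Sigma$ is a global diffeomorphism and $\Sigma$ is the graph of the section $s = \nu \circ N^{-1} : S^2 \to TS^2$.

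The main subtlety I expect is checking that $\nu : S \to \Sigma$ is a genuine diffeomorphism onto its image, rather than just a local parametrization: this requires injectivity, i.e.\ no two distinct points of $S$ share the same oriented normal line. In the convex case, this follows from the observation that a line meets a strictly convex closed surface in at most two points, at which the outward unit normals point in opposite directions, so the oriented normal lines differ. This justifies interpreting $\Sigma$ literally as a surface (not merely an immersed image) and ensures that the graph of the section $s$ coincides with $\Sigma$.
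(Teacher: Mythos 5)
Your proposal is correct and follows essentially the same route as the paper: the paper's justification is precisely the observation that $\pi|_\Sigma$ is the Gauss map of $S$, so that non-vanishing Gauss curvature (the Weingarten map having non-zero determinant) is equivalent to $\pi|_\Sigma$ being a local diffeomorphism, with the convex case upgraded to a global section via the standard fact that the Gauss map of a closed strictly convex surface is a diffeomorphism. Your extra care about injectivity of $p\mapsto\nu(p)$ is a reasonable refinement but does not change the argument.
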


In general, a point $\gamma$ on a surface $\Sigma$ in an almost complex 4-manifold
$({\mathbb M}, J)$ is said to be {\it complex} if
$J:T_\gamma{\mathbb M}\rightarrow T_\gamma{\mathbb M}$ leaves $T_\gamma\Sigma$ invariant. In our setting:

\vspace{0.1in}

\begin{Prop}\cite{gak2}\label{p:index}
A point $\gamma$ on a Lagrangian surface $\Sigma$ in $TS^2$
is complex iff the point on the orthogonal surface $S$ in ${\mathbb
E}^3$ with oriented normal line $\gamma$ is umbilic.

Moreover, the index $i(p)$ of an isolated umbilic point $p$ on $S$ is related to the Keller-Maslov index of a simple closed curve 
about the oriented normal $\gamma$ on $\Sigma$ by:
\[
\mu(TS^2,T\Sigma)=4i(p).
\]
\end{Prop}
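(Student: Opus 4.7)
The plan is to work in local coordinates adapted to both the neutral K\"ahler structure on $TS^2$ and to the surface $S$. First, I would introduce a holomorphic stereographic coordinate $\xi$ on $S^2$ together with the linear fiber coordinate $\eta$ associated to $\partial_\xi$, producing complex coordinates $(\xi,\eta)$ on $TS^2$ in which ${\mathbb J}$ acts in a standard way on the horizontal and vertical subspaces of $T(TS^2)$. Since the Proposition concerns points at which $S$ is locally non-flat (the assertion is otherwise either vacuous or reduces to a direct check), Proposition \ref{p:Gauss} lets me write $\Sigma$ as a graph $\eta = F(\xi,\bar\xi)$ for a function $F$ subject to the Lagrangian condition $\Omega|_\Sigma=0$.

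For the first assertion, I would compute $T_\gamma\Sigma$ explicitly in these coordinates and determine precisely when ${\mathbb J}(T_\gamma\Sigma)=T_\gamma\Sigma$. Rewriting $F$ in terms of the support function of an orthogonal surface to $\Sigma$, this ${\mathbb J}$-invariance condition should translate directly into the vanishing of the trace-free part of the second fundamental form of $S$ at $p$, which is exactly the umbilic condition. The invariance under parallel translation observed after Proposition \ref{p:lag} gives a consistency check: both being complex and being umbilic are properties of the oriented normal line $\gamma$ itself, independent of which orthogonal surface is chosen.

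For the index formula, I would take a small simple loop $\sigma$ on $\Sigma$ enclosing the isolated complex point $\gamma$ and study the induced loop $t\mapsto T_{\sigma(t)}\Sigma$ in the Lagrangian Grassmannian of $T(TS^2)$ pulled back along $\sigma$; its winding number relative to a chosen reference Lagrangian is $\mu(TS^2,T\Sigma)$. Away from $\gamma$ the surface $S$ carries two distinct principal directions, and I would express a Lagrangian frame of $T\Sigma$ using horizontal lifts of these principal directions together with the vertical contributions coming from variation of the principal curvatures along $\sigma$. The rotation of the principal line field on $S$ by $2\pi\,i(p)$ around $p$ then translates, through this frame, into the corresponding Maslov winding of $T\Sigma$ in $\Lambda(2)$.

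The main obstacle is the bookkeeping that produces the exact factor of $4$. One factor of $2$ arises because the principal directions are unoriented lines rather than vectors, so that a rotation by $\pi$ at the level of line fields corresponds to a full $2\pi$ rotation at the level of a lifted oriented frame; a further factor of $2$ arises from the standard description of the Maslov class on $\Lambda(2)$ via $\det^2$ on the unitary group. Tracking both factors simultaneously, and ensuring no orientation sign is lost in passing from $S$ to $\Sigma$ via the Gauss map, is where the detailed local computation has to be carried out, and I would follow the explicit verification given in \cite{gak2} to complete this step.
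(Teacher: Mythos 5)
There is no internal proof for you to be compared against here: the paper imports this Proposition verbatim from \cite{gak2} and never argues it. Your outline is nevertheless essentially the computation that reference performs, and it is consistent with the machinery the paper itself sets up in Section \ref{s:neutral}: for a Lagrangian section $\eta=F(\xi,\bar\xi)$ a point is complex exactly where the shear $\sigma=-\partial\bar F$ vanishes, and the stated relations $|\sigma|=\tfrac12|\kappa_1^{-1}-\kappa_2^{-1}|$ and ``the argument of $\sigma$ gives the principal directions'' reduce both halves of the statement to elementary facts about $\sigma$. The first half becomes $\sigma=0\iff\kappa_1=\kappa_2$, which is your trace-free second fundamental form condition expressed through the support function; for the second half, the Keller--Maslov index of a small loop is (up to the orientation conventions fixed in \cite{gak2}) twice the winding number of $\sigma$ about its isolated zero, while $\arg\sigma$ winds twice the rotation of the unoriented principal line field, i.e.\ twice $2\pi\,i(p)$. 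So the two factors of $2$ you isolate (line field versus oriented frame, and the $\det^2$ description of the Maslov class on $\Lambda(2)$) are exactly the right ones, and your bookkeeping for the factor $4$ is sound. Two caveats. First, your restriction to non-flat points so that $\Sigma$ is a graph (Proposition \ref{p:Gauss}) quietly excludes planar umbilics, where the Gauss map degenerates and $\Sigma$ is not a section near $\gamma$; the ``direct check'' you gesture at there is not automatic, although it is irrelevant for the strictly convex surfaces to which the Main Theorem applies. Second, you explicitly defer the final verification to \cite{gak2}; since the paper does precisely the same, this is acceptable, but it means what you have written is a correct proof sketch rather than a self-contained proof.
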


\vspace{0.1in}

Here, an umbilic point on $S\subset{\mathbb E}^3$ is a point where
the second fundamental form has a double eigenvalue, the ${\textstyle{\frac{1}{2}}}{\mathbb Z}$-valued index $i(p)$ of an 
isolated umbilic point $p$ is the winding number of the eigen-directions about the point and the Keller-Maslov index $\mu(TS^2,T\Sigma)$ is the 
relative 1st Chern class of the pair over a simple closed curve on $\Sigma$ about $\gamma$.

Propositions \ref{p:lag} to \ref{p:index} prove that the Carath\'eodory conjecture is equivalent to:

\vspace{0.1in}

\noindent{\bf Reformulation.} {\it Every closed Lagrangian section of $TS^2\rightarrow S^2$ has at least two complex points.}

\vspace{0.1in}

This reformulation in the 1-jet involves a drop in differentiability: it holds for a $C^k$-smooth Lagrangian section of $TS^2$
if and only if it holds for a $C^{k+1}$-smooth surface in ${\mathbb E}^3$.

Before proceeding with the proof, we exhibit a smooth family of surfaces in ${\mathbb E}^3$ with certain properties that are 
germane to the Carath\'eodory conjecture. We return to this example in section \ref{s:nkm}.

\begin{Ex}\label{ex1}
Consider the 1-parameter family of surfaces in ${\mathbb E}^3$ parameterized by 
$\xi\mapsto (x^1(\xi,\bar{\xi}),x^2(\xi,\bar{\xi}),x^3(\xi,\bar{\xi}))$, for $\xi\in{\mathbb C}$ and:
\[
x^1+ix^2=\frac{2[\bar{\xi}(1+2\xi\bar{\xi}-\xi^4)+C\xi]}{1+\xi\bar{\xi}}
\qquad\qquad
x^3=\frac{-(1+3\xi\bar{\xi})(\xi^2+\bar{\xi}^2)+C(1-\xi\bar{\xi})}{1+\xi\bar{\xi}}.
\]

This family exhibits the following features:
\begin{enumerate}
\item The surfaces are parameterized by the inverse of their Gauss maps, with stereographic projection from 
the south pole.
\item For different values of $C$ the surfaces are parallel: they can be obtained by moving a fixed distance along their normal lines.
\item For large $C$ the surfaces are convex. More specifically, the surface will be convex at $\xi=Re^{i\theta}$ if $C>1+2(1+2\sin^2\theta)R^2+3R^4$.
 For example, the surfaces with $C>10$ are convex for a whole Gauss hemisphere about 0.
\item The surfaces have no umbilic points.
\end{enumerate}

While the surfaces are not closed, for large enough $C$ we can construct a smooth convex surface without umbilic points 
with arbitrary large Gauss area and such surfaces can always be smoothly closed. Above is the umbilic-free hemisphere obtained by putting $C=11$.
\end{Ex}

\vspace{0.1in}

From this example we conclude that the umbilic points on a closed convex surface can occur arbitrarily close 
together. One interpretation of Carath\'eodory's conjecture is
that, despite this, they cannot be brought to a single point while closing the surface differentiably.

\vspace{0.1in}
\includegraphics{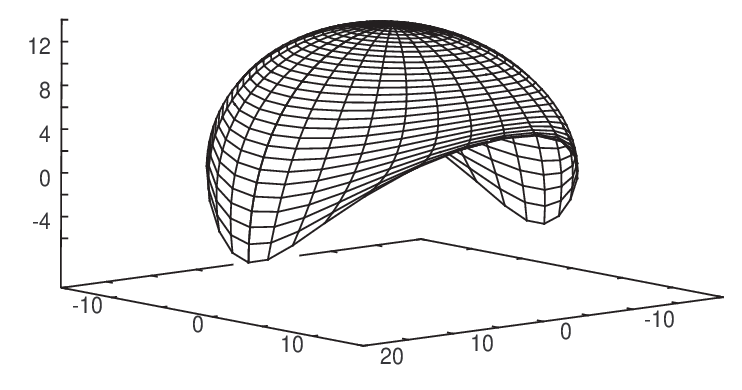}
\vspace{0.1in}

In what follows, such  \say{umbilic-free hemispheres} play an important role. They motivate the following definition:

\begin{Def}
A local section of $TS^2\rightarrow S^2$ is a {\it hemisphere} if its domain is a hemisphere in $S^2$. A {\it totally real Lagrangian hemisphere} is a 
hemisphere in $TS^2$ which is Lagrangian and contains no complex points.
\end{Def}

By Propositions \ref{p:lag} and \ref{p:Gauss}, a Lagrangian hemisphere $\Sigma$ in $TS^2$ corresponds to a strictly convex surface $S$ in ${\mathbb E}^3$
whose image under the Gauss map contains a hemisphere. Moreover, should $\Sigma$ be totally real (i.e. without complex points) then $S$ 
is umbilic-free - and vice-versa. 
In summary, the oriented lines normal to an umbilic-free hemisphere (as pictured above) form a totally real Lagrangian hemisphere in $TS^2$.

Our proof can be summarized as follows. Every putative closed convex surface with a single umbilic point contains a umbilic-free hemisphere.
We demonstrate using mean curvature flow that given any umbilic-free hemisphere one can always find a holomorphic 2-parameter family
of oriented lines which agree with a curve of oriented normals to the hemisphere. 

As we will see in the next section, global arguments then preclude the possibility of closing the hemisphere differentiably with the creation of only 
a single umbilic point. Thus a closed convex surface with a single umbilic point does not exist.

\vspace{0.1in}

\subsection{The manifold of holomorphic discs with boundary}\label{s:mnfldjdiscs}

We now analyze holomorphic discs in the complex surface $(TS^2,{\mathbb J})$ with edge lying on a boundary surface. 
Fix  $\alpha\in(0,1)$, $s\geq 1$ and denote by $C^{k+\alpha}$ and $H^{1+s}$ the usual H\"older and Sobolev spaces, respectively.

Define the space of {\it H\"older boundary conditions} by
\[
{\mathcal L}\equiv \left\{ \;\Sigma\subset TS^2 \; \;\left| \;\;  \Sigma\; {\mbox{is a}}\; C^{2+\alpha} \;{\mbox{-section}}\;\right.\right\}  .
\]
Endow ${\mathcal L}$ with the $C^{2+\alpha}$ topology  for global sections of $TS^2$.

For a fixed $\Sigma\in{\mathcal L}$, the differentiable structure of the set of $C^{2+\alpha}$ sections of the normal bundle 
$N\Sigma=T_\Sigma TS^2/T\Sigma$ exponentiates to 
give an infinite-dimensional manifold structure for an open neighbourhood of $\Sigma$ in ${\mathcal L}$. If $\Sigma$ were totally real, then
$N\Sigma\cong {\mathbb J}T\Sigma\cong T\Sigma$, which gives a canonical Banach manifold structure modeled on sections of $T\Sigma$:
\[
\Gamma T \Sigma \cong \Gamma J T\Sigma  \hookrightarrow \Gamma N \Sigma \stackrel{{\mbox{exp}}}{\rightarrow} {\mathcal L} .
\] 

In our situation, the section always has at least one complex point and so we must modify this argument. In particular, fix a point $\gamma_0\in TS^2$
and define
\[
{\mathcal L}_0\equiv \left\{ \;\Sigma\in{\mathcal L}\;\; \left| \;\;  \gamma_0\in\Sigma \;\right.\right\}  ,
\]
endowed with the induced  H\"older space $C^{2,\alpha}$ -topology.

Note that ${\mathcal L}_0$ can be identified with the
quotient of ${\mathcal L}$ by the translation that takes the point $\pi^{-1}(\pi(\gamma_0))\cap\Sigma\in{\mathcal L}$ to $\gamma_0$. That is, 
${\mathcal L}_0={\mathcal L}/\tilde Aut(TS^2)$, where $\tilde Aut(TS^2) = Aut(TS^2)/Aut_{\gamma_0}(TS^2) $ is the quotient of the isometry group 
$Aut(TS^2)$ of $(TS^2,{\mathbb G})$ by the stabilizer subgroup of $\gamma_0 \in TS^2$.
In addition, $Aut(TS^2)$ acts holomorphically and symplectically on $TS^2$ and continuously on ${\mathcal L}$ \cite{gak4}. Therefore it takes complex points
of $\Sigma$ to complex points of its image, and preserves the Lagrangian property for surfaces.

Let $\Sigma_1\in{\mathcal L}_0$ have exactly one complex point at $\gamma_0$. Then there exists an open neighbourhood of $\Sigma_1$ in
${\mathcal L}_0$ which is a Banach manifold. We prove this as follows.

\vspace{0.1in}

\begin{Lem}
Let $\Sigma_1 \in {\mathcal L}_0 $ be a surface with exactly one complex point at $\gamma_0$.
Then there exists a canonical embedding 
\[
\tilde\Phi : \Gamma ({\mathbb J}T\Sigma_1/T\Sigma_1)   \hookrightarrow {\mathcal L}/\tilde Aut(TS^2)\cong{\mathcal L}_0 , 
\]
In particular,  there exists an open neighbourhood of  $\Sigma_1$ in ${\mathcal L}_0$ 
which has a canonical Banach manifold structure.
\end{Lem}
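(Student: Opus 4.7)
The plan is to extend the standard chart construction for spaces of sections recalled in the display preceding the Lemma (namely $\Gamma T\Sigma \cong \Gamma \mathbb{J}T\Sigma \hookrightarrow \Gamma N\Sigma \stackrel{\exp}{\to} \mathcal{L}$ in the totally real case) to the situation where $\Sigma_1$ has an isolated complex point $\gamma_0$. The only obstruction to running that totally real argument verbatim is that $\mathbb{J}T_{\gamma_0}\Sigma_1 = T_{\gamma_0}\Sigma_1$, so $\mathbb{J}T\Sigma_1$ fails to be transverse to $T\Sigma_1$ at $\gamma_0$; the quotient $\mathbb{J}T\Sigma_1/T\Sigma_1$ is designed precisely to absorb this failure.

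First I would analyse the natural bundle morphism
\[
\mathbb{J}T\Sigma_1 \hookrightarrow T_{\Sigma_1}TS^2 \twoheadrightarrow N\Sigma_1,
\]
whose kernel $\mathbb{J}T\Sigma_1 \cap T\Sigma_1$ is trivial at every totally real point and equals $T_{\gamma_0}\Sigma_1$ at $\gamma_0$. The image, identified with $\mathbb{J}T\Sigma_1/T\Sigma_1$, is therefore a rank-$2$ subbundle of $N\Sigma_1$ away from $\gamma_0$ and degenerates to $0$ there. Equipped with the $C^{2+\alpha}$-topology, $\Gamma(\mathbb{J}T\Sigma_1/T\Sigma_1)$ is thus realised as the closed Banach subspace of $\Gamma N\Sigma_1$ consisting of $C^{2+\alpha}$-normal fields that vanish at $\gamma_0$.

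Next I would define $\tilde\Phi$ as the composition
\[
\Gamma(\mathbb{J}T\Sigma_1/T\Sigma_1) \hookrightarrow \Gamma N\Sigma_1 \stackrel{\exp}{\longrightarrow} \mathcal{L} \twoheadrightarrow \mathcal{L}/\tilde Aut(TS^2) \cong \mathcal{L}_0,
\]
with $\exp$ the exponential of the Levi-Civita connection of $\mathbb{G}$ applied normally to $\Sigma_1$. For $\sigma$ sufficiently small in $C^{2+\alpha}$, $\exp\sigma$ is a genuine $C^{2+\alpha}$-section of $TS^2$, and because $\sigma(\gamma_0)=0$ the image surface passes through $\gamma_0$, hence determines a point of $\mathcal{L}_0$.

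Finally I would show $\tilde\Phi$ is an embedding by the inverse function theorem. Its derivative at $0$ is the inclusion $\Gamma(\mathbb{J}T\Sigma_1/T\Sigma_1) \hookrightarrow T_{\Sigma_1}\mathcal{L} \cong \Gamma N\Sigma_1$ composed with the projection onto $T_{[\Sigma_1]}(\mathcal{L}/\tilde Aut(TS^2))$. The image of $\Gamma(\mathbb{J}T\Sigma_1/T\Sigma_1)$ has codimension exactly $2$ in $\Gamma N\Sigma_1$, corresponding to the value at $\gamma_0$; the infinitesimal action of $\tilde Aut(TS^2)$ at $\Sigma_1$, generated by the four Euclidean motions modulo those stabilising the line $\gamma_0$, spans precisely the missing two-dimensional fibre $N_{\gamma_0}\Sigma_1$. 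Thus $d\tilde\Phi_0$ is a bijection, and the inverse function theorem gives the local Banach manifold chart. The main obstacle is verifying this transversality between the $\tilde Aut$-orbit at $\Sigma_1$ and the degeneration of $\mathbb{J}T\Sigma_1/T\Sigma_1$ at $\gamma_0$; this relies on the identification of $Aut(TS^2)$ with the Euclidean group of $\mathbb{E}^3$ and the fact that its infinitesimal action along the two directions transverse to $\gamma_0$ in $TS^2$ produces normal deformations of $\Sigma_1$ that are linearly independent of those coming from $\mathbb{J}T\Sigma_1$ near $\gamma_0$.
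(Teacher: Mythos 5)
Your construction proves a weaker statement than the one the paper needs, and the tell-tale sign is that the hypothesis ``exactly one complex point'' is never actually used. In the paper's proof the whole content is the ``key point'' that, when $\gamma_0$ is the only complex point, the degenerate bundle map ${\mathbb J}T\Sigma_1/T\Sigma_1\hookrightarrow N\Sigma_1/d\tilde Aut_{\gamma_0}(TS^2)$ is an isomorphism of plane bundles degenerating only at $\gamma_0$, so that the chart for ${\mathcal L}_0$ is modelled on sections of ${\mathbb J}T\Sigma_1$ (equivalently of $T\Sigma_1$) --- this is the model the rest of the paper requires (compare the Lagrangian analogue, whose model $\Gamma^{lag}$ sits inside $\Gamma({\mathbb J}T\Sigma_1)$, and the discussion before Proposition \ref{p:banach}, where the neighbourhood must be modelled on ``${\mathbb J}$ times the tangent space to $\Sigma_1$'' to serve as a totally-real-type boundary condition for $\bar{\partial}$). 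You instead declare $\Gamma({\mathbb J}T\Sigma_1/T\Sigma_1)$ to be the space of $C^{2+\alpha}$ normal fields vanishing at $\gamma_0$. That space is a closed subspace of $\Gamma N\Sigma_1$, but it is \emph{not} Banach-isomorphic to $\Gamma({\mathbb J}T\Sigma_1)$: the bundle map ${\mathbb J}T\Sigma_1\to N\Sigma_1$ degenerates at $\gamma_0$, its inverse blows up there, and the image of $\Gamma({\mathbb J}T\Sigma_1)$ is a proper, non-closed subspace of your model (fields vanishing to the order of the degeneracy). With your reading the Lemma reduces to the trivial fact that ${\mathcal L}_0$ is a Banach manifold near any section through $\gamma_0$, with or without complex points, which is not what the Lemma is for; the genuinely delicate identification that the paper's proof hinges on is bypassed rather than established.

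There is also a quantitative slip in your inverse-function-theorem step. You invoke the full four-parameter family $\tilde Aut(TS^2)=Aut(TS^2)/Aut_{\gamma_0}(TS^2)$ and assert that its infinitesimal orbit at $\Sigma_1$ ``spans precisely'' the missing two-dimensional fibre $N_{\gamma_0}\Sigma_1$. Evaluation of Killing fields at $\gamma_0$ identifies that four-dimensional family with $T_{\gamma_0}TS^2$, so a two-dimensional subfamily has values tangent to $\Sigma_1$ at $\gamma_0$; its normal projections vanish at $\gamma_0$ without vanishing identically, hence lie in your model space, and $d\tilde\Phi_0$ as you define it acquires a two-dimensional kernel --- the orbit directions cannot be a complement of the codimension-two subspace. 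The clean statement uses only the two translations transverse to $\gamma_0$ (vertical Killing fields, which hit $N_{\gamma_0}\Sigma_1$ isomorphically because $\Sigma_1$ is a section), or, more simply, no quotient at all, mapping directly into ${\mathcal L}_0$ since your sections already fix $\gamma_0$. Neither repair, however, addresses the main issue above: to prove the Lemma as the paper uses it you must show that the exponential of ${\mathbb J}$-tangential fields, modulo the infinitesimal automorphisms at $\gamma_0$, parametrizes a neighbourhood in ${\mathcal L}_0$, and that is exactly where the single-complex-point hypothesis has to enter.
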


\begin{proof}
Let $\pi_N : T_\Sigma TS^2 \rightarrow T_\Sigma TS^2/T\Sigma\cong  N\Sigma $ be the normal bundle of $\Sigma$.
 
Note that for $\Sigma \in \mathcal L$, the linearization of $ \tilde Aut(TS^2)$
at the identity gives rise to the quotient projection
$\tilde\pi_N : T_\Sigma TS^2 / d \tilde Aut_{\gamma_0}(TS^2)  \to N\Sigma / d\tilde Aut_{\gamma_0}(TS^2).$ 
Since the exponential map commutes with the isometries of $Aut(TS^2)$, we have:
\[
\begin{array}[c]{ccc}
 \Gamma T_\Sigma TS^2/d\tilde Aut_{\gamma_0}(TS^2) &\stackrel{\tilde{{\mbox{exp}}}}
{\rightarrow}& {\mathcal L}/\tilde Aut(TS^2) \\
\downarrow\scriptstyle{\tilde\pi_N}&&\downarrow\scriptstyle{{\mbox{id}}}\\
 \Gamma N\Sigma/dAut_{\gamma_0}(TS^2) &\stackrel{\tilde{{\mbox{exp}}}}{\rightarrow}& {\mathcal L}/\tilde Aut(TS^2).
\end{array}
\]

The key point is that if $\Sigma_1$ has only one complex point at $\gamma_0$, then
${\mathbb J}T\Sigma_1/T\Sigma_1 \stackrel{\cong}\hookrightarrow N\Sigma_1 /d\tilde Aut_{\gamma_0}(TS^2) $ is an  
isomorphism of plane bundles over $\Sigma_1$ with 
vanishing fibre at one point only, namely the complex point, where ${\mathbb J}T_{\gamma_0} \Sigma_1 = T_{\gamma_0}\Sigma_1$. 
This is  no longer an isomorphism if $\Sigma_1$ has more than one complex point. 

We  conclude that the composition
\[
\tilde\Phi : \Gamma ({\mathbb J}T\Sigma_1/T\Sigma_1)
 \hookrightarrow \Gamma N \Sigma_1 /d\tilde Aut_{\gamma_0}(TS^2) 
 \stackrel{\tilde{{\mbox{exp}}}}{\rightarrow} {\mathcal L}/\tilde Aut(TS^2) 
\] 
is an embedding of the Banach space of $C^{2+\alpha} $ - smooth sections of  ${\mathbb J}T\Sigma_1/T\Sigma_1$
into $ {\mathcal L}/\tilde Aut(TS^2)$, giving rise to an open Banach manifold
of variations of  $ \Sigma_1 \in {\mathcal L}_0$.

\end{proof}

\vspace{0.1in}

In a similar way we define the spaces of Lagrangian boundary conditions
\[
{\mathcal L}{\mbox{ag}}\equiv \left\{ \;\Sigma\subset TS^2 \; \;\left| \;\;  \Sigma\; {\mbox{is a}}\; C^{2+\alpha} \;{\mbox{ Lagrangian section}}\;\right.\right\}  
\] 
\[
{\mathcal L}{\mbox{ag}}_0\equiv \left\{ \;\Sigma\in{\mathcal L}{\mbox{ag}}\;\; \left| \;\;  \gamma_0\in\Sigma \;\right.\right\}\equiv {\mathcal L}{\mbox{ag}}/\tilde Aut(TS^2).
\]
\vspace{0.1in}
\begin{Lem}
Let $\Sigma_1 \in {\mathcal L}{\mbox{ag}}_0 $ be a surface with exactly one complex point at $\gamma_0$. Then there exists an open neighbourhood ${\mathcal U}$ of  
$\Sigma_1$ in ${\mathcal L}{\mbox{ag}}_0$ 
which has a canonical Banach manifold structure.
\end{Lem}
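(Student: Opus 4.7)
The plan is to realize an open neighborhood of $\Sigma_1$ in $\mathcal{L}\text{ag}_0$ as a closed Banach submanifold of the ambient manifold furnished by the previous lemma, by cutting out the first-order Lagrangian condition. Weinstein's Lagrangian neighborhood theorem, which carries over without change to the neutral K\"ahler setting since $\Omega$ is still a closed nondegenerate 2-form, identifies a tubular neighborhood of $\Sigma_1$ in $TS^2$ symplectically with a neighborhood of the zero section in $T^*\Sigma_1$. Under this identification, the graph of a small $C^{2+\alpha}$ 1-form $\alpha$ on $\Sigma_1$ is a Lagrangian section if and only if $d\alpha = 0$.

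Next, I would transport this description back through the embedding $\tilde\Phi$ of the previous lemma. On the totally real open set $\Sigma_1 \setminus \{\gamma_0\}$, the symplectic form yields a bundle isomorphism $N\Sigma_1 \cong T^*\Sigma_1$ given by $\nu \mapsto \iota_\nu \Omega$, and hence a bounded linear operator
\[
D : \Gamma^{2+\alpha}\bigl(\mathbb{J}T\Sigma_1/T\Sigma_1\bigr) \longrightarrow \Gamma^{1+\alpha}\bigl(\Lambda^2 T^*\Sigma_1\bigr), \qquad \nu \longmapsto d(\iota_\nu \Omega).
\]
Since $D$ is bounded and linear, $\ker D$ is a closed linear subspace of a Banach space and is itself a Banach space. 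The composition of $\ker D \hookrightarrow \Gamma^{2+\alpha}(\mathbb{J}T\Sigma_1/T\Sigma_1)$ with $\tilde\Phi$ then yields the required canonical embedding; its image is the open neighborhood $\mathcal{U} \subset \mathcal{L}\text{ag}_0$ of $\Sigma_1$, which inherits a Banach manifold structure.

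The main obstacle is controlling the analysis at the complex point $\gamma_0$, where the Lagrangian plane coincides with its $\mathbb{J}$-image and the pointwise isomorphism $N_{\gamma_0}\Sigma_1 \cong T^*_{\gamma_0}\Sigma_1$ degenerates. As in the previous lemma, passing to the quotient by $\tilde{Aut}_{\gamma_0}(TS^2)$ absorbs this rank drop, and this is precisely where the hypothesis that $\gamma_0$ is the \emph{only} complex point of $\Sigma_1$ is used: elsewhere the bundles $\mathbb{J}T\Sigma_1/T\Sigma_1$ and $T^*\Sigma_1$ are genuinely isomorphic, and at $\gamma_0$ both fibres vanish, so the operator $D$ extends continuously to a bounded map between the ambient H\"older spaces. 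Once $D$ is globally defined, a Moser-type deformation argument inside the Weinstein tube upgrades the first-order Lagrangian condition $\nu \in \ker D$ to the genuine nonlinear Lagrangian condition on $\tilde\Phi(\nu)$, confirming that $\tilde\Phi(\ker D)$ is really an open neighborhood in $\mathcal{L}\text{ag}_0$ and not merely its formal tangent space at $\Sigma_1$.
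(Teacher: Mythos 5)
Your proposal follows essentially the same route as the paper: the neighbourhood of $\Sigma_1$ is modelled on the closed Banach subspace of $C^{2+\alpha}$ sections cut out by the linearized Lagrangian condition $d(\iota_\nu\Omega)=0$ (the paper's space $\Gamma^{lag}$ of Lagrangian variations), transported into ${\mathcal L}{\mbox{ag}}_0$ by the embedding $\tilde\Phi$ of the previous Lemma, with the unique complex point $\gamma_0$ absorbed by the quotient by $\tilde{Aut}_{\gamma_0}(TS^2)$ exactly as there. The Weinstein-neighbourhood/Moser step you add, ensuring that the variations in the kernel correspond to genuinely (not merely infinitesimally) Lagrangian sections, is a justification the paper leaves implicit rather than a different argument.
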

\begin{proof}
The proof follows that of the previous Lemma, with the manifold modeled on the Banach space of Lagrangian sections
\[
\Gamma^{lag}=\left\{\;v\in\Gamma({\mathbb J}(T\Sigma_1))\;\;\left|\;\; v\in C^{2+\alpha},\;\;d({\mathbb J}(v)\lrcorner\Omega)=0\;\right.\right\}.
\] 
\end{proof}
\vspace{0.1in}

For $s > 1, (2 - \alpha)/2 = 1/s $ and a relative class $A\in\pi_2(TS^2,\Sigma)$, the space of
{\it parameterized Sobolev-regular discs with Lagrangian boundary condition} is defined by
\[
{\mathcal{F}}_A\equiv \left\{\;(f, \Sigma)\in H^{1+s}(TS^2)\times{\mathcal U}\;\;\left|\;\; [f]=A,\; f(\partial D)\subset {\mbox{ totally real part of }} \Sigma   
\;\right.\right\},
\]
where ${\mathcal U}$ is the Banach manifold neighbourhood of $\Sigma_1$ as above.
The space ${\mathcal{F}}_A$ is a Banach manifold and the so the projection 
$\pi:{\mathcal{F}}_A\rightarrow {\mathcal{U}}$: $\pi(f, \Sigma)= \Sigma$ is a Banach
bundle. 

For $(f, \Sigma)\in{\mathcal F}_A$ define 
$\bar{\partial}f={\textstyle{\frac{1}{2}}}(df\circ j- J \circ df)$, where
$j$ is the complex structure on $D$. Then
$\bar{\partial}f\in H^s(f^*T ^{01}TS^2) \equiv H^s(f^*TTS^2) $ and we define the space of sections
\[
H^s\equiv\bigcup_{(f, \Sigma)\in{\mathcal F}_A}H^s(f^*TTS^2).
\]
This is a Banach vector bundle over ${\mathcal F}_A$ and the operator $\bar{\partial}$ is
a section of this bundle.

\vspace{0.1in}
\begin{Def}
The {\it set of holomorphic discs with Lagrangian boundary condition} is defined by
\[
{\mathcal M}_A\equiv\left\{\;(f, \Sigma)\in{\mathcal F}_A\;\;\left|\;\; \bar{\partial}f=0 \;\right.\right\}.
\]
\end{Def}
\vspace{0.1in}

As before let $\Sigma_1$ be a Lagrangian section with a single complex point at $\gamma_0$. 

\vspace{0.1in}
\begin{Prop}\label{p:banach}
There exists a neighbourhood of $\Sigma_1$, denoted
${\mathcal V}\subset{\mathcal L}ag_0$, such that ${\mathcal M}_A\cap\pi^{-1}({\mathcal V})$ is a Banach submanifold of ${\mathcal F}_A\cap\pi^{-1}({\mathcal V})$.
\end{Prop}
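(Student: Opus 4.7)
The plan is to realize $\mathcal{M}_A$ as the zero locus of the smooth Banach-bundle section $\bar{\partial}:\mathcal{F}_A\to H^s$ and to deduce the submanifold property from the implicit function theorem applied to the linearization of $\bar{\partial}$ at a pair $(f_0,\Sigma_0)$ lying over some neighbourhood $\mathcal{V}$ of $\Sigma_1$. The preliminary check is that $\bar{\partial}$ is a $C^1$ (indeed smooth) section: the choice $s>1$ with $(2-\alpha)/2=1/s$ is precisely what is needed so that the $H^{1+s}$-regularity of $f$ combines with the $C^{2+\alpha}$-regularity of $\Sigma$ under composition to land in $H^s(f^{\ast}TTS^2)$, making both the interior term and the boundary-constraint meaningful.

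Next I would compute the full linearization. At $(f_0,\Sigma_0)$ with $\bar\partial f_0=0$, the tangent space of $\mathcal{F}_A$ splits (after choosing a connection) into an ``interior'' factor $H^{1+s}(f_0^{\ast}TTS^2)$, subject to the linearized boundary condition $\xi(\partial D)\subset T\Sigma_0$, and a ``Lagrangian-deformation'' factor $T_{\Sigma_0}\mathcal{U}\cong\Gamma^{lag}$, given by the previous Lemma. The derivative $D\bar{\partial}=D_f\bar{\partial}\oplus D_\Sigma\bar{\partial}$ has interior component the standard linearized Cauchy--Riemann operator
\[
D_f\bar\partial\,\xi \;=\; \bar{\partial}\xi+\tfrac{1}{2}(\nabla_{\xi}{\mathbb J})\circ df_0\circ j,
\]
with totally real boundary condition (ensured by shrinking $\mathcal{V}$ so that $f(\partial D)$ avoids the unique complex point of $\Sigma_1$). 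This is a Fredholm operator of index determined by the Maslov class of $f_0^{\ast}T\Sigma_0\subset f_0^{\ast}TTS^2$; the $\Sigma$-variation contributes an additional affine boundary perturbation along $\partial D$.

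The main obstacle is surjectivity of $D\bar{\partial}$. Granting it, the implicit function theorem in Banach bundles immediately gives that $\mathcal{M}_A\cap\pi^{-1}(\mathcal{V})$ is a Banach submanifold whose tangent space is $\ker D\bar{\partial}$. To prove surjectivity one argues by duality: any $\eta\in H^s(f_0^{\ast}TTS^2)$ lying in the $L^2$-annihilator of $\mathrm{im}\,D\bar\partial$ satisfies the formal adjoint equation $\bar{\partial}^{\ast}\eta=0$ on the interior, while pairing against all variations $v\in\Gamma^{lag}$ of $\Sigma_0$ yields a homogeneous boundary condition for $\eta$ along $\partial D$. The Lagrangian deformation space provided by the preceding Lemma is rich enough (it is modelled on the full Banach space of closed $1$-forms on $\Sigma_1$) to force $\eta|_{\partial D}=0$; unique continuation for the adjoint $\bar{\partial}^{\ast}$-equation with trivial Cauchy data then gives $\eta\equiv0$.

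Combining the Fredholm property of $D_f\bar{\partial}$ with the surjectivity just established, the implicit function theorem yields the desired Banach submanifold structure for $\mathcal{M}_A\cap\pi^{-1}(\mathcal{V})$ inside $\mathcal{F}_A\cap\pi^{-1}(\mathcal{V})$. I expect the verification of the unique-continuation/annihilator step to be the delicate point, since one must exploit both the totally real character of the boundary along $f_0(\partial D)$ and the explicit form of the Lagrangian deformation space from the previous Lemma; once that is in hand, the remainder is routine Banach manifold calculus.
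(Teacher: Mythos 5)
Your proposal is correct and follows essentially the same route as the paper: the paper also realizes ${\mathcal M}_A$ as a preimage (via the map $\Delta(f,\Sigma)=(\bar{\partial}f,\Phi^{-1}_\Sigma\circ f|_{\partial D})$, following Oh) and proves transversality by exactly the duality argument you describe, taking an $L^2$-annihilator element, pairing against interior variations $\zeta$ and Hamiltonian/Lagrangian boundary variations $X_h$, integrating by parts to force the boundary data to vanish, and then invoking the adjoint equation with trivial boundary values to conclude it is zero. The only cosmetic difference is bookkeeping: the paper phrases the boundary constraint through the loop space $\Omega(TS^2)$ and Hamiltonian isotopies rather than as a direct zero-section/IFT statement, but the linearization and the surjectivity step coincide with yours.
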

\begin{proof}
This is established by considering the smooth map defined by
\[
\Delta:{\mathcal F}_A\cap\pi^{-1}({\mathcal U})\rightarrow H^s\times \Omega(TS^2),
\]
by $\Delta(f,\Sigma)=(\bar{\partial}f,\Phi^{-1}_\Sigma\circ f|_{\partial D})$, where 
$\Omega(TS^2)$ is the set of loops in $TS^2$ and $\Phi_\Sigma$ is the ambient Hamiltonian isotopy which takes $\Sigma_1$ to $\Sigma$. For ease
of notation, we suppress the composition $\Phi^{-1}_\Sigma\circ f|_{\partial D}$ and simply write $f|_{\partial D}$.

Thus
\[
{\mathcal M}_A\cap\pi^{-1}({\mathcal U})=\Delta^{-1}(\{0\}\times\Omega_A({\mathcal U})).
\]
To prove the proposition we must show that $\Delta$ is transverse to the submanifold
\[
\{0\}\times\Omega_A({\mathcal U})\subset H^s\times \Omega(TS^2),
\]
at $\Sigma_1$. Transversality at $\Sigma_1$ is proved exactly as in the tame case (Theorem 1 in Oh \cite{oh}), which we now outline.

Let $(f,\Sigma)\in{\mathcal M}_A$, which means that
\[
\bar{\partial}f=0
\qquad\qquad
f|_{\partial D}\subset\Sigma_1
\qquad\qquad
[f]=A {\mbox{ in }}\pi_2(TS^2,\Sigma_1).
\]
To show transversality, we need that to prove that
\[
{\mbox{Im}}\left(D_{(f,\Sigma)}\Delta\right)+\{0\}\oplus T_{f|_{\partial D}}\Omega_A({\mathcal U})=
T_{(0,f|_{\partial D})} H^s\times \Omega(TS^2),
\]
or, denoting the $L^2$-adjoint by $^{\perp_{L^2}}$, equivalently
\begin{align}
0&=\left({\mbox{Im}}\left(D_{(f,\Sigma)}\Delta\right)+\{0\}\oplus T_{f|_{\partial D}}\Omega_A({\mathcal U})\right)^{\perp_{L^2}}\nonumber\\
&=\left({\mbox{Im}}\left(D_{(f,\Sigma)}\Delta\right)\right)^{\perp_{L^2}}\cap\left(\{0\}\oplus T_{f|_{\partial D}}\Omega_A({\mathcal U})\right)^{\perp_{L^2}}\nonumber\\
&=\left({\mbox{Im}}\left(D_{(f,\Sigma)}\Delta\right)\right)^{\perp_{L^2}}\cap\left(H^{-s}\oplus \left(T_{f|_{\partial D}}\Omega_A({\mathcal U})\right)^{\perp_{L^2}}\right)\label{e:adj}.
\end{align}
A point in $T_{(f,\Sigma_1)}\left({\mathcal F}_A\cap\pi^{-1}({\mathcal U})\right)$ can be represented by $(\zeta,X_h)$, where $X_h$ is the Hamiltonian vector field associated 
with some $h\in C_0^\infty(\Sigma_1)$ and we find that
\[
D_{(f,\Sigma_1)}\Delta(\zeta,X_h)=({\nabla}^+_{\mathbb J}\zeta,X_h(f|_{\partial D})-\zeta(f|_{\partial D})),
\]
where we have introduced the connection
\[
\nabla^{\pm}_{\mathbb J}={\textstyle{\frac{1}{2}}}\left(\frac{D}{dx}\pm{\mathbb J}\frac{D}{dy}\right).
\]
Now let $(\psi,\alpha)\in\left({\mbox{Im}}\left(D_{(f,\Sigma)}\Delta\right)+\{0\}\oplus T_{f|_{\partial D}}\Omega_A({\mathcal U})\right)^{\perp_{L^2}}$. We show
that  $(\psi,\alpha)=(0,0)$ as follows. By definition we have 
\[
\int_D({\nabla}^+_{\mathbb J}\zeta,\psi)+\int_{\partial D}(X_h(f|_{\partial D})-\zeta(f|_{\partial D}),\alpha)=0,
\]
for all $(\zeta,X_h)\in T_{(f,\Sigma_1)}\left({\mathcal F}_A\cap\pi^{-1}({\mathcal U})\right)$.

Integrating by parts and rearranging terms
\[
-\int_D(\zeta,{\nabla}^-_{\mathbb J}\psi)+\int_{\partial D}(\zeta,\tilde{\psi}-\alpha)+\int_{\partial D}(X_h\circ f|_{\partial D}),\alpha)=0,
\]
where $\tilde{\psi}$ is the ${\mathbb J}$-adjoint of $\psi$. As this holds for all $\zeta$ and $h$, we have
\[
{\nabla}^-_{\mathbb J}\psi=0
\qquad\qquad
\tilde{\psi}-\alpha=0 {\mbox{ on }}\partial D
\qquad\qquad
\alpha^\perp=0 {\mbox{ on }}\partial D .
\]
Since, by equation (\ref{e:adj}) $\alpha\in\left(T_{f|_{\partial D}}\Omega_A({\mathcal U})\right)^{\perp_{L^2}}$, we have $\alpha=\alpha^\perp$, which vanishes by 
the last equation above. Substituting this in the second equation, we get $\psi=0$ on $\partial D$, and finally, from the first equation with this
boundary condition, $\psi=0$. Thus $(\psi,\alpha)=(0,0)$, as claimed, and we have established transversality.
\end{proof}

\vspace{0.1in}

Consider the linearization of $\bar{\partial}$ at $(f, \Sigma) \in \mathcal{F}_A$ with
respect to any $J$-parallel connection on $H^{1+s}({\mathcal F}_A)$:
\[
\nabla_{(f, \Sigma)} \bar{\partial}: H^{1+s}(f^*TTS^2 \otimes f^*T \Sigma) \to H^s(f^*TTS^2).
\]
The following key points about this operator are standard:
\vspace{0.1in}

\begin{Prop}\cite{oh}\label{p:oh}
$\nabla_{(f,\Sigma)}\bar{\partial}$ is Fredholm and therefore has finite dimensional
kernel and cokernel. The analytic index of this operator
$I={\mbox{dim ker}}(\nabla_{(f, \Sigma)} \bar{\partial}) -{\mbox{dim
coker}}(\nabla_{(f, \Sigma)} \bar{\partial})$ is related to
the Keller-Maslov index of the edge by
\[
I=\mu(TS^2,T\Sigma)+2.
\]
\end{Prop}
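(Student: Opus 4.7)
The plan is to recognize $\nabla_{(f,\Sigma)}\bar{\partial}$ as a Cauchy--Riemann type operator on the disc with totally real boundary conditions, and then invoke the standard Riemann--Roch calculation for such operators. First I would note that, because $(f,\Sigma)\in{\mathcal F}_A$ by hypothesis sends $\partial D$ into the totally real part of $\Sigma$, the subbundle $f^*T\Sigma\subset f^*TTS^2|_{\partial D}$ is a totally real subbundle of the complex rank-$2$ bundle $(f^*TTS^2,{\mathbb J})$. Thus the operator can be viewed as
\[
\nabla_{(f,\Sigma)}\bar{\partial}:\;
\bigl\{\zeta\in H^{1+s}(D, f^*TTS^2)\;:\;\zeta|_{\partial D}\in H^{s+1/2}(\partial D,f^*T\Sigma)\bigr\}
\;\longrightarrow\;H^s(D,f^*TTS^2),
\]
which is precisely the setup for the linearized $\bar\partial$-problem with totally real boundary.

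Next I would establish the Fredholm property. Computing in a local holomorphic frame for $f^*TTS^2$, $\nabla_{(f,\Sigma)}\bar{\partial}$ differs from the flat Cauchy--Riemann operator on $D$ by a zeroth-order term (the difference involves $df$ and the Christoffel symbols of the chosen $\mathbb{J}$-parallel connection). Since the boundary condition is totally real, the classical theory of elliptic boundary value problems (as presented e.g.\ in the appendix of McDuff--Salamon, or Oh \cite{oh}) shows this operator satisfies Lopatinski--Shapiro at $\partial D$, and hence is Fredholm between the stated Banach spaces; kernel and cokernel are finite-dimensional by elliptic regularity on the compact surface $D$.

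For the index, I would invoke the Riemann--Roch theorem for a complex vector bundle $E\to D$ of complex rank $n$ over a bordered Riemann surface of Euler characteristic $\chi$, equipped with a totally real subbundle $F\subset E|_{\partial D}$. The analytic index of $\bar\partial$ with boundary condition $F$ equals
\[
I \;=\; n\,\chi(D)+\mu(E,F),
\]
where $\mu(E,F)$ is the Maslov index of the loop of totally real subspaces. In our situation $E=f^*TTS^2$ has complex rank $n=2$, $\chi(D)=1$, and $F=f^*T\Sigma$, whose Maslov index is by definition $\mu(TS^2,T\Sigma)$. Substituting yields
\[
I\;=\;2+\mu(TS^2,T\Sigma),
\]
as claimed. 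The main obstacle, if any, is purely bookkeeping: one must verify that the neutral (signature $(++--)$) nature of ${\mathbb G}$ plays no role in the analysis, which it does not, because the Cauchy--Riemann operator and the Maslov index depend only on the almost complex structure ${\mathbb J}$ and the totally real subbundle $T\Sigma|_{\partial D}$, not on the signature of the compatible metric.
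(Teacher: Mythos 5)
Your proposal is correct, and it is essentially the argument the paper is relying on: the paper offers no proof of this Proposition, quoting it from Oh \cite{oh}, and your reduction to the Riemann--Roch theorem for a rank-$2$ complex bundle over the disc with totally real boundary subbundle, $I=n\chi(D)+\mu(E,F)=2+\mu(TS^2,T\Sigma)$, together with the Lopatinski--Shapiro/ellipticity argument for the Fredholm property, is exactly the content of that cited result. Your closing remark that only ${\mathbb J}$ and the totally real subbundle $f^*T\Sigma|_{\partial D}$ matter, and not the neutral signature of ${\mathbb G}$, is the right observation for transplanting Oh's tame result to this setting.
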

                    
\vspace{0.1in} 

\begin{Prop} \cite{oh}\label{p:fredholm}
There exists a dense open set ${\mathcal W}\subset{\mathcal V}$ containing $\Sigma_1$ such that any 
(not multiply covered) holomorphic disc
with edge in $\Sigma\in{\mathcal W}$ is Fredholm-regular i.e. 
${\mbox{dim coker}}(\nabla_{(f, \Sigma)} \bar{\partial})=0$.
\end{Prop}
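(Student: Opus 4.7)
My plan is to deduce the proposition from the Sard--Smale theorem applied to the projection
\[
\pi: {\mathcal M}_A \cap \pi^{-1}({\mathcal V}) \rightarrow {\mathcal V},
\]
following the scheme of Oh \cite{oh}. By Proposition \ref{p:banach} this is a smooth map of Banach manifolds, and restricting it to the open subset of somewhere-injective (equivalently, not multiply covered) discs will produce a Fredholm map to which Sard--Smale applies directly.

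The first step is to linearise. Differentiating the defining relation $\bar\partial f=0$ at a holomorphic disc $(f,\Sigma)$ yields the snake-lemma exact sequence
\[
0 \rightarrow \ker \nabla_{(f,\Sigma)}\bar\partial \rightarrow \ker D\pi_{(f,\Sigma)} \rightarrow T_\Sigma {\mathcal V} \rightarrow {\mbox{coker}}\,\nabla_{(f,\Sigma)}\bar\partial \rightarrow {\mbox{coker}}\,D\pi_{(f,\Sigma)} \rightarrow 0,
\]
so by Proposition \ref{p:oh} the differential $D\pi_{(f,\Sigma)}$ is Fredholm. Crucially, $\Sigma$ is a regular value of $\pi$ if and only if ${\mbox{coker}}\,\nabla_{(f,\Sigma)}\bar\partial=0$ for every simple $(f,\Sigma) \in \pi^{-1}(\Sigma)$, which is exactly the Fredholm-regularity conclusion we seek.

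The second step invokes Sard--Smale to produce a residual subset ${\mathcal W}_0\subset {\mathcal V}$ of regular values. To upgrade ``residual'' to the required ``dense open'' ${\mathcal W}$, I would combine two ingredients: openness of surjectivity for Fredholm operators under continuous perturbation of their coefficients (which holds since $\nabla_{(f,\Sigma)}\bar\partial$ depends continuously on $\Sigma$ in the $C^{2+\alpha}$-topology used on ${\mathcal V}$), and a Gromov-type compactness statement on ${\mathcal M}_A$ ensuring that as $\Sigma$ moves within a small ball in ${\mathcal V}$ no simple disc escapes a small neighbourhood of the fibre.

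The main obstacle I expect is justifying the somewhere-injective reduction in our specific setting. The almost complex structure ${\mathbb J}$ is held fixed throughout, and the only admissible perturbations of the boundary condition are the Hamiltonian variations $X_h$ used in the transversality proof of Proposition \ref{p:banach}. One must therefore check that evaluation of $D\pi$ along such Hamiltonian variations alone suffices to dominate any putative cokernel vector of $\nabla\bar\partial$. This is the step where Oh's argument exploits the existence of an open arc in $\partial D$ on which a simple $f$ is an embedding meeting $\Sigma$ only at itself; choosing $h\in C_0^\infty(\Sigma_1)$ supported in small balls about $f(\partial D)$ then yields independent perturbations. Once this neutral K\"ahler analogue of Oh's Lagrangian-perturbation lemma is verified, the remaining unpacking of Sard--Smale is routine.
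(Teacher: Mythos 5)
Your proposal is correct and takes essentially the same route as the paper, whose entire proof is the one-line observation that ellipticity of $\bar{\partial}$, Fredholmness of the projection $\pi:{\mathcal M}_A\cap\pi^{-1}({\mathcal V})\rightarrow{\mathcal V}$ and the Sard--Smale theorem (following Oh \cite{oh}) give the result; you merely supply the standard details, namely the identification of regular values of $\pi$ with Fredholm-regular boundary conditions, the residual-to-dense-open upgrade, and the somewhere-injective perturbation step that the paper delegates to \cite{oh}. One cosmetic point: in your exact sequence the second term should be the tangent space $T_{(f,\Sigma)}\bigl({\mathcal M}_A\cap\pi^{-1}({\mathcal V})\bigr)$ (the domain of $D\pi$) rather than $\ker D\pi_{(f,\Sigma)}$, and the final term is the cokernel of the full linearization, which vanishes by Proposition \ref{p:banach}, giving ${\mbox{coker}}\,D\pi\cong{\mbox{coker}}\,\nabla_{(f,\Sigma)}\bar{\partial}$ as you use.
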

\begin{proof}
The proof follows from ellipticity of $\bar{\partial}$, the fact 
that the projection map $\pi$ is Fredholm and the Sard-Smale theorem. 

\end{proof}

\vspace{0.1in}

\subsection{Proof of the Conjecture}\label{s:bvp}

We now prove that it is not possible for a closed $C^{3+\alpha}$ strictly convex surface in ${\mathbb E}^3$ to have only one umbilic point. By
the reformulation in section \ref{s:reform}, this is equivalent to showing that it is not possible for the graph of a global $C^{2+\alpha}$ 
Lagrangian section of $\pi:TS^2\rightarrow S^2$ to have a single complex point. 

Our proof goes as follows. First, by Proposition \ref{p:banach}, should such a section exist, call it $\Sigma_1$, then 
$\Sigma_1$ lies in an open 
subset of the $C^{2+\alpha}$ Lagrangian sections passing through a single fixed point of $TS^2$. This neighbourhood ${\mathcal U}$ is a 
Banach manifold modeled on the 
Lagrangian vector fields which arise as ${\mathbb J}$ times the tangent space to $\Sigma_1$. 

Moreover, this model is exactly the one required for a surface
to be a good boundary condition for the Cauchy-Riemann operator. That is, Proposition \ref{p:banach} establishes that, 
passing to a smaller neighbourhood ${\mathcal V}$, the space of holomorphic discs 
with edge lying on points of ${\mathcal V}$ is a submanifold of the space of all smooth maps.

Since the Cauchy-Riemann operator with this boundary condition is Fredholm, an application of the Sard-Smale theorem in Proposition \ref{p:fredholm} 
proves that for a dense open subset ${\mathcal W}$ of ${\mathcal V}$ the Cauchy-Riemann operator with this boundary condition is surjective. We conclude
that the dimension of the space of parameterized holomorphic discs with edge on a section in ${\mathcal W}$ is equal to the analytic index of the 
operator.  By Proposition \ref{p:oh} this index is related to the Keller-Maslov index of the edge curve by  $I=\mu+2$.

Consider a graphical holomorphic disc with edge lying on the fixed section $\Sigma\in{\mathcal W}$. The Keller-Maslov index of the edge is the relative first 
Chern class of the edge which, for graphs over the same domain, counts the number of complex points inside the edge on $\Sigma$ - 
see Proposition \ref{p:index}. 

Suppose that the disc in $\Sigma$ bounded by the edge of the holomorphic disc is totally real, so that $\mu=0$. Thus $I=\mu+2=2$, and, quotienting by 
the M\"obius group of the  disc, the space of unparameterised holomorphic discs is $I-3=2-3=-1$. This means that, for $C^{2+\alpha}$ boundary conditions 
$\Sigma\in{\mathcal W}$, there cannot exist a holomorphic disc with edge on $\Sigma$.

In particular, as every $\Sigma\in{\mathcal W}$ contains a totally real Lagrangian hemisphere, we conclude that, should $\Sigma_1$ exist, then so would 
a totally real Lagrangian hemisphere over which it is not possible to attach a holomorphic disc.

However, we prove that a graphical holomorphic disc always exists with edge lying on any $C^{2+\alpha}$ 
totally real Lagrangian hemisphere. The construction of the holomorphic disc is carried out by mean curvature flow. In particular, 
we consider the following initial boundary value problem:

\begin{center}\fbox{\parbox{4.8in}{
\begin{center}{\Large{ \bf I.B.V.P.}}\end{center}
{\it
Consider a family of positive sections $f_s:D\rightarrow TS^2$ such that
\[
\frac{d f}{ds}^\bot=H,
\]
with initial and boundary conditions:
\vspace{0.1in}
\begin{enumerate}
\item[(i)] $f_0(D)=\Sigma_0,$
\item[(ii)]$f_s(\partial D)\subset \tilde{\Sigma}$,
\item[(iii)] the hyperbolic angle $B$ between $Tf_s(D)$ and $T\tilde{\Sigma}$ is constant along $f_s(\partial D)$,
\item[(iv)] $f_s(\partial D)$ is asymptotically holomorphic: $|\bar{\partial}f_s|=C/(1+s)$,
\end{enumerate} 
\vspace{0.1in}
where $H$ is the mean curvature vector of $f_s(D)$, and $\Sigma_0$ and
$\tilde{\Sigma}$ are given positive sections.
\vspace{0.1in}
}
}
}
\end{center}
\vspace{0.2in}

Here positive means spacelike: the induced metric is positive definite. In what follows we refer to 
$f_s(\partial D)$ as the {\it edge} of the flowing surface, which lies on the boundary surface $\tilde{\Sigma}$.

This is a quasilinear parabolic system and short time existence for the flow is established in Theorem \ref{t:ste}. The proof 
consists of checking the Lopatinskii-Shapiro conditions for the boundary conditions and then using standard Schauder theory. 

Long time existence under certain conditions is then proven in Theorem \ref{t:lte}. In particular, having added a sufficiently large
holomorphic twist to the boundary surface to make it positive at the origin, we can choose an initial surface and hyperbolic angle
so that the flow exists for all time. This is proven as follows.

Long-time existence is  ensured by uniform positivity of the flowing disc. Uniform positivity in the interior of the flowing disc
is established by showing that the 
conditions required in the compact case, Theorem \ref{t:indflow}, namely the timelike convergence condition and containment in a compact region, 
hold for this flow. We then establish uniform positivity at the edge by careful consideration of the boundary conditions and finding a priori
bounds on the 2-jet of the flowing surface. 

It is at this juncture that our differentiability assumption enters. Derivatives of the boundary conditions relate second derivatives of
the flowing surface with those of the boundary surface. As a consequence, we must assume that the boundary surface is at least 
$C^{2+\alpha}$-smooth as a section in $TS^2$, or that the underlying surface in ${\mathbb E}^3$ is at least $C^{3+\alpha}$-smooth.

We prove in Theorem \ref{t:ashol} that, given a non-umbilic hemisphere $S$ in ${\mathbb E}^3$, then there exists a 
holomorphic disc in $TS^2$ whose edge lies on the set of oriented normals of $S$, considered as a surface $\Sigma$ in $TS^2$.
 
In summary, if there exists a $C^{2+\alpha}$ Lagrangian section with exactly one complex point, then there must exist totally real $C^{2+\alpha}$
Lagrangian hemispheres which cannot be the boundary condition for a holomorphic disc. However, we prove that one can attach a 
holomorphic disc to any $C^{2+\alpha}$ totally real Lagrangian hemisphere. 
Thus a $C^{2+\alpha}$ Lagrangian section with one complex point does not exist.

Noting the jump in derivative in the passage from points in ${\mathbb E}^3$ to points in $TS^2$, we conclude that a closed convex $C^{3+\alpha}$-smooth 
surface in ${\mathbb E}^3$ cannot have just one umbilic point.

\vspace{0.2in}


\section{{\bf Mean Curvature Flow of Compact Spacelike Submanifolds}}\label{s:compactmcf}

In this section we establish a long-time existence result for mean
curvature flow of compact spacelike surfaces in indefinite manifolds. Throughout we utilize the summation convention on repeated indices, except for the
quantity $\psi_\alpha$, defined below. In some instances we include summation signs for clarity. Note that raising and lowering normal indices
(Greek indices) changes the sign of the component, while raising and lowering tangent indices (Latin indices) does not change the sign.

\subsection{Immersed spacelike submanifolds}\label{s:setting}

Let ${\mathbb M}$ be an $n+m$-dimensional manifold endowed with a metric ${\mathbb G}$ of signature ($n,m$).
We assume throughout that there exists a {\it multi-time function} $t:{\mathbb M}\rightarrow {\mathbb R}^m$ of maximal rank with
components $t_\alpha$ for $\alpha=1...m$ such that
\[
{\mathbb G}(\overline{\nabla}t_\alpha,\overline{\nabla}t_\alpha)<0 \qquad\qquad \forall \alpha=1...m,
\]
and $\{\overline{\nabla}t_\alpha\}_1^m$ form a mutually orthogonal basis for a spacelike plane,
where all geometric quantities associated with ${\mathbb G}$ will be denoted with a bar.

\begin{Def}
The manifold $({\mathbb M},{\mathbb G})$ is said to satisfy the {\it timelike curvature condition} if, for any spacelike $n$-plane 
$P$ at a point in ${\mathbb M}$, the Riemann curvature tensor satisfies
\begin{equation}\label{e:tcc}
{\mathbb G}(\overline{R}(X,\tau_i)X,\tau_i)\;\geq k \;{\mathbb G}(X,X),
\end{equation}
for some positive constant $k$, where $\{\tau_i\}_{i=1}^n$ form an orthonormal basis for $P$ and $X$ is any timelike vector orthogonal to $P$.
\end{Def}

\begin{Note}
This generalises the timelike convergence condition of the codimension one case employed in \cite{EaH}:
\[
\overline{R}ic(X,X)\geq 0.
\]
\end{Note}

\vspace{0.1in}

We fix an orthonormal frame on (${\mathbb M},{\mathbb G}$):
\[
\{e_i,T_\alpha\}_{i,\alpha=1}^{n,m} \qquad \mbox{ s.t.}\qquad {\mathbb G}(e_i,e_j)=\delta_{ij}
\qquad {\mathbb G}(T_\alpha,T_\beta)=-\delta_{\alpha\beta} \qquad {\mathbb G}(e_i,T_\alpha)=0,
\]
with
\[
T_\alpha=-\psi_\alpha {\mathbb G}(\overline{\nabla}t_\alpha,\cdot)
\qquad\qquad \psi_\alpha^{-2}=-{\mathbb G}(\overline{\nabla}t_\alpha,\overline{\nabla}t_\alpha).
\]

\begin{Def}\label{d:norm}
Given a contravariant tensor $B$ on ${\mathbb M}$ we define its norm by
\[
\|B\|^2=\sum_{i_1,...i_l=1}^n [B(e_{i_1},e_{i_2},...,e_{i_l})]^2+\sum_{\beta_1,...\beta_l=1}^m [B(T_{\beta_1},T_{\beta_2},...,T_{\beta_l})]^2.
\]
Similarly, for a covariant tensor $B$ we dualise it with the metric ${\mathbb G}$ and define its norm as above.

Higher derivative norms are also defined:
\[
\|B\|^2_k=\sum_{j=0}^k\|\overline{\nabla}^jB\|^2.
\]

For a mixed tensor, we occasionally use the induced metric on the spacelike components to define a norm on the timelike components. That is,
if $B_{\alpha\beta ijk}$ is a tensor of the indicated type, then we define
\[
|B_{\alpha\beta}|^2=\sum_{i=1}^n [B_{\alpha\beta}(e_i,e_i,e_i)]^2.
\]
 
\end{Def}

Let $f:\Sigma\rightarrow {\mathbb M}$ be a spacelike immersion of an
$n$-dimensional manifold $\Sigma$, and let $g$ be the metric induced
on $\Sigma$ by ${\mathbb G}$. 

\begin{Def}\label{d:adptframe}
A second orthonormal basis $\{\tau_{i},\nu_\alpha\}$ for (${\mathbb
M},{\mathbb G}$) along $\Sigma$ is {\it adapted} to the
submanifold if:
\[
\{\tau_i,\nu_\alpha\}_{i,\alpha=1}^{n,m} \qquad \mbox{ s.t.}\qquad {\mathbb G}(\tau_i,\tau_j)=\delta_{ij}
\qquad {\mathbb G}(\nu_\alpha,\nu_\beta)=-\delta_{\alpha\beta} \qquad {\mathbb G}(\tau_i,\nu_\alpha)=0,
\]
where $\{\tau_i\}_{i=1}^{n}$ form an orthonormal basis for
($\Sigma,g$), and $\{\nu_\alpha\}_{\alpha=1}^m$ span the normal space.

\end{Def}

The {\it second fundamental form} of the immersion is
\[
A_{ij\alpha}={\mathbb G}(\overline{\nabla}_{\tau_i}\nu_\alpha,\tau_j)=-{\mathbb G}(\overline{\nabla}_{\tau_i}\tau_j,\nu_\alpha),
\]
while the {\it mean curvature vector} is
\[
H_\alpha=g^{ij}A_{ij\alpha}.
\]
We have the following two equations for the splitting of the
connection
\begin{equation}\label{e:connsplit1}
\overline{\nabla}_{\tau_i}\tau_j=\nabla^\parallel_{\tau_i}\tau_j-A_{ij}^\alpha\nu_\alpha
\end{equation}
\begin{equation}\label{e:connsplit2}
\overline{\nabla}_{\tau_i}\nu_\alpha=A^j_{i\alpha}\tau_j+C_{i\alpha}^\beta\nu_\beta,
\end{equation}
where $C_{i\alpha}^\beta$ are the components of the normal
connection
\[
\nabla^\bot_{\tau_i}\nu_\alpha=C_{i\alpha}^\beta\nu_\beta.
\]

\subsection{Multi-angles}\label{s:anglesgen}

We now consider how to use orthonormal frames to define a matrix of angles between two positive $n$-planes in an $n+m$-manifold.

For frames $\{e_{i},T_\alpha\}$ and $\{\tau_{i},\nu_\alpha\}$ as above, introduce the notation
\[
X_{ij}={\mathbb G}(\tau_i,e_j)
\quad
W_{i\beta}={\mathbb G}(\tau_i,T_\beta)
\quad
U_{\alpha j}=-{\mathbb G}(\nu_\alpha,e_j)
\quad
V_{\alpha\beta}=-{\mathbb G}(\nu_\alpha,T_\beta).
\]
Thus
\[
e_i=X_{ij}\tau_j+U_{\alpha i}\nu_\alpha
\quad
T_\beta=W_{i\beta}\tau_i+V_{\alpha\beta}\nu_\alpha,
\]
and the $(n+m)\times (n+m)$ dimensional matrix
\[
M=\left(
     \begin{array}{cc}
      X & W \\
      -U & -V
     \end{array}\right),
\]
is an element of the orthogonal group $O(n,m)$.

\begin{Prop}
The $O(n,m)$ condition on M is
\begin{equation}\label{e:onm1a}
X^TX=I_n+U^TU
\qquad
V^TV=I_m+W^TW
\qquad
U^TV=X^TW.
\end{equation}
\end{Prop}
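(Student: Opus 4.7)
The plan is to observe that the three identities in (\ref{e:onm1a}) are precisely the block components of the defining relation $M^T\eta M=\eta$ of the orthogonal group $O(n,m)$, where $\eta=\mathrm{diag}(I_n,-I_m)$ is the signature matrix. Conceptually, $M$ is the change-of-basis matrix between the two orthonormal frames $\{\tau_i,\nu_\alpha\}$ and $\{e_j,T_\beta\}$ for the inner product of signature $(n,m)$, so $M$ must lie in $O(n,m)$, and the three displayed equations are simply the three distinct blocks of $M^T\eta M=\eta$ written out in terms of $X$, $U$, $W$, $V$.

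For a concrete execution I would simply compute the Gram matrix of $\{e_j,T_\beta\}$ using the expansions displayed immediately before the proposition together with the inner products of $\{\tau_i,\nu_\alpha\}$. The identity $\mathbb{G}(e_j,e_k)=\delta_{jk}$ expands to $(X^TX)_{jk}-(U^TU)_{jk}=\delta_{jk}$, since the $\nu_\alpha$-terms contribute with a minus sign from $\mathbb{G}(\nu_\alpha,\nu_\beta)=-\delta_{\alpha\beta}$; this gives the first equation. The identity $\mathbb{G}(T_\beta,T_\gamma)=-\delta_{\beta\gamma}$ similarly yields $(W^TW)_{\beta\gamma}-(V^TV)_{\beta\gamma}=-\delta_{\beta\gamma}$, which is the second. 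Finally, the mixed identity $\mathbb{G}(e_j,T_\beta)=0$ yields $(X^TW)_{j\beta}-(U^TV)_{j\beta}=0$, which is the third.

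There is essentially no substantive obstacle: the proof reduces to a routine index computation whose only subtlety is careful bookkeeping of the signs arising from the negative signature directions and from the minus signs built into the definitions $U_{\alpha j}=-\mathbb{G}(\nu_\alpha,e_j)$ and $V_{\alpha\beta}=-\mathbb{G}(\nu_\alpha,T_\beta)$. These conventions are precisely what cause the lower block row of $M$ to carry minus signs so that the resulting matrix lies in $O(n,m)$ with respect to the standard signature matrix $\eta$.
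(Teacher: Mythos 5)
Your proposal is correct and follows essentially the same route as the paper: the paper's proof consists precisely of writing the condition $M^T\,\mathrm{diag}(I_n,-I_m)\,M=\mathrm{diag}(I_n,-I_m)$ and reading off the three block identities, which is what your first paragraph states and your Gram-matrix computation carries out explicitly (with the correct sign bookkeeping coming from ${\mathbb G}(\nu_\alpha,\nu_\beta)=-\delta_{\alpha\beta}$ and the minus signs in the definitions of $U$ and $V$).
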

\begin{proof}
This follows from the requirement that
\[
M^T\left(
     \begin{array}{cc}
      I_n & 0 \\
       0 & -I_m
     \end{array}\right)M=\left(
     \begin{array}{cc}
      I_n & 0 \\
       0 & -I_m
     \end{array}\right).
\]
\end{proof}

The vectors $\{\tau_i\}_1^n$ span the tangent space of $\Sigma$, while $\{\nu_\alpha\}_1^m$ span the normal bundle. We are free to rotate these frames
within these two spaces, and this corresponds to left action of $O(n)$ and $O(m)$ within $O(n,m)$.

Similarly, we consider rotations of $\{e_i\}_1^n$
that preserve the $n$-dimensional vector space that they span, along with rotations of $\{T_\beta\}_1^m$ 
that preserves the $m$-dimensional space they span.
These correspond to right actions of $O(n)$ and $O(m)$ within $O(n,m)$. Note that the positive definite norm in Definition \ref{d:norm} is preserved by
these rotations.

\begin{Prop}\label{p:gauge}
By rotations of the frames $\{e_i,T_\alpha\}$ and $\{\tau_j,\nu_\beta\}$ which preserve the tangent and normal bundles of $\Sigma$, as well as the
tensor norm of Definition \ref{d:norm}, we can simplify the matrix $M\in O(n,m)$ for $n\geq m$ to
\[
M=\left(
     \begin{array}{ccc}
      I_{n-m} & 0 & 0\\
       0 &  D_1 & \pm D_4A^T \\
       0 & D_3A & D_2
     \end{array}\right),
\]
where $A\in O(m)$ is a transposition matrix, $D_1$, $D_2$, $D_3$ and $D_4$ are diagonal matrices satisfying
\[
D_1^2=I_m+D_3^2
\qquad
D_2^2=I_m+D_4^2
\qquad
|D_1|^2=|D_2|^2,
\]
and $\pm$ of a diagonal matrix means a free choice of sign on the entries of the matrix. 
\end{Prop}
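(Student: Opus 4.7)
The plan is to exhaust the gauge freedom---the $O(n)\times O(m)$-rotations of $\{\tau_i,\nu_\alpha\}$ (left action) and of $\{e_i,T_\beta\}$ (right action)---in four successive stages, each employing a different subgroup so that the normalizations achieved in earlier stages are preserved by the later ones. The $O(n,m)$-identity $V^TV=I_m+W^TW$ gives $V^TV\ge I_m$, so $V$ is an invertible $m\times m$ block. Stage~1 performs a singular value decomposition $V=Q_L\Sigma_V Q_R^T$ by applying $Q_L^{-1}\in O(m)$ to $\{\nu_\alpha\}$ and $Q_R\in O(m)$ to $\{T_\beta\}$, reducing $V$ to a diagonal block that we identify with $-D_2$.

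Stage~2 uses the left $O(n)$-rotation of $\{\tau_i\}$ (which leaves $V$ untouched) to normalize $W$: after Stage~1, $W^TW=D_2^2-I_m$ is diagonal, so the $m$ columns of $W$ are mutually orthogonal vectors in $\mathbb{R}^n$, and a single $O(n)$-rotation aligns them with the last $m$ coordinate directions, up to a permutation $A\in O(m)$ recording which column goes where and independent signs on the columns. This gives $W$ with vanishing first $n-m$ rows and lower block $\pm D_4A^T$, with $D_4$ diagonal. Stage~3 then uses the right $O(n)$-rotation of $\{e_i\}$ (which leaves $V$ and $W$ untouched) to normalize $X$: the identity $XX^T=I_n+WW^T=\mathrm{diag}(I_{n-m},\,I_m+D_4^2)$ shows that the rows of $X$ are pairwise orthogonal with prescribed norms, and an appropriate $O(n)$-rotation brings $X$ into the block-diagonal form $\mathrm{diag}(I_{n-m},D_1)$ with $D_1$ diagonal and $D_1^2=I_m+D_4^2$. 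Finally, Stage~4 solves the cross-identity $X^TW=U^TV$ for $U$: substituting the normalized forms, together with $D_2^2=I_m+AD_4^2A^T=AD_1^2A^T$, yields that the lower-left block of $M$ takes the form $(0,\,D_3A)$ with $D_3=AD_4A^T$ diagonal (being a permutation of the diagonal $D_4$), and the remaining relations $D_2^2=I_m+D_3^2$ and $|D_1|^2=|D_2|^2=\mathrm{tr}(D_1^2)$ follow by tracing through these identities.

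The main obstacle is the bookkeeping: verifying that each stage acts only on its intended blocks. This is possible because the four subgroups used are disjoint in their effect---right $O(m)$ affects $\{W,V\}$, left $O(m)$ affects $\{U,V\}$, left $O(n)$ affects $\{X,W\}$, and right $O(n)$ affects $\{X,U\}$---so executing the stages in the order $V\to W\to X\to U$ ensures that later normalizations never disturb earlier ones, while the $O(n,m)$ identities are repeatedly invoked to constrain the remaining blocks. The permutation matrix $A$ and the sign ambiguities encode precisely the residual discrete freedom in the SVDs performed in Stages~1 and~2, and the tensor norm of Definition~\ref{d:norm} is preserved throughout since every gauge transformation is an orthogonal rotation within a single frame.
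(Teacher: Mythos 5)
Your argument is correct, and it reaches the normal form by a genuinely different route from the paper. The paper first diagonalizes the two square blocks $X$ and $V$ (via the symmetric square root $\sqrt{X^TX}$ and a two-sided $O(n)$, respectively $O(m)$, action), then analyzes the off-diagonal blocks: writing $U_1=D_3A$, $W_1=D_4B$, it uses the cross-relation $U_1^TD_2=D_1W_1$ to argue that $A$ is a transposition-type matrix with $A=\pm B^T$, and obtains $|D_1|^2=|D_2|^2$ by an entrywise computation over that permutation. You instead spend the four gauge factors one at a time in the order $V\to W\to X$, and let $U$ be determined algebraically by $U^T=X^TWV^{-1}$ rather than normalized; in doing so you invoke the ``row'' identities such as $XX^T=I_n+WW^T$, which are not among the three displayed in the paper but follow equally from $M\in O(n,m)$ (equivalently $M^T\in O(n,m)$), so this is legitimate. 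What your route buys: it avoids the delicate step in the paper asserting that an orthogonal matrix carrying a diagonal matrix to a diagonal matrix must be a transposition (which needs care when diagonal entries coincide, and at best yields a signed permutation), and in fact, if in your Stage~2 you send the $\beta$-th column of $W$ directly to the coordinate direction $n-m+\beta$, no permutation arises at all: $A=I$, $D_3=D_4$, $D_1=D_2$, a sharper normal form. One small bookkeeping point: with an arbitrary aligning permutation as you wrote it, your relations come out as $D_1^2=I_m+D_4^2$ and $D_2^2=I_m+AD_4^2A^T=I_m+D_3^2$, i.e.\ the pairing of $D_1,D_2$ with $D_3,D_4$ is transposed relative to the statement; since $D_3$ and $D_4$ differ only by the permutation $A$, the trace identity $|D_1|^2=|D_2|^2$ is unaffected, this mirrors the same imprecision already present in the passage from the paper's equation (\ref{e:onm1}) to (\ref{e:onm4}), and it disappears entirely with the choice $A=I$ above.
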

\begin{proof}
Consider first the matrix $X_{ij}=<\tau_i,e_j>$. The matrix $X^TX$
is symmetric and non-negative definite and so it has a
well-defined square root, namely a symmetric $n\times n$ matrix which we
denote by $\sqrt{X^TX}$. By the first equation of (\ref{e:onm1a}), $X$
is invertible since det$(X)\ge 1$ and so we can define the
$n\times n$ matrix $A=\sqrt{X^TX}X^{-1}$. Then
\[
A^TI_nA=(X^{-1})^T\sqrt{X^TX}\sqrt{X^TX}X^{-1}=(X^{-1})^TX^TXX^{-1}=I_n,
\]
so that $A\in O(n)$. Define a new frame by $\{A_{ij}\tau_j,\nu_\alpha\}$ and then
\[
\tilde{X}_{ij}=A_{ik}<\tau_k,e_j>=\sqrt{X^TX}X^{-1}X=\sqrt{X^TX},
\]
which is symmetric. Now we can act on both the left and right of
$\tilde{X}$ by $O(n)$ to diagonalise it.

A similar argument yields a diagonalisation of $V_{\alpha\beta}$.

After diagonalisation of $X$, the
first of equations (\ref{e:onm1a}) implies that the matrix $U^TU$
is diagonal. Thus the $n$ $m$-dimensional vectors $\{U_{\alpha
i}\nu_\alpha\}_{i=1}^n$ are mutually orthogonal and, since $n\geq
m$, we conclude that $n-m$ of these vectors must be zero.

After a reordering of the basis elements, the matrix $M$ then decomposes into
\[
M=\left(
     \begin{array}{ccc}
      I_{n-m} & 0 & W_2\\
       0 & X_1 & W_1 \\
       0 & U_1 & V
     \end{array}\right).
\]
The last of equations (\ref{e:onm1a}) now implies that $W_2=0$ and
we reduce the problem to the square case:
\[
X_1^TX_1=I_m+U_1^TU_1
\qquad
V^TV=I_m+W_1^TW_1
\qquad
U_1^TV=X_1^TW_1.
\]

In fact, to indicate that $X^1$ and $V$ are diagonal, let us write $X_1=D_1$ and $V=D_2$. Thus
\begin{equation}\label{e:onm1}
D_1^2=I_m+U_1^TU_1,
\end{equation}
\begin{equation}\label{e:onm2}
D_2^2=I_m+W_1^TW_1,
\end{equation}
\begin{equation}\label{e:onm3}
U_1^TD_2=D_1W_1.
\end{equation}
Equations (\ref{e:onm1}) and (\ref{e:onm2}) imply that there
exists diagonal matrices $D_3$ and $D_4$ (with entries defined up
to a sign) such that
\[
U_1=D_3A \qquad W_1=D_4B \qquad {\mbox{ for }}A,B\in O(m).
\]
Thus equations (\ref{e:onm1}), (\ref{e:onm2}) and (\ref{e:onm3}) now read
\begin{equation}\label{e:onm4}
D_1^2=I_m+D_3^2,
\end{equation}
\begin{equation}\label{e:onm5}
D_2^2=I_m+D_4^2,
\end{equation}
\begin{equation}\label{e:onm6}
A^TD_2D_3=D_1D_4B.
\end{equation}
Taking the transpose of this last equation and multiplying back on the right we find that
\begin{equation}\label{e:onm7}
A^TD_2^2D_3^2A=D_1^2D_4^2.
\end{equation}
However, if $A\in O(m)$ sends a diagonal matrix to a diagonal matrix, then $A$ must be a transposition. Similarly
\[
B^TD_1^2D_4^2B=D_2^2D_3^2,
\]
and so $A=\pm B^T$.

Denote the  diagonal elements of $D_1$, $D_2$, $D_3$ and $D_4$ by $\lambda_i$, $\mu_i$, $a_i$ and $b_i$, respectively, where $i=n-m+1,...n$.
Then equations (\ref{e:onm4}), (\ref{e:onm5}) and (\ref{e:onm7}) read
\[
\lambda_i^2=1+a_i^2
\qquad
\mu_i^2=1+b_i^2
\qquad
\mu_i^2a_i^2=\lambda_{p(i)}^2b_{p(i)}^2,
\]
where $p$ is the permutation of $(n-m+1,...n)$ determined by the transposition A. Combining these three equations we get
\[
a_i^2+a_i^2b_i^2=a_{p(i)}^2+a_{p(i)}^2b_{p(i)}^2,
\]
which when summed yields
\[
\sum_ia_i^2=\sum_ib_i^2
\qquad\quad
{\mbox{and}}
\qquad\quad
\sum_i\lambda_i^2=\sum_i\mu_i^2.
\]
Thus $|D_1|^2=|D_2|^2$ as claimed.

\end{proof}

\vspace{0.1in}

\begin{Def}
The function $v$ is defined to be
\[
v^2=V^{\alpha\beta}V_{\alpha\beta}.
\]
This is a generalization of the {\it tilt function} in the codimension one case \cite{Bart}.
\end{Def}

\vspace{0.1in}

We now use the normal form to construct estimates of the norms of the adapted frames:

\vspace{0.1in}

\begin{Prop}\label{p:frameest}
For an adapted frame $\{\tau_i,\nu_\alpha\}$ we have
\[
\|\tau_i\|^2\leq [n^2+m(m-1)^2] v^2 \qquad \|\nu_\alpha\|^2\leq m^3v^2,
\]
for all $i=1,2,...n$ and $\alpha=1,2,...m$.
\end{Prop}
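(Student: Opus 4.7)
The plan is to exploit the normal form for $M\in O(n,m)$ established in Proposition \ref{p:gauge}. The key observation is that the norm of Definition \ref{d:norm} is manifestly invariant under the separate $O(n)$ and $O(m)$ rotations of the ambient frame $\{e_j,T_\beta\}$, while rotations of the adapted frame $\{\tau_i,\nu_\alpha\}$ within the tangent and normal bundles of $\Sigma$ merely relabel individual $\tau_i$ and $\nu_\alpha$. Since the claimed bounds are uniform in $i$ and $\alpha$, it suffices to verify them in any admissible gauge, and we pass to the canonical form.

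In that form, the $(e_j,T_\beta)$-components of $\tau_i$ and $\nu_\alpha$ can be read directly off the matrix
\[
M=\left(
     \begin{array}{ccc}
      I_{n-m} & 0 & 0\\
       0 &  D_1 & \pm D_4A^T \\
       0 & D_3A & D_2
     \end{array}\right).
\]
For $i=1,\ldots,n-m$ we have $\tau_i=e_i$, hence $\|\tau_i\|^2=1$. For the remaining indices, writing $i=(n-m)+\alpha$ with $\alpha=1,\ldots,m$, the only non-zero $(e,T)$-components of $\tau_i$ come from the $\alpha$-th diagonal entry of $D_1$ and the unique non-zero entry of the $\alpha$-th row of $\pm D_4A^T$, giving $\|\tau_i\|^2=\lambda_\alpha^2+b_\alpha^2$. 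Analogously $\|\nu_\alpha\|^2=a_\alpha^2+\mu_\alpha^2$, where $\lambda_\alpha,\mu_\alpha,a_\alpha,b_\alpha$ denote the diagonal entries of $D_1,D_2,D_3,D_4$.

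To conclude I would combine the pointwise identities $\lambda_\alpha^2=1+a_\alpha^2$ and $\mu_\alpha^2=1+b_\alpha^2$ with the trace identities $\sum_\beta a_\beta^2=\sum_\beta b_\beta^2=v^2-m$ established inside the proof of Proposition \ref{p:gauge}. These yield $a_\alpha^2,b_\alpha^2\le v^2-m$ and $\lambda_\alpha^2,\mu_\alpha^2\le v^2$ for every $\alpha$. Together with the elementary estimate $v^2\ge m\ge 1$, which absorbs the contribution $\|\tau_i\|^2=1$ from the small block into a multiple of $v^2$, the claimed (deliberately loose) polynomial bounds with constants $n^2+m(m-1)^2$ and $m^3$ follow. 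The main bookkeeping obstacle is to track the permutation $A$ through the three defining equations of Proposition \ref{p:gauge} so as to confirm that each individual $\|\tau_i\|^2$ and $\|\nu_\alpha\|^2$ picks up only the \emph{diagonal} pair $(\lambda_\alpha,b_\alpha)$ or $(a_\alpha,\mu_\alpha)$ with no cross-term contamination arising from the off-diagonal action of $A$.
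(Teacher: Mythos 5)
Your reduction to the canonical gauge contains a genuine gap. The norm of Definition \ref{d:norm} is indeed invariant under the $O(n)\times O(m)$ rotations of the \emph{background} frame $\{e_j,T_\beta\}$, but your parallel claim that rotations of the \emph{adapted} frame $\{\tau_i,\nu_\alpha\}$ within $T\Sigma$ and $N\Sigma$ ``merely relabel'' the individual vectors is false: only permutations relabel, while a general rotation replaces $\tau_i$ by a linear combination $\tau_i=A_i^{\,k}\mathring{\tau}_k$, $A\in O(n)$, and the quantity $\|\tau_i\|^2$ is \emph{not} invariant under this mixing (e.g.\ $\|\tfrac{1}{\sqrt2}(\mathring{\tau}_1+\mathring{\tau}_2)\|$ can exceed both $\|\mathring{\tau}_1\|$ and $\|\mathring{\tau}_2\|$). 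Since the Proposition asserts the bound for an \emph{arbitrary} adapted frame --- and is later applied to frames that are not the canonical one of Proposition \ref{p:gauge}, such as the eigenframes of $A_{ij\alpha}$ and of $V_{\alpha\beta}$ used in the proof of Proposition \ref{p:gradest} --- verifying it only in the normal form does not suffice. This is exactly the step the paper's proof does not skip: there one writes $\tau_i=A_i^{\,k}\mathring{\tau}_k$, $\nu_\alpha=B_\alpha^{\,\beta}\mathring{\nu}_\beta$, bounds $|A_i^{\,k}|\le 1$, $|B_\alpha^{\,\beta}|\le 1$, and estimates the resulting sums over $k$; the cross terms produced by this mixing are precisely the origin of the loose constants $n^2+m(m-1)^2$ and $m^3$, which your canonical-frame computation (giving essentially $\lambda_\alpha^2+b_\alpha^2\le 2v^2-m$) never needs.

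The computation you do carry out in the normal form is essentially correct (the diagonal identities $\lambda_\alpha^2=1+a_\alpha^2$, $\mu_\alpha^2=1+b_\alpha^2$ and the trace identity $\sum_\beta a_\beta^2=\sum_\beta b_\beta^2=v^2-m$ do give per-entry bounds by $v^2$), and the argument could be repaired: after the canonical-frame estimate, insert a triangle or Cauchy--Schwarz step for $\tau_i=A_i^{\,k}\mathring{\tau}_k$ and $\nu_\alpha=B_\alpha^{\,\beta}\mathring{\nu}_\beta$ to pass to a general adapted frame, accepting the dimensional factors this introduces. As written, however, the invariance claim on which your reduction rests does not hold, so the proposal proves the inequality only for one particular frame rather than for all adapted frames.
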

\begin{proof}
First consider an adapted frame $\{\mathring{\tau}_i,\mathring{\nu}_\alpha\}$ for which, with respect to an orthonormal background  basis 
$\{\mathring{e}_i,\mathring{T}_\alpha\}$, the matrix $M$ has the form given in
Proposition \ref{p:gauge}. For a general adapted frame $\{\tau_i,\nu_\alpha\}$
\[
\tau_i=A_i^j\mathring{\tau}_j
\qquad
\nu_\alpha=B_\alpha^\beta\mathring{\nu}_\beta,
\]
where $A\in O(n)$ and $B\in O(m)$. Then
\begin{align}
\|\tau_i\|^2&=\sum_j({\mathbb G}(\tau_i,\mathring{e}_j))^2+\sum_\alpha({\mathbb G}(\tau_i,\mathring{T}_\alpha))^2\nonumber\\
&=\sum_j\left[\sum_kA_i^k{\mathbb G}(\mathring{\tau}_k,\mathring{e}_j)\right]^2
    +\sum_\alpha\left[\sum_kA_i^k{\mathbb G}(\mathring{\tau}_k,\mathring{T}_\alpha)\right]^2 \nonumber\\
&\leq\sum_j\left[\sum_k|A_i^k|\;|{\mathbb G}(\mathring{\tau}_k,\mathring{e}_j)|\right]^2
    +\sum_\alpha\left[\sum_k|A_i^k|\;|{\mathbb G}(\mathring{\tau}_k,\mathring{T}_\alpha)|\right]^2\nonumber\\
&\leq\sum_j\left[\sum_k\;|{\mathbb G}(\mathring{\tau}_k,\mathring{e}_j)|\right]^2
    +\sum_\alpha\left[\sum_k\;|{\mathbb G}(\mathring{\tau}_i,\mathring{T}_\alpha)|\right]^2\nonumber\\
&\leq\sum_j\left[\sum_k\;|X_{kj}|\right]^2+\sum_\alpha\left[\sum_k\;|W_{k\alpha}|\right]^2\nonumber\\
&\leq\left[\sum_k\;|X_{kk}|\right]^2+\sum_\alpha\left[\sum_{k=n-m+1}^n\;|W_{k\alpha}|\right]^2\nonumber\\
&\leq n^2 v^2+m(m-1)^2v^2\nonumber\\
&\leq [n^2+m(m-1)^2] v^2\nonumber.
\end{align}

Similarly for $\nu_\alpha$.

\end{proof}
\vspace{0.1in}

\subsection{The height functions}
Let $u_\alpha:\Sigma\rightarrow {\mathbb R}$ be the {\it height function} $u_\alpha=t_\alpha\circ f$. Then
\begin{Prop}\label{p:slice}
For all $\alpha=1...m$
\[
\nabla u_\alpha=\overline{\nabla}t_\alpha+\psi^{-1}_\alpha\sum_\beta V_{\beta\alpha}\nu_\beta,
\]
\[
\nabla u_\alpha\cdot\nabla u_\beta=\psi^{-1}_\alpha\psi^{-1}_\beta\left(\sum_\gamma V_{\gamma\alpha}V_{\gamma\beta}-\delta_{\alpha\beta}\right).
\]
\end{Prop}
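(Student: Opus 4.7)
The plan is to prove both identities by straightforward tensor calculation once the ambient gradient $\overline{\nabla} t_\alpha$ is decomposed in the adapted frame $\{\tau_i,\nu_\beta\}$ and compared with the intrinsic gradient $\nabla u_\alpha$ on $\Sigma$.

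First I would unpack the definition of $T_\alpha$. The conditions $T_\alpha = -\psi_\alpha\,\mathbb{G}(\overline{\nabla} t_\alpha,\cdot)$ (read as the metric dual) and $\psi_\alpha^{-2} = -\mathbb{G}(\overline{\nabla} t_\alpha,\overline{\nabla} t_\alpha)$ together force $T_\alpha = -\psi_\alpha \overline{\nabla} t_\alpha$, since then $\mathbb{G}(T_\alpha,T_\alpha) = \psi_\alpha^{2}\cdot(-\psi_\alpha^{-2}) = -1$, as required. Equivalently, $\overline{\nabla} t_\alpha = -\psi_\alpha^{-1}T_\alpha$. Next, using the decomposition $T_\beta = W_{i\beta}\tau_i + V_{\alpha\beta}\nu_\alpha$ recorded before Proposition \ref{p:gauge}, I get
\[
\overline{\nabla} t_\alpha \;=\; -\psi_\alpha^{-1}\!\left(W_{i\alpha}\tau_i + V_{\gamma\alpha}\nu_\gamma\right),
\]
so the tangential and normal projections along $\Sigma$ are $-\psi_\alpha^{-1}W_{i\alpha}\tau_i$ and $-\psi_\alpha^{-1}V_{\gamma\alpha}\nu_\gamma$ respectively.

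Second, I would identify $\nabla u_\alpha$ with the tangential projection of $\overline{\nabla} t_\alpha$. Because $u_\alpha = t_\alpha\circ f$ and $g = f^{*}\mathbb{G}$, for any $X \in T\Sigma$ one has $g(\nabla u_\alpha, X) = du_\alpha(X) = \mathbb{G}(\overline{\nabla} t_\alpha, f_{*}X) = \mathbb{G}(\overline{\nabla} t_\alpha|^{\top}, f_{*}X)$. Hence $\nabla u_\alpha = \overline{\nabla} t_\alpha|^{\top} = \overline{\nabla} t_\alpha - \overline{\nabla} t_\alpha|^{\perp}$, and substituting the normal piece from the previous paragraph yields
\[
\nabla u_\alpha = \overline{\nabla} t_\alpha + \psi_\alpha^{-1}\sum_\beta V_{\beta\alpha}\nu_\beta,
\]
which is the first claim.

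Third, for the inner-product formula I would use $\nabla u_\alpha = -\psi_\alpha^{-1}W_{i\alpha}\tau_i$ to get
\[
g(\nabla u_\alpha,\nabla u_\beta) = \psi_\alpha^{-1}\psi_\beta^{-1}\sum_i W_{i\alpha}W_{i\beta} = \psi_\alpha^{-1}\psi_\beta^{-1}(W^{T}W)_{\alpha\beta},
\]
and then invoke the $O(n,m)$ identity $V^{T}V = I_m + W^{T}W$ from Proposition~3.4 (equation (\ref{e:onm1a})) to replace $(W^{T}W)_{\alpha\beta}$ with $\sum_\gamma V_{\gamma\alpha}V_{\gamma\beta} - \delta_{\alpha\beta}$. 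This gives the stated second identity.

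There is no real obstacle here; the only subtlety is sign-bookkeeping for the timelike components (raising a normal index flips a sign, per the convention fixed at the start of Section~3), which is handled cleanly by inserting the decomposition of $T_\alpha$ directly and then appealing to the purely algebraic $O(n,m)$ relation already proved.
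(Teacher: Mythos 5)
Your proof is correct and follows essentially the same route as the paper: write $\overline{\nabla}t_\alpha=-\psi_\alpha^{-1}T_\alpha$, identify $\nabla u_\alpha$ with the tangential projection of $\overline{\nabla}t_\alpha$, and read off the normal part from $T_\alpha=W_{i\alpha}\tau_i+V_{\gamma\alpha}\nu_\gamma$. The only cosmetic difference is in the second identity, where you compute $g(\nabla u_\alpha,\nabla u_\beta)=\psi_\alpha^{-1}\psi_\beta^{-1}(W^TW)_{\alpha\beta}$ and invoke the $O(n,m)$ relation $V^TV=I_m+W^TW$, whereas the paper expands ${\mathbb G}\bigl(\sum_\gamma V_{\gamma\alpha}\nu_\gamma-T_\alpha,\sum_\delta V_{\delta\beta}\nu_\delta-T_\beta\bigr)$ directly; the two computations are equivalent.
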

\begin{proof}
From the definition of $u_\alpha$ and $T_\alpha$ we have 
\[
\nabla u_\alpha=\overline{\nabla}t_\alpha+\psi^{-1}_\alpha\sum_\beta V_{\beta\alpha}\nu_\beta
=\psi^{-1}_\alpha\left(\sum_\beta V_{\beta\alpha}\nu_\beta-T_\alpha\right),
\]
and so
\begin{align}
\nabla u_\alpha\cdot\nabla u_\beta=&\psi^{-1}_\alpha\psi^{-1}_\beta\;{\mathbb G}\left(\sum_\gamma V_{\gamma\alpha}\nu_\gamma-T_\alpha,
                      \sum_\delta V_{\delta\beta}\nu_\delta-T_\beta\right)\nonumber\\
&=\psi^{-1}_\alpha\psi^{-1}_\beta\left(\sum_\gamma V_{\gamma\alpha}V_{\gamma\beta}-\delta_{\alpha\beta}\right)\nonumber.
\end{align}
as claimed.
\end{proof}

\begin{Prop}\label{p:laps}
\[
\triangle u_\gamma=-\psi_\gamma^{-1}V_{\alpha\gamma}H^\alpha+g^{ij}\overline{\nabla}_i\overline{\nabla}_jt_\gamma.
\]
\begin{align}
\triangle V_{\alpha\beta}=& V_{\gamma\beta}(A_{ij\gamma}A^{ij}_\alpha-<\overline{\mbox {R}}(\tau_i,\nu_\gamma)\tau_i,\nu_\alpha>)
      -\tilde{\nabla}_{T_\beta}H_\alpha-A^{ij}_\alpha T_\beta(g_{ij})\nonumber\\
&\qquad +{\textstyle{\frac{1}{2}}}(\overline{\nabla}{\mathcal{L}}_{T_\beta}{\mathbb G})(\nu_\alpha,\tau_i,\tau_i)
    -(\overline{\nabla}{\mathcal{L}}_{T_\beta}{\mathbb G})(\tau_i,\nu_\alpha,\tau_i)-(\overline{\nabla}T_\beta)(H,\nu_\alpha)\nonumber\\
&\qquad -2C_{i\alpha}^{\;\;\;\;\gamma}<\nu_\gamma,\overline{\nabla}_{T_\beta}\tau_i>+(\nabla_i C_{i\alpha}^{\;\;\;\;\gamma}
    +C_{i\alpha}^{\;\;\;\;\delta}C_{i\delta}^{\;\;\;\;\gamma})V_{\gamma\beta}\nonumber,
\end{align}
where $\triangle$ is the Laplacian of the induced metric $\triangle =g^{ij}\nabla_i\nabla_j$.
\end{Prop}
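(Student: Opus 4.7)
The two identities are proved separately. The first is a direct consequence of the Gauss split of the connection; the second is a Simons-type calculation that produces many intermediate terms which must be organised into the stated form.

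\textbf{Laplacian of $u_\gamma$.} Writing $\nabla_i \nabla_j u_\gamma = \tau_i(\tau_j(t_\gamma)) - (\nabla_{\tau_i}\tau_j)(t_\gamma)$ and comparing with the restriction of the ambient Hessian, the Gauss split (\ref{e:connsplit1}) gives
\[
\nabla_i\nabla_j u_\gamma = \overline{\nabla}_i\overline{\nabla}_j t_\gamma - A_{ij}^\alpha \nu_\alpha(t_\gamma).
\]
The definition of $T_\gamma$ yields $\overline{\nabla} t_\gamma = -\psi_\gamma^{-1} T_\gamma$, so that $\nu_\alpha(t_\gamma) = {\mathbb G}(\overline{\nabla} t_\gamma,\nu_\alpha) = \psi_\gamma^{-1} V_{\alpha\gamma}$ by the definition of $V$. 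Tracing with $g^{ij}$ and using $g^{ij}A_{ij}^\alpha = H^\alpha$ gives the first formula.

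\textbf{Laplacian of $V_{\alpha\beta}$.} Starting from $V_{\alpha\beta} = -{\mathbb G}(\nu_\alpha, T_\beta)$, apply $\nabla_{\tau_i}$ twice. The Weingarten equation (\ref{e:connsplit2}) reduces derivatives of $\nu_\alpha$ to second-fundamental-form, normal-connection and ambient-curvature contributions, while derivatives of $T_\beta$ are retained for later symmetrisation. After tracing with $g^{ij}$, the computation calls for three further manoeuvres: commuting two tangential derivatives past a normal-vector argument using the ambient Riemann tensor $\overline{R}$, applying the Codazzi equation to trade a tangential derivative of $A_{ij\alpha}$ for a $T_\beta$-derivative of its trace, and splitting $\overline{\nabla} T_\beta$ into its symmetric part $\tfrac{1}{2}\mathcal{L}_{T_\beta}{\mathbb G}$ and its antisymmetric remainder. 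The four families of terms in the stated formula arise as follows. The quadratic term $V_{\gamma\beta} A_{ij\gamma} A^{ij}_\alpha$ and the curvature term $-V_{\gamma\beta}{\mathbb G}(\overline{R}(\tau_i,\nu_\gamma)\tau_i,\nu_\alpha)$ come from applying Weingarten twice and commuting the covariant derivatives on $\nu_\alpha$. The pair $-\tilde{\nabla}_{T_\beta}H_\alpha - A^{ij}_\alpha T_\beta(g_{ij})$ is the Codazzi trade, with the $T_\beta(g_{ij})$ correction appearing because tracing does not commute with $T_\beta$. The Lie-derivative contributions $\tfrac{1}{2}(\overline{\nabla}\mathcal{L}_{T_\beta}{\mathbb G})(\nu_\alpha,\tau_i,\tau_i) - (\overline{\nabla}\mathcal{L}_{T_\beta}{\mathbb G})(\tau_i,\nu_\alpha,\tau_i) - (\overline{\nabla}T_\beta)(H,\nu_\alpha)$ are obtained by differentiating the symmetric/antisymmetric decomposition of $\overline{\nabla} T_\beta$ once more and regrouping. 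Finally, the normal-connection contributions $-2 C_{i\alpha}^\gamma {\mathbb G}(\nu_\gamma,\overline{\nabla}_{T_\beta}\tau_i)$ and $(\nabla_i C_{i\alpha}^\gamma + C_{i\alpha}^\delta C_{i\delta}^\gamma) V_{\gamma\beta}$ are the residual tangential-derivative terms of the $C_{i\alpha}^\gamma \nu_\gamma$ part of Weingarten, the quadratic $CC$-term being the standard normal-curvature contribution.

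\textbf{Main obstacle.} No single step is analytically deep; the difficulty lies in bookkeeping. Signs must be tracked through the asymmetric raising/lowering convention for Greek indices, the symmetric $\mathcal{L}_{T_\beta}{\mathbb G}$ contributions must be cleanly separated from the antisymmetric $\overline{\nabla} T_\beta$ contributions, and the residual normal-connection terms must be grouped into precisely the combination $\nabla_i C_{i\alpha}^\gamma + C_{i\alpha}^\delta C_{i\delta}^\gamma$ that arises naturally from a second differentiation of the normal Weingarten form. The payoff of this organisation is that each of the four families has a clear geometric interpretation and can be estimated separately in the subsequent long-time existence argument for the flow.
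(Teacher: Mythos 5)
Your proposal is correct and takes essentially the same route as the paper: the first identity by tracing the Gauss splitting of the ambient Hessian (the paper simply quotes the codimension-one case of Ecker--Huisken), and the second by the Bartnik-type expansion of $\triangle\langle\nu_\alpha,T_\beta\rangle$ using the Weingarten relations, the ambient curvature commutation, the traced Codazzi identity, and the splitting of $\overline{\nabla}T_\beta$ into ${\textstyle{\frac{1}{2}}}\mathcal{L}_{T_\beta}{\mathbb G}$ plus a remainder, which the paper packages as Lemma \ref{l:lie}. The only difference is organisational: the paper fixes a frame with $(\nabla_i\tau_j)(p)=0$ extended by $\mathcal{L}_{T_\beta}\tau_i=0$, which streamlines exactly the bookkeeping (commuting $\tau_i$ with $T_\beta$, and converting $2A_{i\alpha}^{j}\langle\tau_j,\overline{\nabla}_{T_\beta}\tau_i\rangle$ into $A^{ij}_\alpha T_\beta(g_{ij})$) that you describe.
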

\begin{proof}
The first statement follows from a straightforward generalization of the codimension one case \cite{EaH}.

For the second statement we follow Bartnik \cite{Bart} and fix a point $p\in\Sigma$ and choose an orthonormal frame $\{\tau_i\}$ on $\Sigma$ 
such that $(\nabla_i\tau_j)(p)=0$.
Extend this frame in a neighbourhood of $\Sigma$ by ${\mathcal L}_{T_\beta}\tau_i=0$ for fixed $\beta$. Then
\begin{align}
-\triangle V_{\alpha\beta}&= \triangle <\nu_\alpha,T_\beta>\nonumber\\
&=\tau_i\tau_i<\nu_\alpha,T_\beta>\nonumber\\
&= \tau_i(<\overline{\nabla}_{\tau_i}\nu_\alpha,T_\beta>+<\nu_\alpha,\overline{\nabla}_{\tau_i}T_\beta>)\nonumber\\
&= \tau_i(A_{i\alpha}^j<\tau_j,T_\beta>+C_{i\alpha}^\gamma<\nu_\gamma,T_\beta>+<\nu_\alpha,\overline{\nabla}_{\tau_i}T_\beta>)\nonumber\\
&= <\overline{R}(\tau_i,T_\beta)\tau_i,\nu_\alpha>+<\nu_\alpha,\overline{\nabla}_{T_\beta}\overline{\nabla}_{\tau_i}\tau_i>
  +<\overline{\nabla}_{\tau_i}\nu_\alpha,\overline{\nabla}_{T_\beta}\tau_i>)\nonumber\\
&\qquad\quad +(\overline{\nabla}_{\tau_i}H_\alpha+<\overline{R}(\tau_i,\tau_j)\nu_\alpha,\tau_i>-A_{ij}^\gamma C_{i\gamma}^\alpha
  +H^\gamma C_{j\gamma}^\alpha)<\tau_j,T_\beta>\nonumber\\
&\qquad\quad +C_{i\alpha}^\gamma(<\overline{\nabla}_{\tau_i}\nu_\gamma,T_\beta>+<\nu_\gamma,\overline{\nabla}_{\tau_i}T_\beta>)
  +<\nu_\gamma,T_\beta>\overline{\nabla}_{\tau_i}C_{i\alpha}^\gamma \nonumber\\
&\qquad\quad +A_{i\alpha}^j(<\overline{\nabla}_{\tau_i}\tau_j,T_\beta>+<\tau_j,\overline{\nabla}_{\tau_i}T_\beta>)\nonumber\\
&= <\overline{R}(\tau_i,\nu_\gamma)\tau_i,\nu_\alpha><\nu_\gamma,T_\beta>+<\nu_\alpha,\overline{\nabla}_{T_\beta}\overline{\nabla}_{\tau_i}\tau_i>
  +2A_{i\alpha}^j<\tau_j,\overline{\nabla}_{T_\beta}\tau_i>\nonumber\\
&\qquad\quad +2C_{i\alpha}^\gamma<\nu_\gamma,\overline{\nabla}_{\tau_i}T_\beta>+<\tau_i,T_\beta>\overline{\nabla}_iH_\alpha
  +H^\gamma C_{i\gamma\alpha}<\tau_i,T_\beta>\nonumber\\
&\qquad\quad +A_{\alpha}^{ij} A_{ij}^\gamma<\nu_\gamma,T_\beta>+C_{i\alpha}^{\gamma} C_{i\gamma}^\delta<\nu_\delta,T_\beta>
+<\nu_\gamma,T_\beta>\overline{\nabla}_{\tau_i}C_{i\alpha}^\gamma \nonumber\\
&=-V_{\gamma\beta}(A_{ij\gamma}A^{ij}_\alpha+<\overline{\mbox {R}}(\tau_i,\nu_\gamma)\tau_i,\nu_\alpha>)
      +\tilde{\nabla}_{T_\beta}H_\alpha\nonumber\\
&\qquad <\nu_\alpha,\overline{\nabla}_{T_\beta}\overline{\nabla}_i\tau_i>+A_\alpha^{ij}T_\beta<\tau_i,\tau_j>\nonumber\\
&\qquad +2C_{i\alpha}^{\;\;\;\;\gamma}<\nu_\gamma,\overline{\nabla}_{T_\beta}\tau_i>+(\nabla_i C_{i\alpha}^{\;\;\;\;\gamma}
    -C_{i\alpha}^{\;\;\;\;\delta}C_{i\delta}^{\;\;\;\;\gamma})V_{\gamma\beta}\label{e:lap1}.
\end{align}

We now use the following:

\begin{Lem}\label{l:lie}
\begin{align}
T_\beta<\tau_i,\overline{\nabla}_i\nu_\alpha>&=-<\overline{\nabla}_i\tau_i,\overline{\nabla}_{T_\beta}\nu_\alpha>+
   {\textstyle{\frac{1}{2}}}(\overline{\nabla}{\mathcal{L}}_{T_\beta}{\mathbb G})(\nu_\alpha,\tau_i,\tau_i)\nonumber\\
  &\qquad  -(\overline{\nabla}{\mathcal{L}}_{T_\beta}{\mathbb G})(\tau_i,\nu_\alpha,\tau_i)-<\overline{\nabla}_HT_\beta,\nu_\alpha>\nonumber.
\end{align}
\end{Lem}
\begin{proof}
The proof of this follows the codimension one case (Proposition 2.1 of \cite{Bart}).
\end{proof}

To complete the proof of the proposition we note that
\begin{align}
<\nu_\alpha,\overline{\nabla}_{T_\beta}\overline{\nabla}_i\tau_i>&=T_\beta<\nu_\alpha,\overline{\nabla}_i\tau_i>
     -<\overline{\nabla}_{T_\beta}\nu_\alpha,\overline{\nabla}_i\tau_i>\nonumber\\
&=-T_\beta<\overline{\nabla}_i\nu_\alpha,\tau_i>-<\overline{\nabla}_{T_\beta}\nu_\alpha,\overline{\nabla}_i\tau_i>\nonumber\\
&=-{\textstyle{\frac{1}{2}}}(\overline{\nabla}{\mathcal{L}}_{T_\beta}{\mathbb G})(\nu_\alpha,\tau_i,\tau_i)
    +(\overline{\nabla}{\mathcal{L}}_{T_\beta}{\mathbb G})(\tau_i,\nu_\alpha,\tau_i)\nonumber\\
  &\qquad  +<\overline{\nabla}_HT_\beta,\nu_\alpha>\nonumber,
\end{align}
where in the last equality we have used Lemma \ref{l:lie}.
Substituting this in equation (\ref{e:lap1}) then yields the
result.

\end{proof}

\vspace{0.1in}

\subsection{The I.V.P.}

Let $f_s:\Sigma\rightarrow{{\mathbb M}}$ be a family of compact $n$-dimensional spacelike immersed submanifold in an $n+m$-dimensional manifold 
${\mathbb M}$ with a metric
${\mathbb G}$ of signature $(n,m)$. In addition, we assume that $n\geq m$, the case $n< m$ follows by similar arguments.

Then $f_s$ moves by parameterized mean curvature flow if it satisfies the following initial value problem:

\vspace{0.2in}
\begin{center}\fbox{\parbox{4.8in}{
\begin{center}{\Large{ \bf I.V.P.}}\end{center}
\vspace{0.1in}
{\it
Let $f_s:\Sigma\rightarrow{{\mathbb M}}$ be a family of spacelike immersed submanifolds satisfying
\[
\frac{d f}{ds}=H,
\]
with initial conditions 
\[
f_0(\Sigma)=\Sigma_0
\] 
where $H$ is the mean curvature vector associated with the immersion $f_s$ in $({\mathbb M},{\mathbb G})$, and $\Sigma_0$ is some given
initial compact $n$-dimensional spacelike immersed submanifold.
}
\vspace{0.1in}
}}
\end{center}
\vspace{0.2in}

The flow of the functions $u_\gamma$ and $v$ are given by:

\begin{Prop}\label{p:flowuv}
\begin{equation}\label{e:uflow}
\left(\frac{d}{ds}-\triangle\right)u_\gamma=-g^{ij}\overline{\nabla}_i\overline{\nabla}_jt_\gamma,
\end{equation}
\begin{align}
v\left(\frac{d}{ds}-\triangle\right) v\leq& -V^{\alpha\beta}V_{\gamma\beta}
      (A_{ij\gamma}A^{ij}_\alpha-<\overline{\mbox {R}}(\tau_i,\nu_\gamma)\tau_i,\nu_\alpha>)+A^{ij}_\alpha {\mathcal L}_{T_\beta}g_{ij}V^{\alpha\beta}\nonumber\\
&\qquad -{\textstyle{\frac{1}{2}}}(\overline{\nabla}{\mathcal{L}}_{T_\beta}{\mathbb G})(\nu_\alpha,\tau_i,\tau_i)V^{\alpha\beta}
    +(\overline{\nabla}{\mathcal{L}}_{T_\beta}{\mathbb G})(\tau_i,\nu_\alpha,\tau_i)V^{\alpha\beta}\nonumber\\
&\qquad +2C_{i\alpha}^{\;\;\;\;\gamma}<\nu_\gamma,\overline{\nabla}_{T_\beta}\tau_i>V^{\alpha\beta}
    -C_{i\alpha}^{\;\;\;\;\delta}C_{i\delta}^{\;\;\;\;\gamma}V_{\gamma\beta}V^{\alpha\beta}\nonumber\label{e:muflow}.
\end{align}
\end{Prop}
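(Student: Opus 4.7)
The first identity follows from a direct calculation using the flow equation. Since $u_\gamma = t_\gamma \circ f_s$, the chain rule together with $df/ds = H$ gives
\[
\frac{d u_\gamma}{ds} = dt_\gamma(H) = \mathbb{G}(\overline{\nabla} t_\gamma, H).
\]
Using $T_\gamma = -\psi_\gamma \mathbb{G}(\overline{\nabla} t_\gamma, \cdot)$ to express $\overline{\nabla} t_\gamma$ in terms of $T_\gamma$, and expanding $H$ in the adapted normal frame, this contracts to $-\psi_\gamma^{-1}V_{\alpha\gamma}H^\alpha$, which is precisely the negative of the first term on the right-hand side of the formula for $\triangle u_\gamma$ in Proposition \ref{p:laps}. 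Subtraction yields (\ref{e:uflow}).

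For the inequality on $v$, the plan is first to pass from tensor evolution to scalar evolution. Since $v^2 = V^{\alpha\beta}V_{\alpha\beta}$ is a sum of squares of scalar functions on $\Sigma$, the Leibniz rule for the heat operator gives
\[
v\left(\frac{d}{ds} - \triangle\right)v = V^{\alpha\beta}\left(\frac{d}{ds} - \triangle\right)V_{\alpha\beta} + \bigl(|\nabla v|^2 - |\nabla V|^2\bigr).
\]
The Kato inequality $|\nabla v|^2 \leq |\nabla V|^2$, following from $\nabla v = V^{\alpha\beta}\nabla V_{\alpha\beta}/v$ and Cauchy-Schwarz, allows this last bracket to be dropped, reducing the problem to estimating $V^{\alpha\beta}(d/ds - \triangle)V_{\alpha\beta}$ from above. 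The spatial Laplacian $\triangle V_{\alpha\beta}$ is supplied by the second part of Proposition \ref{p:laps}, so the essential new input is the time derivative
\[
\frac{d V_{\alpha\beta}}{ds} = -\mathbb{G}(\dot{\nu}_\alpha, T_\beta) - \mathbb{G}(\nu_\alpha, \overline{\nabla}_H T_\beta).
\]

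The evolution $\dot{\nu}_\alpha$ of the normal frame is a standard MCF computation: after fixing a normal gauge, the requirement that $\nu_\alpha$ remain unit and orthogonal to the evolving $\tau_i = df_s(e_i)$ determines $\dot{\nu}_\alpha$ in terms of $\nabla_j H_\alpha$ and the second fundamental form. Substituting this and the Laplacian formula, I expect the term $\tilde{\nabla}_{T_\beta}H_\alpha$ coming from $-\triangle V_{\alpha\beta}$ to cancel against its counterpart arising from $d V_{\alpha\beta}/ds$, while the evolution $d g_{ij}/ds = 2 A_{ij\gamma}H^\gamma$ converts the $A^{ij}_\alpha T_\beta(g_{ij})$ correction into the Lie-derivative term $A^{ij}_\alpha \mathcal{L}_{T_\beta} g_{ij} V^{\alpha\beta}$ in the claimed right-hand side. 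The $-C_{i\alpha}^\delta C_{i\delta}^\gamma V_{\gamma\beta} V^{\alpha\beta}$ term survives directly, while the $(\nabla_i C_{i\alpha}^\gamma)V_{\gamma\beta}V^{\alpha\beta}$ piece is absorbed using the antisymmetry of the normal connection and the symmetry of $V^{\alpha\beta}V_{\gamma\beta}$ in $\alpha, \gamma$. The main obstacle will not be conceptual but purely bookkeeping — tracking the normal connection coefficients $C_{i\alpha}^\beta$ with the correct sign conventions for raising and lowering Greek indices, and isolating which terms cancel versus which must be retained with the correct sign. The overall strategy parallels Bartnik's codimension-one calculation in \cite{Bart}, now carried out in the higher-codimension adapted frame developed above.
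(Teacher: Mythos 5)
Your outline follows the paper's own route: the same time derivatives of $u_\gamma$ and $V_{\alpha\beta}$, the same Leibniz-plus-Cauchy--Schwarz (Kato) reduction of $v(\tfrac{d}{ds}-\triangle)v$ to $V^{\alpha\beta}(\tfrac{d}{ds}-\triangle)V_{\alpha\beta}$, the contraction of the Laplacian formula of Proposition \ref{p:laps}, the cancellation of the $\tilde{\nabla}_{T_\beta}H_\alpha$ and $(\overline{\nabla}T_\beta)(H,\nu_\alpha)$ terms against the time derivative, and the disappearance of $(\nabla_iC_{i\alpha}^{\;\;\gamma})V_{\gamma\beta}V^{\alpha\beta}$ by the antisymmetry of $C$ against the symmetry of $V_{\gamma\beta}V^{\alpha\beta}$. (For the first identity, note the small wording slip: $\tfrac{du_\gamma}{ds}=-\psi_\gamma^{-1}V_{\alpha\gamma}H^\alpha$ is \emph{equal} to the signed first term in the formula for $\triangle u_\gamma$, which is exactly why it cancels under subtraction.)

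The one step that is genuinely wrong as stated is your mechanism for the term $A^{ij}_\alpha{\mathcal L}_{T_\beta}g_{ij}V^{\alpha\beta}$. The evolution of the induced metric, $\tfrac{d}{ds}g_{ij}$, plays no role here: $V_{\alpha\beta}$ and $v$ are scalars, the Laplacian is taken with respect to the metric at fixed $s$, and contracting $A^{ij}_\alpha$ with $\tfrac{d}{ds}g_{ij}\sim A_{ij\gamma}H^\gamma$ would produce a spurious $|A|^2H$-type term that does not appear in the statement. The Lie-derivative term comes directly from the $-A^{ij}_\alpha T_\beta(g_{ij})$ term already present in $\triangle V_{\alpha\beta}$ in Proposition \ref{p:laps}: with the frame extended off $\Sigma$ by ${\mathcal L}_{T_\beta}\tau_i=0$ (as in the proof of that proposition), one has $T_\beta(g_{ij})=T_\beta\langle\tau_i,\tau_j\rangle=({\mathcal L}_{T_\beta}{\mathbb G})(\tau_i,\tau_j)={\mathcal L}_{T_\beta}g_{ij}$, so the rewriting is purely notational and the term simply changes sign when passed to $-\triangle V_{\alpha\beta}$ and is contracted with $V^{\alpha\beta}$. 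With that correction your argument coincides with the paper's proof.
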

\begin{proof}
Generalizing Proposition 3.1 of \cite{EaH}, note the time derivatives are
\[
\frac{du_\gamma}{ds}=-\psi_\gamma^{-1}V_{\alpha\gamma}H^\alpha,
\]
\[
\frac{dV_{\alpha\beta}}{ds}=-\overline{\nabla}_{T_\beta}H_\alpha-H^\gamma<\overline{\nabla}_{\nu_\gamma}T_\beta,\nu_\alpha>.
\]
The flow of $u_\gamma$ then follows immediately from Proposition \ref{p:laps}.

To find the flow of $v$ note that
\[
v\left(\frac{d}{ds}-\triangle\right)v=V^{\alpha\beta}\left(\frac{d}{ds}-\triangle\right)V_{\alpha\beta}
  +\frac{1}{v^2}\left[(v_\alpha\nabla V_\alpha)\cdot (v_\beta\nabla V_\beta)-V_\alpha^2|\nabla V_\beta|^2 \right],
\]
where we sum over $\alpha$ and $\beta$ and diagonalised $V_{\alpha\beta}={\mbox{diag}}(V_1,...V_m)$. By the Cauchy-Schwarz inequality we have
\[
v\left(\frac{d}{ds}-\triangle\right)v\leq V^{\alpha\beta}\left(\frac{d}{ds}-\triangle\right)V_{\alpha\beta}.
\]
Now contracting the second equation of Proposition \ref{p:laps} with $V^{\alpha\beta}$ yields the result.
\end{proof}

\vspace{0.1in}

\begin{Prop}\label{p:gradest}
Assume that ${\mathbb M}$ satisfies the timelike curvature condition (\ref{e:tcc}). Let $\Sigma_s$ be a smooth solution of {\bf I.V.P.} on the
interval $0\leq s<s_0$ such that $\Sigma_s$ is contained in a
compact subset of ${\mathbb M}$ for all $0\leq s<s_0$. Then
the function $ v$ satisfies the a priori estimate
\[
 v(p,s)\leq(m+\sup_{\Sigma\times 0} v)\sup_{(q,s)\in \Sigma\times[0,s_0]}\exp[K(u(q,s)-u(p,s))],
\]
for some positive constant K$(n,m,\|t\|_3,|\psi|,\|\overline{R}\|,|H|,k)$, where $u=\sum_\alpha u_\alpha$.
\end{Prop}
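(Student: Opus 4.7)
The estimate is the arbitrary-codimension analogue of the Ecker--Huisken gradient estimate for the tilt function under mean curvature flow, and the strategy is to extract a parabolic differential inequality for $v$ and then apply the parabolic maximum principle to a height-weighted barrier.

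First, I would simplify the evolution inequality for $v$ given in Proposition \ref{p:flowuv}. The contraction $V^{\alpha\beta}V_{\gamma\beta}A^{ij}_\gamma A^\alpha_{ij}$ is non-negative and, by Cauchy--Schwarz, dominates $|H|^2 v^2/n$; the curvature contraction is controlled below by the timelike curvature condition (\ref{e:tcc}) applied to the timelike vector $V^{\alpha\beta}\nu_\alpha$ for each fixed $\beta$, yielding a lower bound proportional to $k(v^2-m)$. The remaining terms---derivatives of $\mathbb{G}$ along the reference frame, the normal connection $C_{i\alpha}^{\;\;\gamma}$, and the Lie-derivative tensors $\overline{\nabla}\mathcal{L}_{T_\beta}\mathbb{G}$---are controlled uniformly using the compactness hypothesis (which bounds $\|\overline{R}\|$, $\|t\|_3$, $|\psi_\alpha|$) together with the frame estimates of Proposition \ref{p:frameest}, which convert contractions of adapted-frame tensors into polynomial expressions in $v$. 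This produces a differential inequality of schematic form
\[
\left(\frac{d}{ds}-\triangle\right)v \;\le\; C_1 v + C_2,
\]
with $C_1, C_2$ depending only on the quantities appearing in $K$.

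Second, I would introduce the barrier
\[
\varphi(p,s) \;=\; \bigl(v(p,s)-m\bigr)\,e^{K\,u(p,s)},
\]
with $K>0$ to be chosen. Using equation (\ref{e:uflow}) for $(\partial_s-\triangle)u$ together with Proposition \ref{p:slice} to bound $|\nabla u|^2$ from below by a positive multiple of $v^2-m$, a direct computation gives
\[
\left(\frac{d}{ds}-\triangle\right)\varphi \;\le\; e^{Ku}(C_1 v + C_2) - 2K\,\nabla\varphi\cdot\nabla u - K^2(v-m)|\nabla u|^2 e^{Ku} + \text{lower order}.
\]
At an interior spacetime maximum of $\varphi$, $\nabla\varphi=0$, $\partial_s\varphi\ge 0$ and $\triangle\varphi\le 0$, so once $K$ is chosen large enough in terms of $n,m,\|t\|_3,|\psi|,\|\overline{R}\|,|H|,k$ the negative $-K^2(v-m)|\nabla u|^2 e^{Ku}$ term swallows $C_1 v + C_2$ for every value of $v$ exceeding a universal threshold. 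The parabolic maximum principle then yields
\[
\sup_{\Sigma\times[0,s_0]}\varphi \;\le\; \sup_{\Sigma\times 0}\varphi \;\le\; \bigl(\sup_{\Sigma\times 0} v\bigr)\sup_{\Sigma\times 0} e^{Ku}.
\]
Rearranging this into the form $v(p,s)\le m + (\sup_{\Sigma\times 0}v)\,e^{K(u(q_0,0)-u(p,s))}$ for some $q_0\in\Sigma$ and bounding the factor $\sup_{\Sigma\times 0}e^{Ku}$ by $\sup_{\Sigma\times[0,s_0]}e^{Ku}$ gives the claimed inequality.

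The main obstacle is arranging that the terms in the $v$-evolution equation which are absent in the codimension-one case of Ecker--Huisken---in particular the normal-connection quadratic $C_{i\alpha}^{\;\;\delta}C_{i\delta}^{\;\;\gamma}V_{\gamma\beta}V^{\alpha\beta}$ and the mixed term $(\overline{\nabla}\mathcal{L}_{T_\beta}\mathbb{G})(\nu_\alpha,\tau_i,\tau_i)V^{\alpha\beta}$---are absorbed into the single negative term supplied by the timelike curvature condition rather than overwhelming it. This is where the strengthening from a Ricci-type bound to the full sectional condition (\ref{e:tcc}) is used essentially, and where the canonical normal form of Proposition \ref{p:gauge} is invoked so that the contractions can be bounded directly by polynomials in $v$ rather than in the individual entries $V_{\alpha\beta}$.
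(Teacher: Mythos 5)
There is a genuine gap, and it sits exactly at the point where this estimate is delicate. Your barrier computation has the crucial term with the wrong sign: writing $\varphi=(v-m)e^{Ku}$, one has
\[
\left(\tfrac{d}{ds}-\triangle\right)\varphi
= e^{Ku}\left(\tfrac{d}{ds}-\triangle\right)v
+ K(v-m)e^{Ku}\left(\tfrac{d}{ds}-\triangle\right)u
+ K^{2}(v-m)e^{Ku}|\nabla u|^{2}
- 2K\,\nabla\varphi\cdot\nabla u ,
\]
so at a spatial maximum (where $\nabla\varphi=0$, hence $\nabla v=-K(v-m)\nabla u$) the weight contributes $+K^{2}(v-m)|\nabla u|^{2}$, a \emph{bad} term, not the negative term you propose to ``swallow'' the reaction terms with. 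This is precisely the structure of the paper's inequality (\ref{e:atmax3}): after the maximum principle, $Kv(\tfrac{d}{ds}-\triangle)u\geq -(\tfrac{d}{ds}-\triangle)v-K^{2}v|\nabla u|^{2}$, so the contradiction can only come if $-(\tfrac{d}{ds}-\triangle)v$ itself supplies a positive multiple of $K^{2}v|\nabla u|^{2}$ with coefficient \emph{strictly greater than} $1$. The exponential weight alone produces nothing negative; the negativity must be extracted from the second fundamental form term $-\sum_\alpha V_\alpha^{2}|A_\alpha|^{2}$ in Proposition \ref{p:flowuv}, via the identity $v\nabla_i v=-A^{j}_{i\alpha}W_{j\beta}V^{\alpha\beta}-\langle\nu_\alpha,\overline{\nabla}_iT_\beta\rangle V^{\alpha\beta}$, which converts it into $-(1+\tfrac{1}{n})\tfrac{1-\epsilon}{1+\epsilon}|\nabla v|^{2}$ up to $Cv^{2}$ errors; since $|\nabla v|^{2}=K^{2}v^{2}|\nabla u|^{2}$ at the maximum, the Schwarz surplus $\tfrac{1}{n}$ (inequality (\ref{e:ineq1})) is exactly what beats the $+K^{2}v|\nabla u|^{2}$ weight term and, together with $|\nabla u|^{2}\geq c(v^{2}-m)$, also the cubic-in-$v$ term $Kv\cdot C_1v^{2}$ coming from $(\tfrac{d}{ds}-\triangle)u\leq C_1v^{2}$ (which, note, is quadratic in $v$, not lower order). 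Your proposal never uses the relation between $\nabla v$ and $A$ and in fact discards the $A$-quadratic term, so the argument cannot close.

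Relatedly, the schematic reaction inequality $(\tfrac{d}{ds}-\triangle)v\leq C_1v+C_2$ is not obtainable. The evolution of $v$ contains the term $A^{ij}_{\alpha}{\mathcal L}_{T_\beta}g_{ij}V^{\alpha\beta}$, linear in the second fundamental form, which is \emph{not} a priori bounded at this stage (bounds on $A$ come only later, in Proposition \ref{p:bdsff}, and use the present gradient estimate); it can only be absorbed by Young's inequality against the good term, leaving $v(\tfrac{d}{ds}-\triangle)v\leq -(1-\epsilon)\sum_\alpha V_\alpha^{2}|A_\alpha|^{2}+C_4v^{4}$ as in (\ref{e:eq11}), with quartic rather than linear growth in $v$. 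The correct repair is the Bartnik/Ecker--Huisken mechanism the paper follows: test function $h=v\,e^{Ku}$, the first-touching-time contradiction at level $C_K=(m+\sup_{\Sigma\times0}v)\sup\exp(Ku)$, the eigenvalue inequality (\ref{e:ineq1}), the gradient identity leading to (\ref{e:ineq2a}), and finally $|\nabla u|^{2}\geq\min_\alpha\psi_\alpha^{-2}(v^{2}-m)$ to force $v^{2}\leq(mC_8K^{2}+C_7)/(C_8K^{2}-C_1K-C_4)$, contradicting $v\geq m+1$ for $K$ large.
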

\begin{proof}
The argument is an extension of Bartnik's estimate in the stationary case \cite{Bart} to the parabolic case with higher codimension.

Let K$>$0 be a constant to be determined later and set
\[
C_K=(m+\sup_{\Sigma\times 0} v)\sup_{\Sigma\times[0,s_0]}\exp(Ku).
\]
Consider the test function $h= v \exp(Ku)$. Suppose, for the sake of
contradiction, that the function $h$ reaches $C_K$ for the first
time at $(p_1,s_1)\in \Sigma\times(0,s_0]$. Then at this point $v\geq
m+1$ and by the maximum principle
\[
\left(\frac{d}{ds}-\triangle\right) h \stackrel{\cdot}{\geq}0
\qquad\qquad \nabla h \stackrel{\cdot}{=}0.
\]
Here and throughout a dot over an inequality or equality will refer to evaluation at the point $(p_1,s_1)$. Working out these two equations
we have
\begin{equation}\label{e:atmax1}
\left(\frac{d}{ds}-\triangle\right)  v +K v
\left(\frac{d}{ds}-\triangle\right) u-2K\nabla u\cdot\nabla v -K^2
v |\nabla u|^2\stackrel{\cdot}{\geq}0,
\end{equation}
\begin{equation}\label{e:atmax2}
\nabla  v +K v \nabla u\stackrel{\cdot}{=}0.
\end{equation}
Substituting the second of these in the first we obtain
\begin{equation}\label{e:atmax3}
K v \left(\frac{d}{ds}-\triangle\right)
u\stackrel{\cdot}{\geq}-\left(\frac{d}{ds}-\triangle\right) v -K^2
v |\nabla u|^2.
\end{equation}
Now, from Proposition \ref{p:flowuv} and the estimates in
Proposition \ref{p:frameest}
\begin{equation}\label{e:boxuest}
\left(\frac{d}{ds}-\triangle\right)
u=-g^{ij}\overline{\nabla}_i\overline{\nabla}_j t\leq
\|\overline{\nabla}_i\overline{\nabla}_j
t\|.\|\tau_i\|.\|\tau_j\|\leq C_1 v^2,
\end{equation}
where $C_1=C_1(n,m,\|t\|_2)$.

At $p_1$ we can set $C_{i\alpha}^{\;\;\;\beta}=0$ and then, Proposition \ref{p:flowuv} and the timelike curvature condition (\ref{e:tcc}) imply that
\begin{align}
v\left(\frac{d}{ds}-\triangle\right)v\stackrel{\cdot}{\leq}&
-\sum_{\alpha}V_\alpha^2|A_\alpha|^2+C_2(\|T\|_1)|A_\alpha|V_\alpha+C_3(n,m,\|T\|_2)v^4\nonumber\\
\leq&-(1-\epsilon)\sum_{\alpha}V_\alpha^2|A_\alpha|^2+C_4(\epsilon,n,m,\|T\|_2)v^4\label{e:eq11},
\end{align}
for any $\epsilon>0$. Here we have utilised the gauge choice $V_{\alpha\beta}=V_\alpha\delta_{\alpha\beta}$, summation
is over $\alpha$ and the last inequality follows from Young's:
\[
ab\leq\frac{\epsilon a^2}{2}+\frac{b^2}{2\epsilon}
\]

Now, from the Schwartz and arithmetic-geometric mean inequalities
\begin{equation}\label{e:ineq1}
\sum_{\alpha}V_\alpha^2|A_\alpha|^2\geq\sum_{\alpha}\left(1+\frac{1}{n}\right)\lambda_\alpha^2V_\alpha^2-H_\alpha^2V_\alpha^2,
\end{equation}
where $\lambda_\alpha$ is the eigenvalue of $A_{ij\alpha}$ with
the maximum absolute value, so that in an eigenframe $A_{ij\alpha}\leq|\lambda_\alpha|\delta_{ij}$.

On the other hand we compute
\[
\nabla_iV_{\alpha\beta}=-A_{i\alpha}^j<\tau_j,T_\beta>-<\nu_\alpha,\overline{\nabla}_iT_\beta>,
\]
and so
\[
v\nabla_iv=V^{\alpha\beta}\nabla_iV_{\alpha\beta}=-A_{i\alpha}^jW_{j\beta}V^{\alpha\beta}-<\nu_\alpha,\overline{\nabla}_iT_\beta>V^{\alpha\beta}.
\]
The square norm is
\begin{align}
v^2|\nabla v|^2&=v^2\nabla_iv\nabla^iv\nonumber\\
&=\left(A_{i\alpha}^jW_{j\beta}+<\nu_\alpha,\overline{\nabla}_iT_\beta>\right)
\left(A_{\gamma}^{ik}W_{k\delta}+<\nu_\gamma,\overline{\nabla}^iT_\delta>\right)V^{\alpha\beta}V^{\gamma\delta}\nonumber
\\
&=\left(A_{i\alpha}^jA_{\gamma}^{ik}W_{j\beta}W_{k\delta}+2A_{i\alpha}^jW_{j\beta}<\nu_\gamma,\overline{\nabla}^iT_\delta>
+<\nu_\alpha,\overline{\nabla}_iT_\beta><\nu_\gamma,\overline{\nabla}^iT_\delta>\right)V^{\alpha\beta}V^{\gamma\delta}\nonumber
\end{align}
Take these three summands separately, computing in eigenframes (so that $V^{\alpha\beta}=V_\alpha\delta^{\alpha\beta}$ and 
$A_{ij\alpha}\leq|\lambda_\alpha|\delta_{ij}$). The first term is
\begin{align}
A_{i\alpha}^jA_{\gamma}^{ik}W_{j\beta}W_{k\delta}V^{\alpha\beta}V^{\gamma\delta}&\leq |\lambda_\alpha\lambda_\gamma| .|W^k_{\beta}W_{k\delta}
V^{\alpha\beta}V^{\gamma\delta}|\nonumber\\
&=|\lambda_\alpha\lambda_\gamma| .|\left(V^\rho_{\beta}V_{\rho\delta}-\delta_{\beta\gamma}\right)V^{\alpha\beta}V^{\gamma\delta}|\nonumber\\
&=\sum_{\alpha}\lambda^2_\alpha\left(V_{\alpha}^2-1\right)V^2_{\alpha}\nonumber\\
&=v^2\sum_{\alpha}\lambda^2_\alpha V^2_{\alpha}\nonumber
\end{align}
where we have used the relationship between the matrices $W$ and $V$ given in the middle of equations (\ref{e:onm1a}). Note that this equation implies
$\|W_\beta\|^2=V_\beta^2-1\leq v^2-1\leq v^2$.

For the second term
\begin{align}
2A_{i\alpha}^jW_{j\beta}<\nu_\gamma,\overline{\nabla}^iT_\delta>V^{\alpha\beta}V^{\gamma\delta}
&\leq 2|\lambda_{\alpha}|.|W_{i\beta}<\nu_\gamma,\overline{\nabla}^iT_\delta>V^{\alpha\beta}V^{\gamma\delta}|\nonumber\\
&=2\sum_{\alpha,\gamma}|\lambda_{\alpha}|.|W_{i\alpha}<\nu_\gamma,\overline{\nabla}^iT_\gamma>|.|V_{\alpha}V_{\gamma}|\nonumber\\
&\leq 2\sum_{\alpha,\gamma}|\lambda_{\alpha}|\|W_{\alpha}\|.\|\nu_\gamma\|.\|\overline{\nabla}T_\gamma\|.|V_{\alpha}V_{\gamma}|\nonumber\\
&\leq 2m^{\scriptstyle{\frac{3}{2}}}v^2\|T\|_1\sum_{\alpha,\gamma}|\lambda_{\alpha}|.|V_{\alpha}V_{\gamma}|\nonumber
\end{align}
where we use $\|W_\beta\|^2\leq v^2$ and  $\|\nu_\gamma\|^2\leq m^3v^2$ from Proposition \ref{p:frameest}.

For each $\alpha$ we use Young's inequality with $a=v\lambda_{\alpha}|V_{\alpha}|$ and $b=m^{\scriptstyle{\frac{3}{2}}}v\|T\|_1\sum_\gamma |V_{\gamma}|$ to conclude the second estimate
\[
2A_{i\alpha}^jW_{j\beta}<\nu_\gamma,\overline{\nabla}^iT_\delta>V^{\alpha\beta}V^{\gamma\delta}
\leq\epsilon\sum_\alpha v^2\lambda^2_{\alpha}V^2_{\alpha}+m^5\epsilon^{-1}\|T\|_1^2v^4
\]
The final term is easily estimated in a similar manner
\[
<\nu_\alpha,\overline{\nabla}_iT_\beta><\nu_\gamma,\overline{\nabla}^iT_\delta>V^{\alpha\beta}V^{\gamma\delta}\leq C_5(m,\|T\|_1)v^4
\]
Putting these last three estimates together and canceling the $v^2$ factor we bound the square norm:
\[
|\nabla v|^2\leq(1+\epsilon)\sum_{\alpha}V_\alpha^2\lambda_\alpha^2+C_6(\epsilon,m,\|T\|_1)v^2.
\] 
or, rearranging
\begin{equation}\label{e:ineq2a}
\sum_{\alpha}V_\alpha^2\lambda_\alpha^2\geq \frac{1}{1+\epsilon}|\nabla v|^2-C_6v^2.
\end{equation}
Combining inequalities (\ref{e:ineq1}) and (\ref{e:ineq2a}) we get
\[
\sum_{\alpha}V_\alpha^2|A_\alpha|^2\geq\left(1+\frac{1}{n}\right)\left[\frac{1}{1+\epsilon}|\nabla
v|^2-C_6v^2\right]-\sum_{\alpha}H_\alpha
^2V_\alpha^2,
\]
which, when substituted in inequality (\ref{e:eq11}) yields
\[
v\left(\frac{d}{ds}-\triangle\right)v\stackrel{\cdot}{\leq}
-\left(1+\frac{1}{n}\right)\frac{1-\epsilon}{1+\epsilon}|\nabla v|^2+C_7(\epsilon,n,m,|H|,\|T\|_1)v^2+C_4v^4,
\]
and, by virtue of equation (\ref{e:atmax2}),
\[
|\nabla v|^2\stackrel{\cdot}{=}K^2v^2|\nabla u|^2,
\]
yielding
\begin{equation}\label{e:boxvest}
\left(\frac{d}{ds}-\triangle\right)v\stackrel{\cdot}{\leq}
-\left(1+\frac{1}{n}\right)\frac{1-\epsilon}{1+\epsilon}K^2v|\nabla u|^2+C_7v+C_4v^3.
\end{equation}
Substituting inequalities (\ref{e:boxuest}) and (\ref{e:boxvest}) in (\ref{e:atmax3}) we get
\[
KC_1v^2\stackrel{\cdot}{\geq} \left[\left(1+\frac{1}{n}\right)\frac{1-\epsilon}{(1+\epsilon)}-1\right]K^2|\nabla u|^2-C_7-C_4v^2,
\]
for any $\epsilon>0$.

Now for $0<\epsilon<1/(1+2n)$
\[
\left(1+\frac{1}{n}\right)\frac{1-\epsilon}{1+\epsilon}-1>0,
\]
and so using Proposition \ref{p:slice}
\[
|\nabla u|^2=\sum_{\alpha,\beta}\nabla u_\alpha\cdot\nabla u_\beta \geq{\mbox{min}}_\alpha\psi_\alpha^{-2}(v^2-m),
\]
we have
\[
KC_1v^2\stackrel{\cdot}{\geq} C_8(\epsilon,n,|\psi|)K^2(v^2-m)-C_7-C_4v^2,
\]
which can be rearranged to
\[
v^2\stackrel{\cdot}{\leq}\frac{mC_8K^2+C_7}{C_8K^2-C_1K-C_4},
\]
where, in summary, $C_1(n,m,\|t\|_2)$, $C_4(\epsilon,n,m,\|T\|_1)$, $C_7(\epsilon,n,m,\|t\|_2,\|T\|_1)$ and $C_8(\epsilon,n,|\psi|)$.

For large $K$ this inequality violates $ v\geq m+1$ and we have a contradiction.
\end{proof}

\vspace{0.2in}
For tensors $H_\alpha$ and $A_{ij\alpha}$ we define a positive norm by
\[
|H|_+^2=-H_\alpha H^\alpha \qquad\qquad |A|_+^2=-A_{ij\alpha}A^{ij\alpha},
\]
and similarly for their gradients.

\vspace{0.1in}

\begin{Prop}
Under the mean curvature flow, the norms of the mean curvature vector and the second fundamental form of a positive m-dimensional submanifold in an 
indefinite m+n-dimensional manifold evolve according to:
\[
\left(\frac{d}{ds}-\triangle\right)
|H|_+^2=-2|\tilde{\nabla}H|_+^2-2|H\cdot A|_+^2-2H^\alpha H^\beta\bar{R}_{i\alpha i\beta},
\]
\[
\left(\frac{d}{ds}-\triangle\right)
|A|_+^2=-2|\tilde{\nabla}A|_+^2-2|A|_+^4+A*A*\overline{R}+A*\overline{\nabla}\;\overline{R},
\]
where $\tilde{\nabla}$ is the covariant derivative in both the tangent and normal bundles and $*$ represents linear combinations of contractions
of the tensors involved.
\end{Prop}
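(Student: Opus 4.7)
The plan is to generalize the Simons--Huisken computation from \cite{EaH}, which treats the codimension-one Riemannian hypersurface case, to the present indefinite higher-codimension setting, with careful attention to the signs introduced by the $(n,m)$-signature and to the normal connection $C_{i\alpha}^{\;\;\beta}$ which, unlike in the hypersurface case, does not vanish.

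First I would derive the basic evolution equations for the induced data under the flow $\tfrac{df}{ds}=H$. Using the splittings (\ref{e:connsplit1})--(\ref{e:connsplit2}) and differentiating $g_{ij}={\mathbb G}(f_*\partial_i,f_*\partial_j)$ gives
\[
\frac{d g_{ij}}{ds} = -2 H^\alpha A_{ij\alpha},
\]
which controls the evolution of tangential index raising. Differentiating $A_{ij\alpha}$ and the identity $H_\alpha=g^{ij}A_{ij\alpha}$ in the same way produces tensorial evolution equations for $A$ and $H$ whose principal parts involve $\tilde{\nabla}^2H$ and $\tilde{\nabla}^2A$ respectively.

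Next I would invoke a Simons-type identity in this setting. Commuting two covariant derivatives by means of the Gauss, Codazzi and Ricci equations lets one rewrite $g^{ij}\tilde\nabla_i\tilde\nabla_j H_\alpha$ in terms of $\triangle A_{ij\alpha}$ plus terms quadratic in $A$ and terms linear in $\overline{R}$ and $\overline{\nabla}\,\overline{R}$. The outcome is tensorial evolution equations of the schematic form
\[
\Big(\tfrac{d}{ds}-\triangle\Big)H_\alpha = A_{ij\alpha}A^{ij\beta}H_\beta+\overline{R}_{i\alpha i\beta}H^\beta + (\text{normal-connection terms}),
\]
\[
\Big(\tfrac{d}{ds}-\triangle\Big)A_{ij\alpha}=A*A*A + A*\overline{R} + \overline{\nabla}\,\overline{R}.
\]
With these in hand, the flow of the positive norms is a product-rule calculation. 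Applying $\triangle(\xi\cdot\eta)=\xi\cdot\triangle\eta+\eta\cdot\triangle\xi+2\,\tilde\nabla\xi\cdot\tilde\nabla\eta$ to $|H|_+^2=-H_\alpha H^\alpha$ and $|A|_+^2=-A_{ij\alpha}A^{ij\alpha}$ gives
\[
\Big(\tfrac{d}{ds}-\triangle\Big)|H|_+^2 = -2H^\alpha\Big(\tfrac{d}{ds}-\triangle\Big)H_\alpha -2|\tilde\nabla H|_+^2,
\]
and the analogous identity for $|A|_+^2$. Substituting the evolution equations above and contracting delivers the two claimed formulae: the term $-2|H\cdot A|_+^2$ arises from $2H^\alpha H^\beta A_{ij\alpha}A^{ij}{}_\beta$ after the sign flip produced by lowering a normal index, while $-2|A|_+^4$ comes from contracting the $A*A*A$ contribution against $A$.

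The main obstacle will be the sign bookkeeping. Because normal indices are timelike, raising and lowering them flips the sign of a component (see the opening remarks of Section \ref{s:compactmcf}), so the positive norms $|\cdot|_+^2$ and the various quadratic reaction terms end up carrying different signs from their Riemannian analogues -- this is precisely why the reaction term $-2|H\cdot A|_+^2$ appears with a minus rather than a plus sign. The remaining technical point is to verify that the commutator terms produced by $C_{i\alpha}^{\;\;\beta}$ when one exchanges $\tilde\nabla$-derivatives either cancel via the Ricci identity in the normal bundle or are absorbed into the schematic $A*\overline{R}$ and $\overline{\nabla}\,\overline{R}$ terms, so that they do not disturb the stated identities.
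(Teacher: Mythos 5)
Your outline is mathematically sound, but it is worth knowing that the paper does not actually carry out this computation at all: its ``proof'' is a one-line citation to Proposition 4.1 of \cite{LaS}, which in turn generalizes Proposition 3.3 of \cite{EaH}. What you sketch -- evolution of the induced metric under $\frac{df}{ds}=H$, a Simons-type identity obtained by commuting covariant derivatives via the Gauss, Codazzi and Ricci equations, then the product rule applied to the positive norms $|H|_+^2=-H_\alpha H^\alpha$ and $|A|_+^2=-A_{ij\alpha}A^{ij\alpha}$ -- is precisely the computation performed in that cited reference, so in substance you are reconstructing the deferred argument rather than finding a new route; what your version buys is self-containedness, while the paper buys brevity at the cost of outsourcing the indefinite-signature, higher-codimension bookkeeping. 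One concrete caution, which confirms your own warning that the sign bookkeeping is where the real work lies: with the paper's conventions (timelike normals with ${\mathbb G}(\nu_\alpha,\nu_\beta)=-\delta_{\alpha\beta}$ and $A_{ij\alpha}={\mathbb G}(\overline{\nabla}_{\tau_i}\nu_\alpha,\tau_j)$), differentiating $g_{ij}={\mathbb G}(\tau_i,\tau_j)$ along the flow gives $\frac{d}{ds}g_{ij}=2H^\alpha A_{ij\alpha}$, opposite in sign to the formula you wrote; since the final identities are only stated schematically (the $A*A*\overline{R}$ and $A*\overline{\nabla}\,\overline{R}$ terms), this does not derail your plan, but the signs of the explicit reaction terms $-2|\tilde{\nabla}H|_+^2$, $-2|H\cdot A|_+^2$ and $-2|A|_+^4$ do depend on tracking exactly such flips when normal indices are raised and lowered, so that step cannot be left entirely to ``bookkeeping'' if the proof is to stand on its own.
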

\begin{proof}
These are proven in Proposition 4.1 of \cite{LaS}, generalizing the expressions in Proposition 3.3 of \cite{EaH}.
\end{proof}
\vspace{0.1in}

\begin{Prop}\label{p:bdsff}
Under the mean curvature flow
\[
|H|_+^2\leq C_1(1+s^{-1}),
\]
\[
|A|_+^2\leq C_2(1+s^{-1}),
\]
where $C_1=C_1(n,k)$ and $C_2=C_2(n,\|\overline{R}\|_1)$, $k$ being the constant in the timelike curvature condition (\ref{e:tcc}).
\end{Prop}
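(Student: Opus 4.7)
The plan is to apply the parabolic maximum principle to the rescaled quantities $s|H|_+^2$ and $s|A|_+^2$, using the evolution equations from the preceding proposition together with the timelike curvature condition (\ref{e:tcc}) and elementary algebraic estimates. This is the higher-codimension, neutral-signature analogue of the Ecker--Huisken argument in the Lorentzian codimension-one case.

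For the bound on $|H|_+^2$, the first step is to control the curvature term $-2H^\alpha H^\beta\bar{R}_{i\alpha i\beta}$. Since the mean curvature vector $H$ is normal to a spacelike submanifold it is timelike and orthogonal to $\mathrm{span}\{\tau_i\}$; applying (\ref{e:tcc}) with $X=H$ and using the symmetries of $\overline{R}$ gives
\[
H^\alpha H^\beta\bar{R}_{i\alpha i\beta}=\sum_i {\mathbb G}(\overline{R}(H,\tau_i)H,\tau_i)\geq k\,{\mathbb G}(H,H)=-k|H|_+^2,
\]
so the curvature term is bounded above by $2k|H|_+^2$. The second step is the algebraic Cauchy--Schwarz inequality $|H\cdot A|_+^2\geq\tfrac{1}{n}|H|_+^4$: noting that $(H\cdot A)_{ij}:=H^\alpha A_{ij\alpha}$ is a symmetric tangent tensor whose trace with respect to the positive-definite induced metric equals $H^\alpha H_\alpha=-|H|_+^2$, the standard bound $(\mathrm{tr}\,B)^2\leq n\,B_{ij}B^{ij}$ applies. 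Dropping the non-positive gradient term, the evolution inequality reduces to
\[
\left(\frac{d}{ds}-\triangle\right)|H|_+^2\leq -\frac{2}{n}|H|_+^4+2k|H|_+^2.
\]
The final step is a standard maximum-principle argument. Fix $s_0$ in the interval of existence and let $(p_1,s_1)$ be a maximum point of $\phi:=s|H|_+^2$ on the compact set $\Sigma\times[0,s_0]$. If $s_1=0$ then $\phi\equiv 0$ and the estimate is trivial; otherwise $(d/ds-\triangle)\phi\geq 0$ at $(p_1,s_1)$, and a direct computation combined with the above inequality forces $s_1|H|_+^2(p_1,s_1)\leq \tfrac{n}{2}(1+2ks_1)$. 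Hence $\phi\leq\tfrac{n}{2}+nks_0$ throughout, and evaluating at $s=s_0$ (which was arbitrary) yields $|H|_+^2\leq\tfrac{n}{2s}+nk$, of the required form $C_1(n,k)(1+s^{-1})$.

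The bound on $|A|_+^2$ follows by the same strategy applied to $\psi:=s|A|_+^2$, now using the evolution equation for $|A|_+^2$. The zero-order curvature error is controlled by $|A*A*\overline{R}|\leq c(n)\|\overline{R}\|_0|A|_+^2$, while Young's inequality applied to the linear-in-$A$ term gives $|A*\overline{\nabla}\overline{R}|\leq \tfrac{1}{2}|A|_+^2+\tfrac{1}{2}c(n)^2\|\overline{R}\|_1^2$. At an interior maximum of $\psi$, the resulting inequality is a quadratic in $|A|_+^2$ whose positive root is bounded by an expression of the form $\tfrac{1}{2s}+C(n,\|\overline{R}\|_1)$, giving the claim. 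The main obstacle is verifying that the signs propagate correctly in the indefinite setting---specifically that the timelike curvature condition dominates the curvature term in the right direction, and that the Cauchy--Schwarz step involves only the positive-definite tangent metric so that no sign reversal enters---and tracking the Young constants so that $C_2$ depends on no more than $n$ and $\|\overline{R}\|_1$.
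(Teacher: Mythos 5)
Your proposal is correct and follows essentially the same route as the paper: the same Riccati-type inequalities $\left(\tfrac{d}{ds}-\triangle\right)|H|_+^2\leq -\tfrac{2}{n}|H|_+^4+2k|H|_+^2$ and the analogous bound for $|A|_+^2$ are obtained from the evolution equations, the timelike curvature condition and Cauchy--Schwarz/Young, after which the paper simply cites a ``suitable modification of Lemma 4.5 of Ecker--Huisken''. Your maximum-principle argument applied to $s|H|_+^2$ and $s|A|_+^2$ is precisely what that cited modification amounts to, so you have merely spelled out the step the paper leaves to the reference.
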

\begin{proof}
From the previous proposition and the timelike curvature condition
we conclude that
\[
\left(\frac{d}{ds}-\triangle\right) |H|_+^2\leq-2n^{-1}|H|_+^4+2k|H|_+^2,
\]
while
\[
\left(\frac{d}{ds}-\triangle\right) |A|_+^2\leq-2|A|_+^4+C_3|A|_+^2+C_4|A|_+\leq-|A|_+^4+C_5.
\]
The result then follows by a suitable modification of Lemma 4.5 of Ecker and Huisken \cite{EaH}.
\end{proof}

\vspace{0.1in}

\begin{Thm}\label{t:indflow}
Let $\Sigma_0$ be a smooth compact $n$-dimensional spacelike submanifold of an $n+m$ dimensional manifold ${\mathbb M}$ with indefinite metric 
${\mathbb G}$ satisfying the timelike curvature 
condition. Then there exists a unique family $f_s(\Sigma)$ of smooth compact $n$-dimensional spacelike submanifolds satisfying the initial value 
problem {\bf I.V.P.} on an interval $0\leq s< s_0$. Moreover, if $f_s(\Sigma)$ remains in
a smooth compact region of ${\mathbb M}$ as $s\rightarrow s_0$, the solution can be extended beyond $s_0$.
\end{Thm}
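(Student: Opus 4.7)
The plan is to establish short-time existence by reducing the mean curvature flow to a standard quasilinear parabolic system on the positive-definite induced metric, and then to extract the extension statement from the a priori estimates of Propositions \ref{p:gradest} and \ref{p:bdsff} combined with a parabolic bootstrap.

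For short-time existence, note that since $f_0$ is spacelike the induced metric $g_0$ on $\Sigma_0$ is Riemannian. The equation $df/ds = H$ is degenerate parabolic because of ambient reparameterization invariance, but its normal component is strictly parabolic. I would write $f_s$ locally as a normal graph over $f_0$, namely $f_s(p) = \exp^{\perp}_{f_0(p)}(\varphi(p,s))$ with $\varphi$ a section of the normal bundle of $\Sigma_0$ in $\mathbb{M}$. The resulting PDE for $\varphi$ is strictly parabolic quasilinear in the Riemannian sense, so standard Schauder theory on the compact manifold $\Sigma_0$ produces a unique smooth solution on some interval $[0,s_0)$, and the spacelike character is preserved on a possibly smaller interval by continuity of $g_s$. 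Uniqueness of the original flow then follows from uniqueness of the gauge-fixed system together with diffeomorphism equivariance.

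For the extension, suppose $f_s(\Sigma) \subset K$ for a compact set $K \subset \mathbb{M}$ as $s \to s_0$. Then the multi-time function $t$ and its first three derivatives are uniformly bounded on $K$, so the height functions $u_\alpha = t_\alpha \circ f_s$ are uniformly bounded, and Proposition \ref{p:gradest} delivers a uniform upper bound on the tilt function $v$. This is the critical estimate: it is equivalent to a uniform lower bound on the spacelike character of $f_s$, so the induced metrics $g_s$ cannot degenerate. Proposition \ref{p:bdsff} then yields uniform bounds on $|H|_+$ and $|A|_+$ for $s$ bounded away from $0$. Differentiating the evolution equations of Proposition \ref{p:bdsff} to higher order $|\tilde{\nabla}^k A|_+^2$, iterating the parabolic maximum principle in the spirit of Ecker--Huisken, one obtains uniform $C^k$ bounds on $f_s$ for every $k$. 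Arzel\`a--Ascoli then produces a smooth spacelike limit $f_{s_0}$, and applying the short-time existence statement with $f_{s_0}$ as initial data continues the flow past $s_0$.

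The main obstacle, and the place where the indefinite signature matters most, is the passage from the $v$-bound to genuine spacelike persistence: in a Lorentzian or higher-index ambient space the set of spacelike immersions is open but not closed, and without a uniform tilt estimate the flow could approach a null configuration while remaining inside $K$. The hypothesis of containment in a compact set, together with the timelike curvature condition, is exactly what is needed to run the Bartnik-style barrier argument underlying Proposition \ref{p:gradest}. Once the $v$-bound is secured, the subsequent parabolic regularity theory is standard, and short-time existence applied at $s_0$ closes the extension argument.
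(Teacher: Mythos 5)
Your proposal is correct and follows essentially the same route as the paper: short-time existence from the quasilinear parabolic character of the flow (the paper invokes Schauder estimates and the implicit function theorem, you fix the gauge by a normal graph, a cosmetic difference), and extension from the tilt estimate of Proposition \ref{p:gradest} together with the curvature bounds of Proposition \ref{p:bdsff}, followed by Ecker--Huisken-style bootstrapping to all orders and restarting the flow at $s_0$. Your emphasis on the compact-containment hypothesis feeding the Bartnik-type gradient estimate is exactly the role it plays in the paper's argument.
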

\begin{proof}
The flow is a quasilinear parabolic system and therefore short time existence follows from linear Schauder estimates and the implicit function theorem.
In the case where $n=m=2$, ${\mathbb M}=TS^2$ and we impose boundary conditions, short-time existence is established in detail
in Theorem \ref{t:ste}.

Having bounded the gradient and the second fundamental form in Propositions \ref{p:gradest} and \ref{p:bdsff}, bounds on the higher derivatives and 
long-time existence follow from standard parabolic bootstrapping arguments, as in \cite{EaH}.
\end{proof}

\vspace{0.2in}


\section{{\bf Neutral K\"ahler Geometry of $TS^2$}}\label{s:neutral}

In this section we give details of the neutral K\"ahler geometry of the space of oriented lines, paying particular attention to those
parts that play a role later.

\subsection{The neutral K\"ahler surface and submanifolds}\label{s:nkm}

\begin{Def}
A {\it neutral K\"ahler surface} is a 4-manifold ${\mathbb M}$ endowed with a complex structure ${\mathbb J}$, a symplectic structure
$\Omega$ and a metric ${\mathbb G}$ of signature $(++--)$ which are compatible in the sense that
\[
{\mathbb G}({\mathbb J}\cdot,{\mathbb J}\cdot)={\mathbb G}(\cdot,\cdot)
\qquad\qquad
{\mathbb G}({\mathbb J}\cdot,\cdot)=\Omega(\cdot,\cdot).
\]
\end{Def}
For such a structure we have the following identity:

\vspace{0.1in}

\begin{Prop}\label{p:callib}\cite{gak9}
Let $({\Bbb{M}},{\Bbb{J}},\Omega,{\Bbb{G}})$  be a neutral K\"ahler surface and let $p\in{\Bbb{M}}$ and $v_1,v_2\in T_p{\Bbb{M}}$ span a plane. Then
\[
\Omega(v_1,v_2)^2-det[v_1,v_2,{\mathbb J}v_1,{\mathbb J}v_2]={\mbox{det }}{\Bbb{G}}(v_i,v_j).
\]

\end{Prop}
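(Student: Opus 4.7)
The identity is purely algebraic and pointwise, so it suffices to work in the 4-dimensional neutral K\"ahler vector space $V = T_p\mathbb{M}$. My strategy is to evaluate the 4-form $\Omega \wedge \Omega$ on the ordered tuple $(v_1, v_2, \mathbb{J}v_1, \mathbb{J}v_2)$ in two complementary ways and equate them: first by expanding via the shuffle formula for the square of a 2-form, and second by using that $\tfrac{1}{2}\Omega \wedge \Omega$ is the canonical symplectic volume form, so that $(\Omega \wedge \Omega)(v_1, v_2, \mathbb{J}v_1, \mathbb{J}v_2) = 2\det[v_1, v_2, \mathbb{J}v_1, \mathbb{J}v_2]$.

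For the shuffle expansion, $(\Omega \wedge \Omega)(v_1, v_2, \mathbb{J}v_1, \mathbb{J}v_2)$ splits into three terms, each of the form $\Omega(\cdot,\cdot)\,\Omega(\cdot,\cdot)$. I would simplify each summand using the two K\"ahler compatibility identities $\Omega(v, w) = \mathbb{G}(\mathbb{J}v, w)$ and $\mathbb{G}(\mathbb{J}v, \mathbb{J}w) = \mathbb{G}(v, w)$, together with $\mathbb{J}^2 = -I$. These imply the two derived identities $\Omega(\mathbb{J}v, \mathbb{J}w) = \Omega(v, w)$ and $\Omega(v, \mathbb{J}w) = \mathbb{G}(v, w)$, and a short calculation then collapses the three summands respectively to $\Omega(v_1, v_2)^2$, $\mathbb{G}(v_1, v_1)\mathbb{G}(v_2, v_2)$ and $\mathbb{G}(v_1, v_2)^2$. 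Packaging these terms into $2\bigl[\Omega(v_1,v_2)^2 - \det \mathbb{G}(v_i,v_j)\bigr]$ and equating with $2\det[v_1, v_2, \mathbb{J}v_1, \mathbb{J}v_2]$ yields the claimed identity after dividing by $2$ and rearranging.

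The main obstacle is one of sign and orientation bookkeeping: the normalization relating $\Omega \wedge \Omega$ to the 4-form used to evaluate $\det[\cdot]$ must come out to exactly $+2$, for otherwise the identity picks up the wrong sign on one side. To verify this I would fix a $\mathbb{G}$-orthonormal frame $\{e_1, e_2, f_1, f_2\}$ with $\mathbb{G} = \mathrm{diag}(1,1,-1,-1)$, noting that the $\mathbb{J}$-invariance of $\mathbb{G}$ forces $\mathbb{J}$ to preserve both the positive- and negative-definite subspaces (since a timelike image would have negative square-norm) and act on each as a $\pi/2$ rotation whose orientation is pinned down by compatibility with $\Omega$. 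A direct check in this frame then confirms $\Omega \wedge \Omega = 2\,e^1 \wedge e^2 \wedge f^1 \wedge f^2$ in the standard neutral K\"ahler orientation, closing the loop and leaving the rest of the argument as a routine unwinding of the K\"ahler identities.
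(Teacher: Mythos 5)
The paper itself contains no argument for this proposition --- it is quoted from \cite{gak9} --- so there is no in-paper proof to compare against and your argument has to stand on its own. In outline it does: the shuffle expansion is correct, and with $v_3=\mathbb{J}v_1$, $v_4=\mathbb{J}v_2$ together with the derived identities $\Omega(\mathbb{J}\cdot,\mathbb{J}\cdot)=\Omega(\cdot,\cdot)$ and $\Omega(\cdot,\mathbb{J}\cdot)=\mathbb{G}(\cdot,\cdot)$ it gives $(\Omega\wedge\Omega)(v_1,v_2,\mathbb{J}v_1,\mathbb{J}v_2)=2\bigl[\Omega(v_1,v_2)^2-\det\mathbb{G}(v_i,v_j)\bigr]$. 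The proposition is thus reduced, exactly as you say, to the normalisation statement $(\Omega\wedge\Omega)(v_1,v_2,\mathbb{J}v_1,\mathbb{J}v_2)=2\det[v_1,v_2,\mathbb{J}v_1,\mathbb{J}v_2]$, i.e.\ to the assertion that $\det[\cdot]$ is evaluation against the volume form $\tfrac12\Omega\wedge\Omega$.

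The genuine gap is in the closing verification of that normalisation, which you yourself identify as the crux. First, it is not true that for an arbitrary $\mathbb{G}$-orthonormal frame the $\mathbb{J}$-invariance of $\mathbb{G}$ forces $\mathbb{J}$ to preserve the positive- and negative-definite subspaces: invariance only says $\mathbb{J}$ sends spacelike vectors to spacelike vectors, and a spacelike vector need not lie in $\mathrm{span}(e_1,e_2)$ (apply any $O(2,2)$ boost to an adapted frame for a counterexample). What you need is the existence of an adapted frame $(e_1,\mathbb{J}e_1,f_1,\mathbb{J}f_1)$, which follows from diagonalising the associated pseudo-Hermitian form, not from the argument given. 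Second, in such an adapted frame the paper's convention $\Omega(\cdot,\cdot)=\mathbb{G}(\mathbb{J}\cdot,\cdot)$ yields $\Omega(e_1,\mathbb{J}e_1)=+1$ but $\Omega(f_1,\mathbb{J}f_1)=-1$, hence $\Omega=e^1\wedge e^2-f^1\wedge f^2$ and $\Omega\wedge\Omega=-2\,e^1\wedge e^2\wedge f^1\wedge f^2$: the sign is opposite to the $+2$ you claim for that ordered frame. This is not cosmetic: if $\det[\cdot]$ were read off in the orientation $(e_1,\mathbb{J}e_1,f_1,\mathbb{J}f_1)$ the identity would fail --- for $v_1=e_1$, $v_2=f_1$ one gets $\Omega(v_1,v_2)=0$ and $\det[v_1,v_2,\mathbb{J}v_1,\mathbb{J}v_2]=-1$, so the left-hand side is $+1$ while $\det\mathbb{G}(v_i,v_j)=-1$. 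The identity holds precisely when $\det[\cdot]$ is taken with respect to $\tfrac12\Omega\wedge\Omega$ (equivalently, the $\mathbb{G}$-volume form for the orientation that $\Omega\wedge\Omega$ determines), which is what your first paragraph implicitly builds in. So: keep the strategy, but replace the frame argument by the Hermitian diagonalisation, redo the sign computation with this paper's compatibility convention, and state explicitly which orientation defines $\det[\cdot]$; as written, the final confirmation would not go through.
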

\vspace{0.1in}

We turn now to the special case of $TS^2$. In order to compute geometric quantities, we introduce local coordinates. These are readily supplied 
by lifting the standard complex coordinate $\xi$ (obtained by stereographic projection from the south pole on $S^2$) to complex coordinates 
($\xi,\eta$) on $TS^2$. In particular, identify $(\xi,\eta)\in{\mathbb {C}}^2$ with the vector
\[
\eta\frac{\partial}{\partial \xi}+\bar{\eta}\frac{\partial}{\partial \bar{\xi}}\in T_\xi S^2.
\]
These coordinates are holomorphic with respect to the complex structure ${\mathbb J}$:
\[
{\mathbb J}\left(\frac{\partial}{\partial \xi}\right)=i\frac{\partial}{\partial \xi}
\qquad\qquad
{\mathbb J}\left(\frac{\partial}{\partial \eta}\right)=i\frac{\partial}{\partial \eta},
\]
and the symplectic 2-form and neutral metric have the following local expressions:
\[
\Omega=4(1+\xi\bar{\xi})^{-2}{\Bbb{R}}\mbox{e}\left(d\eta\wedge d\bar{\xi}-\frac{2\bar{\xi}\eta}{1+\xi\bar{\xi}}d\xi\wedge d\bar{\xi}\right),
\]
\begin{equation}\label{e:metric}
{\Bbb{G}}=4(1+\xi\bar{\xi})^{-2}{\Bbb{I}}\mbox{m}\left(d\bar{\eta} d\xi+\frac{2\bar{\xi}\eta}{1+\xi\bar{\xi}}d\xi d\bar{\xi}\right).
\end{equation}

\begin{Def}
The canonical coordinates $(\xi,\bar{\xi})$ are called {\it Gauss coordinates} and $R=|\xi|$ is the {\it Gauss radius}. See the comments before 
Proposition \ref{p:Gauss}.
\end{Def}

This metric on $TS^2$ is of neutral signature $(++--)$ and is conformally and scalar flat, but not K\"ahler-Einstein. It sits within a larger class of
natural scalar flat neutral metrics on $TN$, where $(N,g)$ is a Riemannian 2-manifold \cite{gak4}.   

We turn now to immersed surfaces in $TS^2$. Such 2-parameter
families of oriented lines are often referred to as line congruences. 
Consider surfaces which are graphs of local sections of the bundle $\pi:TS^2\rightarrow S^2$. 
Such local sections are given by $\xi\mapsto (\xi,\eta=F(\xi,\bar{\xi}))$, for some function $F:U\rightarrow{\mathbb C}$, for 
$U\subset{\mathbb C}$.

\vspace{0.1in}
\begin{Def}\label{d:spinco}
For a section $\eta=F(\xi,\bar{\xi})$, introduce the weighted complex slopes of $F$:
\[
\sigma=-\partial \bar{F} \qquad\qquad
\rho=\vartheta+i\lambda=(1+\xi\bar{\xi})^2\partial [F(1+\xi\bar{\xi})^{-2}].
\]
Here, and throughout, $\partial$ represents differentiation with respect to $\xi$. The functions $\lambda$ and $\sigma$ are commonly referred
to as the {\it twist} and {\it shear} of the underlying family $\Sigma$ of oriented lines in ${\mathbb E}^3$ \cite{PaR}.
\end{Def}

For later use we also introduce the notation
\[
\Delta=\lambda^2-|\sigma|^2 \qquad\qquad \mu=\frac{|\sigma|}{|\lambda|}.
\]
\vspace{0.1in}

Note the two identities, which follow from these definitions:
\begin{equation}\label{e:id1}
-(1+\xi\bar{\xi})^2\partial\left[\frac{\bar{\sigma}}{(1+\xi\bar{\xi})^2}\right]=\bar{\partial}\rho+\frac{2F}{(1+\xi\bar{\xi})^2},
\end{equation}
\begin{equation}\label{e:id2}
{\mathbb I}{\mbox{m}}\;\partial\left\{(1+\xi\bar{\xi})^2\partial\left[\frac{\bar{\sigma}}{(1+\xi\bar{\xi})^2}\right]\right\}
    =\partial\bar{\partial}\lambda+\frac{2\lambda}{(1+\xi\bar{\xi})^2}.
\end{equation}

The geometric significance of $\lambda$ and $\sigma$ are:

\vspace{0.1in}

\begin{Prop}\cite{gak2}
A surface $\Sigma$ given by a local section $\eta=F(\xi,\bar{\xi})$ is Lagrangian iff $\lambda=0$ and is holomorphic
iff $\sigma=0$.
\end{Prop}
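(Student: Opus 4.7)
The plan is to verify each equivalence by direct computation in the Gauss coordinates $(\xi,\eta)$ already set up in the excerpt. Since the section is parameterised by $\xi\mapsto(\xi,F(\xi,\bar\xi))$, both conditions reduce to first-order equations on $F$, and the definitions of the spin coefficients $\sigma$ and $\rho=\vartheta+i\lambda$ are tailored precisely to package those equations cleanly.

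For the Lagrangian equivalence, I would pull back $\Omega$ along the section. On $\Sigma$ we have $d\eta=\partial F\,d\xi+\bar\partial F\,d\bar\xi$, so that $d\eta\wedge d\bar\xi=\partial F\,d\xi\wedge d\bar\xi$. Substituting into the expression for $\Omega$ from equation (\ref{e:metric}) and its companion gives
\[
\Omega|_\Sigma=4(1+\xi\bar\xi)^{-2}\,{\mathbb R}\mathrm{e}\!\left[\Bigl(\partial F-\tfrac{2\bar\xi F}{1+\xi\bar\xi}\Bigr)d\xi\wedge d\bar\xi\right]
=4(1+\xi\bar\xi)^{-2}\,{\mathbb R}\mathrm{e}\bigl[\rho\,d\xi\wedge d\bar\xi\bigr],
\]
because the bracket is exactly $(1+\xi\bar\xi)^{2}\partial[F(1+\xi\bar\xi)^{-2}]=\rho$. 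Writing $\xi=x+iy$ so that $d\xi\wedge d\bar\xi=-2i\,dx\wedge dy$ and $\rho=\vartheta+i\lambda$, the real part collapses to $8(1+\xi\bar\xi)^{-2}\lambda\,dx\wedge dy$. Thus $\Omega|_\Sigma$ vanishes identically iff $\lambda\equiv 0$, which is one direction of Proposition \ref{p:lag} in coordinates.

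For the holomorphic equivalence, I would use that $(\xi,\eta)$ are ${\mathbb J}$-holomorphic coordinates on $TS^2$, so that $\Sigma$ is a complex submanifold precisely when the graphing map $\xi\mapsto(\xi,F(\xi,\bar\xi))$ is holomorphic. In real components $F=F_R+iF_I$ this means the Cauchy--Riemann equations $\partial_{x^1}F_R=\partial_{x^2}F_I$ and $\partial_{x^1}F_I=-\partial_{x^2}F_R$ hold, equivalently $\bar\partial F=0$. Taking complex conjugates, $\overline{\bar\partial F}=\partial\bar F=-\sigma$, so $\bar\partial F=0\iff\sigma=0$, as required. Alternatively one can verify this frame-wise by showing ${\mathbb J}(\partial_{x^1}|_\Sigma)=\partial_{x^2}|_\Sigma$ forces the same two equations.

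There is no real obstacle here; both statements are essentially tautological once one recognises that the two spin coefficients $\lambda$ and $\sigma$ were defined precisely to be the imaginary part of $\rho$ and the anti-holomorphic derivative of $\bar F$. The only step requiring care is keeping track of signs and the conformal factor $(1+\xi\bar\xi)^{-2}$ when pulling back $\Omega$, and checking that $\rho$ really does arise as the coefficient of $d\xi\wedge d\bar\xi$ in $\Omega|_\Sigma$; this is immediate once one rewrites $\partial F-2\bar\xi F(1+\xi\bar\xi)^{-1}$ using the product rule as $(1+\xi\bar\xi)^{2}\partial[F(1+\xi\bar\xi)^{-2}]$.
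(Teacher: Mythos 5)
Your computation is correct: pulling back $\Omega$ along the section gives exactly $4(1+\xi\bar{\xi})^{-2}{\mathbb R}\mbox{e}[\rho\,d\xi\wedge d\bar{\xi}]=8(1+\xi\bar{\xi})^{-2}\lambda\,dx\wedge dy$, so $\Omega|_\Sigma=0$ iff $\lambda\equiv 0$, and since $(\xi,\eta)$ are ${\mathbb J}$-holomorphic coordinates the graph is a complex curve iff $\bar{\partial}F=0$, i.e. $\sigma=0$; the paper states this proposition without proof, citing \cite{gak2}, and your direct coordinate argument is precisely the expected one. One minor mislabel: the displayed equivalence is the full content of the Lagrangian half of the statement (both directions), not merely \emph{one direction of} Proposition \ref{p:lag}, which concerns the existence of surfaces in ${\mathbb E}^3$ orthogonal to the line congruence rather than this coordinate characterisation.
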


\vspace{0.1in}

For the normal congruence
$\Sigma$ of a surface $S$ in ${\mathbb E}^3$ the following relationship between the quantities $\sigma$ and $\rho$ and the principal 
curvatures and directions of $S$ hold:

\begin{Prop}
Let $S$ be a convex surface in ${\mathbb E}^3$  and $\Sigma\subset TS^2$ be the surface formed by the 2-parameter family of oriented normals to $S$.
Then $\Sigma$ is the graph of a section and $\lambda=0$.

Moreover
\[
|\sigma|={\textstyle{\frac{1}{2}}}|\kappa_1^{-1}-\kappa_2^{-1}|
\qquad
r+\rho={\textstyle{\frac{1}{2}}}(\kappa_1^{-1}+\kappa_2^{-1}),
\]
where $\kappa_1,\kappa_2$ are the principal curvatures of $S$ and $r$ is the support function of $S$ (see below). The argument of $\sigma$
gives the principal directions of $S$.
\end{Prop}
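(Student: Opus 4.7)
The plan is to parameterise $S$ by its inverse Gauss map and compute everything directly in the complex coordinate $\xi$. Strict convexity of $S$ gives positive Gauss curvature, so by Proposition \ref{p:Gauss} the congruence $\Sigma$ of oriented normals is the graph of a section $\eta = F(\xi,\bar\xi)$. Since $\Sigma$ arises as the normal congruence of a surface, Proposition \ref{p:lag} gives that $\Sigma$ is Lagrangian, and the immediately preceding proposition (characterising Lagrangian sections by $\lambda=0$) yields $\lambda = 0$.

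For the curvature formulas, decompose the position vector of $S$ as $x(\xi,\bar\xi) = r(\xi,\bar\xi)\,n(\xi,\bar\xi) + T(\xi,\bar\xi)$, where $n$ is the outward unit normal identified with a point of $S^2$ via stereographic projection, $r = \langle x, n\rangle$ is the support function, and $T\perp n$ is the tangential displacement. The tangential part $T$ is precisely the $\mathbb E^3$-incarnation of the tangent vector $\eta\partial_\xi + \bar\eta\partial_{\bar\xi}\in T_\xi S^2$ under the standard identification that defines $TS^2$ as the space of oriented lines. Differentiating gives $dx = dr\,n + r\,dn + dT$; since $\langle dn,n\rangle = 0$ and the Gauss map preserves tangent planes, projecting onto $n^\perp$ yields the first fundamental form $I$ and, from $\mathrm{II} = -\langle dn,dx\rangle$, the second fundamental form, both expressed in the complex frame $\{\partial_\xi,\partial_{\bar\xi}\}$.

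A direct expansion, using the round metric $\tfrac{4}{(1+\xi\bar\xi)^2}d\xi\,d\bar\xi$ and the covariant derivative of the round connection to express $dT$, shows that the complex coefficients of $\mathrm{II}$ assemble into precisely the quantities $\sigma = -\partial\bar F$ and $\rho = \vartheta + i\lambda = (1+\xi\bar\xi)^2\partial[F(1+\xi\bar\xi)^{-2}]$ of Definition \ref{d:spinco}, with $r$ entering additively in the same way as $\vartheta$. Imposing $\lambda = 0$ and raising one index, the shape operator $W = I^{-1}\cdot\mathrm{II}$ has inverse expressible in a local orthonormal frame as
\begin{equation*}
W^{-1} \;=\; (r+\vartheta)\,\mathrm{Id} \;+\; \begin{pmatrix} \mathrm{Re}\,\sigma & \mathrm{Im}\,\sigma \\ \mathrm{Im}\,\sigma & -\mathrm{Re}\,\sigma \end{pmatrix}.
\end{equation*}
The eigenvalues of this symmetric matrix are $(r+\vartheta)\pm|\sigma|$, and these must equal $\kappa_1^{-1}$ and $\kappa_2^{-1}$; this gives the stated formulas for the half-sum and the absolute value of the half-difference. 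The eigenvectors in the orthonormal frame lie along directions making angle $\tfrac12\arg\sigma$ with $\partial_\xi$, which identifies the principal directions.

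The main obstacle is the bookkeeping: keeping track of the conformal factors $(1+\xi\bar\xi)^{-2}$ coming from the round metric, and carefully identifying the complex tangent vector $\eta\partial_\xi$ with its Euclidean image in $n^\perp$. The factor of $\tfrac12$ in the relation between $\arg\sigma$ and the principal direction reflects the spin-weight $2$ character of $\sigma$: a rotation of the tangent frame by angle $\theta$ multiplies $\sigma$ by $e^{2i\theta}$, which is exactly the geometric manifestation of the shape operator being a symmetric rank-$2$ tensor whose principal axes are defined modulo $\pi$.
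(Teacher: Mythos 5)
The paper states this Proposition without proof: it is background material imported from the authors' earlier work \cite{gak2}, so there is no internal argument to compare yours against. Your proposal is essentially correct and follows what is surely the intended derivation. The reduction of the first two claims to Propositions \ref{p:lag} and \ref{p:Gauss} and to the characterisation of Lagrangian sections by $\lambda=0$ is exactly right, and the curvature formulas are the classical support-function computation: writing $x=rn+T$ with $T=x-\langle x,n\rangle n$ identified with the fibre coordinate $\eta$ (this is the same identification that underlies equation (\ref{e:mt}) and the formula for $\chi$), the inverse shape operator becomes $\mathrm{Hess}\,r+r\,\mathrm{Id}$ on $S^2$, whose trace and trace-free parts are $2(r+\vartheta)$ and the spin-weight-2 quantity $\sigma$, giving $\kappa_{1,2}^{-1}=r+\rho\pm|\sigma|$, which is equivalent to the stated half-sum and half-difference formulas, with $\arg\sigma$ encoding the principal axes. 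Two small caveats: the whole content lives in the step you describe as "a direct expansion" — matching the coefficients of $\mathrm{II}$ against the conformally weighted slopes of Definition \ref{d:spinco} is precisely the bookkeeping with the factors $(1+\xi\bar\xi)^{-2}$ that you defer, so as written the proposal is a correct plan rather than a complete verification; and when you diagonalise you should say that the frame is orthonormal for the round metric (the third fundamental form), since that is the metric with respect to which $W^{-1}=\mathrm{Hess}\,r+r\,\mathrm{Id}$ is symmetric. As a useful sanity check that your conventions agree with the paper's, the formula $\kappa_{1,2}^{-1}=r+\rho\pm|\sigma|$ applied to Example \ref{ex1} (where $\rho=0$, $r=\xi^2+\bar\xi^2+C$, $\sigma=-(1+4\xi\bar\xi+3\xi^2\bar\xi^2)$) reproduces exactly the convexity criterion $C>1+2(1+2\sin^2\theta)R^2+3R^4$ and the umbilic-freeness claimed there.
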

\vspace{0.1in}

To construct the 1-parameter
family of parallel surfaces in ${\mathbb E}^3$ from a Lagrangian section consider the following.

\vspace{0.1in}

\begin{Prop}\cite{gak2}
Let $\Sigma$ be a local Lagrangian section given by $\xi\mapsto (\xi,\eta=F(\xi,\bar{\xi}))$, for some function $F:{\mathbb C}\rightarrow{\mathbb C}$. 
Then there exists a real function $\xi\mapsto r(\xi,\bar{\xi})$, satisfying
\[
\bar{\partial}r=\frac{2F}{(1+\xi\bar{\xi})^2}.
\]
Such a function is defined up to an additive real constant $C$. In terms of Euclidean coordinates the surfaces 
\begin{equation}\label{e:mt}
x^1+ix^2=\frac{2(F-\bar{F}\xi^2)+2\xi(1+\xi\bar{\xi})r}{(1+\xi\bar{\xi})^2},
\qquad
x^3=\frac{-2(F\bar{\xi}+\bar{F}\xi)+(1-\xi^2\bar{\xi}^2)r}{(1+\xi\bar{\xi})^2},
\end{equation}
are orthogonal to the oriented lines of $\Sigma$.
\end{Prop}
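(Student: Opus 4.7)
The plan is to split the argument into two independent parts: first, existence and uniqueness of the real scalar $r$ satisfying the $\bar\partial$-equation, and second, direct verification that the parametric surface (\ref{e:mt}) has oriented normals equal to the lines of $\Sigma$.

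For the first part, I would set $G := 2F/(1+\xi\bar\xi)^2$ and observe that, since $r$ is required to be real, the single equation $\bar\partial r = G$ is equivalent to its conjugate $\partial r = \bar G$, i.e.\ to the 1-form equation $dr = \bar G\, d\xi + G\, d\bar\xi$. On a simply-connected coordinate chart in $S^2$, this system has a real solution iff the right-hand 1-form is closed, equivalently iff $\partial G - \bar\partial\bar G = 0$, i.e.\ iff $\partial G$ is real. A direct computation using Definition \ref{d:spinco} gives
\[
\partial G \;=\; 2\partial\!\left[F(1+\xi\bar\xi)^{-2}\right] \;=\; \frac{2\rho}{(1+\xi\bar\xi)^2},
\]
so reality of $\partial G$ is equivalent to $\lambda = \operatorname{Im}\rho = 0$, which is precisely the Lagrangian hypothesis (invoked via the proposition immediately preceding). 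Uniqueness up to an additive real constant is then automatic: the difference of two real solutions is a $\bar\partial$-closed real function, hence locally constant.

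For the second part, I would use the identification of $TS^2$ with the space of oriented lines in $\mathbb{E}^3$: the point $(\xi,\eta)$ corresponds to the oriented line with direction given by inverse stereographic projection from the south pole,
\[
\hat n(\xi) \;=\; \frac{1}{1+\xi\bar\xi}\bigl(\xi+\bar\xi,\; -i(\xi-\bar\xi),\; 1-\xi\bar\xi\bigr),
\]
together with the translation component encoded by $\eta = F$. A general point on this line is $y = p(F,\xi,\bar\xi) + r\,\hat n(\xi)$, where $p$ denotes the $F$-dependent part. The orthogonality condition $\hat n \cdot \partial_\xi y = 0$ reduces, since $\hat n\cdot\partial_\xi\hat n = 0$, to $\partial_\xi r = -\hat n\cdot\partial_\xi p$, which a routine calculation shows is exactly $\bar G = 2\bar F/(1+\xi\bar\xi)^2$. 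Substituting the real $r$ from the first step into $y = p + r\hat n$ and separating $r$-parts from $F$-parts should reproduce (\ref{e:mt}) line-by-line; note in particular that the $r$-dependent piece is manifestly $r\,\hat n(\xi)$, which in coordinates gives the factors $2\xi(1+\xi\bar\xi)/(1+\xi\bar\xi)^2$ and $(1-\xi^2\bar\xi^2)/(1+\xi\bar\xi)^2$ appearing in the formula.

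The main obstacle is bookkeeping in the second step: one has to fix once and for all the convention identifying $\eta \in T_\xi S^2$ with a translation along the line in $\mathbb{E}^3$, and then verify that the formula for $p(F,\xi,\bar\xi)$ it produces, combined with $r\hat n(\xi)$, really matches (\ref{e:mt}). This is not conceptually deep but requires careful tracking of the stereographic factors and signs; by contrast, the analytic content (integrability of a $\bar\partial$-equation reducing to the Lagrangian condition) is transparent.
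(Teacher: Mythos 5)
Your proposal is correct; note, though, that the paper itself gives no proof of this Proposition (it is quoted from \cite{gak2}), so what you have written is a self-contained replacement rather than a variant of an argument appearing in the text. Both halves check out. The integrability step is exactly as you say: for real $r$ the equation $\bar{\partial}r=G$ with $G=2F(1+\xi\bar{\xi})^{-2}$ is the statement that the real $1$-form $\bar{G}\,d\xi+G\,d\bar{\xi}$ is exact, closedness is the reality of $\partial G=2\rho(1+\xi\bar{\xi})^{-2}$, and by Definition \ref{d:spinco} this is precisely $\lambda=0$, i.e.\ the Lagrangian condition; uniqueness up to a real constant is immediate. For the second half you can sidestep the convention issue you flag: take the identification of $(\xi,\eta)$ with an oriented line to be the one encoded by (\ref{e:mt}) itself, with direction $\hat{n}(\xi)$ given by $n^1+in^2=2\xi/(1+\xi\bar{\xi})$, $n^3=(1-\xi\bar{\xi})/(1+\xi\bar{\xi})$, and $p$ the $r$-independent part of (\ref{e:mt}) with $\eta=F$. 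A two-line computation shows $p\cdot\hat{n}=0$ identically, so $\hat{n}\cdot\partial_\xi p=-p\cdot\partial_\xi\hat{n}$, and evaluating the right-hand side yields $\hat{n}\cdot\partial_\xi p=-2\bar{F}/(1+\xi\bar{\xi})^2$; hence the orthogonality condition $\hat{n}\cdot\partial_\xi(p+r\hat{n})=0$ is exactly $\partial r=2\bar{F}/(1+\xi\bar{\xi})^2$, the conjugate of the stated equation, confirming your claimed identity without any external bookkeeping. (As a sanity check, this convention is the one consistent with Example \ref{ex1}; the closest-point formula quoted later before the $\chi$ computation differs from it by a sign, which is harmless there since only $\chi^2$ is used.) The only hypotheses to keep explicit are the reality of $r$ when conjugating and simple connectivity of the domain for the Poincar\'e lemma, both of which you already note.
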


\vspace{0.1in}

\begin{Def}
The function $r:S^2\rightarrow{\mathbb R}$ in the previous Proposition is called the {\it support function}. Given a point $p$ on a closed convex
surface $S$, $r$ is the distance between $p$ and the point closest to the origin on the normal line through $p$.

The surfaces obtained by replacing $r$ by $r+C$ are called {\it parallel surfaces}.
\end{Def}

\vspace{0.1in}

Returning to Example \ref{ex1} we illustrate the preceding.

\begin{Ex}
Consider the section with $F=(1+\xi\bar{\xi})^2\bar{\xi}$. This is Lagrangian since
\[
\lambda={\mathbb I}m\;(1+\xi\bar{\xi})^2\partial\left(\frac{F}{(1+\xi\bar{\xi})^2}\right)=0,
\]
and the support function is easily found to be $r=\xi^2+\bar{\xi}^2+C$. 

To see that this surface has no umbilics, compute
\[
\sigma=-\partial\bar{F}=-(1+4\xi\bar{\xi}+3\xi^2\bar{\xi}^2)<0.
\]

To explicitly construct the surfaces, substitute the expressions for $F$ and $r$ in equations (\ref{e:mt}). The results are as given
in Example \ref{ex1}.

\end{Ex}

\vspace{0.1in}

For the induced metric we have:

\vspace{0.1in}

\begin{Prop}\label{p:ts2indmet}
The metric induced on the graph of a section by the K\"ahler metric is given in coordinates ($\xi,\bar{\xi}$) by;
\[
g=\frac{2}{(1+\xi\bar{\xi})^2}\left[\begin{matrix}
i\sigma & -\lambda\\
-\lambda & -i\bar{\sigma}\\
\end{matrix}
\right],
\]
with inverse
\[
g^{-1}=\frac{(1+\xi\bar{\xi})^2}{2(\lambda^2-\sigma\bar{\sigma})}\left[\begin{matrix}
i\bar{\sigma} & -\lambda\\
-\lambda & -i\sigma\\
\end{matrix}
\right].
\]
\end{Prop}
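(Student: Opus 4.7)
The plan is a direct pullback calculation. First, rewriting the expression for $\mathbb{G}$ in equation (\ref{e:metric}) via $\operatorname{Im} z = (z - \bar z)/(2i)$, I obtain
\[
\mathbb{G} = \frac{2}{i(1+\xi\bar\xi)^{2}}\left[d\bar\eta\cdot d\xi - d\eta\cdot d\bar\xi + \frac{2(\bar\xi\eta - \xi\bar\eta)}{1+\xi\bar\xi}\,d\xi\cdot d\bar\xi\right],
\]
where $\cdot$ denotes the symmetric tensor product of $1$-forms. The strategy is then to substitute $\eta = F(\xi,\bar\xi)$, expand $d\eta$ and $d\bar\eta$ in terms of $d\xi,d\bar\xi$, and read off the coefficients.

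To do this I extract from Definition \ref{d:spinco} the four derivatives of $F$ and $\bar F$ that enter the pullback. From $\sigma = -\partial\bar F$ one has $\partial_\xi\bar F = -\sigma$, hence $\partial_{\bar\xi}F = -\bar\sigma$ by conjugation. Expanding the defining formula for $\rho = \vartheta + i\lambda$ gives $\partial_\xi F = \rho + 2F\bar\xi/(1+\xi\bar\xi)$, and its conjugate yields $\partial_{\bar\xi}\bar F = \bar\rho + 2\bar F\xi/(1+\xi\bar\xi)$. Substituting $d\eta = \partial_\xi F\,d\xi + \partial_{\bar\xi}F\,d\bar\xi$ and $d\bar\eta = \partial_\xi\bar F\,d\xi + \partial_{\bar\xi}\bar F\,d\bar\xi$ into the bracket, the $(d\xi)^{2}$ and $(d\bar\xi)^{2}$ coefficients are immediately $-\sigma$ and $+\bar\sigma$. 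For the cross coefficient, $\partial_{\bar\xi}\bar F - \partial_\xi F$ produces $(\bar\rho - \rho) + 2(\bar F\xi - F\bar\xi)/(1+\xi\bar\xi) = -2i\lambda + 2(\bar F\xi - F\bar\xi)/(1+\xi\bar\xi)$, and the $F$-dependent piece cancels exactly against the affine correction $2(\bar\xi F - \xi\bar F)/(1+\xi\bar\xi)$ inherited from the metric. The cross term collapses to $-2i\lambda$. Multiplying the bracket by $1/i = -i$ converts the coefficients $-\sigma$, $-2i\lambda$, $\bar\sigma$ into $i\sigma$, $-2\lambda$, $-i\bar\sigma$, which is exactly the matrix in the statement.

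The inverse is then the standard $2\times 2$ formula: the determinant works out to $\det g = -4(\lambda^{2}-\sigma\bar\sigma)/(1+\xi\bar\xi)^{4}$, and swapping diagonal entries and negating the off-diagonals delivers the stated expression for $g^{-1}$. The single subtle point in the whole computation is the cancellation of the $F$-dependent pieces in the cross term: this cancellation is what ensures that the induced metric depends on $F$ only through the intrinsic quantities $\sigma$ and $\lambda$, and it reflects the Levi-Civita correction on $S^{2}$ built into the affine term of $\mathbb{G}$.
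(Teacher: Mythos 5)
Your proposal is correct and takes essentially the same route as the paper: the paper's proof is just the one-line remark that the result follows from pulling back the neutral metric (\ref{e:metric}) along the section $\eta=F(\xi,\bar{\xi})$, which is exactly the computation you carry out in detail. Your identification of the derivatives of $F$ via Definition \ref{d:spinco}, the cancellation of the $F$-dependent pieces in the cross term, and the $2\times 2$ inversion all check out.
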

\begin{proof}
This follows from pulling back the neutral metric (\ref{e:metric}) along a local section $\eta=F(\xi,\bar{\xi})$.
\end{proof}

\vspace{0.1in}

\begin{Def}
A surface $\Sigma\subset TS^2$ is {\it positive} if the induced metric on $\Sigma$ is positive definite.
\end{Def}

\vspace{0.1in}

\begin{Prop}\label{p:indmet}
The induced metric on a Lagrangian surface is Lorentz, except at complex points, where it is degenerate. The induced metric on a holomorphic surface is 
positive, except at complex points, where it is degenerate. 
\end{Prop}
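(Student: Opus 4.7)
The plan is a direct computation from Proposition \ref{p:ts2indmet}. Writing $\xi=x+iy$ and expanding the complex matrix of $g$ in the real cobasis $(dx,dy)$ produces a real symmetric $2\times 2$ matrix $g_{\mathbb R}$ whose trace and determinant come out to
\[
\operatorname{tr} g_{\mathbb R} = -\frac{8\lambda}{(1+\xi\bar\xi)^2},\qquad \det g_{\mathbb R}=\frac{16(\lambda^2-|\sigma|^2)}{(1+\xi\bar\xi)^4}=\frac{16\Delta}{(1+\xi\bar\xi)^4}.
\]
Since a real symmetric $2\times 2$ form is classified up to signature by the signs of these two invariants, the proposition reduces to a two-case inspection.

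For a Lagrangian section $\lambda\equiv 0$, so the trace vanishes identically and $\det g_{\mathbb R}=-16|\sigma|^2/(1+\xi\bar\xi)^4\le 0$. Wherever $\sigma\neq 0$ the determinant is strictly negative, which in dimension two is the Lorentz signature $(+-)$; both invariants vanish precisely when $\sigma=0$. By Proposition \ref{p:index} together with the identification $|\sigma|=\tfrac12|\kappa_1^{-1}-\kappa_2^{-1}|$, the locus $\sigma=0$ coincides with the complex (equivalently, umbilic) locus of $\Sigma$, so the induced metric is Lorentz off the complex points and degenerates exactly on them.

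For a holomorphic section $\sigma\equiv 0$, so $\det g_{\mathbb R}=16\lambda^2/(1+\xi\bar\xi)^4\ge 0$ while $\operatorname{tr} g_{\mathbb R}=-8\lambda/(1+\xi\bar\xi)^2$ is nonzero wherever the determinant is. The eigenvalues therefore share a nonzero sign, giving a definite induced metric (positive after arranging the orientation so that the trace is positive). The degenerate locus is $\lambda=0$, and by Proposition \ref{p:callib} applied with $\Omega|_{T\Sigma}^2=\det\mathbb{G}|_{T\Sigma}$ this is precisely the locus where the holomorphic surface is additionally Lagrangian; this is the natural reading of ``complex points'' on the holomorphic branch of the statement.

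There is no substantive obstacle here---the entire argument reduces to identifying the sign of the trace and determinant of a $2\times 2$ matrix once the change from the complex cobasis $(d\xi,d\bar\xi)$ to the real cobasis $(dx,dy)$ has been carried out. The only conceptual point worth spelling out is the harmless abuse in calling $\lambda=0$ the ``complex point'' locus on a holomorphic surface, which as noted above matches Proposition \ref{p:callib} exactly.
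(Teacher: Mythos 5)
Your proof is correct and takes essentially the same route as the paper, which simply reads off the sign of the determinant of the induced metric from Proposition \ref{p:ts2indmet} (noting it is a special case of Proposition \ref{p:callib}); your real-coordinate trace/determinant computation just makes this explicit. The only slip is the parenthetical about ``arranging the orientation'': definiteness of the induced metric is not an orientation choice but is governed by the sign of $\lambda$ (positive definite precisely when $\lambda<0$, which the paper arranges by convention elsewhere), though this does not affect the substance.
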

\begin{proof}
By the previous Proposition we see that the determinant of the induced metric is $2(1+\xi\bar{\xi})^{-4}(\lambda^2-\sigma\bar{\sigma})$,
and the result follows. This is, in fact, a special case of Proposition \ref{p:callib}.
\end{proof}

\vspace{0.1in}

\begin{Def}\label{d:perpdist}
Let $\chi:TS^2\rightarrow {\mathbb R}$ be the map that takes an oriented line to the square of the perpendicular distance from the line to the origin. 
\end{Def}
\vspace{0.1in}

\begin{Prop}
The coordinate expression for $\chi$ is:
\[
\chi^2(\xi,\bar{\xi},\eta,\bar{\eta})=\frac{4\eta\bar{\eta}}{(1+\xi\bar{\xi})^2}.
\]
\end{Prop}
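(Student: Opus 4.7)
The plan is to unwind the definitions: under the standard identification $TS^2 \cong \{\text{oriented lines in }{\mathbb E}^3\}$, a point $(\xi,\eta)\in TS^2$ corresponds to the line whose direction is the unit vector $\hat{u}(\xi)\in S^2\subset{\mathbb E}^3$ obtained by stereographic inversion, and whose closest approach to the origin is the vector $\vec v(\xi,\eta)\in\hat u(\xi)^\perp\subset{\mathbb E}^3$ obtained from $\eta\partial_\xi+\bar\eta\partial_{\bar\xi}\in T_\xi S^2$ by identifying $T_\xi S^2$ with $\hat u(\xi)^\perp$ via the standard embedding $S^2\hookrightarrow{\mathbb E}^3$. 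Under this identification the squared perpendicular distance of the line from the origin equals $|\vec v|^2_{{\mathbb E}^3}$, which in turn equals the squared length of $\eta\partial_\xi+\bar\eta\partial_{\bar\xi}$ measured in the round metric on $S^2$.

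First I would recall (or verify in local coordinates) that the round metric on $S^2$ in the stereographic coordinate $\xi$ from the south pole has the conformal form
\[
ds^2_{S^2}=\frac{4\,d\xi\,d\bar\xi}{(1+\xi\bar\xi)^2}.
\]
Then I would apply this to the tangent vector $\eta\partial_\xi+\bar\eta\partial_{\bar\xi}$ to get
\[
\bigl\|\eta\partial_\xi+\bar\eta\partial_{\bar\xi}\bigr\|^2_{S^2}=\frac{4\eta\bar\eta}{(1+\xi\bar\xi)^2}.
\]
The identification $T_\xi S^2\cong \hat u(\xi)^\perp\subset{\mathbb E}^3$ is an isometry between the round metric and the Euclidean metric restricted to $\hat u(\xi)^\perp$, so this quantity equals $|\vec v|^2_{{\mathbb E}^3}$, which is the claimed expression.

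The only nontrivial step is verifying that under the chosen identification the perpendicular offset vector in ${\mathbb E}^3$ is genuinely the image of $\eta\partial_\xi+\bar\eta\partial_{\bar\xi}$ under the isometric embedding of $T_\xi S^2$ as $\hat u(\xi)^\perp$; this is a matter of convention which is already fixed by the coordinate setup in Section \ref{s:nkm} (in particular by the explicit expressions for $x^1,x^2,x^3$ in Example \ref{ex1} and in equation (\ref{e:mt})). Once this convention is pinned down, the computation is essentially a one-line application of the round-metric formula. I do not anticipate any serious obstacle; the main bookkeeping is ensuring consistency of factors of $2$ between the $d\xi\,d\bar\xi$ convention and the symmetric tensor convention used elsewhere in the paper, and checking that the stereographic projection is taken from the south pole as declared in the definition of Gauss coordinates.
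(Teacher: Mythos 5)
Your proposal is correct, but it takes a more conceptual route than the paper. The paper's own proof is a bare-hands computation: it specializes equation (\ref{e:mt}) (with $r=0$, $F=\eta$) to obtain the Euclidean coordinates $(x^1_0,x^2_0,x^3_0)$ of the point on the line closest to the origin, and then sums the squares, the cross terms cancelling to give $4\eta\bar\eta/(1+\xi\bar\xi)^2$. You instead observe that the foot-point vector lies in $\hat u(\xi)^\perp\cong T_\xi S^2$ isometrically, so $\chi^2$ is the round-metric squared norm of $\eta\partial_\xi+\bar\eta\partial_{\bar\xi}$, which the conformal factor $4(1+\xi\bar\xi)^{-2}$ produces immediately. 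This is a valid argument and explains \emph{why} the conformal factor of the round sphere appears, which the paper's computation leaves implicit. The one step you defer --- that under the paper's conventions the tangent vector $\eta\partial_\xi+\bar\eta\partial_{\bar\xi}$ pushes forward under inverse stereographic projection to exactly the perpendicular offset vector (no extra factor, no rotation) --- does hold, and checking it against (\ref{e:mt}) with $r=0$ amounts to essentially the same small computation the paper performs, except as a vector identity rather than a scalar one. So the two proofs carry comparable content; yours trades the explicit norm computation for the conformality of stereographic projection plus a componentwise check of the identification, and it generalizes more cleanly (e.g.\ it makes clear the statement is just the round metric evaluated on the fibre coordinate), while the paper's is self-contained given (\ref{e:mt}) and avoids any appeal to the metric on $S^2$.
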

\begin{proof}
The point on an oriented line $(\xi,\eta)$ which lies closest to the origin has Euclidean coordinates (see equation (\ref{e:mt}))
\[
x^1_0+ix^2_0=-\frac{2(\eta-\bar{\eta}\xi^2)}{(1+\xi\bar{\xi})^2},
\qquad
x^3_0=-\frac{2(\eta\bar{\xi}+\bar{\eta}\xi)}{(1+\xi\bar{\xi})^2},
\]
and so the perpendicular distance to the origin is
\[
\chi^2= (x^1_0)^2+(x^2_0)^2+(x^3_0)^2 =\frac{4\eta\bar{\eta}}{(1+\xi\bar{\xi})^2}.
\] 
\end{proof}

\vspace{0.1in}

\begin{Note}
In Section \ref{s:reform} we have seen that umbilic points on surfaces in ${\mathbb
E}^3$ give rise to complex points on Lagrangian surfaces in
$TS^2$ and now we see that these correspond to degeneracies
in the induced Lorentz metric. 
Thus the Carath\'eodory conjecture bounds the number of degenerate points on certain Lorentz surfaces, and the hyperbolic nature 
(and hence difficulty) of the problem becomes evident.
\end{Note}

\vspace{0.1in}

In order to continue, we introduce geometric tools which will prove useful later.

\vspace{0.1in}

\subsection{The second fundamental form of a positive surface}

Let $\Sigma\rightarrow$$TS^2$ be an immersed surface and assume that the induced metric on $\Sigma$ is positive,
so that for $\gamma\in\Sigma$ we have the orthogonal splitting $T_\gamma TS^2=T_\gamma \Sigma\oplus N_\gamma\Sigma$. In what follows
we omit the subscript $\gamma$.

\vspace{0.1in}

\begin{Prop}\label{p:frames}
If $\Sigma$ is a positive surface given by the graph $\xi\rightarrow(\xi,\eta=F(\xi,\bar{\xi}))$, then the following vector fields form
an orthonormal basis for $TTS^2$ along $\Sigma$:
\[
E_{(1)}=2{\mathbb R}{ e}\left[\alpha_1\left(\frac{\partial}{\partial \xi}+\partial F\frac{\partial}{\partial \eta}
          +\partial \bar{F}\frac{\partial}{\partial \bar{\eta}}    \right) \right],
\]
\[
E_{(2)}=2{\mathbb R}{ e}\left[\alpha_2\left(\frac{\partial}{\partial \xi}+\partial F\frac{\partial}{\partial \eta}
          +\partial \bar{F}\frac{\partial}{\partial \bar{\eta}}    \right) \right],
\]
\[
E_{(3)}=2{\mathbb R}{ e}\left[\alpha_2\left(\frac{\partial}{\partial \xi}
       +(\bar{\partial} \bar{F}-2(F\partial u-\bar{F}\bar{\partial} u))\frac{\partial}{\partial \eta}
          -\partial \bar{F}\frac{\partial}{\partial \bar{\eta}}    \right) \right],
\]
\[
E_{(4)}=2{\mathbb R}{ e}\left[\alpha_1\left(\frac{\partial}{\partial \xi}
       +(\bar{\partial} \bar{F}-2(F\partial u-\bar{F}\bar{\partial} u))\frac{\partial}{\partial \eta}
          -\partial \bar{F}\frac{\partial}{\partial \bar{\eta}}    \right) \right],
\]
for
\[
\alpha_1=\frac{e^{-u-{\scriptstyle \frac{1}{2}}\phi i+{\scriptstyle \frac{1}{4}}\pi i}}
      {\sqrt{2}[-\lambda-|\sigma|]^{\scriptstyle \frac{1}{2}}}
\qquad\qquad
\alpha_2=\frac{e^{-u-{\scriptstyle \frac{1}{2}}\phi i-{\scriptstyle \frac{1}{4}}\pi i}}
      {\sqrt{2}[-\lambda+|\sigma|]^{\scriptstyle \frac{1}{2}}},
\]
where $\bar{\partial}F=-|\sigma|e^{-i\phi}$ and we have introduced $e^{2u}=4(1+\xi\bar{\xi})^{-2}$. Note that when 
$|\sigma|=0$, then $\phi$ is just a gauge freedom for the frame.

Moreover, $\{E_{(1)},E_{(2)}\}$ span $T\Sigma$ and $\{E_{(3)},E_{(4)}\}$ span $N\Sigma$. 
\end{Prop}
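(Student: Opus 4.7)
The plan is to verify by direct computation that the four proposed vector fields form an orthonormal basis of $T(TS^2)|_\Sigma$ with the stated tangent/normal splitting.

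The tangency of $E_{(1)}$ and $E_{(2)}$ is immediate, since each has the form $2\mathrm{Re}[\alpha_j\,e_\xi]$ where $e_\xi := \partial/\partial\xi + \partial F\,\partial/\partial\eta + \partial\bar F\,\partial/\partial\bar\eta$ is the pushforward of $\partial/\partial\xi$ under the section $\xi\mapsto(\xi,F(\xi,\bar\xi))$. Using Proposition \ref{p:ts2indmet}, I would compute the induced metric on a general real tangent vector $V=2\mathrm{Re}[\alpha\,e_\xi]$: writing $\alpha=|\alpha|e^{i\psi}$ and $\sigma=|\sigma|e^{i\phi}$, a short calculation yields
\[
g(V,V)\;=\;-\frac{4|\alpha|^2}{(1+\xi\bar\xi)^2}\bigl[\lambda+|\sigma|\sin(2\psi+\phi)\bigr].
\]
The two eigendirections of this quadratic form occur at $\sin(2\psi+\phi)=\pm 1$, i.e.\ at $\psi=\pi/4-\phi/2$ and $\psi=-\pi/4-\phi/2$—precisely the phases carried by $\alpha_1$ and $\alpha_2$. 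Positivity of $\Sigma$ forces $-\lambda>|\sigma|$, so both eigenvalues are strictly positive, and the magnitudes $|\alpha_j|$ are prescribed so that $g(E_{(j)},E_{(j)})=1$. Orthogonality $g(E_{(1)},E_{(2)})=0$ follows from the same computation using $\psi_1-\psi_2=\pi/2$.

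For the normal bundle, I would write a generic complex vector $n_\xi=\partial/\partial\xi+C\,\partial/\partial\eta+D\,\partial/\partial\bar\eta$ and impose $\mathbb{G}(n_\xi,e_\xi)=\mathbb{G}(n_\xi,e_{\bar\xi})=0$ in the complex-bilinear extension of the neutral metric. The first condition, computed from (\ref{e:metric}), forces $D=\sigma$, matching the $-\partial\bar F$ coefficient of $\partial/\partial\bar\eta$. The second condition is the delicate one: the $\frac{2\bar\xi\eta}{1+\xi\bar\xi}d\xi\,d\bar\xi$ term of the K\"ahler metric, restricted to the graph via $\eta=F$, contributes a piece that is cancelled by the $u$-derivative correction $-2(F\partial u-\bar F\bar\partial u)$ in the coefficient of $\partial/\partial\eta$, using the identities $\partial u=-\bar\xi/(1+\xi\bar\xi)$ and $\bar\partial u=-\xi/(1+\xi\bar\xi)$ implicit in $e^{2u}=4(1+\xi\bar\xi)^{-2}$.

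Once $n_\xi$ is known to span $N\Sigma\otimes\mathbb{C}$ together with its conjugate, the orthonormality of $\{E_{(3)},E_{(4)}\}$ reduces to the same eigenvalue analysis as on $T\Sigma$, but with a sign flip: because the ambient signature is $(++--)$ and $T\Sigma$ is positive definite, $N\Sigma$ must have signature $(--)$. This flip interchanges which of the two phases $\pm\pi/4-\phi/2$ corresponds to the larger versus smaller eigenvalue of the normal quadratic form, so the normalizing constants $\alpha_1$ and $\alpha_2$ swap between $E_{(3)}$ and $E_{(4)}$, explaining the specific pairing in the statement. The main obstacle is the derivation of the correct form of $n_\xi$: tracking how the $\eta$-dependence of the K\"ahler metric produces exactly the $\partial u,\bar\partial u$ terms in the $\partial/\partial\eta$-coefficient after restriction to the graph.
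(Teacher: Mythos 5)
Your verification plan is correct and is essentially the proof the paper leaves implicit: Proposition \ref{p:frames} is stated without proof, the intended argument being exactly this direct computation — pull back ${\mathbb G}$ to the graph (Proposition \ref{p:ts2indmet}), diagonalise the resulting quadratic form $-\frac{4|\alpha|^2}{(1+\xi\bar\xi)^2}\bigl[\lambda+|\sigma|\sin(2\psi+\phi)\bigr]$ over the phases $\psi=\pm\pi/4-\phi/2$, and determine the normal complex direction $n_\xi$ by imposing ${\mathbb G}(n_\xi,e_\xi)={\mathbb G}(n_\xi,e_{\bar\xi})=0$, where the $\eta$-dependent term of (\ref{e:metric}) restricted to $\eta=F$ is precisely what produces the correction $-2(F\partial u-\bar F\bar\partial u)$ via $\partial u=-\bar\xi/(1+\xi\bar\xi)$, $\bar\partial u=-\xi/(1+\xi\bar\xi)$. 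Two small corrections: orthogonality of $E_{(1)},E_{(2)}$ needs both phase relations, $\psi_1-\psi_2=\pi/2$ (killing the $\lambda$ cross-term) and $\psi_1+\psi_2=-\phi$ (killing the $\sigma$ cross-term), not just the first; and your explanation of the $\alpha_1/\alpha_2$ pairing on the normal side is off — one finds ${\mathbb G}(n_\xi,n_\xi)=-{\mathbb G}(e_\xi,e_\xi)$ and ${\mathbb G}(n_\xi,\bar n_\xi)=-{\mathbb G}(e_\xi,\bar e_\xi)$, so the normal quadratic form is the exact negative of the tangential one, each phase requires the \emph{same} normalizing magnitude as in the tangent plane, and the assignment of $\alpha_2$ to $E_{(3)}$ and $\alpha_1$ to $E_{(4)}$ is merely an ordering convention rather than a consequence of an eigenvalue swap. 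Neither point affects the outcome: carrying out your computation verifies the stated frame, with $E_{(1)},E_{(2)}$ of norm $+1$ and $E_{(3)},E_{(4)}$ of norm $-1$ (in the paper's normalization of the symmetric products in (\ref{e:metric})), and tangent--normal orthogonality automatic from the two defining conditions on $n_\xi$.
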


\vspace{0.1in}

Using the same notation as above:

\vspace{0.1in}

\begin{Prop}\label{p:dualbasis}
The dual basis of 1-forms is:
\[
\theta^{(1)}={\mathbb I}m\left[(\alpha_1\partial\bar{F}+\bar{\alpha}_1(\bar{\partial}\bar{F}
    -2(F\partial u-\bar{F}\bar{\partial} u)))d\xi-\bar{\alpha}_1d\eta\right]e^{2u},
\]
\[
\theta^{(2)}=\;{\mathbb I}m\left[(\alpha_2\partial\bar{F}+\bar{\alpha}_2(\bar{\partial}\bar{F}
    -2(F\partial u-\bar{F}\bar{\partial} u)))d\xi-\bar{\alpha}_2d\eta\right]e^{2u},
\]
\[
\theta^{(3)}=\;{\mathbb I}m\left[(\alpha_2\partial\bar{F}-\bar{\alpha}_2\partial F)d\xi
       +\bar{\alpha}_2d\eta\right]e^{2u},
\]
\[
\theta^{(4)}={\mathbb I}m\left[(\alpha_1\partial\bar{F}-\bar{\alpha}_1\partial F)d\xi
       +\bar{\alpha}_1d\eta\right]e^{2u}.
\]
\end{Prop}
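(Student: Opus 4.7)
The plan is to verify $\theta^{(i)}(E_{(j)}) = \delta^i_j$ for $i,j \in \{1,2,3,4\}$. The cleanest route is to use the fact that $\{E_{(k)}\}$ is already orthonormal for the neutral metric ${\mathbb G}$ (this is part of Proposition~\ref{p:frames}), with signs $\epsilon_1 = \epsilon_2 = +1$ for the two tangential directions and $\epsilon_3 = \epsilon_4 = -1$ for the two normal directions, since $\Sigma$ is positive and the ambient signature is $++--$. The dual 1-forms are then obtained by index lowering, $\theta^{(i)} = \epsilon_i\,{\mathbb G}(E_{(i)},\,\cdot\,)$, and I would read this off in the coordinate basis $\{d\xi,d\bar{\xi},d\eta,d\bar{\eta}\}$ using the explicit formula (\ref{e:metric}) for ${\mathbb G}$. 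The envelope $\operatorname{Im}(\,\cdot\,)\,e^{2u}$ appearing in the statement is inherited directly from ${\mathbb G}$, which itself has the form $e^{2u}\operatorname{Im}(\cdots)$.

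Alternatively, one can verify the sixteen pairings by direct contraction. Writing each $E_{(j)} = \alpha_j V_j + \bar{\alpha}_j \bar{V}_j$ via $2\operatorname{Re}(z) = z + \bar{z}$, and each $\theta^{(i)} = \tfrac{1}{2i}(\omega_i - \bar{\omega}_i)e^{2u}$, every pairing collapses to elementary contractions such as $d\xi(\partial_\xi)=1$. The algebraic identities doing the work are
\[
2\alpha_1\bar{\alpha}_1(-\lambda - |\sigma|)e^{2u} = 1, \qquad 2\alpha_2\bar{\alpha}_2(-\lambda + |\sigma|)e^{2u} = 1,
\]
together with
\[
\alpha_1^2 = \frac{i e^{-2u - i\phi}}{2(-\lambda - |\sigma|)}, \qquad \alpha_2^2 = \frac{-i e^{-2u - i\phi}}{2(-\lambda + |\sigma|)},
\]
which, combined with $\bar{\partial}F = -|\sigma|e^{-i\phi}$, produce exactly the phases needed for the imaginary parts to survive or cancel as required.

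The diagonal pairings $\theta^{(i)}(E_{(i)})$ reduce, after the cross terms of the form $\alpha_i^2\,\bar{\partial}F$ cancel against $\bar{\alpha}_i^2\,\partial\bar{F}$ inside the imaginary part, to the two normalisation identities above, evaluating to $+1$. The off-diagonal pairings split into two subcases: within the tangential pair $(E_{(1)},E_{(2)})$ or the normal pair $(E_{(3)},E_{(4)})$, vanishing follows from the $\pi/4$ phase offset between $\alpha_1$ and $\alpha_2$; the mixed tangential--normal pairings vanish because the $\partial_{\bar{\eta}}$ component of $E_{(3)},E_{(4)}$ carries the opposite sign from that of $E_{(1)},E_{(2)}$, so the two halves of $2\operatorname{Im}(\cdots)$ cancel rather than reinforce.

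The main obstacle is purely computational bookkeeping: tracking which real and imaginary parts of products of $\alpha_1,\alpha_2,\bar{\alpha}_1,\bar{\alpha}_2$ with the first derivatives of $F$ and $\bar{F}$ survive the cancellations. There is no conceptual hurdle; the statement is a direct unpacking of the metric dual in the coordinates $(\xi,\bar{\xi},\eta,\bar{\eta})$.
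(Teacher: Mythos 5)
The paper gives no proof of this proposition at all -- it is stated, like Proposition \ref{p:frames}, as a bare computational fact -- so there is nothing to match your argument against; the question is only whether your verification plan is sound, and it is. Your two routes are both valid: the index-lowering route $\theta^{(i)}=\epsilon_i\,{\mathbb G}(E_{(i)},\cdot)$ with $\epsilon_1=\epsilon_2=+1$, $\epsilon_3=\epsilon_4=-1$ is correct (by Definition \ref{d:adptframe} the normal legs have ${\mathbb G}$-norm $-1$, so the minus signs are exactly where you put them), and the normalisation and phase identities you record, $2\alpha_1\bar{\alpha}_1(-\lambda-|\sigma|)e^{2u}=1$, $2\alpha_2\bar{\alpha}_2(-\lambda+|\sigma|)e^{2u}=1$, $\alpha_1^2=\tfrac{i}{2}e^{-2u-i\phi}(-\lambda-|\sigma|)^{-1}$, $\alpha_2^2=-\tfrac{i}{2}e^{-2u-i\phi}(-\lambda+|\sigma|)^{-1}$, are precisely the ones that make the diagonal pairings equal $1$ and kill the within-pair off-diagonal terms through the $\pm\pi/4$ phase offset; a spot check (e.g.\ $F=-iC_0\xi$ at $\xi=0$) confirms both the duality relations and that the stated $\theta^{(3)}$ really is $-{\mathbb G}(E_{(3)},\cdot)$ with the paper's conventions. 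The only caveat worth flagging: route (a) leans on the orthonormality assertion of Proposition \ref{p:frames}, which the paper likewise states without proof, so if you want the verification to be self-contained you should either run route (b) (direct contraction of the sixteen pairings, which uses only the frame formulas) or note that orthonormality itself drops out of the same two normalisation identities, at essentially no extra cost. Also, the interesting cross term $\tfrac{2\bar{\xi}\eta}{1+\xi\bar{\xi}}$ in ${\mathbb G}$ is exactly $-2(F\partial u-\bar{F}\bar{\partial}u)$ on the section, which is why that combination appears in both the frame and the coframe -- making that observation explicit will shorten the bookkeeping you describe.
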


\vspace{0.1in}

\begin{Def}
We refer to the above frame as the {\it canonical frame} associated with the positive surface $\Sigma$. Any other frame that respects the
tangent and normal splitting $TTS^2=T\Sigma\oplus N\Sigma$ is of the form
\[
E'_{(1)}=\cos\theta {E}_{(1)}-\sin\theta {E}_{(2)},
\]
\[
E'_{(2)}=\sin\theta {E}_{(1)}+\cos\theta {E}_{(2)},
\]
\[
E'_{(3)}=\cos\psi {E}_{(3)}+\sin\psi {E}_{(4)},
\]
\[
E'_{(4)}=-\sin\psi {E}_{(3)}+\cos\psi {E}_{(4)},
\]
for some $\theta,\psi\in S^1$.
\end{Def}
\vspace{0.1in}

Now consider the Levi-Civita connection $\overline{\nabla}$ associated with ${\mathbb G}$ and for $X,Y\in T\Sigma$ we have
the orthogonal splitting
\[
\overline{\nabla}_X Y= \nabla^\parallel_X Y+A(X,Y),
\]
where $A:T\Sigma\times T\Sigma\rightarrow N\Sigma$ is the second fundamental form of the immersed surface $\Sigma$.

Let $\Delta=\lambda^2-|\sigma|^2$, not to be confused with the Laplacian $\triangle$ of the last section.

\vspace{0.1in}

\begin{Prop}\label{p:2ndff}
The second fundamental form is:
\[
A(e_{(a)},e_{(b)})=2{\mathbb R}{ e}\left[\beta_{ab}\left(\frac{\partial}{\partial \xi}
       +(\bar{\partial} \bar{F}-2(F\partial u-\bar{F}\bar{\partial} u))\frac{\partial}{\partial \eta}
          -\partial \bar{F}\frac{\partial}{\partial \bar{\eta}}    \right) \right],
\]
for $a,b=1,2$, where
\[
\beta_{11}=\left[i\lambda\partial |\sigma|-\sigma\bar{\partial}|\sigma|+i\lambda\partial\lambda-\sigma\bar{\partial}\lambda
    +|\sigma|(|\sigma|+\lambda)(\partial\phi-ie^{i\phi}\bar{\partial}\phi+2i\partial u-2e^{i\phi}\bar{\partial}u)\right]
\]
\[
\left/\left[2e^{2u+i\phi}(|\sigma|+\lambda)^2(-|\sigma|+\lambda)\right]\right.,
\]
\[
\beta_{22}=\left[-i\lambda\partial |\sigma|+\sigma\bar{\partial}|\sigma|+i\lambda\partial\lambda-\sigma\bar{\partial}\lambda
    +|\sigma|(|\sigma|-\lambda)(\partial\phi+ie^{i\phi}\bar{\partial}\phi+2i\partial u+2e^{i\phi}\bar{\partial}u)\right]
\]
\[
\left/\left[2 e^{2u+i\phi}(|\sigma|-\lambda)^2(-|\sigma|-\lambda)\right]\right.,
\]
\[
\beta_{12}=\left(-|\sigma|\partial |\sigma|+i\lambda e^{i\phi}\bar{\partial}|\sigma|+\lambda\partial\lambda-i\sigma\bar{\partial}\lambda
    \right)
\]
\[
\left/\left[2e^{2u+i\phi}(|\sigma|^2-\lambda^2)\sqrt{|\Delta|}\right]\right. .
\]
\end{Prop}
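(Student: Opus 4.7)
The plan is to compute $A(e_{(a)},e_{(b)})$ directly from the definition $A(X,Y)=\overline{\nabla}_X Y-(\overline{\nabla}_X Y)^{\parallel}$, using the canonical orthonormal frame of Proposition \ref{p:frames} together with the Levi-Civita connection of the neutral K\"ahler metric (\ref{e:metric}). First I would record the non-zero Christoffel symbols of $\mathbb{G}$ in the holomorphic coordinates $(\xi,\bar\xi,\eta,\bar\eta)$. Since $(\mathbb{J},\Omega,\mathbb{G})$ is (pseudo-)K\"ahler with $(\xi,\eta)$ holomorphic for $\mathbb{J}$, the only non-vanishing symbols are those with all indices of the same type; explicit differentiation of (\ref{e:metric}) expresses them as combinations of $\partial u$ and $\bar\partial u$, where $e^{2u}=4(1+\xi\bar\xi)^{-2}$, together with terms linear in $F$, $\bar F$ and their first derivatives.

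Next I would apply $\overline{\nabla}_{E_{(a)}}$ to $E_{(b)}$ for $a,b\in\{1,2\}$, writing each tangent vector as $E_{(a)}=2\mathbb{R}e[\alpha_a V]$ with $V=\partial_\xi+\partial F\,\partial_\eta+\partial\bar F\,\partial_{\bar\eta}$. The Leibniz rule splits the computation into derivations of the prefactors $\alpha_a$ along $\Sigma$, which introduce the terms in $\partial|\sigma|,\bar\partial|\sigma|,\partial\lambda,\bar\partial\lambda,\partial\phi,\bar\partial\phi$ visible in the numerators of the stated $\beta_{ab}$, plus the ambient-geometry correction from the Christoffel symbols, which supplies the remaining terms in $\partial u$ and $\bar\partial u$. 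The normal projection is then extracted using Proposition \ref{p:frames}: both $E_{(3)}$ and $E_{(4)}$ are proportional to the real part of the single complex vector $W=\partial_\xi+(\bar\partial\bar F-2(F\partial u-\bar F\bar\partial u))\partial_\eta-\partial\bar F\,\partial_{\bar\eta}$, so either by contracting with the dual normal 1-forms $\theta^{(3)},\theta^{(4)}$ of Proposition \ref{p:dualbasis} or by subtracting the tangential piece built from $\theta^{(1)},\theta^{(2)}$, one identifies the normal part of $\overline{\nabla}_{E_{(a)}}E_{(b)}$ with $2\mathbb{R}e[\beta_{ab}W]$ for a single complex coefficient $\beta_{ab}$.

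The main obstacle is bookkeeping rather than conceptual. The factors $[-\lambda\pm|\sigma|]^{1/2}$ entering $\alpha_1,\alpha_2$ produce, upon differentiation and combination with the dual-frame normalisation, precisely the denominators $2e^{2u+i\phi}(|\sigma|+\lambda)^2(-|\sigma|+\lambda)$ and $2e^{2u+i\phi}(|\sigma|-\lambda)^2(-|\sigma|-\lambda)$ in $\beta_{11}$ and $\beta_{22}$; checking that the corresponding numerators collapse to the short expressions stated, rather than much longer unsimplified ones, requires repeated use of the relation $\bar\partial F=-|\sigma|e^{-i\phi}$ together with the identities (\ref{e:id1}) and (\ref{e:id2}). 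I expect $\beta_{12}$ to demand the most care, since it mixes derivatives of $|\sigma|$ and $\lambda$ weighted asymmetrically by $\sigma$ and $\lambda$, and the reduction of its denominator to the single factor $\sqrt{|\Delta|}=\sqrt{|\lambda^2-|\sigma|^2|}$ appears only after combining the cross terms between the $E_{(1)}$ and $E_{(2)}$ contributions and carefully tracking signs in the hyperbolic regime $\lambda^2<|\sigma|^2$ versus $\lambda^2>|\sigma|^2$.
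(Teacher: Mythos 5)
Your proposal is correct and follows essentially the same route as the paper: the paper likewise computes $A_{(ab)}^{\;\;\;\;\;j}=\;^\perp P_k^j\,E_{(a)}^l\overline{\nabla}_l E_{(b)}^k$ by direct computation, with the perpendicular projector built from the adapted frame and its dual coframe (which it then tabulates explicitly in the $(\xi,\eta)$ coordinates), exactly as you propose to do via the Christoffel symbols of ${\mathbb G}$ and contraction with $\theta^{(3)},\theta^{(4)}$. The differences are purely organisational bookkeeping, not a different argument.
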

\begin{proof}
Consider the parallel and perpendicular projection operators $^\parallel P:TTS^2\rightarrow T\Sigma$ and
$^\perp P:TTS^2\rightarrow N\Sigma$. These are given in terms of an adapted frame by
\[
^\parallel P_j^k=\delta_j^k-E_{(3)}^k\theta_j^{(3)}-E_{(4)}^k\theta_j^{(4)}
\qquad\qquad
^\perp P_j^k=\delta_j^k-E_{(1)}^k\theta_j^{(1)}-E_{(2)}^k\theta_j^{(2)}.
\]
The parallel projection operator has the following coordinate description:
\begin{align}
^\parallel P_{\bar{\eta}}^\xi=-{\textstyle{\frac{1}{2\Delta}}}\bar{\sigma}
&\qquad^\parallel P_{\bar{\xi}}^\xi=-{\textstyle{\frac{1}{2\Delta}}} (\bar{\partial}\bar{F}+\lambda i)\bar{\sigma},\nonumber\\
^\parallel P_\eta^\xi= -{\textstyle{\frac{1}{2\Delta}}} \lambda i
&\qquad^\parallel P_\xi^\xi= {\textstyle{\frac{1}{2\Delta}}} [(\partial F -2\lambda i)\lambda i -|\sigma|^2], \nonumber\\
^\parallel P_{\bar{\eta}}^\eta={\textstyle{\frac{1}{2\Delta}}} \bar{\sigma}(\partial F-\lambda i)
&\qquad^\parallel P_{\bar{\xi}}^\eta={\textstyle{\frac{1}{2\Delta}}} [-\bar{\sigma}[\partial F\bar{\partial}\bar{F}-|\sigma|^2
                                 -\lambda i(\bar{\partial}\bar{F}-\partial F)]+2\lambda^2],\nonumber\\
^\parallel P_\eta^\eta= -{\textstyle{\frac{1}{2\Delta}}}[\lambda i\partial F +|\sigma|^2]
&\qquad^\parallel P_\xi^\eta= {\textstyle{\frac{1}{2\Delta}}} \lambda i[(\partial F-2\lambda i)\partial F-|\sigma|^2]\nonumber,
\end{align}
while the perpendicular projection operator is
\[
^\perp P_\xi^\xi=\;^\parallel P_\eta^\eta
\qquad\qquad
^\perp P_{\bar{\xi}}^\xi=-\;^\parallel P_{\bar{\xi}}^\xi
\qquad\qquad
^\perp P_\eta^\xi=-\;^\parallel P_\eta^\xi
\qquad\qquad
^\perp P_{\bar{\eta}}^\xi=-\;^\parallel P_{\bar{\eta}}^\xi,
\]
\[
^\perp P_\xi^\eta=-\;^\parallel P_\xi^\eta
\qquad\qquad
^\perp P_{\bar{\xi}}^\eta=-\;^\parallel P_{\bar{\xi}}^\eta
\qquad\qquad
^\perp P_\eta^\eta=\;^\parallel P_\xi^\xi
\qquad\qquad
^\perp P_{\bar{\eta}}^\eta=-\;^\parallel P_{\bar{\eta}}^\eta.
\]
In terms of a frame in which $\{E_{(1)},E_{(2)}\}$ span the tangent space of $\Sigma$, the second fundamental form
\[
A_{(ab)}^{\;\;\;\;\;j}=\;^\perp P_k^j\;E_{(a)}^l\overline{\nabla}_l\;E_{(b)}^k.
\]
The result follows by direct computation of these quantities.
\end{proof}

\vspace{0.1in}

\begin{Prop}
The mean curvature vector of the surface $\Sigma$ is:
\[
H=2{\mathbb R}{ e}\left[\gamma\left(\frac{\partial}{\partial \xi}
       +(\bar{\partial} \bar{F}-2(F\partial u-\bar{F}\bar{\partial} u))\frac{\partial}{\partial \eta}
          -\partial \bar{F}\frac{\partial}{\partial \bar{\eta}}    \right) \right],
\]
where
\[
\gamma=\left[-\lambda(-i\lambda\partial |\sigma|+\sigma\bar{\partial}|\sigma|)
     -|\sigma|(i\lambda\partial\lambda-\sigma\bar{\partial}\lambda)-|\sigma|(|\sigma|^2-\lambda^2)(\partial\phi+2i\partial u)\right]
\]
\[
\left/\left[e^{2u+i\phi}(|\sigma|^2-\lambda^2)^2\right]\right. .
\]
\end{Prop}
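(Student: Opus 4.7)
The plan is to compute $H$ directly as the trace of the second fundamental form recorded in Proposition \ref{p:2ndff}. Since the frame $\{E_{(1)},E_{(2)},E_{(3)},E_{(4)}\}$ from Proposition \ref{p:frames} is orthonormal with $\{E_{(1)},E_{(2)}\}$ tangent to $\Sigma$, the induced metric in this frame is the identity, so $g^{ab}=\delta^{ab}$ and
\[
H \;=\; g^{ab}A(E_{(a)},E_{(b)}) \;=\; A(E_{(1)},E_{(1)}) + A(E_{(2)},E_{(2)}).
\]
By Proposition \ref{p:2ndff} both diagonal components of $A$ have the form $2\,\mathrm{Re}[\beta_{aa}V]$ for one and the same complex vector
\[
V \;=\; \frac{\partial}{\partial\xi} + \bigl(\bar{\partial}\bar{F} - 2(F\partial u - \bar{F}\bar{\partial}u)\bigr)\frac{\partial}{\partial\eta} - \partial\bar{F}\frac{\partial}{\partial\bar{\eta}},
\]
so the statement reduces to the scalar identity $\gamma = \beta_{11} + \beta_{22}$.

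Next I would combine $\beta_{11}$ and $\beta_{22}$ over a common denominator using the factorisations $(|\sigma|+\lambda)(\lambda-|\sigma|) = \lambda^2 - |\sigma|^2 = \Delta$ and $(|\sigma|-\lambda)(-|\sigma|-\lambda) = \Delta$. This collapses the denominators of $\beta_{11}$ and $\beta_{22}$ to $2e^{2u+i\phi}(|\sigma|+\lambda)\Delta$ and $2e^{2u+i\phi}(|\sigma|-\lambda)\Delta$ respectively, giving
\[
\beta_{11}+\beta_{22} \;=\; \frac{N_1(|\sigma|-\lambda) + N_2(|\sigma|+\lambda)}{-2\,e^{2u+i\phi}\Delta^{2}},
\]
where $N_1,N_2$ are the numerators listed in Proposition \ref{p:2ndff}. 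Matching against $\gamma = M/[e^{2u+i\phi}\Delta^{2}]$ therefore amounts to the polynomial identity $N_1(|\sigma|-\lambda) + N_2(|\sigma|+\lambda) = -2M$, which I would verify by grouping the terms in $N_1,N_2$ into six blocks: the $\partial|\sigma|$, $\bar{\partial}|\sigma|$, $\partial\lambda$, $\bar{\partial}\lambda$ contributions, the \emph{symmetric} $(\partial\phi+2i\partial u)$ block, and the \emph{anti-symmetric} $(\bar{\partial}\phi,\bar{\partial}u)$ block. The first four groups reproduce the four corresponding summands of $M$, the fifth reproduces the $|\sigma|(|\sigma|^{2}-\lambda^{2})(\partial\phi+2i\partial u)$ term, and the sixth cancels identically because the $\pm ie^{i\phi}\bar{\partial}\phi$ and $\mp 2e^{i\phi}\bar{\partial}u$ factors in $N_1$ and $N_2$ enter with opposite signs once multiplied by $(|\sigma|\mp\lambda)$.

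The main obstacle is bookkeeping rather than anything conceptual: the simplification hinges on keeping careful track of the signs in $(|\sigma|\pm\lambda)$, which in turn rely on the positivity of $\Sigma$ (forcing $\lambda^{2}>|\sigma|^{2}$ via Proposition \ref{p:indmet}) for the square roots implicit in $\alpha_1,\alpha_2$ to be real. As a consistency check one can note that $\beta_{11}+\beta_{22}$ must be invariant under the residual $S^{1}$-rotation of the tangent frame, which is manifest from the orthonormality of $\{E_{(1)},E_{(2)}\}$ and hence the rotation-invariance of the trace. Once the algebraic identity $\gamma=\beta_{11}+\beta_{22}$ is established the formula for $H$ follows at once from the displayed expression for $H$ above.
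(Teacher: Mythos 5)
Your proposal is correct and follows essentially the same route as the paper: the paper also obtains $H$ as the trace $H^j=A_{(11)}^{\;\;\;\;j}+A_{(22)}^{\;\;\;\;j}$ of the second fundamental form in the orthonormal frame and then computes $\gamma=\beta_{11}+\beta_{22}$ from Proposition \ref{p:2ndff}. Your explicit verification that $N_1(|\sigma|-\lambda)+N_2(|\sigma|+\lambda)=-2M$, with the antisymmetric $\bar{\partial}\phi$, $\bar{\partial}u$ terms cancelling, is exactly the algebra the paper leaves to the reader, and it checks out.
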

\begin{proof}
The mean curvature vector of the surface $\Sigma$ is the trace of the second fundamental form, which is
\[
H^j=H_{(11)}^{\;\;\;\;\;j}+ H_{(22)}^{\;\;\;\;\;j}.
\]
The result follows from computing this with the aid of the previous Proposition.
\end{proof}
\vspace{0.1in}

\begin{Note}
We can also write the mean curvature vector component (see \cite{gak8} for a variational derivation of this formula)
\begin{equation}\label{e:meanc}
H^\xi=\frac{2e^{-2u}}{\sqrt{|\lambda^2-|\sigma|^2|}}\left[
              ie^{-2u}\partial\left(\frac{\bar{\sigma}e^{2u}}{\sqrt{|\lambda^2-|\sigma|^2|}}\right)
       -\bar{\partial}\left(\frac{\lambda}{\sqrt{|\lambda^2-|\sigma|^2|}}\right)\right].
\end{equation}
\end{Note}

\begin{Cor}
A holomorphic graph has vanishing mean curvature.
\end{Cor}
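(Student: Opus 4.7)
The plan is to exploit the explicit formula for the mean curvature component displayed in equation (\ref{e:meanc}), combined with the characterization of holomorphic graphs as those with vanishing shear $\sigma$.

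First I would recall from the earlier proposition that a graph $\eta = F(\xi,\bar{\xi})$ is holomorphic iff $\sigma = -\partial\bar{F} \equiv 0$. Combined with Proposition \ref{p:ts2indmet}, this makes the determinant of the induced metric equal to $2(1+\xi\bar{\xi})^{-4}\lambda^{2}$, so on the open subset where the induced metric is non-degenerate (i.e. where $\lambda \neq 0$) the sign of $\lambda$ is locally constant.

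Substituting $\sigma \equiv 0$ into (\ref{e:meanc}), the first summand inside the brackets is a $\partial$-derivative of a quantity proportional to $\bar{\sigma}$ and so vanishes identically, while the second summand reduces to $\bar{\partial}(\lambda/|\lambda|) = \bar{\partial}(\operatorname{sign}\lambda) = 0$ on that open set. Hence $H^{\xi} = 0$, and the coordinate formula for $H$ in the preceding proposition (whose $\partial/\partial\xi$-coefficient is precisely $\gamma = H^{\xi}$) then forces $H = 0$.

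For conceptual clarity I would also note that this is an instance of the classical fact that every $\mathbb{J}$-invariant submanifold of a K\"ahler manifold is minimal: since $\overline{\nabla}\mathbb{J} = 0$, the second fundamental form of a complex submanifold satisfies $A(\mathbb{J}X,Y) = \mathbb{J}A(X,Y)$, and choosing an orthonormal frame of the form $\{e_{1},\mathbb{J}e_{1}\}$ (available because the induced metric is positive and $\mathbb{J}$ is a $\mathbb{G}$-isometry by compatibility of $({\mathbb J},\Omega,{\mathbb G})$) gives $H = A(e_{1},e_{1}) + \mathbb{J}^{2}A(e_{1},e_{1}) = 0$. This argument is signature-insensitive and so applies verbatim in the neutral setting. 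There is no substantive obstacle: the apparent $1/\sqrt{|\lambda^{2}-|\sigma|^{2}|}$ singularity in (\ref{e:meanc}) is harmless, since the corollary makes no claim at the degeneracy locus of the induced metric.
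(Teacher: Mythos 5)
Your proposal is correct and follows essentially the same route as the paper, which likewise proves the corollary simply by inserting $\sigma=0$ into equation (\ref{e:meanc}); your extra observations (why each summand vanishes where $\lambda\neq 0$, and the classical fact that ${\mathbb J}$-invariant submanifolds of a K\"ahler manifold are minimal, valid also in neutral signature) are accurate but supplementary to that one-line substitution.
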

\begin{proof}
This follows from inserting $\sigma=0$ in equation (\ref{e:meanc}).
\end{proof}
\vspace{0.1in}

\vspace{0.1in}

\subsection{Angles between positive surfaces}

In section \ref{s:anglesgen} a reduction of the action of the action of the group $O(n,m)$ by the maximal compact subgroup $O(n)\times O(m)$ 
led to a matrix of angles between 
pairs of positive $n$-planes in ${\mathbb E}^{n+m}$. This reduction carries over to $n+m$-dimensional manifolds and we now consider in more detail
the case of intersecting positive surfaces in $(TS^2,{\mathbb G})$. 

The positive surfaces we have in mind are the flowing disc $f_s(D)$ and the boundary surface $\tilde{\Sigma}$ intersecting along $f_s(\partial D)$. 
While we are working pointwise along this intersection, for ease of notation we drop any mention of the point of intersection.

Assume that $f_s(D)$ and $\tilde{\Sigma}$ are positive and intersect at a point. Choose an orthonormal frame 
$E_{(\mu)}=\{e_{(1)},e_{(2)},f_{(1)},f_{(2)}\}$ and coframe $E^{(\mu)}=\{e^{(1)},e^{(2)},f^{(1)},f^{(2)}\}$ so that 
$\{e_{(a)}\}$ span the tangent plane $Tf_s(D)$, while 
$\{f_{(a)}\}$ span the normal plane $Nf_s(D)$. Similarly, let $\tilde{E}_{(\mu)}=\{\tilde{e}_{(1)},\tilde{e}_{(2)},\tilde{f}_{(1)},\tilde{f}_{(2)}\}$ and 
$\tilde{E}^{(\mu)}=\{\tilde{e}^{(1)},\tilde{e}^{(2)},\tilde{f}^{(1)},\tilde{f}^{(2)}\}$ be similar frames and coframes for $\tilde{\Sigma}$. 

\begin{Def}
For frames as above define the $4\times 4$ matrix
\[
M_{(\mu)}^{\;\;(\nu)}=<E_{(\mu)},\tilde{E}^{(\nu)}>.
\]
\end{Def}
Then
\vspace{0.1in}

\begin{Prop}
There exist adapted frames such that the matrix $M$ has the form:
\[
M=\left(
     \begin{array}{cccc}
      \cosh A & 0 & 0 & \sinh A\\
       0 &  \cosh B & \sinh B  & 0 \\
       0 &  \sinh B & \cosh B & 0 \\
       \sinh A & 0 & 0 & \cosh A
     \end{array}\right),
\]
for hyperbolic angles $A,B\in{\mathbb R}$.
\end{Prop}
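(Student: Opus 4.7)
The plan is to derive this proposition as a specialization of Proposition \ref{p:gauge} to $n=m=2$, followed by a cosmetic swap of the third and fourth basis vectors in the two adapted frames.

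Since both $f_s(D)$ and $\tilde\Sigma$ are positive, their tangent 2-planes are positive-definite and their normal 2-planes are negative-definite inside the $(++--)$ ambient metric; consequently $M\in O(2,2)$. The frame freedom available to us --- an independent $O(2)$ rotation inside each of the four planes $Tf_s(D)$, $Nf_s(D)$, $T\tilde\Sigma$, $N\tilde\Sigma$ --- is precisely the $O(n)\times O(m)$ gauge action invoked in Proposition \ref{p:gauge}. That proposition, with $n=m=2$, reduces $M$ to
\[
M=\begin{pmatrix} D_1 & \pm D_4 P^T \\ D_3 P & D_2 \end{pmatrix},
\]
where $D_1,\dots,D_4$ are diagonal with entries $\lambda_i,\mu_i,a_i,b_i$, $P\in O(2)$ is a transposition, and $\lambda_i^2=1+a_i^2$, $\mu_i^2=1+b_i^2$, $|D_1|^2=|D_2|^2$.

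Taking the branch $P=I$ and combining these identities with the compatibility equation (\ref{e:onm6}) from the proof of Proposition \ref{p:gauge}, one deduces $\lambda_i=\mu_i$ and $a_i=\pm b_i$, with the residual signs absorbable by reflecting individual normal-frame vectors. The identity $\lambda_i^2-a_i^2=1$ then lets me set $\lambda_i=\cosh\theta_i$, $a_i=\sinh\theta_i$, and with $A:=\theta_1$, $B:=\theta_2$ the matrix $M$ assumes the Cartan-type form
\[
M_0=\begin{pmatrix} \cosh A & 0 & \sinh A & 0 \\ 0 & \cosh B & 0 & \sinh B \\ \sinh A & 0 & \cosh A & 0 \\ 0 & \sinh B & 0 & \cosh B \end{pmatrix};
\]
the alternative branch, with $P$ equal to the swap, produces an equivalent configuration after a relabelling of basis vectors, so $M_0$ is always reachable.

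Finally, I would conjugate $M_0$ by the permutation $P_{34}$ exchanging the third and fourth basis vectors, which geometrically amounts to simultaneously swapping $f_{(1)}\leftrightarrow f_{(2)}$ and $\tilde f_{(1)}\leftrightarrow\tilde f_{(2)}$ --- an allowed reflection inside each negative-definite normal plane. This migrates the off-diagonal entries from positions $(1,3),(2,4),(3,1),(4,2)$ into positions $(1,4),(2,3),(3,2),(4,1)$, yielding precisely the matrix of the proposition. The only real obstacle is combinatorial bookkeeping with the sign freedoms in $D_3,D_4,\pm D_4 P^T$ and the two branches of $P$, in order to guarantee the uniformly positive $\sinh$ convention; no further analytic ingredient beyond Proposition \ref{p:gauge} is required, since the statement is a linear-algebraic normal-form result for the transition matrix between two positive 2-planes in a signature $(++--)$ vector space.
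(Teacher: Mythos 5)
Your proposal is correct and is essentially the paper's own argument: the paper proves this proposition in one line as ``a special case of Proposition \ref{p:gauge}'' with $n=m=2$, which is exactly your route. The additional bookkeeping you supply (using equation (\ref{e:onm6}) to force $\lambda_i=\mu_i$, $a_i=\pm b_i$, the hyperbolic parametrisation, and the relabelling that exchanges $f_{(1)}\leftrightarrow f_{(2)}$ and $\tilde f_{(1)}\leftrightarrow\tilde f_{(2)}$ to place the $\sinh$ entries in the stated positions) is sound and simply makes explicit what the paper leaves implicit.
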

\begin{proof}
This is a special case of Proposition \ref{p:gauge}.
\end{proof}

\vspace{0.1in}
 
\begin{Cor}\label{c:multi}
In the special case where $f_s(D)$ and $\tilde{\Sigma}$ intersect along a curve the angle matrix reduces to
\[
M=\left(
     \begin{array}{cccc}
      1 & 0 & 0 & 0\\
       0 &  \cosh B & \sinh B  & 0 \\
       0 &  \sinh B & \cosh B & 0 \\
       0 & 0 & 0 & 1
     \end{array}\right).
\]
\end{Cor}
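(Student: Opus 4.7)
The plan is to deduce this from the previous Proposition by adding a single geometric constraint, namely that the two positive tangent planes share a line (the tangent to the intersection curve).

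First, I would apply the previous Proposition verbatim to obtain adapted frames in which $M$ has the $2{+}2$ hyperbolic normal form involving two angles $A$ and $B$. I would then use the fact that, within each positive surface, I still have the freedom to rotate $\{e_{(1)},e_{(2)}\}$ by an element of $O(2)$ in $Tf_s(D)$ and $\{\tilde e_{(1)},\tilde e_{(2)}\}$ by an element of $O(2)$ in $T\tilde\Sigma$ (and similarly in the normal bundles), since such rotations preserve both the Riemannian inner product on each positive plane and the adapted splitting $TTS^2=T\Sigma\oplus N\Sigma$. This is precisely the residual gauge freedom that Proposition \ref{p:gauge} leaves untouched at the level of the tangent/normal splitting.

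Next, let $T$ be a unit vector tangent to the common intersection curve; since $f_s(D)$ and $\tilde\Sigma$ are both positive and meet along a curve, $T$ lies in both $Tf_s(D)$ and $T\tilde\Sigma$ and has ${\mathbb G}(T,T)=1$. I would use the residual $O(2)\times O(2)$ freedom above to rotate the first tangent basis vectors into alignment with $T$, setting $e_{(1)}=\tilde e_{(1)}=T$. Then
\[
M_{(1)}^{\;\;(1)}={\mathbb G}(e_{(1)},\tilde e_{(1)})=1,\qquad M_{(1)}^{\;\;(\nu)}={\mathbb G}(T,\tilde E_{(\nu)})=0\ \ \text{for } \nu=2,3,4,
\]
because $T$ is orthogonal to $\tilde e_{(2)}$ within $T\tilde\Sigma$ and orthogonal to the normal plane $N\tilde\Sigma$ spanned by $\tilde f_{(1)},\tilde f_{(2)}$. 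Symmetrically, $M_{(\mu)}^{\;\;(1)}=\delta_{\mu 1}$. Comparing with the hyperbolic normal form of the previous Proposition, this forces $\cosh A=1$ and $\sinh A=0$, i.e.\ $A=0$.

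The hyperbolic angle $B$ in the remaining $(2,3)$ block is unconstrained, since $e_{(2)}$ (the unit tangent to $f_s(D)$ perpendicular to $T$) need not lie in $T\tilde\Sigma$; it may tilt into the normal plane $N\tilde\Sigma$ by an arbitrary hyperbolic amount. Collecting all entries yields exactly the stated matrix. I do not anticipate a serious obstacle here: the only subtle point is verifying that the $O(2)\times O(2)$ residual freedom indeed survives the normal-form reduction of Proposition \ref{p:gauge} in this $n=m=2$ case, so that one can legitimately align $e_{(1)}$ with $T$ without disturbing the block structure, but this is immediate from inspection of the reduction.
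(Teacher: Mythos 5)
Your geometric idea --- align the first vectors of both adapted frames with a unit tangent $T$ to the intersection curve --- is sound, and it is in fact what the paper itself does when it sets up the frames in the proof of Proposition \ref{p:hypang1}; your computation of the first row and first column of $M$ in that aligned frame is also fine. The gap is in the decisive step. You first fix frames realizing the hyperbolic normal form of the preceding Proposition (i.e.\ of Proposition \ref{p:gauge}) and then invoke a ``residual'' $O(2)\times O(2)$ freedom to rotate $e_{(1)}$ and $\tilde e_{(1)}$ onto $T$. But once the normal form has been achieved, that freedom has been spent: a further independent rotation of the two tangent frames changes $M$ by left and right block rotations and generically destroys the zeros of the normal form, so after the alignment you are not entitled to ``compare with the hyperbolic normal form'' and read off $\cosh A=1$, $\sinh A=0$. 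In the aligned frame, the structure of rows and columns $2$--$4$ --- the vanishing of the $(2,4)$, $(3,4)$, $(4,2)$, $(4,3)$ entries and the $1$ in position $(4,4)$ --- is exactly what remains to be proved, and your proposal asserts it (``collecting all entries yields exactly the stated matrix'') rather than derives it. This is precisely the point you flagged as ``immediate from inspection''; it is not.

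The repair is short, by either route. (i) Keep the normal form: there $e_{(1)}=\cosh A\,\tilde e_{(1)}+\sinh A\,\tilde f_{(2)}$ and $e_{(2)}=\cosh B\,\tilde e_{(2)}+\sinh B\,\tilde f_{(1)}$, so a nonzero common tangent vector $v=a\,e_{(1)}+b\,e_{(2)}\in T\tilde\Sigma$ forces its $N\tilde\Sigma$-component $a\sinh A\,\tilde f_{(2)}+b\sinh B\,\tilde f_{(1)}$ to vanish, hence $\sinh A=0$ or $\sinh B=0$; relabelling the frame vectors (a transposition, which is an allowed adapted-frame change) gives the stated matrix. (ii) Align first, then reduce the complement: with $e_{(1)}=\tilde e_{(1)}=T$, orthogonality to $\tilde e_{(1)}$ and ${\mathbb G}(e_{(2)},e_{(2)})=1$ give $e_{(2)}=\cosh B\,\tilde e_{(2)}+\sinh B\,\tilde n$ with $\tilde n$ a unit vector of the negative definite plane $N\tilde\Sigma$; rotate the tilded normal frame so that $\tilde n=\tilde f_{(1)}$, and then the $O(2,2)$ relations (\ref{e:onm1a}), together with a rotation of $\{f_{(1)},f_{(2)}\}$, force the remaining entries and in particular $f_{(2)}=\tilde f_{(2)}$. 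Either of these few lines closes the gap; with one of them included, your argument coincides with the paper's (implicit) one.
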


Here we have established the basic fact of neutral geometry that if two positive planes intersect on a line, then their normal planes must also 
intersect each other in a line. To pursue this further we need to relate the various angles between the tangent and normal planes of the two surfaces and 
their intersections, together with their complex slopes.

\begin{Prop} \label{p:hypang1}
Define $\mu=|\sigma|/|\lambda|$ and $\tilde{\mu}=|\tilde{\sigma}|/|\tilde{\lambda}|$. By positivity of the surfaces $\mu<1$ and $\tilde{\mu}<1$. Then
\begin{align}\label{e:hypangle}
\cosh B&=\frac{(1-\tilde{\mu}^2)(1+2\cos 2\theta\;\mu+\mu^2)+(1-\mu^2)(1+2\cos 2\tilde{\theta}\;\tilde{\mu}+\tilde{\mu}^2)}
{2(1-\tilde{\mu}^2)^{\scriptstyle{\frac{1}{2}}}(1-\mu^2)^{\scriptstyle{\frac{1}{2}}}(1+\cos 2\theta\;\mu)(1+\cos 2\tilde{\theta}\;\tilde{\mu})}\nonumber\\
&\qquad\qquad\qquad\qquad -\frac{\sin 2\theta\sin 2 \tilde{\theta}\;\mu\tilde{\mu}}{(1+\cos 2\theta\;\mu)(1+\cos 2\tilde{\theta}\;\tilde{\mu})}\nonumber\\
&=\frac{(1-\tilde{\mu}^2)(1+2\cos 2\psi\;\mu+\mu^2)+(1-\mu^2)(1+2\cos 2\tilde{\psi}\;\tilde{\mu}+\tilde{\mu}^2)}
  {2(1-\tilde{\mu}^2)^{\scriptstyle{\frac{1}{2}}}(1-\mu^2)^{\scriptstyle{\frac{1}{2}}}(1+\cos 2\psi\;\mu)(1+\cos 2\tilde{\psi}\;\tilde{\mu})}\nonumber\\
&\qquad\qquad\qquad\qquad -\frac{\sin 2\psi\sin 2 \tilde{\psi}\;\mu\tilde{\mu} }
{(1+\cos 2\psi\;\mu)(1+\cos 2\tilde{\psi}\;\tilde{\mu})},
\end{align}
where $\theta$,$\tilde{\theta}$,$\psi$ and $\tilde{\psi}$ determine the angles that the lines of intersection make with the canonical 
frames on $TD$,$T\tilde{\Sigma}$,$ND$ and $N\tilde{\Sigma}$, respectively.
\end{Prop}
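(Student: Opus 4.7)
The argument is a direct computation in the canonical frames of Proposition \ref{p:frames}. First, fix a unit tangent vector $T$ to the curve $f_s(D)\cap\tilde\Sigma$. In the canonical frame of $f_s(D)$ we may write $T=\cos\theta\,e_{(1)}+\sin\theta\,e_{(2)}$, and in the canonical frame of $\tilde\Sigma$ we write $T=\cos\tilde\theta\,\tilde e_{(1)}+\sin\tilde\theta\,\tilde e_{(2)}$; these equalities define the angles $\theta$ and $\tilde\theta$. Corollary \ref{c:multi} further guarantees that the normal planes $Nf_s(D)$ and $N\tilde\Sigma$ also meet in a line, and choosing a unit vector $N$ along this line similarly defines $\psi$ and $\tilde\psi$ via the canonical normal frames.

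Next, I rotate each of the canonical frames by the relevant angles so that one tangent basis vector aligns with $T$ and one normal basis vector aligns with $N$. In this aligned configuration the angle matrix $M$ takes the block form of Corollary \ref{c:multi} exactly, so that
\[
\cosh B=\langle e_{(2)}^{\mathrm{rot}},\tilde e^{(2)\,\mathrm{rot}}\rangle=\langle f_{(1)}^{\mathrm{rot}},\tilde f^{(1)\,\mathrm{rot}}\rangle.
\]
Substituting the explicit expressions for $E_{(a)}$ from Proposition \ref{p:frames} and the dual 1-forms from Proposition \ref{p:dualbasis}, and contracting via the K\"ahler metric (\ref{e:metric}), the first pairing produces the expression for $\cosh B$ in terms of $(\theta,\tilde\theta,\mu,\tilde\mu)$ displayed in the first line of (\ref{e:hypangle}). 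The second pairing, computed identically in the normal frames, produces the equivalent expression in $(\psi,\tilde\psi,\mu,\tilde\mu)$, and the equality of the two expressions is automatic since both evaluate $\cosh B$.

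The principal obstacle will be the algebraic bookkeeping: the normalisations $\alpha_1,\alpha_2$ of Proposition \ref{p:frames} involve square roots of $\lambda\pm|\sigma|$ and a phase $e^{-i\phi/2}$ determined by $\bar\partial F=-|\sigma|e^{-i\phi}$, with analogous factors for $\tilde\Sigma$. One must track how these phases combine with the rotation angles $\theta,\tilde\theta$ (respectively $\psi,\tilde\psi$) to generate the double-angle terms $\cos 2\theta$, $\cos 2\tilde\theta$ and $\sin 2\theta\sin 2\tilde\theta$ in the numerators, and to produce the factors $1+\cos 2\theta\,\mu$ and $1+\cos 2\tilde\theta\,\tilde\mu$ in the denominators (which are recognisable as the lengths of the aligned frame vectors in the indefinite metric). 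Once these combinations are assembled and the ratios $\mu=|\sigma|/|\lambda|$, $\tilde\mu=|\tilde\sigma|/|\tilde\lambda|$ are introduced, the stated identity follows from straightforward algebraic simplification together with the positivity conditions $\mu,\tilde\mu<1$.
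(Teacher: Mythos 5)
Your proposal follows essentially the same route as the paper: rotate the canonical frames of Proposition \ref{p:frames} so that the first tangent vectors (respectively the second normal vectors) align along the common intersection line, then compute $\cosh B$ as the metric pairing ${\mathbb G}(\mathring{e}_{(2)},\mathring{\tilde{e}}_{(2)})$, respectively $-{\mathbb G}(\mathring{f}_{(1)},\mathring{\tilde{f}}_{(1)})$, and simplify. The only point compressed into your ``bookkeeping'' remark is that the alignment condition $\mathring{e}_{(1)}=\mathring{\tilde{e}}_{(1)}$ itself supplies the constraint relations (on $\phi-\tilde{\phi}$, $\vartheta-\tilde{\vartheta}$ and $\tilde{\lambda}/\lambda$) that the paper uses to eliminate all quantities other than $\mu,\tilde{\mu}$ and the angles, which is exactly how the stated formula emerges.
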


\begin{proof}
Consider the tangent space to a point $\gamma\in D\cap\tilde{\Sigma}$ 
(assumed to be non-empty). This can be split two distinct ways
\[
T\gamma TS^2=T_\gamma D\oplus N_\gamma D=T_\gamma \tilde{\Sigma}\oplus N_\gamma \tilde{\Sigma},
\]
and adapted orthonormal bases $\{e_{(a)},f_{(b)}\}_{a,b=1}^2$ and $\{\tilde{e}_{(a)},\tilde{f}_{(b)}\}_{a,b=1}^2$ chosen for the splittings (respectively).

Recall the canonical frames from Proposition \ref{p:frames}:
\[
e_{(1)}=2{\mathbb R}{ e}\left[\alpha_1\left(\frac{\partial}{\partial \xi}+\partial F\frac{\partial}{\partial \eta}
          +\partial \bar{F}\frac{\partial}{\partial \bar{\eta}}    \right) \right],
\]
\[
e_{(2)}=2{\mathbb R}{ e}\left[\alpha_2\left(\frac{\partial}{\partial \xi}+\partial F\frac{\partial}{\partial \eta}
          +\partial \bar{F}\frac{\partial}{\partial \bar{\eta}}    \right) \right],
\]
\[
f_{(1)}=2{\mathbb R}{ e}\left[\alpha_2\left(\frac{\partial}{\partial \xi}
       +(\bar{\partial} \bar{F}-2(F\partial u-\bar{F}\bar{\partial} u))\frac{\partial}{\partial \eta}
          -\partial \bar{F}\frac{\partial}{\partial \bar{\eta}}    \right) \right],
\]
\[
f_{(2)}=2{\mathbb R}{ e}\left[\alpha_1\left(\frac{\partial}{\partial \xi}
       +(\bar{\partial} \bar{F}-2(F\partial u-\bar{F}\bar{\partial} u))\frac{\partial}{\partial \eta}
          -\partial \bar{F}\frac{\partial}{\partial \bar{\eta}}    \right) \right],
\]
for
\begin{equation}\label{e:alphas}
\alpha_1=\frac{e^{-u-{\scriptstyle \frac{1}{2}}\phi i+{\scriptstyle \frac{1}{4}}\pi i}}
      {\sqrt{2}[-\lambda-|\sigma|]^{\scriptstyle \frac{1}{2}}},
\qquad\qquad
\alpha_2=\frac{e^{-u-{\scriptstyle \frac{1}{2}}\phi i-{\scriptstyle \frac{1}{4}}\pi i}}
      {\sqrt{2}[(-\lambda+|\sigma|)]^{\scriptstyle \frac{1}{2}}},
\end{equation}
where $\bar{\partial}F=-|\sigma|e^{-i\phi}$ and $e^{2u}=4(1+\xi\bar{\xi})^{-2}$.

Analogous expressions hold for the bases of $T\tilde{\Sigma}$ and $N\tilde{\Sigma}$, with a tilde on appropriate quantities. 

Assume that $D$ and $\tilde{\Sigma}$ intersect along a curve and rotate the frames in the tangent bundle so that
the first vector of each basis coincide and lie along the tangent to the intersection. Thus, referring to the above frames, there exists  
rotated frames $\{\mathring{e}_{(a)},\mathring{f}_{(b)}\}$ and $\{\mathring{\tilde{e}}_{(a)},\mathring{\tilde{f}}_{(b)}\}$
with
\[
\mathring{e}_{(1)}=\cos\theta {e}_{(1)}-\sin\theta {e}_{(2)},
\qquad\qquad
\mathring{e}_{(2)}=\sin\theta {e}_{(1)}+\cos\theta {e}_{(2)},
\] 
\[
\mathring{\tilde{e}}_{(1)}=\cos\tilde{\theta} {\tilde{e}}_{(1)}-\sin\tilde{\theta} {\tilde{e}}_{(2)},
\qquad\qquad
\mathring{\tilde{e}}_{(2)}=\sin\tilde{\theta} {\tilde{e}}_{(1)}+\cos\tilde{\theta} {\tilde{e}}_{(2)},
\]
and $\mathring{e}_{(1)}=\mathring{\tilde{e}}_{(1)}$, for some $\theta,\tilde{\theta}\in [0,2\pi)$.

\newpage
\includegraphics{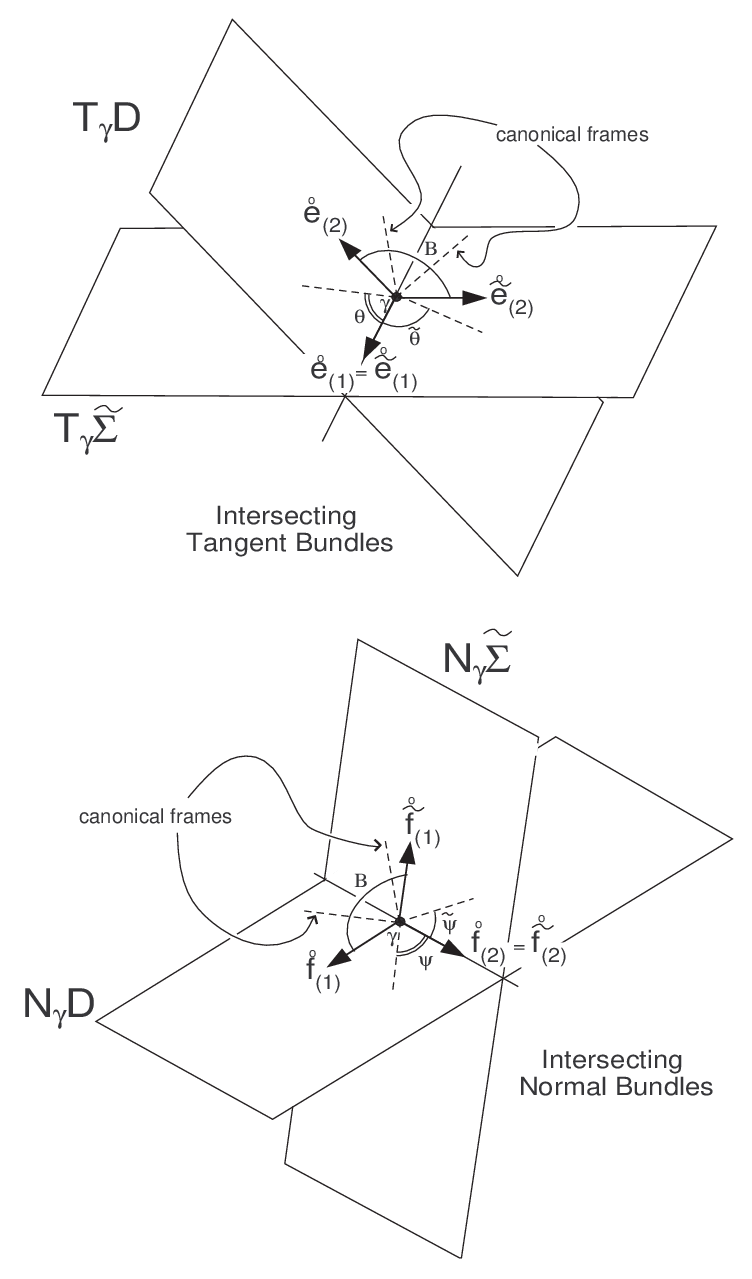}
\newpage

Now compute $\cosh B={\mathbb G}(\mathring{e}_{(2)},\mathring{\tilde{e}}_{(2)})$ with the aid of these expressions.
First note that $\mathring{e}_{(1)}=\mathring{\tilde{e}}_{(1)}$ implies that
\begin{equation}\label{e:intsect1}
\cos\theta {\alpha}_{1}-\sin\theta {\alpha}_{2}=\cos\tilde{\theta} {\tilde{\alpha}}_{1}-\sin\tilde{\theta} {\tilde{\alpha}}_{2},
\end{equation}
\begin{equation}\label{e:intsect2}
(\cos\theta {\alpha}_{1}-\sin\theta {\alpha}_{2})\partial(F-\tilde{F})+(\cos\theta \bar{\alpha}_{1}
-\sin\theta \bar{\alpha}_{2})\bar{\partial}(F-\tilde{F})=0.
\end{equation}

Substituting the expressions for $\alpha_1$ and $\alpha_2$ in equation (\ref{e:intsect1}), we find
\begin{equation}\label{e:intsect2a}
e^{{\scriptstyle \frac{1}{2}}{\phi}}\left(\frac{\cos{\theta}}{[-{\lambda}-|{\sigma}|]^{\scriptstyle \frac{1}{2}}}
    -\frac{i\sin{\theta}}{[-{\lambda}+|{\sigma}|]^{\scriptstyle \frac{1}{2}}}\right)
  =e^{{\scriptstyle \frac{1}{2}}\tilde{\phi}}\left(\frac{\cos\tilde{\theta}}{[-\tilde{\lambda}-|\tilde{\sigma}|]^{\scriptstyle \frac{1}{2}}}
    -\frac{i\sin\tilde{\theta}}{[-\tilde{\lambda}+|\tilde{\sigma}|]^{\scriptstyle \frac{1}{2}}}\right).
\end{equation}
The norm and argument of this equation give
\begin{equation}\label{e:hypangcon1}
(\tilde{\lambda}^2-|\tilde{\sigma}|^2)(-\lambda+|\sigma|\cos 2\theta)=(\lambda^2-|\sigma|^2)(-\tilde{\lambda}+|\tilde{\sigma}|\cos 2\tilde{\theta}),
\end{equation}
and
\begin{equation}\label{e:hypangcon2}
e^{(\phi-\tilde{\phi})i}=\frac{(\lambda^2-|\sigma|^2)(|\tilde{\sigma}|-\tilde{\lambda}\cos 2\tilde{\theta}
   -i(\tilde{\lambda}^2-|\tilde{\sigma}|^2)^{\scriptstyle \frac{1}{2}} \sin 2\tilde{\theta})}
   {(\tilde{\lambda}^2-|\tilde{\sigma}|^2)(|{\sigma}|-{\lambda}\cos 2{\theta}
   -i({\lambda}^2-|{\sigma}|^2)^{\scriptstyle \frac{1}{2}} \sin 2{\theta})}.
\end{equation}

Moreover, introducing the complex slopes $\sigma$ and $\rho=\vartheta+i\lambda$ as before, equation (\ref{e:intsect2}) 
can be split into real and imaginary
parts. The result, with the aid of equation (\ref{e:hypangcon2}), is
\begin{equation}\label{e:hypangcon3}
\vartheta-\tilde{\vartheta}=\frac{\sin 2\tilde{\theta}(\tilde{\lambda}^2-|\tilde{\sigma}|^2)^{\scriptstyle \frac{1}{2}}}
  {-\tilde{\lambda}+|\tilde{\sigma}|\cos 2\tilde{\theta}}|\tilde{\sigma}|
   -\frac{\sin 2{\theta}({\lambda}^2-|{\sigma}|^2)^{\scriptstyle \frac{1}{2}}}
  {-{\lambda}+|{\sigma}|\cos 2{\theta}}|{\sigma}|,
\end{equation}
and
\begin{equation}\label{e:hypangcon4}
\lambda-\tilde{\lambda}=-\frac{\tilde{\lambda}\cos 2\tilde{\theta}-|\tilde{\sigma}|}
{-\tilde{\lambda}+|\tilde{\sigma}|\cos 2\tilde{\theta}}|\tilde{\sigma}|
  +\frac{{\lambda}\cos 2{\theta}-|{\sigma}|}{-{\lambda}+|{\sigma}|\cos 2{\theta}}|{\sigma}|.
\end{equation}
Note that equations (\ref{e:hypangcon1}) and (\ref{e:hypangcon4}) are, in fact, the same equation.

We now compute, with the aid of the metric expression, that
\begin{align}
{\mathbb G}(\mathring{e}_{(2)},\mathring{\tilde{e}}_{(2)})&=\frac{4}{(1+\xi\bar{\xi})^2}{\mathbb I}{\mbox {m}}\left[  
     (\sin\theta \bar{\alpha}_{1}+\cos\theta \bar{\alpha}_{2})(\sin\tilde{\theta} \tilde{\alpha}_{1}+\cos\tilde{\theta} \tilde{\alpha}_{2})
     (\bar{\rho}-\tilde{\rho})\right. \nonumber\\
&\qquad \qquad\qquad \qquad\left. -(\sin\theta {\alpha}_{1}+\cos\theta {\alpha}_{2})(\sin\tilde{\theta} \tilde{\alpha}_{1}+\cos\tilde{\theta} \tilde{\alpha}_{2})(\tilde{\sigma}+\sigma)\right].\nonumber
\end{align}

The expression for the angle given in the Proposition follows from a lengthy computation in which equations (\ref{e:hypangcon2}) 
and (\ref{e:hypangcon3}), recalling that $\rho=\vartheta+i\lambda$,
along with the definitions of $\alpha_1$ and $\alpha_2$ given in equations (\ref{e:alphas}), are substituted in the above expression. These result in

\begin{align}\label{e:hypangle1}
\cosh B=&-\frac{(\lambda^2-|\sigma|^2)^{\scriptstyle{\frac{1}{2}}}}
{(\tilde{\lambda}^2-|\tilde{\sigma}|^2)^{\scriptstyle{\frac{1}{2}}}(-\lambda+|\sigma|\cos 2\theta)}
\left(\lambda+\frac{\tilde{\lambda}\cos 2\tilde{\theta}-|\tilde{\sigma}|}{|\tilde{\sigma}|\cos 2\tilde{\theta}-\tilde{\lambda}}|\tilde{\sigma}|\right)
\nonumber\\
&\qquad\qquad-\frac{\sin 2\theta\sin 2\tilde{\theta}|\sigma||\tilde{\sigma}|}
{(-\lambda+|\sigma|\cos 2\theta)(-\tilde{\lambda}+|\tilde{\sigma}|\cos 2\tilde{\theta})},
\end{align}
where the intersection constraint is
\begin{equation}\label{e:intersectionconstraint1}
(\tilde{\lambda}^2-|\tilde{\sigma}|^2)(-\lambda+|\sigma|\cos 2\theta)=(\lambda^2-|\sigma|^2)(-\tilde{\lambda}+|\tilde{\sigma}|\cos 2\tilde{\theta}).
\end{equation}

The first equation of the Proposition follows from using the definitions $\mu=|\sigma|/|\lambda|$ and $\tilde{\mu}=|\tilde{\sigma}|/|\tilde{\lambda}|$.
Note that, the intersection equation can be taken as defining the ratio of the twists of the intersecting surfaces:

\begin{equation}\label{e:intersectionconstraint}
\frac{\tilde{\lambda}}{\lambda}=\frac{(1-\mu^2)(1+\cos 2\tilde{\theta}\;\tilde{\mu})}{(1-\tilde{\mu}^2)(1+\cos 2\theta\;\mu)}.
\end{equation}

The preceding arguments, which involve the angles $\theta$ and $\tilde{\theta}$, can also be expressed in terms of the angles $\psi$ and $\tilde{\psi}$ 
tracking the intersection of the {\it normal} bundles $ND$ and $N\tilde{\Sigma}$, respectively.  

More specifically suppose that the adapted frames in the normal bundles are
\[
\mathring{f}_{(1)}=\cos\psi {f}_{(1)}+\sin\psi {f}_{(2)},
\qquad\qquad
\mathring{f}_{(2)}=-\sin\psi {f}_{(1)}+\cos\psi {f}_{(2)},
\] 
\[
\mathring{\tilde{f}}_{(1)}=\cos\tilde{\psi} {\tilde{f}}_{(1)}+\sin\tilde{\psi} {\tilde{f}}_{(2)},
\qquad\qquad
\mathring{\tilde{f}}_{(2)}=-\sin\tilde{\psi} {\tilde{f}}_{(1)}+\cos\tilde{\psi} {\tilde{f}}_{(2)},
\]
and $\mathring{f}_{(2)}=\mathring{\tilde{f}}_{(2)}$, for some $\psi,\tilde{\psi}\in [0,2\pi)$.

Then the angle can be computed using $\cosh B=-{\mathbb{G}}(\mathring{f}_{(1)},\mathring{\tilde{f}}_{(1)})$ which yields the second part of the 
Proposition.

We also have the relations ({\it c.f.} equations (\ref{e:hypangcon1}), (\ref{e:hypangcon2}) and (\ref{e:hypangcon3})):
\[
(\tilde{\lambda}^2-|\tilde{\sigma}|^2)(-\lambda+|\sigma|\cos 2\psi)=(\lambda^2-|\sigma|^2)(-\tilde{\lambda}+|\tilde{\sigma}|\cos 2\tilde{\psi}),
\]
\[
e^{(\phi-\tilde{\phi})i}=\frac{(\lambda^2-|\sigma|^2)(|\tilde{\sigma}|-\tilde{\lambda}\cos 2\tilde{\psi}
   -i(\tilde{\lambda}^2-|\tilde{\sigma}|^2)^{\scriptstyle \frac{1}{2}} \sin 2\tilde{\psi})}
   {(\tilde{\lambda}^2-|\tilde{\sigma}|^2)(|{\sigma}|-{\lambda}\cos 2{\psi}
   -i({\lambda}^2-|{\sigma}|^2)^{\scriptstyle \frac{1}{2}} \sin 2{\psi})},
\]
\[
\vartheta-\tilde{\vartheta}=-\frac{\sin 2\tilde{\psi}(\tilde{\lambda}^2-|\tilde{\sigma}|^2)^{\scriptstyle \frac{1}{2}}}
  {-\tilde{\lambda}+|\tilde{\sigma}|\cos 2\tilde{\psi}}|\tilde{\sigma}|
   +\frac{\sin 2{\psi}({\lambda}^2-|{\sigma}|^2)^{\scriptstyle \frac{1}{2}}}
  {-{\lambda}+|{\sigma}|\cos 2{\psi}}|{\sigma}|.
\]
\end{proof}
\vspace{0.1in}

We now state a series of corollaries to these relations that play a key role in controlling the flow of the edge.

\begin{Cor}
If $f_s(D)$ is holomorphic along $f_s(\partial D)$, then 
\[
\cosh B=\frac{{\mbox{Area}}_\Omega(T\tilde{\Sigma})}{{\mbox{Area}}_{\mathbb{G}}(T\tilde{\Sigma})}.
\]
\end{Cor}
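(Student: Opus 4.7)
The strategy is to substitute $\mu=0$ into the hyperbolic angle formula of Proposition~\ref{p:hypang1} and recognize the resulting expression as the ratio of the two area forms induced on $\tilde{\Sigma}$.

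\emph{Step 1: Simplification of the angle formula.} By hypothesis $f_s(D)$ is holomorphic along the edge $f_s(\partial D)$, so $\sigma=0$ and hence $\mu=|\sigma|/|\lambda|=0$ at every point of $f_s(\partial D)$. I would substitute $\mu=0$ into the first expression for $\cosh B$ in Proposition~\ref{p:hypang1}. The cross-term $\sin 2\theta\sin 2\tilde{\theta}\,\mu\tilde{\mu}$ then vanishes, the factor $(1+2\cos 2\theta\,\mu+\mu^2)$ collapses to $1$, and $(1-\mu^2)^{1/2}=1$. After cancelling the common factor $(1+\cos 2\tilde{\theta}\,\tilde{\mu})$ in numerator and denominator, the formula reduces to
\[
\cosh B = \frac{1}{\sqrt{1-\tilde{\mu}^2}} = \frac{|\tilde{\lambda}|}{\sqrt{\tilde{\lambda}^2-|\tilde{\sigma}|^2}}.
\]

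\emph{Step 2: Computing the area ratio on $\tilde{\Sigma}$.} Next I would pull back $\Omega$ and the induced metric to $\tilde{\Sigma}$ using the graph parameterization $\eta=\tilde{F}(\xi,\bar\xi)$. Writing $\partial\tilde{F}=\tilde{\rho}+\tfrac{2\bar{\xi}\tilde{F}}{1+\xi\bar\xi}$ as in Definition~\ref{d:spinco}, the $\tilde{F}$-dependent pieces in $\Omega$ cancel and one obtains $\Omega|_{\tilde{\Sigma}}$ proportional to $\tilde{\lambda}$ times the coordinate area $dx\wedge dy$ (with the standard conformal factor $(1+\xi\bar\xi)^{-2}$). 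By Proposition~\ref{p:ts2indmet}, the determinant of the induced metric in those same coordinates is proportional to $\tilde{\lambda}^2-|\tilde{\sigma}|^2$ with the square of the same conformal factor. Consequently
\[
\frac{\mathrm{Area}_\Omega(T\tilde{\Sigma})}{\mathrm{Area}_{\mathbb{G}}(T\tilde{\Sigma})} = \frac{|\tilde{\lambda}|}{\sqrt{\tilde{\lambda}^2-|\tilde{\sigma}|^2}},
\]
which coincides with the expression for $\cosh B$ obtained in Step~1.

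\emph{Main obstacle.} There is no serious analytic difficulty here: the corollary is a direct specialization of Proposition~\ref{p:hypang1}, so the only care required lies in the bookkeeping of the conformal and constant factors between the pulled-back $\Omega$ and the induced volume form coming from Proposition~\ref{p:ts2indmet}. As a consistency check one can appeal to the calibration identity of Proposition~\ref{p:callib}: it forces $\Omega(v_1,v_2)^2-\det\mathbb{G}(v_i,v_j)=\det[v_1,v_2,\mathbb{J}v_1,\mathbb{J}v_2]\geq 0$ on any positive plane, guaranteeing $\cosh B\geq 1$ and confirming the qualitative form of the area-ratio expression.
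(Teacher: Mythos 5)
The paper states this Corollary without proof (it sits directly after Proposition \ref{p:hypang1}), so there is no argument of the authors to compare against; judged on its own, your proposal is essentially the intended one and Step 1 is correct and complete: setting $\mu=0$ in the first formula of Proposition \ref{p:hypang1} makes the cross-term vanish and the numerator collapse to $2(1+\cos 2\tilde{\theta}\,\tilde{\mu})$, giving $\cosh B=(1-\tilde{\mu}^2)^{-1/2}=|\tilde{\lambda}|/\sqrt{\tilde{\lambda}^2-|\tilde{\sigma}|^2}$, independently of the gauge angles, exactly as you say.

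The one place you wave your hands is the place that actually needs care: the ``bookkeeping of conformal and constant factors'' in Step 2. Pulling back the printed $\Omega$ of section \ref{s:nkm} to the graph gives $\Omega|_{\tilde{\Sigma}}=8\tilde{\lambda}(1+\xi\bar{\xi})^{-2}\,dx\wedge dy$, while the induced metric of Proposition \ref{p:ts2indmet} has area element $4\sqrt{\tilde{\lambda}^2-|\tilde{\sigma}|^2}\,(1+\xi\bar{\xi})^{-2}\,dx\wedge dy$, so the literal ratio is $2|\tilde{\lambda}|/\sqrt{\tilde{\lambda}^2-|\tilde{\sigma}|^2}$, i.e.\ $2\cosh B$. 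The discrepancy is not yours: with the stated conventions the printed formulas satisfy $\Omega(\cdot,\cdot)=-2\,{\mathbb G}({\mathbb J}\cdot,\cdot)$ rather than the compatibility ${\mathbb G}({\mathbb J}\cdot,\cdot)=\Omega(\cdot,\cdot)$ demanded by the definition of a neutral K\"ahler surface, and once the normalizations are made genuinely compatible your claimed identity (and the Corollary) holds exactly. If you want to avoid this normalization trap altogether, there is a cleaner frame-level argument that also bypasses Proposition \ref{p:hypang1}: take frames adapted to the intersection as in Corollary \ref{c:multi}, so $\mathring{\tilde{e}}_{(1)}=\mathring{e}_{(1)}$ and $\mathring{\tilde{e}}_{(2)}=\cosh B\,\mathring{e}_{(2)}+\sinh B\,\mathring{f}_{(1)}$. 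Holomorphicity of $Tf_s(D)$ at the point gives ${\mathbb J}\mathring{e}_{(1)}=\pm\mathring{e}_{(2)}$, hence $\Omega(\mathring{e}_{(1)},\mathring{e}_{(2)})=\pm1$ and $\Omega(\mathring{e}_{(1)},\mathring{f}_{(1)})={\mathbb G}({\mathbb J}\mathring{e}_{(1)},\mathring{f}_{(1)})=0$, so $|\Omega(\mathring{\tilde{e}}_{(1)},\mathring{\tilde{e}}_{(2)})|=\cosh B$ while the ${\mathbb G}$-area of this orthonormal pair is $1$; this proves the Corollary pointwise with the correct constant and makes the role of the calibration identity of Proposition \ref{p:callib} (your consistency check $\cosh B\geq1$) transparent.
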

\vspace{0.1in}

\begin{Cor}
The hyperbolic angle can also be written
\begin{align}
\cosh B&=\frac{|\lambda|^{\textstyle{\frac{1}{2}}}(1+2\mu\cos 2\theta+\mu^2)}
{2|\tilde{\lambda}|^{\textstyle{\frac{1}{2}}}(1+\mu\cos 2\theta)^{\textstyle{\frac{3}{2}}}(1+\tilde{\mu}\cos 2\tilde{\theta})^{\textstyle{\frac{1}{2}}}}
+\frac{|\tilde{\lambda}|^{\textstyle{\frac{1}{2}}}(1+2\tilde{\mu}\cos 2\tilde{\theta}+\tilde{\mu}^2)}
{2|\lambda|^{\textstyle{\frac{1}{2}}}(1+\mu\cos 2\theta)^{\textstyle{\frac{1}{2}}}(1+\tilde{\mu}\cos 2\tilde{\theta})^{\textstyle{\frac{3}{2}}}}\nonumber\\
&\qquad\qquad\qquad\qquad-\frac{\sin 2\theta\sin 2\tilde{\theta}}{(1+\mu\cos 2\theta)(1+\tilde{\mu}\cos 2\tilde{\theta})},\nonumber\\
&=\frac{|\lambda|^{\textstyle{\frac{1}{2}}}(1+2\mu\cos 2\psi+\mu^2)}
{2|\tilde{\lambda}|^{\textstyle{\frac{1}{2}}}(1+\mu\cos 2\psi)^{\textstyle{\frac{3}{2}}}(1+\tilde{\mu}\cos 2\tilde{\psi})^{\textstyle{\frac{1}{2}}}}
+\frac{|\tilde{\lambda}|^{\textstyle{\frac{1}{2}}}(1+2\tilde{\mu}\cos 2\tilde{\psi}+\tilde{\mu}^2)}
{2|\lambda|^{\textstyle{\frac{1}{2}}}(1+\mu\cos 2\psi)^{\textstyle{\frac{1}{2}}}(1+\tilde{\mu}\cos 2\tilde{\psi})^{\textstyle{\frac{3}{2}}}}\nonumber\\
&\qquad\qquad\qquad\qquad-\frac{\sin 2\psi\sin 2\tilde{\psi}}{(1+\mu\cos 2\psi)(1+\tilde{\mu}\cos 2\tilde{\psi})},\label{e:hypang2}
\end{align}
\end{Cor}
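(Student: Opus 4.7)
The plan is to derive the stated expression by substituting the intersection constraint (\ref{e:intersectionconstraint}) into the formula for $\cosh B$ already established in Proposition \ref{p:hypang1}. No new geometric input is required: the corollary is an algebraic rewrite that trades the factors $(1-\mu^{2})^{1/2}$ and $(1-\tilde{\mu}^{2})^{1/2}$ for factors involving $|\lambda|$ and $|\tilde{\lambda}|$, which is exactly the content of the intersection constraint.

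First I would rewrite the constraint (\ref{e:intersectionconstraint}) in the symmetric form
\[
\frac{1-\tilde{\mu}^{2}}{1-\mu^{2}}=\frac{|\lambda|\,(1+\tilde{\mu}\cos 2\tilde{\theta})}{|\tilde{\lambda}|\,(1+\mu\cos 2\theta)},
\]
where the absolute values are justified by positivity of both surfaces (which forces consistent signs of $\lambda$ and $\tilde{\lambda}$, since $\Delta=\lambda^{2}-|\sigma|^{2}$ controls the sign of the determinant of the induced metric by Propositions \ref{p:ts2indmet}--\ref{p:indmet}). Taking square roots gives
\[
\frac{(1-\tilde{\mu}^{2})^{1/2}}{(1-\mu^{2})^{1/2}}=\frac{|\lambda|^{1/2}(1+\tilde{\mu}\cos 2\tilde{\theta})^{1/2}}{|\tilde{\lambda}|^{1/2}(1+\mu\cos 2\theta)^{1/2}},
\]
and the reciprocal identity obtained by swapping the roles of the two surfaces.

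Next I would split the first formula in Proposition \ref{p:hypang1} into its three additive pieces. In the first piece, cancel one factor of $(1-\tilde{\mu}^{2})^{1/2}$ from numerator against denominator, leaving $(1-\tilde{\mu}^{2})^{1/2}/(1-\mu^{2})^{1/2}$, and substitute the identity above; the $(1+\tilde{\mu}\cos 2\tilde{\theta})$ in the denominator combines with the new $(1+\tilde{\mu}\cos 2\tilde{\theta})^{1/2}$ in the numerator to give $(1+\tilde{\mu}\cos 2\tilde{\theta})^{1/2}$ in the denominator, and likewise the $(1+\mu\cos 2\theta)$ in the denominator combines with the new $(1+\mu\cos 2\theta)^{1/2}$ to yield $(1+\mu\cos 2\theta)^{3/2}$. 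This produces precisely the first summand of the claimed expression. The second piece is handled by applying the reciprocal identity, producing the second summand. The third piece $-\sin 2\theta\sin 2\tilde{\theta}\,\mu\tilde{\mu}/[(1+\mu\cos 2\theta)(1+\tilde{\mu}\cos 2\tilde{\theta})]$ is rewritten using $\mu\tilde{\mu}=(|\sigma||\tilde{\sigma}|)/(|\lambda||\tilde{\lambda}|)$; in fact the two forms become equal after using the intersection constraint once more to eliminate the $\mu\tilde{\mu}$ in the numerator, yielding the claimed last term of the corollary.

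Finally, the $\psi,\tilde{\psi}$ version of the formula follows by exactly the same substitution, applied to the second equation of Proposition \ref{p:hypang1} and using the analogous intersection constraint for $\psi,\tilde{\psi}$ stated at the end of that proof. The only conceptual subtlety, and the main potential pitfall, is tracking the signs of $\lambda$ and $\tilde{\lambda}$ when taking square roots; positivity of both surfaces ensures $\Delta>0$ and $\tilde{\Delta}>0$, and the frame choice in Proposition \ref{p:frames} fixes these signs uniformly, so the absolute-value conventions in the corollary are consistent with the derivation.
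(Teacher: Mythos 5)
Your overall route coincides with the paper's: the proof given there is exactly the substitution of the intersection constraint (\ref{e:intersectionconstraint}) (and its $\psi$ analogue) into Proposition \ref{p:hypang1}, and your treatment of the first two summands is correct. Writing $(1-\tilde{\mu}^2)/(1-\mu^2)=|\lambda|(1+\tilde{\mu}\cos 2\tilde{\theta})/\bigl(|\tilde{\lambda}|(1+\mu\cos 2\theta)\bigr)$, taking square roots (legitimate, since positivity forces $\lambda$ and $\tilde{\lambda}$ to have the same sign), and cancelling as you describe does reproduce the two summands of (\ref{e:hypang2}) involving $|\lambda|^{1/2}$ and $|\tilde{\lambda}|^{1/2}$.

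The gap is your handling of the last term. Proposition \ref{p:hypang1} gives $-\mu\tilde{\mu}\sin 2\theta\sin 2\tilde{\theta}/[(1+\mu\cos 2\theta)(1+\tilde{\mu}\cos 2\tilde{\theta})]$, whereas the Corollary as printed carries no factor $\mu\tilde{\mu}$. You assert that ``using the intersection constraint once more'' eliminates this factor; it cannot. Equation (\ref{e:intersectionconstraint}) only prescribes the ratio $\tilde{\lambda}/\lambda$ in terms of $\mu,\tilde{\mu},\theta,\tilde{\theta}$; it imposes no relation among $\mu,\tilde{\mu},\theta,\tilde{\theta}$ themselves, and in particular does not give $\mu\tilde{\mu}=1$. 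Nor can a compensating contribution come from the first two summands, since your manipulation already accounts for them exactly. An honest substitution therefore yields (\ref{e:hypang2}) with last term $-\mu\tilde{\mu}\sin 2\theta\sin 2\tilde{\theta}/[(1+\mu\cos 2\theta)(1+\tilde{\mu}\cos 2\tilde{\theta})]$; the printed statement differs from what Proposition \ref{p:hypang1} together with (\ref{e:intersectionconstraint}) produces, apparently by this dropped factor, and the two forms agree only where $\mu\tilde{\mu}=1$ or $\sin 2\theta\sin 2\tilde{\theta}=0$ (for instance in the degenerate limit $\mu=\tilde{\mu}=1$ used in Proposition \ref{p:posregion}). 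The correct move in your write-up is to carry the factor $\mu\tilde{\mu}$ through and flag the discrepancy with the stated formula, rather than to invoke a cancellation that the constraint does not provide.
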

\begin{proof}
The two equality follows from Proposition \ref{p:hypang1} and equation (\ref{e:intersectionconstraint}) and the equivalent $\psi$ version.
\end{proof}\vspace{0.1in}

\begin{Cor}
The hyperbolic angle between two intersecting positive planes satisfies the inequalities
\[
\frac{1-\mu\tilde{\mu}}{(1-\mu^2)^{\scriptstyle{\frac{1}{2}}}(1-\tilde{\mu}^2)^{\scriptstyle{\frac{1}{2}}}}\leq\cosh B
  \leq\frac{1+\mu\tilde{\mu}}{(1-\mu^2)^{\scriptstyle{\frac{1}{2}}}(1-\tilde{\mu}^2)^{\scriptstyle{\frac{1}{2}}}}.
\]
\end{Cor}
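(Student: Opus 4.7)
The plan is to reduce each of the two inequalities to a sum-of-squares identity, starting directly from the explicit formula for $\cosh B$ given in the first equality of Proposition \ref{p:hypang1}. Positivity of both surfaces forces $\mu,\tilde\mu\in[0,1)$, which in turn makes the factors $(1+\mu\cos 2\theta)$ and $(1+\tilde\mu\cos 2\tilde\theta)$ strictly positive. Consequently the common denominator
\[
D = 2\sqrt{(1-\mu^2)(1-\tilde\mu^2)}\,(1+\mu\cos 2\theta)(1+\tilde\mu\cos 2\tilde\theta)
\]
is positive, and each of the desired bounds is equivalent, after multiplying through by $D$, to the non-negativity (respectively non-positivity) of a polynomial expression in $\mu,\tilde\mu,\cos 2\theta,\sin 2\theta,\cos 2\tilde\theta,\sin 2\tilde\theta$.

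To recognise the resulting expression as a sum of squares I would introduce the substitutions $\mu=\sin A$, $\tilde\mu=\sin\tilde A$ (legitimate since $\mu,\tilde\mu\in[0,1)$), and then pass to the half-sum and half-difference variables $B_\pm = (A\pm\tilde A)/2$ together with $\phi=\theta-\tilde\theta$, $\psi=\theta+\tilde\theta$. Product-to-sum formulas then simplify the trigonometric parts: in particular $\cos 2\theta-\cos 2\tilde\theta = -2\sin\psi\sin\phi$, $\cos 2\theta\cos 2\tilde\theta = \tfrac{1}{2}(\cos 2\psi+\cos 2\phi)$, and $\sin 2\theta\sin 2\tilde\theta = \tfrac{1}{2}(\cos 2\phi-\cos 2\psi)$, while $\sin A\pm \sin\tilde A$ and $1\pm\sin A\sin\tilde A\pm\cos A\cos\tilde A$ combine into clean half-angle expressions in $B_\pm$.

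After this reorganisation I expect the difference between $D\cdot(1-\mu\tilde\mu)/\sqrt{(1-\mu^2)(1-\tilde\mu^2)}$ and $D\cosh B$ to collapse, up to a positive factor, to
\[
-4\mu\tilde\mu\bigl(\sin B_-\sin\psi-\cos B_+\sin\phi\bigr)^{2},
\]
which is manifestly non-positive and yields the lower bound. The analogous computation for the upper bound should yield
\[
-4\mu\tilde\mu\bigl(\cos B_-\cos\phi+\sin B_+\cos\psi\bigr)^{2},
\]
again non-positive. Note that in both cases the equality conditions are consistent with the extremal configurations $\theta=\tilde\theta=0$ (lower bound) and $\theta=\pi/2,\tilde\theta=0$ (upper bound), which I would use as independent sanity checks by evaluating $\cosh B$ directly at those angles.

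The main obstacle is purely computational: the formula for $\cosh B$ mixes rational and irrational (square root) pieces, and spotting the exact perfect-square structure requires care in bookkeeping during the expansion. There is no issue from the intersection constraint (\ref{e:intersectionconstraint}), since the identity being proved is valid pointwise in $(\theta,\tilde\theta)$ and does not use that constraint; it is simply a trigonometric inequality satisfied by the explicit expression for $\cosh B$. If the perfect-square form resists direct identification, a fallback is to fix $\mu,\tilde\mu$ and treat $\cosh B$ as a function of $(\theta,\tilde\theta)$, using Lagrange multipliers together with the observation that the equality cases above exhibit the extrema, to conclude the bounds.
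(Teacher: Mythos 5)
Your proposal is correct, but it argues differently from the paper: the paper disposes of this corollary in one line, by maximizing and minimizing the expression (\ref{e:hypangle}) for $\cosh B$ over $\theta$ and $\tilde{\theta}$ with $\mu,\tilde{\mu}$ held fixed — which is exactly your "fallback" extremization route — whereas your primary argument replaces the calculus by an explicit algebraic identity. Your identity does work, and in fact the perfect squares you predict are exactly right: clearing the positive denominator $D=2\sqrt{(1-\mu^2)(1-\tilde{\mu}^2)}\,(1+\mu\cos 2\theta)(1+\tilde{\mu}\cos 2\tilde{\theta})$, the surviving expression factors as $2\mu\tilde{\mu}$ times a quantity which, after the substitution $\mu=\sin A$, $\tilde{\mu}=\sin\tilde{A}$ and passage to the half-sum/half-difference angles, collapses via the elementary identities $1\mp\cos 2\theta\cos 2\tilde{\theta}$, $\sin 2\theta\sin 2\tilde{\theta}$, $\cos 2\theta\mp\cos 2\tilde{\theta}$ into twice the squares $\bigl(\sin B_-\sin\psi-\cos B_+\sin\phi\bigr)^2$ and $\bigl(\cos B_-\cos\phi+\sin B_+\cos\psi\bigr)^2$ with the coefficient $4\mu\tilde{\mu}$ you state; I checked this expansion and it closes. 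You are also right that the intersection constraint (\ref{e:intersectionconstraint}) plays no role here, since for fixed $\mu,\tilde{\mu}$ the formula of Proposition \ref{p:hypang1} depends only on the angles, and this is implicitly how the paper treats it as well. What your route buys over the paper's is a self-contained verification with no critical-point or boundary analysis, together with the precise equality configurations (vanishing of the respective squares), consistent with your sanity checks at $\theta=\tilde{\theta}=0$ and $\theta=\pi/2$, $\tilde{\theta}=0$; what it costs is the bookkeeping you acknowledge, which the paper's optimization statement sidesteps entirely.
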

\begin{proof}
These follow from maximizing and minimizing $\cosh B$ over $\theta$ and $\tilde{\theta}$.
\end{proof}

\vspace{0.1in}
\begin{Cor}\label{c:nulltogether}
Consider the set $U_B$ of pairs of intersecting positive planes with fixed hyperbolic angle $B$ in ${\mathbb R}^{2,2}$. Let $(P_s,\tilde{P}_s)\in U_B$ be
a curve of pairs of planes for
$s\in[0,1)$. Then $\lim_{s\rightarrow 1}P_s$ is degenerate iff $\lim_{s\rightarrow 1}\tilde{P}_s$ is degenerate.
\end{Cor}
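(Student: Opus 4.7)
The plan is to reduce the statement to a limit argument on the quantities $\mu = |\sigma|/|\lambda|$ and $\tilde\mu = |\tilde\sigma|/|\tilde\lambda|$, and then invoke the bounds on $\cosh B$ proved in the preceding corollary.

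First, I would recall the characterization of degeneracy. By Proposition \ref{p:ts2indmet} the induced metric on a positive plane (given as a graph) has determinant proportional to $\lambda^2-|\sigma|^2$, so positivity is equivalent to $\mu<1$, and the plane degenerates precisely when $\mu=1$. This characterization is intrinsic to positive $2$-planes in ${\mathbb R}^{2,2}$ under the action of $O(2,2)$, so for a curve $(P_s,\tilde P_s)\in U_B$ with $s\in[0,1)$ the degeneracy of $\lim_{s\to 1}P_s$ is equivalent to $\mu_s\to 1$, and likewise for the tilde side. The task is therefore to show that $\mu_s\to 1$ iff $\tilde\mu_s\to 1$, given that $\cosh B$ is held fixed.

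Next, I would apply the lower inequality of the immediately preceding corollary, namely
\[
\frac{1-\mu_s\tilde\mu_s}{(1-\mu_s^2)^{\scriptstyle{\frac{1}{2}}}(1-\tilde\mu_s^2)^{\scriptstyle{\frac{1}{2}}}}\leq \cosh B,
\]
which, after squaring and clearing denominators, becomes
\[
(1-\mu_s\tilde\mu_s)^2\leq \cosh^2 B\cdot (1-\mu_s^2)(1-\tilde\mu_s^2).
\]
Assume $\mu_s\to 1$. Since both $\mu_s,\tilde\mu_s\in[0,1)$ by positivity, the factor $(1-\tilde\mu_s^2)$ on the right is uniformly bounded, while $(1-\mu_s^2)\to 0$. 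Because $\cosh B$ is a fixed finite constant, the right-hand side tends to $0$, hence $(1-\mu_s\tilde\mu_s)^2\to 0$, i.e.\ $\mu_s\tilde\mu_s\to 1$. Combined with $\mu_s\to 1$, this forces $\tilde\mu_s\to 1$, so $\tilde P_s$ degenerates. The converse implication is obtained by exchanging the roles of $\mu$ and $\tilde\mu$, which the symmetry of the expressions for $\cosh B$ in Proposition \ref{p:hypang1} makes manifest.

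I do not expect any serious obstacle: the squeeze is immediate once the inequalities of the preceding corollary are in hand, and the only thing requiring a brief verification is that the invariants $\mu,\tilde\mu$ really do characterize degeneracy for abstract positive planes in ${\mathbb R}^{2,2}$, not merely for planes presented as graphs. This follows because any positive plane can be brought into the graph form of Proposition \ref{p:frames} by a suitable element of $O(2,2)$ fixing the neutral Kähler data, under which both $\mu$ and the degeneracy condition $\lambda^2=|\sigma|^2$ are invariant.
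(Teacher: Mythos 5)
Your proposal is correct and follows essentially the same route as the paper: the paper's proof is precisely the observation that, for fixed $B$, the left-hand inequality of the preceding corollary forces $\lim_{s\to 1}\mu=1$ iff $\lim_{s\to 1}\tilde{\mu}=1$. You have simply spelled out the squeeze and the identification of degeneracy with $\mu=1$ in more detail than the paper does.
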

\begin{proof}
From the left-hand inequality of the previous Corollary it is clear that, for fixed $B$, $\lim_{s\rightarrow 1}\mu=1$ iff 
$\lim_{s\rightarrow 1}\tilde{\mu}=1$.
\end{proof}

\vspace{0.1in}

\begin{Cor}\label{c:ineq2}
The following inequality holds for intersecting positive planes forming an angle $B>0$:
\begin{equation}\label{e:ineq2}
\frac{\tilde{\mu}-\cosh B\sinh B\;(1-\tilde{\mu}^2)}{\tilde{\mu}^2+\cosh^2 B\;(1-\tilde{\mu}^2)}\leq \mu
  \leq\frac{\tilde{\mu}+\cosh B\sinh B\;(1-\tilde{\mu}^2)}{\tilde{\mu}^2+\cosh^2 B\;(1-\tilde{\mu}^2)},
\end{equation}
which can also be written
\begin{equation}\label{e:ineq3}
\frac{\cosh B-\tilde{\mu}\sinh B}{(1-\tilde{\mu}^2)^{\scriptstyle{\frac{1}{2}}}}\leq\frac{1}{(1-\mu^2)^{\scriptstyle{\frac{1}{2}}}}\leq \frac{\cosh B+\tilde{\mu}\sinh B}{(1-\tilde{\mu}^2)^{\scriptstyle{\frac{1}{2}}}}.
\end{equation}
\end{Cor}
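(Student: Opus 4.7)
The plan is to reduce both claimed inequalities to a single sharp statement about hyperbolic angles and then recover (\ref{e:ineq2}) and (\ref{e:ineq3}) by algebraic manipulation. Since positivity of the two surfaces forces $\mu,\tilde\mu\in[0,1)$, I would introduce rapidities $\alpha,\beta\geq 0$ by $\mu=\tanh\alpha$ and $\tilde\mu=\tanh\beta$, so that $\cosh\alpha=1/\sqrt{1-\mu^2}$, $\sinh\alpha=\mu/\sqrt{1-\mu^2}$, and analogously for $\beta$. Under this substitution the two expressions bracketing $\cosh B$ in the previous corollary collapse, via the hyperbolic addition formulas, to
\[
\frac{1\mp\mu\tilde\mu}{\sqrt{(1-\mu^2)(1-\tilde\mu^2)}}=\cosh(\alpha\mp\beta),
\]
so the preceding corollary is exactly $\cosh(\alpha-\beta)\leq\cosh B\leq\cosh(\alpha+\beta)$. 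Because $\cosh$ is even and strictly increasing on $[0,\infty)$, this pair of inequalities is equivalent to $|\alpha-\beta|\leq B\leq \alpha+\beta$, from which solving for $\alpha$ yields the two-sided bound $|\beta-B|\leq\alpha\leq\beta+B$.

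To derive (\ref{e:ineq3}), I would apply $\cosh$ to this last bound and expand via $\cosh(\beta\pm B)=\cosh\beta\cosh B\pm\sinh\beta\sinh B$; rewriting $\cosh\alpha$ as $1/\sqrt{1-\mu^2}$ and $\cosh\beta,\sinh\beta$ in terms of $\tilde\mu$ produces (\ref{e:ineq3}) directly. Note that $\tilde\mu<1$ implies $\cosh B-\tilde\mu\sinh B>0$, so all three quantities in (\ref{e:ineq3}) are strictly positive and may be squared unambiguously. To derive (\ref{e:ineq2}) I would instead apply $\tanh$ to $|\beta-B|\leq\alpha\leq\beta+B$ and use
\[
\tanh(\beta\pm B)=\frac{\tilde\mu\cosh B\pm\sinh B}{\cosh B\pm\tilde\mu\sinh B},
\]
then clear fractions by multiplying numerator and denominator by the conjugate $\cosh B\mp\tilde\mu\sinh B$. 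The numerator becomes $\tilde\mu\pm\cosh B\sinh B(1-\tilde\mu^2)$ and the denominator becomes $\cosh^2 B-\tilde\mu^2\sinh^2 B=\cosh^2 B(1-\tilde\mu^2)+\tilde\mu^2$, which is exactly the form in (\ref{e:ineq2}).

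There is no substantive obstacle here; the argument is entirely bookkeeping once one recognises the rapidity substitution, and the main task is to check signs. The one subtlety worth flagging is that when $\tilde\mu<\tanh B$ the lower bound in (\ref{e:ineq2}) becomes negative and is therefore weaker than the trivial $\mu\geq 0$; this is consistent with the sharper rapidity bound $\alpha\geq|\beta-B|$, which supplies $\mu\geq\tanh|\beta-B|$ in that regime. Since the statement of the corollary only asserts the signed inequality, the derivation above suffices.
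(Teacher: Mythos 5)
Your proposal is correct: the rapidity substitution $\mu=\tanh\alpha$, $\tilde{\mu}=\tanh\beta$ turns the preceding corollary into $\cosh(\alpha-\beta)\leq\cosh B\leq\cosh(\alpha+\beta)$, hence $|\beta-B|\leq\alpha\leq\beta+B$, and applying $\cosh$ and $\tanh$ with the addition formulas yields (\ref{e:ineq3}) and (\ref{e:ineq2}) exactly as you compute. The paper gives no explicit proof of this corollary, treating it as an algebraic rearrangement of the previous bounds on $\cosh B$, which is precisely what your argument supplies; your remark that for $\tilde{\mu}<\tanh B$ the signed lower bound in (\ref{e:ineq2}) is weaker than the sharper $\mu\geq\tanh|\beta-B|$ is also correct and harmless, since only the signed form is asserted.
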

\vspace{0.2in}


\section{{\bf Mean Curvature Flow With Boundary in $TS^2$}}\label{s:mcfts2}

Throughout we use the term positive surface to mean spacelike surface: the induced metric is positive definite.

\vspace{0.1in}

\subsection{The I.B.V.P.}

We now investigate the initial boundary value problem of section \ref{s:bvp}, namely unparameterised mean curvature flow with boundary
conditions:

\vspace{0.2in}
\begin{center}\fbox{\parbox{4.8in}{
\begin{center}{\Large{\bf I.B.V.P.}}\end{center}
\vspace{0.1in}
{\it
Consider a family of positive sections $f_s:D\rightarrow TS^2$ such that
\[
\frac{d f}{ds}^\bot=H,
\]
with initial and boundary conditions:
\vspace{0.1in}
\begin{enumerate}
\item[(i)] $f_0(D)=\Sigma_0,$
\item[(ii)]$f_s(\partial D)\subset \tilde{\Sigma},$
\item[(iii)] the hyperbolic angle $B$ between $Tf_s(D)$ and $T\tilde{\Sigma}$ is constant along $f_s(\partial D)$,
\item[(iv)] $f_s(\partial D)$ is asymptotically holomorphic: $|\bar{\partial}f_s|=C/(1+s)$,
\end{enumerate} 
\vspace{0.1in}
where $H$ is the mean curvature vector of $f_s(D)$, and $\Sigma_0$ and
$\tilde{\Sigma}$ are some given positive sections.
}
\vspace{0.1in}
}}
\end{center}
\vspace{0.2in}

We consider this flow when the flowing and boundary surfaces are graphs of sections of $TS^2\rightarrow S^2$. In this
case it is most convenient to use the base to parameterize the surfaces. That is, we consider a flowing surface given by
$\xi\mapsto(\xi,\eta=F_s(\xi,\bar{\xi}))$ and boundary surface given by $\xi\mapsto(\xi,\eta=\tilde{F}(\xi,\bar{\xi}))$. We show that
under the flow, the surface remains a graph. 

For the moment, we compute the explicit expressions for the flow of the function $F_s$.

\begin{Prop}
For a positive graph in $TS^2$, the mean curvature flow is
\begin{align}
\frac{\partial F}{\partial s}=&g^{jk}\partial_j\partial_k F+\frac{i\bar{\sigma}}{\Delta}\left((\sigma\xi-\bar{\rho}\bar{\xi})
    (1+\xi\bar{\xi})+\bar{F}-\bar{\xi}^2F\right)\nonumber\\
&=\frac{(1+\xi\bar{\xi})^2}{2(\lambda^2-\sigma\bar{\sigma})}\left(-2\bar{\sigma}\partial\lambda-i\bar{\sigma}\bar{\partial}\sigma+2\lambda\partial\bar{\sigma}+i\sigma\bar{\partial}\bar{\sigma}
   +\frac{4i\bar{\sigma}(\sigma\xi+\lambda i\bar{\xi})}{1+\xi\bar{\xi}}\right)\label{e:floweq}.
\end{align}
\end{Prop}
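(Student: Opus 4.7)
The plan has three steps, each essentially a bookkeeping exercise.

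First, I observe that the I.B.V.P.\ flow $(df/ds)^\perp = H$ fixes $df/ds$ only up to a tangential reparameterisation, and I will use this freedom to keep the flowing disc a graph over the base. Writing $df/ds = H + V$ with $V = V^\xi\partial_\xi f + V^{\bar\xi}\partial_{\bar\xi}f$ a tangent vector field and demanding that the $\xi$-component of $df/ds$ vanish forces $V^\xi = -H^\xi$ and $V^{\bar\xi} = -H^{\bar\xi}$; extracting the $\eta$-component of $df/ds$ then yields
\[
\frac{\partial F}{\partial s} = H^\eta - H^\xi\,\partial F - H^{\bar\xi}\,\bar\partial F
\]
as the general graph form of the flow.

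Second, I substitute the closed-form expression for $H$ from the preceding Proposition. Unpacking $H = 2\mathrm{Re}[\gamma(\partial_\xi + (\bar\partial\bar F - 2(F\partial u - \bar F\bar\partial u))\partial_\eta - \partial\bar F\,\partial_{\bar\eta})]$ gives $H^\xi = \gamma$ and, taking into account the contribution from the $\partial_{\bar\eta}$-term under conjugation, $H^\eta = \gamma\bigl(\bar\partial\bar F - 2(F\partial u - \bar F\bar\partial u)\bigr) + \bar\gamma\bar\sigma$. Using $\partial F = \rho + 2\bar\xi F/(1+\xi\bar\xi)$, $\bar\partial\bar F = \bar\rho + 2\xi\bar F/(1+\xi\bar\xi)$, $\bar\partial F = -\bar\sigma$, and $\partial u = -\bar\xi/(1+\xi\bar\xi)$, the explicit $F,\bar F$ contributions cancel and the equation collapses to $\partial F/\partial s$ as a linear combination of $\gamma\lambda$ and $\bar\gamma\bar\sigma$. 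Expanding $\gamma$ via the closed-form expression (\ref{e:meanc}) from the Note and distributing $\partial,\bar\partial$ across the quotients $\bar\sigma e^{2u}/\sqrt{\Delta}$ and $\lambda/\sqrt{\Delta}$ produces the second form of the proposition after grouping the $\partial\bar\sigma,\bar\partial\sigma,\partial\lambda,\bar\partial\lambda$ terms and collecting the algebraic residue $4i\bar\sigma(\sigma\xi + i\lambda\bar\xi)/(1+\xi\bar\xi)$.

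Third, to establish the equivalence of the two stated forms I compute the principal part via the inverse induced metric of Proposition \ref{p:ts2indmet}:
\[
g^{jk}\partial_j\partial_k F = \frac{(1+\xi\bar\xi)^2}{2\Delta}\bigl(i\bar\sigma\,\partial^2 F - 2\lambda\,\partial\bar\partial F - i\sigma\,\bar\partial^2 F\bigr).
\]
With $\partial\bar\partial F = -\partial\bar\sigma$, $\bar\partial^2 F = -\bar\partial\bar\sigma$, and $\partial^2 F = \partial\rho + 2\bar\xi\rho/(1+\xi\bar\xi) + 2\bar\xi^2 F/(1+\xi\bar\xi)^2$, the difference between the second form of the proposition and this expression reduces to a combination of the second-derivative piece $-i\bar\sigma\,\partial\vartheta - \bar\sigma\,\partial\lambda$ together with algebraic $\xi,\bar\xi,F$ contributions. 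Invoking the conjugate of identity (\ref{e:id1}),
\[
\partial\vartheta - i\partial\lambda \;=\; -\bar\partial\sigma + \frac{2\xi\sigma}{1+\xi\bar\xi} - \frac{2\bar F}{(1+\xi\bar\xi)^2},
\]
trades the derivative piece for algebraic terms, and the identity $i\rho + 2\lambda = i\bar\rho$ gathers the $\bar\xi$-coefficients into $-2i\bar\sigma\bar\rho\bar\xi/(1+\xi\bar\xi)$; the residue then collapses exactly to $\tfrac{i\bar\sigma}{\Delta}\bigl((\sigma\xi - \bar\rho\bar\xi)(1+\xi\bar\xi) + \bar F - \bar\xi^2 F\bigr)$.

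The main obstacle is the length and delicacy of the computation rather than any conceptual hurdle; the key non-obvious point is that identity (\ref{e:id1}) precisely converts the $\partial\vartheta - i\partial\lambda$ contribution generated when expanding $g^{jk}\partial_j\partial_k F$ into the algebraic residue involving $\sigma\xi,\bar\rho\bar\xi,\bar F,\bar\xi^2 F$, thereby bridging the two presentations. I would verify the sign conventions and numerical coefficients by specialising to the Lagrangian case $\lambda = 0$, where the second form simplifies dramatically and can be cross-checked against a direct computation, and separately to the holomorphic case $\sigma = 0$, where both sides vanish in agreement with the preceding Corollary that a holomorphic graph has $H = 0$.
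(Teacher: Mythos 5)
Your proposal is correct and follows essentially the same route as the paper: reduce the flow to the graph equation $\dot F=-2i\lambda H^{\xi}+2\bar\sigma H^{\bar\xi}$, insert the explicit mean curvature formula (\ref{e:meanc}) to get the second displayed form, and then recover the first form by computing $g^{jk}\partial_j\partial_k F$ with the inverse metric of Proposition \ref{p:ts2indmet} and trading $\partial\vartheta-i\partial\lambda$ for algebraic terms via the 2-jet identity (the conjugate of (\ref{e:id1}), which is what the paper invokes in its ``more convenient form''). The only cosmetic difference is at the first step, where you cancel the horizontal components of $H$ by an explicit tangential vector field and use the coordinate description of the normal bundle, whereas the paper projects the vertical velocity with the operators $^{\perp}P$ from Proposition \ref{p:2ndff} and solves the resulting linear relation together with its conjugate -- the two derivations are equivalent.
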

\begin{proof}
Consider a positive surface $f:\Sigma\times[0,s_0)\rightarrow{\mathbb M}$ such that 
$f_s(\xi,\bar{\xi})=(\xi,\bar{\xi},F_s(\xi,\bar{\xi}),\bar{F}_s(\xi,\bar{\xi}))$. Then
\[
\frac{\partial f}{\partial s}=\frac{\partial F}{\partial s}\frac{\partial }{\partial \eta}+\frac{\partial \bar{F}}{\partial s}\frac{\partial }{\partial \bar{\eta}}.
\]
Projecting onto the normal of $\Sigma$
\begin{align}
\frac{\partial f}{\partial s}^\bot=&(^\bot P^\xi_\eta \dot{F}+\;^\bot P^\xi_{\bar{\eta}} \dot{\bar{F}})\frac{\partial }{\partial \xi}
    +(^\bot P^\eta_\eta \dot{F}+\;^\bot P^\eta_{\bar{\eta}} \dot{\bar{F}})\frac{\partial }{\partial \eta}\nonumber\\
&\qquad+(^\bot P^{\bar{\xi}}_{\bar{\eta}} \dot{\bar{F}}+\;^\bot P^{\bar{\xi}}_{\eta} \dot{F})\frac{\partial }{\partial \bar{\xi}}
    +(^\bot P^{\bar{\eta}}_{\bar{\eta}} \dot{\bar{F}}+\;^\bot P^{\bar{\eta}}_{\eta} \dot{F})\frac{\partial }{\partial \bar{\eta}}\nonumber,
\end{align}
and so the mean curvature flow is
\[
^\bot P^\xi_\eta \dot{F}+\;^\bot P^\xi_{\bar{\eta}} \dot{\bar{F}}=H^\xi,
\]
or from the expressions of the projection operators given in the proof of Proposition \ref{p:2ndff}
\[
\frac{\lambda i}{2\Delta}\dot{F}-\frac{\bar{\sigma}}{2\Delta} \dot{\bar{F}}=H^\xi.
\]
Combining this with its complex conjugate we have
\begin{equation}\label{e:fdot1}
\dot{F}=-2\lambda iH^\xi+2\bar{\sigma}H^{\bar{\xi}}.
\end{equation}
Using the expression (\ref{e:meanc}) for the mean curvature we get that
\begin{align}
H^\xi=&\frac{(1+\xi\bar{\xi})^2}{4\Delta^2}\Big[2\left(i\partial\bar{\sigma}-\bar{\partial}\lambda
    -\frac{2i\bar{\xi}\bar{\sigma}}{1+\xi\bar{\xi}}\right)\Delta\nonumber\\
  &\qquad\qquad\qquad-2i\lambda\bar{\sigma}\partial\lambda+i\sigma\bar{\sigma}\partial\bar{\sigma}+i\bar{\sigma}^2\partial \sigma+2\lambda^2\bar{\partial}\lambda
      -\lambda\sigma\bar{\partial}\bar{\sigma}-\lambda\bar{\sigma}\bar{\partial}\sigma\Big]\nonumber,
\end{align}
and the second equality stated in the Proposition follows from inserting this in equation (\ref{e:fdot1}).

To see that the first equality in the Proposition holds, we compute
\begin{align}
g^{jk}\partial_j\partial_k F&=\frac{(1+\xi\bar{\xi})^2}{2\Delta}\left(i\bar{\sigma}\partial^2F-2\lambda\partial\bar{\partial}F
     -i\sigma\bar{\partial}^2F \right)\nonumber\\
&=\frac{(1+\xi\bar{\xi})^2}{2\Delta}\left[i\bar{\sigma}\partial\left(\theta+i\lambda+\frac{2\bar{\xi}F}{1+\xi\bar{\xi}}\right)
    +2\lambda\partial\bar{\sigma}+i\sigma\bar{\partial}\bar{\sigma} \right]\nonumber\\
&=\frac{(1+\xi\bar{\xi})^2}{2\Delta}\left[-2\bar{\sigma}\partial\lambda-i\bar{\sigma}\bar{\partial}\sigma+i\sigma\bar{\partial}\bar{\sigma} 
   +2\lambda\partial\bar{\sigma}\right.\nonumber\\
&\qquad\qquad\qquad\qquad\left.+i\bar{\sigma}\left(\frac{2(\sigma\xi+\rho\bar{\xi})}{1+\xi\bar{\xi}}
   -\frac{2(\bar{F}-\bar{\xi}^2F)}{(1+\xi\bar{\xi})^2} \right)\right]\nonumber,
\end{align}
where we have used identity (\ref{e:id2}) in the more convenient form
\[
\partial\theta=i\partial\lambda-(1+\xi\bar{\xi})^2\partial\left(\frac{\bar{\sigma}}{(1+\xi\bar{\xi})^2}\right)-\frac{2F}{(1+\xi\bar{\xi})^2}.
\]
Thus 
\[
g^{jk}\partial_j\partial_k F+\frac{i\bar{\sigma}}{\Delta}\left((\sigma\xi-\bar{\rho}\bar{\xi})
    (1+\xi\bar{\xi})+\bar{F}-\bar{\xi}^2F\right)\qquad\qquad\qquad\qquad\qquad\qquad\qquad\qquad
\]
\[
=\frac{(1+\xi\bar{\xi})^2}{2(\lambda^2-\sigma\bar{\sigma})}\left(-2\bar{\sigma}\partial\lambda-i\bar{\sigma}\bar{\partial}\sigma
    +i\sigma\bar{\partial}\bar{\sigma}+2\lambda\partial\bar{\sigma}+\frac{4i\bar{\sigma}(\sigma\xi+\lambda i\bar{\xi})}{1+\xi\bar{\xi}}\right),
\]
as claimed.
\end{proof}

\subsection{Evolution equations for a positive surface}

\begin{Prop}\label{p:shearflow}
Under the mean curvature flow the shear evolves by:
\[
\left(\frac{\partial }{\partial s}-{\mathbb G}^{jk}\partial_j\partial_k\right)\sigma=\frac{H_1(1+\xi\bar{\xi})^2+2H_2(1+\xi\bar{\xi})+2H_3}{2\Delta^2},
\]
where
\begin{align}
H_1=&-4\lambda \sigma \partial \lambda \bar{\partial} \lambda
-2i \lambda \bar{\sigma} \partial \lambda \partial \sigma
+2(\lambda^{2}+|\sigma|^2) \partial \lambda \bar{\partial} \sigma
+2i \lambda \sigma \partial \lambda \partial \bar{\sigma}
+2\lambda^{2} \bar{\partial} \lambda \partial \sigma \nonumber\\
&\qquad+2\sigma^{2} \bar{\partial} \lambda \partial \bar{\sigma}
+i \bar{\sigma}^{2}\left(\partial \sigma\right)^{2}
-2\lambda \bar{\sigma} \partial \sigma \bar{\partial} \sigma
-2\lambda \sigma \bar{\partial} \sigma \partial \bar{\sigma}
-i \sigma^{2}\left(\partial \bar{\sigma}\right)^{2} \nonumber,
\end{align}
\begin{align}
H_2&=-2 \sigma \partial \lambda\left(2 i \lambda\bar{\sigma} \bar{\xi}+ (\lambda^{2}+\sigma \bar{\sigma}) \xi \right)
+2 \sigma \bar{\partial} \lambda \left( \lambda^{2}-\sigma \bar{\sigma}\right) \bar{\xi}
+ \partial\sigma \left(i\bar{\sigma} \bar{\xi}(3 \lambda^{2} - \sigma \bar{\sigma}) +2 \lambda^{3} \xi \right)\nonumber\\
& \qquad\qquad
+ \bar{\partial}\sigma\left( i\sigma \xi-2 \lambda \bar{\xi} \right)(\lambda^{2} -\sigma \bar{\sigma})
+2 \partial \bar{\sigma} \left( i\sigma^{2} \bar{\sigma} \bar{\xi}+ \lambda\sigma^{2} \xi \right) \nonumber,
\end{align}
and
\[
H_3=-\sigma \left( i\sigma \xi^{2}-3 i\bar{\sigma} \bar{\xi}^{2}-4 \lambda\xi \bar{\xi} \right)(\lambda^{2} -\sigma \bar{\sigma}).
\]
In addition,
\[
\left(\frac{\partial }{\partial s}-{\mathbb G}^{jk}\partial_j\partial_k\right)\rho=\frac{H_4(1+\xi\bar{\xi})^3+H_5(1+\xi\bar{\xi})^2
     +H_6(1+\xi\bar{\xi})+H_7}{2(1+\xi\bar{\xi})\Delta^2},
\]
where
\begin{align}
H_4&=4\lambda \bar{\sigma} \left(\partial \lambda\right)^{2}
-2\bar{\sigma}^{2} \partial \lambda \partial \sigma
+2i \lambda \bar{\sigma} \partial \lambda \bar{\partial} \sigma
-4\lambda^{2} \partial \lambda \partial \bar{\sigma}
-2\sigma \bar{\sigma} \partial \lambda \partial \bar{\sigma}
-2i \lambda \sigma \partial \lambda \bar{\partial} \bar{\sigma} \nonumber\\
&\qquad -i \bar{\sigma}^{2}\partial \sigma \bar{\partial} \sigma
+2\lambda \bar{\sigma} \partial \sigma \partial \bar{\sigma}
+i \lambda^{2}\partial \sigma \bar{\partial} \bar{\sigma}
-i \lambda^{2}\bar{\partial} \sigma \partial \bar{\sigma}
+2\lambda \sigma \left(\partial \bar{\sigma}\right)^{2}
+i \sigma^{2}\partial \bar{\sigma} \bar{\partial} \bar{\sigma} \nonumber,
\end{align}
\begin{align}
H_5&=-4 \partial \lambda \left(2 i \lambda\sigma \bar{\sigma} \xi
- \lambda^{2} \bar{\sigma} \bar{\xi}
-\sigma \bar{\sigma}^{2} \bar{\xi} \right) +2\partial\sigma\left( i \lambda^{2} \bar{\sigma} \xi
+ i\sigma \bar{\sigma}^{2} \xi
-2 \lambda \bar{\sigma}^{2} \bar{\xi} \right) \nonumber\\
&\qquad +2\left(2 \partial \bar{\sigma} i \lambda^{2}\sigma \xi
-2 \partial \bar{\sigma} \lambda\sigma \bar{\sigma} \bar{\xi}
+ \bar{\partial} \bar{\sigma} i \lambda^{2}\sigma \bar{\xi}
- \bar{\partial} \bar{\sigma} i\sigma^{2} \bar{\sigma} \bar{\xi} \right)\nonumber,
\end{align}
\[
H_6=-4( i (\lambda^{2}-\sigma \bar{\sigma})+ \lambda \vartheta)(\lambda^{2}-\sigma \bar{\sigma}),
\]
and
\[
H_7=4i (F \sigma \xi-\bar{F} \bar{\sigma} \bar{\xi})(\lambda^{2}-\sigma \bar{\sigma}).
\]
\end{Prop}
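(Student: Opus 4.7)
The plan is direct computation. Starting from the flow equation (\ref{e:floweq}) for $F$, I would apply the first-order differential operators that define $\sigma=-\partial\bar F$ and $\rho=(1+\xi\bar\xi)^2\partial[F(1+\xi\bar\xi)^{-2}]$, then compare with the action of $\mathbb G^{jk}\partial_j\partial_k$ on $\sigma$ and $\rho$. Since $\partial_s$ commutes with the coordinate derivatives $\partial,\bar\partial$, the essential work sits in the commutators of $\mathbb G^{jk}\partial_j\partial_k$ with $\partial$ (and with multiplication by $(1+\xi\bar\xi)^{\pm 2}$); these commutators are first-order with coefficients that, by Proposition \ref{p:ts2indmet}, are rational in $\sigma,\lambda$ and their first derivatives.

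Concretely, I would write the flow equation as $\partial_s F = \mathbb G^{jk}\partial_j\partial_k F + K$, where
\[
K=\frac{i\bar\sigma}{\Delta}\bigl((\sigma\xi-\bar\rho\bar\xi)(1+\xi\bar\xi)+\bar F-\bar\xi^2 F\bigr)
\]
is the zeroth-order remainder read off from (\ref{e:floweq}). A direct check from Proposition \ref{p:ts2indmet} shows that $\overline{\mathbb G^{jk}\partial_j\partial_k F}=\mathbb G^{jk}\partial_j\partial_k\bar F$, so that $\partial_s\bar F=\mathbb G^{jk}\partial_j\partial_k\bar F+\bar K$. Then
\[
\bigl(\partial_s-\mathbb G^{jk}\partial_j\partial_k\bigr)\sigma
=-\partial\bar K+[\mathbb G^{jk}\partial_j\partial_k,\partial]\bar F
=-\partial\bar K-(\partial g^{jk})\,\partial_j\partial_k\bar F.
\]
Every $\partial_j\partial_k\bar F$ that appears can be re-expressed, using $\sigma=-\partial\bar F$, $\rho=\vartheta+i\lambda$, and identities (\ref{e:id1}), (\ref{e:id2}), as a first derivative of $\sigma,\bar\sigma,\lambda$. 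Grouping by powers of $(1+\xi\bar\xi)$ yields precisely the $H_1,H_2,H_3$ pieces stated.

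For $\rho$ the strategy is identical but with a conformal weight. Setting $G:=F(1+\xi\bar\xi)^{-2}$ and $\rho=(1+\xi\bar\xi)^2\partial G$, I would compute
\[
\bigl(\partial_s-\mathbb G^{jk}\partial_j\partial_k\bigr)\rho
=(1+\xi\bar\xi)^2\partial\bigl(K(1+\xi\bar\xi)^{-2}\bigr)
+\bigl[\mathbb G^{jk}\partial_j\partial_k,(1+\xi\bar\xi)^2\partial\bigr]G.
\]
The commutator splits into three groups: a $(\partial g^{jk})$ contribution analogous to the $\sigma$-case, a term $\mathbb G^{jk}\partial_j\partial_k((1+\xi\bar\xi)^2)\cdot\partial G$, and mixed first-order terms $g^{jk}\partial_j((1+\xi\bar\xi)^2)\partial_k\partial G$. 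Separating by the resulting power of $(1+\xi\bar\xi)$ produces $H_4$ (pure first-derivative-squared terms), $H_5$ (first derivatives mixed with a coordinate factor), $H_6$ (the $\lambda\vartheta$-type weight contribution, which emerges only after applying (\ref{e:id2}) to eliminate second derivatives of $\lambda=\mathrm{Im}\,\rho$), and the algebraic $H_7$ coming from $\partial(\bar K(1+\xi\bar\xi)^{-2})$.

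The main obstacle is bookkeeping rather than conceptual: $\partial g^{jk}$ introduces six scalar derivatives of slopes, each paired with a second derivative of $\bar F$ or $F$, and these must be combined with the explicit derivatives of $\bar K$ and with the weight factors. Two structural cancellations must occur: (i) all genuine second-order contributions in $F$ must disappear, which is guaranteed in advance by the commutator identity; and (ii) the $\vartheta=\mathrm{Re}\,\rho$ contributions must reorganise via (\ref{e:id2}) so that only the combinations quoted in $H_1,\dots,H_7$ survive. The remaining work is a mechanical expansion, most efficiently verified by computer algebra, and the structural argument above ensures that the final answer takes the stated homogeneous form in $(1+\xi\bar\xi)$.
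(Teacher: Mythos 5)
Your treatment of the $\sigma$-equation is sound and is, in substance, the computation the paper performs: the paper differentiates the second form of (\ref{e:floweq}) and then identifies the rough Laplacian of the shear with the help of (\ref{e:id2}), while you start from the first form and let the commutator produce the heat operator automatically. The identity $(\partial_s-g^{jk}\partial_j\partial_k)\sigma=-\partial\bar K-(\partial g^{jk})\partial_j\partial_k\bar F$ is correct (the operator $g^{jk}\partial_j\partial_k$ is indeed real by Proposition \ref{p:ts2indmet}), and the reduction of $\partial_j\partial_k\bar F$ to first derivatives of $\lambda,\sigma,\bar\sigma$ via $\sigma=-\partial\bar F$, the definition of $\rho$ and identities (\ref{e:id1}), (\ref{e:id2}) goes through, so this half is an equivalent bookkeeping of the paper's argument.

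The organising identity you display for the $\rho$-equation, however, is false, and since the entire content of the Proposition is the explicit lower-order terms, this is not a harmless slip. Write $w=(1+\xi\bar\xi)^2$ and $L=w\circ\partial\circ w^{-1}$, so that $\rho=LF$ with $L$ time-independent; then the correct statement is
\[
\left(\partial_s-g^{jk}\partial_j\partial_k\right)\rho=L(K)+\left[L,\,g^{jk}\partial_j\partial_k\right]F,
\qquad
\left[L,\,g^{jk}\partial_j\partial_k\right]F=(\partial g^{jk})\partial_j\partial_k F+F\,g^{jk}\partial_j\partial_k a+2g^{jk}(\partial_j a)(\partial_k F),
\]
where $a=\partial\ln w=2\bar\xi/(1+\xi\bar\xi)$. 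This is not the term $[g^{jk}\partial_j\partial_k,\,w\partial]G$ that you wrote: the sign of the commutator is reversed, and moreover $[g^{jk}\partial_j\partial_k,\,w\partial]G$ differs from $-[L,g^{jk}\partial_j\partial_k]F$ by the additional piece $w\partial\left(w^{-1}[g^{jk}\partial_j\partial_k,w]G\right)$. A commutator with the heat operator depends on the operator itself, not on its value on the one function at hand, so $w\partial$ acting on $G$ cannot be substituted for $w\partial w^{-1}$ acting on $F$ inside the bracket. Concretely, your version carries the $(\partial g^{jk})$-contribution with coefficient $-w(\partial g^{jk})\partial_j\partial_k G$ where the correct contribution is $+w(\partial g^{jk})\partial_j\partial_k G$ (after the spurious $2g^{jk}(\partial_j w)\partial_k\partial G$ term is cancelled by the omitted piece), so the mechanical expansion you describe would not reproduce $H_4,\dots,H_7$. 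Once the identity is corrected, the remainder of your plan (eliminate second derivatives in favour of first derivatives of the slopes and sort by powers of $1+\xi\bar\xi$) is fine; note that the paper itself only carries out the $\sigma$ computation and leaves the $\rho$ case to the reader, so this is exactly the part where the setup must be airtight.
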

\begin{proof}
The proofs of these statements follow from differentiation of the flow equation (\ref{e:floweq}). We illustrate this for the flow of
$\sigma$, leaving the flow of $\rho$ to the reader.

We start by splitting the expression into convenient terms:
\[
-\dot{\bar{\sigma}}=\bar{\partial}\dot{F}=E_1+E_2+E_3+E_4,
\]
where $E_1$ is second order in the derivatives of $\lambda$ and $\sigma$, $E_2$ and $E_3$ are the quadratic and linear first order terms, and $E_4$ is the zeroth 
order terms. We now compute each of these terms in turn.

So, differentiating equation (\ref{e:floweq}) we have
\[
E_1=\frac{(1+\xi\bar{\xi})^2}{2\Delta}\left(-2\bar{\sigma}\bar{\partial}\partial\lambda-i\bar{\sigma}\bar{\partial}\bar{\partial}\sigma+2\lambda\bar{\partial}\partial\bar{\sigma}+i\sigma\bar{\partial}\bar{\partial}\bar{\sigma}\right).
\]

At this point we exploit the 3-jet identity (\ref{e:id2}) which we write in the more favorable form
\[
\partial\bar{\partial}\lambda={\mathbb I}{\mbox m}\;\left[\bar{\partial}\bar{\partial}\sigma-\frac{2\xi}{1+\xi\bar{\xi}}\bar{\partial}\sigma
+\frac{2\xi^2}{(1+\xi\bar{\xi})^2}\sigma\right]-\frac{2\lambda}{(1+\xi\bar{\xi})^2}.
\]
Inserting this in the expression for $E_1$ yields
\[
E_1=\frac{(1+\xi\bar{\xi})^2}{2\Delta}\left(-i\bar{\sigma}\partial\partial\bar{\sigma}+2\lambda\partial\bar{\partial}\bar{\sigma}+i\sigma\bar{\partial}\bar{\partial}\bar{\sigma}
-\frac{2i\bar{\sigma}(\xi\bar{\partial}\sigma-\bar{\xi}\partial\bar{\sigma})}{1+\xi\bar{\xi}}
+\frac{2i\bar{\sigma}(\xi^2\sigma-\bar{\xi}^2\bar{\sigma}-2i\lambda)}{(1+\xi\bar{\xi})^2}
\right).
\]
The first three terms of this, noting the expression for $g^{-1}$ in Proposition \ref{p:ts2indmet}, are easily seen to be the 
rough Laplacian of  $-\bar{\sigma}$:  
\[
E_1=-g^{jk}\partial_j\partial_k\bar{\sigma}+\frac{i\bar{\sigma}}{\Delta}\left(
(\bar{\xi}\partial\bar{\sigma}-\xi\bar{\partial}\sigma)(1+\xi\bar{\xi})
+\xi^2\sigma-\bar{\xi}^2\bar{\sigma}-2i\lambda\right).
\]
We note that in the final sets of expressions, the lower order terms introduced into $E_1$ by the 3-jet identity will have to be added to $E_3$ 
and $E_4$.

Moving to the quadratic first order term, differentiating equation (\ref{e:floweq}) we compute that
\begin{align}
E_2=&\frac{(1+\xi\bar{\xi})^2}{2\Delta}\Big(-2\bar{\partial}\bar{\sigma}\partial\lambda-i\bar{\partial}\bar{\sigma}\bar{\partial}\sigma+2\bar{\partial}\lambda\partial\bar{\sigma}+i\bar{\partial}\sigma\bar{\partial}\bar{\sigma}\nonumber\\
&\qquad\qquad\qquad\qquad
-\frac{1}{\Delta}\bar{\partial}\Delta(-2\bar{\sigma}\partial\lambda-i\bar{\sigma}\bar{\partial}\sigma+2\lambda\partial\bar{\sigma}+i\sigma\bar{\partial}\bar{\sigma})\Big)\nonumber\\
&=\frac{(1+\xi\bar{\xi})^2}{2\Delta^2}\Big(
4\lambda\bar{\sigma}\bar{\partial}\lambda\partial\lambda
+2i\lambda\bar{\sigma}\bar{\partial}\lambda\bar{\partial}\sigma
-2(\lambda^2+\sigma\bar{\sigma})\bar{\partial}\lambda\partial\bar{\sigma}
-2i\lambda\sigma\bar{\partial}\lambda\bar{\partial}\bar{\sigma}\nonumber\\
&\qquad\qquad\qquad\qquad
-2\lambda^2\bar{\partial}\bar{\sigma}\partial\lambda
+2\lambda\sigma\bar{\partial}\bar{\sigma}\partial\bar{\sigma}
+i\sigma^2(\bar{\partial}\bar{\sigma})^2-i\bar{\sigma}^2(\bar{\partial}\sigma)^2\nonumber\\
&\qquad\qquad\qquad\qquad\qquad\qquad\qquad\qquad
-2\bar{\sigma}^2\bar{\partial}\sigma\partial\lambda
+2\lambda\bar{\sigma}\bar{\partial}\sigma\partial\bar{\sigma}\Big)\nonumber\\
&=-\frac{(1+\xi\bar{\xi})^2}{2\Delta^2}\bar{H}_1\nonumber.
\end{align}
This establishes the quadratic first order term, once we recall that $\bar{\partial}\dot{F}=-\dot{\bar{\sigma}}$. 

Moving to the linear first order term
\begin{align}
E_3=&\frac{(1+\xi\bar{\xi})\xi}{\Delta}\left(-2\bar{\sigma}\partial\lambda-i\bar{\sigma}\bar{\partial}\sigma+2\lambda\partial\bar{\sigma}+i\sigma\bar{\partial}\bar{\sigma}\right)\nonumber\\
&\qquad\qquad
+\frac{1+\xi\bar{\xi}}{\Delta}\left(2i(\sigma\xi+i\lambda\bar{\xi})\bar{\partial}\bar{\sigma}+2i\bar{\sigma}(\xi\bar{\partial}\sigma+i\bar{\xi}\bar{\partial}\lambda)\right)\nonumber\\
&\qquad\qquad\qquad
-\frac{1+\xi\bar{\xi}}{\Delta^2}\left[2i\bar{\sigma}(\sigma\xi+i\lambda\bar{\xi})(2\lambda\bar{\partial}-\sigma\bar{\partial}\bar{\sigma}-\bar{\sigma}\bar{\partial}\sigma)\right]\nonumber\\
=&\frac{1+\xi\bar{\xi}}{\Delta^2}\Big\{-2\xi\bar{\sigma}(\lambda^2-\sigma\bar{\sigma})\partial\lambda+
[2\bar{\sigma}^2(i\sigma\xi-\lambda\bar{\xi})+i\xi\bar{\sigma}(\lambda^2-\sigma\bar{\sigma})]\bar{\partial}\sigma\nonumber\\
&\qquad\qquad\qquad
+2\xi\lambda(\lambda^2-\sigma\bar{\sigma})\partial\bar{\sigma}+[i\xi\sigma(3\lambda^2-\sigma\bar{\sigma})-2\bar{\xi}\lambda^3]\bar{\partial}\bar{\sigma}
\nonumber\\
&\qquad\qquad\qquad
+2[\bar{\xi}\bar{\sigma}(\lambda^2+\sigma\bar{\sigma})-2i\xi\lambda\sigma\bar{\sigma}]\bar{\partial}\lambda\Big\}\nonumber.
\end{align}
Adding the linear term from $E_1$ we compute that
\begin{align}
-\bar{H}_3=&\frac{\Delta^2}{1+\xi\bar{\xi}}E_3+i\bar{\xi}\bar{\sigma}(\lambda^2-\sigma\bar{\sigma})\partial\bar{\sigma}
  -i\xi\bar{\sigma}(\lambda^2-\sigma\bar{\sigma})\bar{\partial}\sigma\nonumber\\
=&\frac{1+\xi\bar{\xi}}{\Delta^2}\Big\{-2\xi\bar{\sigma}(\lambda^2-\sigma\bar{\sigma})\partial\lambda+
2\bar{\sigma}^2(i\sigma\xi-\lambda\bar{\xi})\bar{\partial}\sigma\nonumber\\
&\qquad\qquad\qquad
+(\lambda^2-\sigma\bar{\sigma})(2\xi\lambda+i\bar{\xi}\bar{\sigma})\partial\bar{\sigma}+[i\xi\sigma(3\lambda^2-\sigma\bar{\sigma})-2\bar{\xi}\lambda^3]\bar{\partial}\bar{\sigma}
\nonumber\\
&\qquad\qquad\qquad
+2[\bar{\xi}\bar{\sigma}(\lambda^2+\sigma\bar{\sigma})-2i\xi\lambda\sigma\bar{\sigma}]\bar{\partial}\lambda\Big\}\nonumber,
\end{align}
as claimed in the Proposition.

Finally, we work out the zero order term by looking again at the derivative of equation (\ref{e:floweq}) :
\[
E_4=\frac{2i\bar{\sigma}}{\Delta}\Big(\sigma\xi^2+i\lambda(1+2\xi\bar{\xi})\Big),
\]
and taking into account the zero order term of $E_1$, we have
\[
-\bar{H}_4=\Delta^2\;E_4+i\bar{\sigma}(\xi^2\sigma-\bar{\xi}^2\bar{\sigma}-2i\lambda)\Delta
   =\bar{\sigma}(3i\xi^2\sigma-i\bar{\xi}^2\bar{\sigma}-4\xi\bar{\xi}\lambda)\Delta,
\]
as claimed.
\end{proof}

\vspace{0.1in}

\begin{Cor}\label{c:flowdet}
Under the mean curvature flow the determinant of the induced metric evolves by:
\[
\left(\frac{\partial }{\partial s}-{\mathbb G}^{jk}\partial_j\partial_k\right)\Delta=\frac{H_8(1+\xi\bar{\xi})^2}
{2\Delta^2}+\frac{H_9(1+\xi\bar{\xi})+H_{10}}{\Delta}-4\lambda,
\]
where
\begin{align}
H_8&=2{\mathbb R}{\mbox{e}}[2i(\sigma \bar{\sigma}-3 \lambda^{2}) \bar{\sigma} \left(\partial \lambda\right)^{2} 
+2\lambda(\lambda^{2}+\sigma \bar{\sigma}) \partial \lambda \bar{\partial} \lambda 
+4i \lambda \bar{\sigma}^{2} \partial \lambda \partial \sigma  \nonumber\\
&\qquad-4\sigma \bar{\sigma}^{2} \partial \lambda \bar{\partial} \sigma
+4i \lambda^{3} \partial \lambda \partial \bar{\sigma} 
-4\lambda^{2} \sigma \partial \lambda \bar{\partial} \bar{\sigma}
-i \bar{\sigma}^{3}\left(\partial \sigma\right)^{2} 
+2\lambda \bar{\sigma}^{2} \partial \sigma \bar{\partial} \sigma  \nonumber\\
&\qquad -2i \sigma \bar{\sigma}^{2} \partial \sigma \partial \bar{\sigma} 
+\lambda \sigma \bar{\sigma} \partial \sigma \bar{\partial} \bar{\sigma} 
+i (2\lambda^{2}-\sigma \bar{\sigma}) \bar{\sigma} \left(\bar{\partial} \sigma\right)^{2}
-2\lambda^{3} \bar{\partial} \sigma \partial \bar{\sigma} 
+3\lambda \sigma \bar{\sigma} \bar{\partial} \sigma \partial \bar{\sigma}], \nonumber
\end{align}
\[
H_9={\mathbb R}{\mbox{e}}\left[(-2i\lambda\bar{\xi}-4\sigma\xi)\bar{\sigma}\partial\lambda
    -i\bar{\sigma}^2\bar{\xi}\partial\sigma+(-3i\sigma\xi+4\lambda\bar{\xi})\bar{\sigma}\bar{\partial}\sigma\right]/2,
\]
and
\[
H_{10}=2{\mathbb R}{\mbox{e}}\;\sigma\bar{\sigma}\left[i(\sigma\xi^2-\bar{\sigma}\bar{\xi}^2)-2\lambda\xi\bar{\xi}\right].
\]

\end{Cor}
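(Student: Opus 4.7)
The plan is to apply the heat-type operator $L := \partial_s - g^{jk}\partial_j\partial_k$ directly to $\Delta = \lambda^2 - \sigma\bar\sigma$ using the product rule for $L$: for any scalar functions $u,v$ on the surface,
\[
L(uv) = u\,Lv + v\,Lu - 2g^{jk}\partial_j u \,\partial_k v .
\]
Applying this with $u=v=\lambda$ and then with $u=\sigma$, $v=\bar\sigma$ gives
\[
L\Delta \;=\; 2\lambda\,L\lambda - \bar\sigma\,L\sigma - \sigma\,L\bar\sigma - 2g^{jk}\partial_j\lambda\,\partial_k\lambda + 2g^{jk}\partial_j\sigma\,\partial_k\bar\sigma .
\]
Thus the computation reduces to three ingredients already at our disposal.

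First, $L\sigma$ (and its conjugate $L\bar\sigma$) are given verbatim by Proposition \ref{p:shearflow}. The quantity $L\lambda$ is extracted as the imaginary part of $L\rho$, since $\rho = \vartheta + i\lambda$: writing $L\rho$ in the form of Proposition \ref{p:shearflow} and taking $\mathrm{Im}$, the contribution $-4\lambda$ in the statement of the Corollary arises from the $H_6$-piece of the $\rho$-flow, namely from $\mathrm{Im}[-4(i(\lambda^2-\sigma\bar\sigma) + \lambda\vartheta)(\lambda^2-\sigma\bar\sigma)]/[2\Delta^2\cdot (1+\xi\bar\xi)^{-1}\cdot\text{scaling}]$, which after the $2\lambda$ multiplication and use of $\lambda^2-\sigma\bar\sigma = \Delta$ collapses to the isolated $-4\lambda$ term.

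Second, for the two first-order gradient corrections I use the explicit inverse metric from Proposition \ref{p:ts2indmet}:
\[
g^{-1} = \frac{(1+\xi\bar\xi)^2}{2\Delta}\begin{pmatrix} i\bar\sigma & -\lambda \\ -\lambda & -i\sigma \end{pmatrix},
\]
so that, using $\partial_1 = \partial$, $\partial_2 = \bar\partial$,
\[
g^{jk}\partial_j\lambda\,\partial_k\lambda = \frac{(1+\xi\bar\xi)^2}{2\Delta}\bigl(i\bar\sigma(\partial\lambda)^2 - 2\lambda\,\partial\lambda\,\bar\partial\lambda - i\sigma(\bar\partial\lambda)^2\bigr),
\]
and analogously for $g^{jk}\partial_j\sigma\,\partial_k\bar\sigma$. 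These are the source of every quadratic first-order monomial in $H_8$; in particular the symmetrization $2\,\mathrm{Re}[\,\cdot\,]$ appears because the contributions of $L\sigma$ and $L\bar\sigma$ are complex conjugates while $L\lambda$ is already real.

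Third, the remaining lower-order pieces, linear in $(\xi,\bar\xi)$ or constant in the $\xi$-jet, come entirely from the $H_2,H_3$ terms in $L\sigma$ (and the corresponding conjugate in $L\bar\sigma$) together with the $H_5,H_6,H_7$ terms in $L\rho$ scaled by $2\lambda$; collecting by powers of $(1+\xi\bar\xi)$ and grouping by the denominator $\Delta^2$ versus $\Delta$ yields exactly $H_9$ at order $(1+\xi\bar\xi)$ and $H_{10}$ at order $1$. The main obstacle is not conceptual but bookkeeping: the $\sigma$-part of $L\Delta$ contributes $-2\,\mathrm{Re}(\bar\sigma L\sigma)$, so one must carefully conjugate the long polynomial $H_1$ of Proposition \ref{p:shearflow}, multiply by $\bar\sigma$, take real parts, and verify that everything recombines with the gradient corrections into the compact form $H_8/(2\Delta^2)\cdot(1+\xi\bar\xi)^2$. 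The identification of the isolated $-4\lambda$ as the only term that survives independently of $\Delta$ in the denominator is the key simplification that makes the formula usable in the subsequent positivity estimates for the flow.
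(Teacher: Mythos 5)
Your proposal is correct and follows essentially the same route as the paper, whose proof of Corollary \ref{c:flowdet} is simply the observation that the evolution of $\Delta=\lambda^2-|\sigma|^2$ follows from the flow equations of Proposition \ref{p:shearflow}; you merely make explicit the product-rule decomposition $L\Delta=2\lambda L\lambda-\bar\sigma L\sigma-\sigma L\bar\sigma-2g^{jk}\partial_j\lambda\,\partial_k\lambda+2g^{jk}\partial_j\sigma\,\partial_k\bar\sigma$, the extraction $L\lambda=\mathbb{I}\mbox{m}\,L\rho$, and the use of $g^{-1}$ from Proposition \ref{p:ts2indmet}, which is exactly the bookkeeping the paper leaves to the reader (including the identification of the isolated $-4\lambda$ from the $i\Delta$ part of $H_6$).
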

\begin{proof}
This follows from the previous evolution equations and the definition $\Delta=\lambda^2-|\sigma|^2$.
\end{proof}
\vspace{0.1in}

\begin{Prop}\label{p:chiflow}
Under mean curvature flow the perpendicular distance function $\chi$ (see Definition \ref{d:perpdist}) evolves by
\begin{align}
\left(\frac{\partial }{\partial s}-{\mathbb G}^{jk}\partial_j\partial_k\right)\chi^2&
  =4\left[i\bar{F}^2\bar{\sigma}-2F\bar{F}\lambda-iF^2\sigma\right.\nonumber\\
&\qquad\qquad
   +[i\sigma\xi(\bar{F}\bar{\sigma}-F\bar{\rho})-i\bar{\sigma}\bar{\xi}(F\sigma-\bar{F}\rho)](1+\xi\bar{\xi})\nonumber\\
&\qquad\qquad\qquad+\left.\lambda(\rho\bar{\rho}-\sigma\bar{\sigma})(1+\xi\bar{\xi})^2\right]/[(1+\xi\bar{\xi})^2(\lambda^2-\sigma\bar{\sigma})].\nonumber
\end{align}
\end{Prop}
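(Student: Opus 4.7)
The plan is a direct coordinate computation from the defining formula $\chi^2 = 4F\bar F/(1+\xi\bar\xi)^2$ evaluated on the graph $\eta = F(\xi,\bar\xi,s)$. First I would compute the time derivative,
\[
\partial_s \chi^2 = \frac{4}{(1+\xi\bar\xi)^2}\bigl(\dot F\,\bar F + F\,\dot{\bar F}\bigr),
\]
and substitute the mean curvature flow equation (\ref{e:floweq}), which I rearrange as
\[
\dot F - g^{jk}\partial_j\partial_k F = \frac{i\bar\sigma}{\Delta}\bigl((\sigma\xi - \bar\rho\bar\xi)(1+\xi\bar\xi) + \bar F - \bar\xi^2 F\bigr),
\]
together with its complex conjugate.

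Next I would expand $g^{jk}\partial_j\partial_k\chi^2$ using the inverse induced metric from Proposition \ref{p:ts2indmet}, namely $g^{\xi\xi} = i\bar\sigma(1+\xi\bar\xi)^2/(2\Delta)$, $g^{\bar\xi\bar\xi} = -i\sigma(1+\xi\bar\xi)^2/(2\Delta)$, $g^{\xi\bar\xi} = -\lambda(1+\xi\bar\xi)^2/(2\Delta)$. Applying the product rule to $F\bar F\,(1+\xi\bar\xi)^{-2}$ yields three groups of terms: the principal parts $\bar F\cdot g^{jk}\partial_j\partial_k F$ and $F\cdot g^{jk}\partial_j\partial_k \bar F$ (each divided by $(1+\xi\bar\xi)^2$), a cross term $2 g^{jk}\partial_j F\,\partial_k\bar F/(1+\xi\bar\xi)^2$, and conformal-factor contributions coming from $\partial_j(1+\xi\bar\xi)^{-2}$ acting on $F\bar F$ and from $g^{jk}\partial_j\partial_k(1+\xi\bar\xi)^{-2}$.

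When I subtract, the principal parts cancel against the flow equation, leaving exactly
\[
\frac{4}{(1+\xi\bar\xi)^2}\cdot \bar F\cdot \frac{i\bar\sigma}{\Delta}\bigl((\sigma\xi - \bar\rho\bar\xi)(1+\xi\bar\xi) + \bar F - \bar\xi^2 F\bigr) + \text{c.c.}
\]
minus the cross and conformal-factor terms. To evaluate the cross term in closed form I would use the identities $\bar\partial F = -\bar\sigma$ (from $\sigma = -\partial\bar F$) and $\partial F = \rho + 2F\bar\xi/(1+\xi\bar\xi)$ (from the definition of $\rho$), which turn $g^{jk}\partial_j F\,\partial_k\bar F$ into a polynomial in $\sigma,\bar\sigma,\rho,\bar\rho,\lambda,F,\bar F,\xi,\bar\xi$. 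The conformal-factor pieces likewise reduce to purely polynomial expressions.

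The hard part is not conceptual but bookkeeping: after all substitutions one has a large rational expression over $\Delta$ (and over $(1+\xi\bar\xi)$) that must be shown to collapse, after grouping by powers of $(1+\xi\bar\xi)$, to the three advertised terms: the zero-order part $i\bar F^2\bar\sigma - 2F\bar F\lambda - iF^2\sigma$, the first-order piece $[i\sigma\xi(\bar F\bar\sigma - F\bar\rho) - i\bar\sigma\bar\xi(F\sigma - \bar F\rho)](1+\xi\bar\xi)$, and the top-order piece $\lambda(\rho\bar\rho - \sigma\bar\sigma)(1+\xi\bar\xi)^2$, all divided by $(1+\xi\bar\xi)^2(\lambda^2 - \sigma\bar\sigma)$. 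The key simplification is that all explicit derivatives of $F$ disappear after using the two identities above, so the final answer is algebraic in the spin coefficients, as claimed.
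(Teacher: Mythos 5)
Your proposal is correct and follows essentially the same route as the paper, whose proof is precisely to differentiate $\chi^2=4F\bar F/(1+\xi\bar\xi)^2$ and substitute the flow equation (\ref{e:floweq}) for $\dot F$. The identities $\bar\partial F=-\bar\sigma$ and $\partial F=\rho+2F\bar\xi/(1+\xi\bar\xi)$ that you invoke to eliminate the first derivatives are exactly those used implicitly in the paper (compare the computation of $\partial\chi^2$ in the proof of Proposition \ref{p:compact}), so only the routine bookkeeping remains.
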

\begin{proof}
This follows from differentiating the expression for $\chi(\xi,\bar{\xi},F,\bar{F})$ and using the equations for the flow of $F$.
\end{proof}
\vspace{0.1in}

\begin{Prop}
The flow satisfies
\begin{align}
\left(\frac{\partial }{\partial s}-{\mathbb G}^{jk}\partial_j\partial_k\right)\left(\frac{|\sigma|^2}{\lambda^2-|\sigma|^2}\right)&=-2\frac{(\lambda^2+|\sigma|^2)}{(\lambda^2-|\sigma|^2)^3}
    \Big\|\lambda d|\sigma|-|\sigma|d\lambda\Big\|^2
     -2\frac{\lambda^2|\sigma|^2}{(\lambda^2-|\sigma|^2)^2}
    \Big\|d\phi\Big\|^2\nonumber\\
&\qquad +\frac{ 4\lambda|\sigma| }{(\lambda^{2}-|\sigma|^2)^3} \;  {\mathbb R}{\mbox{e}}\;[H_{11}]  \label{e:holconv},
\end{align}
where
\begin{align}
H_{11}=&(1+ \xi \bar{\xi})\left[i\lambda\bar{\sigma} \bar{\xi}\partial|\sigma|-i|\sigma|\bar{\sigma} \bar{\xi}\partial \lambda
+2\lambda|\sigma|( i \lambda \xi- \bar{\sigma} \bar{\xi}) \partial\phi\right]\nonumber\\
&\qquad -2 i\lambda \sigma|\sigma| \xi^{2}-2|\sigma|^{3}+2( 1+2 \xi \bar{\xi})\lambda^{2}|\sigma|  \nonumber,
\end{align}
and $\phi$ is the argument of $\sigma$. Here the norm $\|.\|$ is taken with respect to the induced metric given in Proposition \ref{p:ts2indmet}.
\end{Prop}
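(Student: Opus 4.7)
The plan is a direct quotient-rule calculation, using the evolutions of $\sigma$ and $\Delta = \lambda^2 - |\sigma|^2$ from Proposition \ref{p:shearflow} and Corollary \ref{c:flowdet}. Writing $\mathcal{D} = \partial/\partial s - \mathbb{G}^{jk}\partial_j\partial_k$, $N = \sigma\bar\sigma = |\sigma|^2$, and $D = \Delta$, I would first expand
\[
\mathcal{D}\!\left(\tfrac{N}{D}\right) = \tfrac{\mathcal{D}N}{D} - \tfrac{N\,\mathcal{D}D}{D^2} + \tfrac{2\langle\nabla N,\nabla D\rangle}{D^2} - \tfrac{2N\|\nabla D\|^2}{D^3},
\]
which is the Leibniz rule for $\mathbb{G}^{jk}\partial_j\partial_k$ applied to $N\cdot D^{-1}$. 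Here inner products and norms use the induced metric of Proposition \ref{p:ts2indmet}.

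Next I would compute $\mathcal{D}N$ via $\mathcal{D}(\sigma\bar\sigma) = \sigma\,\mathcal{D}\bar\sigma + \bar\sigma\,\mathcal{D}\sigma - 2\langle\nabla\sigma,\nabla\bar\sigma\rangle$, reading $\mathcal{D}\sigma$ from Proposition \ref{p:shearflow}. Writing $\sigma = |\sigma|e^{i\phi}$ separates the Hermitian gradient as
\[
\langle\nabla\sigma,\nabla\bar\sigma\rangle = \|\nabla|\sigma|\|^2 + |\sigma|^2\|\nabla\phi\|^2,
\]
with the cross term between $|\sigma|$ and $\phi$ vanishing by symmetry of $\mathbb{G}^{jk}$. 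Using $\nabla N = 2|\sigma|\nabla|\sigma|$ and $\nabla D = 2\lambda\nabla\lambda - 2|\sigma|\nabla|\sigma|$, I would then collect all quadratic gradient contributions over the common denominator $\Delta^3$. The resulting quadratic form in $\nabla|\sigma|$ and $\nabla\lambda$ has its $\|\nabla\lambda\|^2$ coefficient determined solely by $-2N\|\nabla D\|^2/D^3$, which forces the remaining coefficients to complete the square; the derivative part of the right-hand side must therefore equal
\[
-\tfrac{2(\lambda^2+|\sigma|^2)}{\Delta^3}\bigl\|\lambda\, d|\sigma| - |\sigma|\, d\lambda\bigr\|^2 - \tfrac{2\lambda^2|\sigma|^2}{\Delta^2}\|d\phi\|^2,
\]
which is exactly the first pair of terms in the claim.

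All remaining contributions are at most linear in the derivatives of $\sigma$ and $\lambda$. They come from the $H_1,H_2,H_3$ terms of $\mathcal{D}\sigma$ (assembled as $2\,\mathrm{Re}[\bar\sigma(H_1(1+\xi\bar\xi)^2+2H_2(1+\xi\bar\xi)+2H_3)/(2\Delta^2)]$), the $H_8,H_9,H_{10}$ terms of $\mathcal{D}\Delta$ (multiplied by $-N/\Delta^2$), and the extra $4\lambda N/\Delta^2$ produced by the $-4\lambda$ summand of Corollary \ref{c:flowdet}. Grouping by powers of $(1+\xi\bar\xi)$, the top power should cancel identically (no such term appears in the final formula), and the $(1+\xi\bar\xi)^1$ and $(1+\xi\bar\xi)^0$ coefficients should collapse, after the substitution $\sigma = |\sigma|e^{i\phi}$, into $4\lambda|\sigma|/\Delta^3 \cdot \mathrm{Re}[H_{11}]$ with $H_{11}$ of the stated form.

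The main obstacle is the length of this last bookkeeping step. One has to systematically rewrite every $\partial\sigma$, $\bar\partial\sigma$, $\partial\bar\sigma$, and $\bar\partial\bar\sigma$ via $\sigma = |\sigma|e^{i\phi}$, and verify that the imaginary pieces cancel when each expression is added to its complex conjugate. Once this is carried out, what remains is manifestly $2\,\mathrm{Re}[H_{11}]$ with the advertised prefactor, completing the proof.
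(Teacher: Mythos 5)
Your overall route---apply the heat operator to $|\sigma|^2/\Delta$ by the quotient rule and then substitute the evolution equations of Proposition \ref{p:shearflow} (equivalently Corollary \ref{c:flowdet})---is essentially the computation the paper performs, but there is a genuine gap in the step where you dispose of the quadratic gradient terms. You claim that the $\|d\lambda\|^2$ coefficient of the quadratic form is ``determined solely by $-2N\|\nabla D\|^2/D^3$'' and that the contributions of $H_1,H_2,H_3$ (and, implicitly, of $H_8,H_9,H_{10}$) are ``at most linear in the derivatives''. Both assertions are false: $H_1$ and $H_8$ are by construction \emph{quadratic} in the first derivatives of $\lambda$ and $\sigma$ (for instance $H_1$ contains $-4\lambda\sigma\,\partial\lambda\,\bar{\partial}\lambda$ and $H_8$ contains $2\lambda(\lambda^2+\sigma\bar{\sigma})\,\partial\lambda\,\bar{\partial}\lambda$), and they enter your expansion through $\bar{\sigma}\,\mathcal{D}\sigma/D$ and $-N\,\mathcal{D}D/D^2$ with exactly the same gradient structure as the quotient-rule terms, once one recalls that $\|dX\|^2=\frac{(1+\xi\bar{\xi})^2}{2\Delta}\bigl[i\bar{\sigma}(\partial X)^2-2\lambda\,\partial X\,\bar{\partial}X-i\sigma(\bar{\partial}X)^2\bigr]$ for the induced metric of Proposition \ref{p:ts2indmet}. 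The shortcut also fails numerically: $-2N\|\nabla D\|^2/D^3$ alone would give a $\|d\lambda\|^2$ coefficient of $-8\lambda^2|\sigma|^2/\Delta^3$, whereas the identity to be proved has $-2|\sigma|^2(\lambda^2+|\sigma|^2)/\Delta^3$; the discrepancy is precisely what the quadratic pieces of the flow equations supply. So the complete-square form of the first two terms is not ``forced'' by the quotient rule---it only emerges after the quadratic parts of $H_1$ (and of $H_8$, or of $H_4$ if you work with $\rho$) are folded into the quadratic form.

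For comparison, the paper first writes the chain-rule identity expressing the heat operator applied to $|\sigma|^2/(\lambda^2-|\sigma|^2)$ in terms of ${\frak P}\sigma$, ${\frak P}\bar{\sigma}$, ${\frak P}\lambda$ plus explicit gradient terms; at that intermediate stage the $\|d\lambda\|^2$ coefficient is $-2|\sigma|^2(3\lambda^2+|\sigma|^2)/\Delta^3$, which differs from the final one---confirming that the quadratic terms of the flow equations genuinely alter the quadratic form and cannot be relegated to ${\mathbb R}\mbox{e}\,[H_{11}]$. To repair your argument you must actually collect the quadratic-gradient contributions from both sources (the quotient-rule terms and $H_1$, $H_8$), check that their sum collapses to $-2(\lambda^2+|\sigma|^2)\Delta^{-3}\bigl\|\lambda\,d|\sigma|-|\sigma|\,d\lambda\bigr\|^2-2\lambda^2|\sigma|^2\Delta^{-2}\|d\phi\|^2$, and only then show that the leftover, genuinely linear and zeroth-order, terms assemble into $4\lambda|\sigma|\Delta^{-3}\,{\mathbb R}\mbox{e}\,[H_{11}]$. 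The rest of your plan (grouping the lower-order terms by powers of $1+\xi\bar{\xi}$ and substituting $\sigma=|\sigma|e^{i\phi}$) is sound, but as written the quadratic part of the claimed identity is unjustified.
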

\begin{proof}
For the sake of brevity, introduce the heat operator ${\frak P}$:
\[
{\frak P}=\frac{\partial }{\partial s}-{\mathbb G}^{jk}\partial_j\partial_k
\]
Then the result follows from the fact that
\begin{align}
{\frak P}\left(\frac{|\sigma|^2}{\lambda^2-|\sigma|^2}\right)&=\frac{\lambda}{(\lambda^2-|\sigma|^2)^2}\left[\lambda\sigma{\frak P}(\bar{\sigma})
   +\lambda\bar{\sigma}{\frak P}(\sigma)
   -2|\sigma|^2{\frak P}(\lambda)\right]\nonumber\\
&\qquad -\frac{2|\sigma|^2(3\lambda^2+  |\sigma|^2)}{(\lambda-|\sigma|^2)^3}\|d\lambda\|^2
     -\frac{2\lambda^2(\lambda^2+  |\sigma|^2)}{(\lambda-|\sigma|^2)^3}<<d\sigma,d\bar{\sigma}>>\nonumber\\
&\qquad  +\frac{4\lambda(\lambda^2+|\sigma|^2)}{(\lambda-|\sigma|^2)^3}<<\sigma d\bar{\sigma}+\bar{\sigma}d\sigma,d\lambda>> 
   -\frac{2\lambda^2\sigma^2}{(\lambda-|\sigma|^2)^2}\|d\bar{\sigma}\|^2\nonumber\\
&\qquad -\frac{2\lambda^2\bar{\sigma}^2}{(\lambda-|\sigma|^2)^2}\|d\sigma\|^2\nonumber,
\end{align}
and the flow equations given in Proposition \ref{p:shearflow}, recalling that $\lambda={\mathbb I}{\mbox{m}}\;\rho$. 
\end{proof}

\vspace{0.1in}

\subsection{Boundary conditions}\label{s:bdryinit}

In our case we would like the boundary surface to be the totally real Lagrangian hemisphere
$\Sigma$, but, as the metric will
be Lorentz or degenerate on such a surface (see Proposition
\ref{p:indmet}), a Lagrangian surface can never be positive and so cannot be used as a
boundary condition. Instead we perturb the hemisphere to make it positive, and attach the initial disc to this perturbed surface.

More specifically, suppose $\Sigma$ is given by $\eta=F(\xi,\bar{\xi})$.
Define the perturbed surface $\tilde{\Sigma}_{C_0}$ by adding a {\it linear holomorphic twist}: 
\begin{equation}\label{e:addtwist}
\eta=\tilde{F}=F-iC_0\xi,
\end{equation}
where $C_0$ is a real positive constant.

\begin{Prop}\label{p:linholtwist}
For any closed $K\subset\{|\xi|<1\}$, there exists $C_0>0$ such that $\tilde{\Sigma}_{C}$ is positive on $K$ for all $C>C_0$ .
As we make $C_0$ large, the positive area containing $\xi=0$ becomes bigger, and tends to an open hemisphere as $C_0\rightarrow\infty$.

\end{Prop}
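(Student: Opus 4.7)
The plan is to compute the effect of the linear holomorphic twist on the shear $\sigma$ and twist $\lambda$ of the Lagrangian section $\Sigma$, and then reduce positivity to an elementary pointwise inequality via Proposition \ref{p:indmet}. Because $\Sigma$ is Lagrangian we have $\lambda=0$, and by Proposition \ref{p:ts2indmet} the induced metric on the graph of a section $\eta=G(\xi,\bar{\xi})$ is positive definite if and only if $\lambda_G^2>|\sigma_G|^2$. So everything reduces to estimating the new shear and twist of $\tilde F=F-iC\xi$.

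First I would carry out the direct computation from Definition \ref{d:spinco}. Since $\partial\overline{(-iC\xi)}=\partial(iC\bar{\xi})=0$, one obtains immediately
\[
\tilde{\sigma}=-\partial\overline{\tilde F}=-\partial\bar F=\sigma.
\]
For the weighted derivative,
\[
\tilde{\rho}-\rho
=(1+\xi\bar{\xi})^{2}\,\partial\!\left[\tfrac{-iC\xi}{(1+\xi\bar{\xi})^{2}}\right]
=-iC\,\tfrac{1-\xi\bar{\xi}}{1+\xi\bar{\xi}},
\]
so that, using $\lambda=\operatorname{Im}\rho=0$,
\[
\tilde{\lambda}=\operatorname{Im}\tilde{\rho}=-C\,\tfrac{1-R^{2}}{1+R^{2}},
\qquad |\tilde{\sigma}|=|\sigma|.
\]

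Second, I would feed these into the positivity criterion. The surface $\tilde{\Sigma}_{C}$ is positive at $\xi$ iff
\[
C^{2}\Bigl(\tfrac{1-R^{2}}{1+R^{2}}\Bigr)^{2}>|\sigma(\xi,\bar{\xi})|^{2}.
\]
For a closed set $K\subset\{|\xi|<1\}$, continuity gives constants $\delta_{K}>0$ and $M_{K}<\infty$ with $(1-R^{2})/(1+R^{2})\ge\delta_{K}$ and $|\sigma|\le M_{K}$ on $K$. Setting $C_{0}:=M_{K}/\delta_{K}$ yields positivity of $\tilde{\Sigma}_{C}$ on $K$ for every $C>C_{0}$, which is the first assertion.

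Third, for the hemisphere claim, I would observe that at $\xi=0$ one has $\tilde{\lambda}(0)=-C$ while $|\tilde{\sigma}(0)|=|\sigma(0)|$ is fixed, so $\xi=0$ lies in the connected component of the positivity set once $C>|\sigma(0)|$. As $C\to\infty$ the inequality $C(1-R^{2})/(1+R^{2})>|\sigma|$ is eventually satisfied on every compact subset of $\{|\xi|<1\}$, so the positive component through the origin exhausts the open hemisphere $\{|\xi|<1\}$, which under stereographic projection is an open hemisphere in $S^{2}$. The main (very mild) obstacle is simply keeping track of the factor $(1+\xi\bar{\xi})^{2}$ in the definition of $\rho$; once this is handled, the conclusion is a direct consequence of the compactness of $K$ and continuity of $\sigma$.
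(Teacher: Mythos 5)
Your proposal is correct and follows essentially the same route as the paper: compute that the holomorphic twist leaves the shear unchanged ($\tilde{\sigma}=\sigma$) while producing $\tilde{\lambda}=-C\,\frac{1-R^2}{1+R^2}$, then apply the positivity criterion $\tilde{\lambda}^2>|\tilde{\sigma}|^2$ from the induced-metric determinant. In fact your compactness argument on $K$ and the exhaustion argument for the hemisphere claim are spelled out in more detail than the paper's own proof, which only verifies positivity at $\xi=0$.
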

\begin{proof}
Since the deformation is holomorphic, the shear remains the same: $\tilde{\sigma}=\sigma$ and, computing the twist of $\tilde{\Sigma}$
\[
\tilde{\lambda}={\mathbb I}{\mbox m}\;(1+\xi\bar{\xi})^2\partial\left[\frac{\tilde{F}}{(1+\xi\bar{\xi})^2}\right]
={\mathbb I}{\mbox m}\;(1+\xi\bar{\xi})^2\partial\left[\frac{F-iC_0\xi}{(1+\xi\bar{\xi})^2}\right]=-C_0\frac{1-\xi\bar{\xi}}{1+\xi\bar{\xi}},
\]
where we have used the fact that $\Sigma$ is Lagrangian. For $C_0>|\tilde{\sigma}(0)|$ 
\[
\left.\left(\tilde{\lambda}^2-|\tilde{\sigma}|^2\right)\right|_0=C_0^2-|\tilde{\sigma}(0)|^2>0,
\]
i.e. the metric at $0$ is positive definite.
\end{proof}

\vspace{0.1in}

\begin{Def}
Fix $C_0>|\tilde{\sigma}(0)|$ and denote the set on which the induced metric is positive by
\[
\tilde{\Lambda}_{C_0}'=\left\{\;\gamma\in\tilde{\Sigma}\;\;\left|\;\; |\tilde{\sigma}(\gamma)|<|\tilde{\lambda}(\gamma)|\;\right.\right\}.
\] 
Clearly $\tilde{\Lambda}_{C_0}'$ is non-empty since it contains $\gamma_0$. Denote the connected component of $\gamma_0$ in $\tilde{\Lambda}_{C_0}'$ 
by $\tilde{\Lambda}_{C_0}$. 
\end{Def}

\vspace{0.1in}

\begin{Note}
In order for the induced metric to be positive (rather than
negative) definite we have arranged that $\tilde{\lambda}<0$ - see Proposition \ref{p:ts2indmet}.
\end{Note}
\vspace{0.1in}

\begin{Note}\label{n:H}
Along the edge, aside from the initial angle and $|\sigma(s=0)|^2$ having to be constant, we also have the following compatibility condition:
\begin{equation}\label{e:compat}
{\mbox{ Pr}}_{ND\cap N\tilde{\Sigma}}H=0,
\end{equation}
on $f_0(\partial D)$.  In terms of the components of an adapted frames along the edge, this is $<H,f^{(2)}>=0$. 

To see this, note that from Corollary \ref{c:multi}, if $B$ is the hyperbolic angle between the planes, the following relations hold:
\[
\mathring{\tilde{e}}_{(1)}=\mathring{e}_{(1)},
\qquad
\mathring{\tilde{e}}_{(2)}=\cosh B\;\mathring{e}_{(2)}+\sinh B \;\mathring{f}_{(1)},
\]
\[
\mathring{\tilde{f}}_{(1)}=\sinh B\;\mathring{e}_{(2)}+\cosh B \;\mathring{f}_{(1)},
\qquad
\mathring{\tilde{f}}_{(2)}=\mathring{f}_{(2)}.
\]

Now, suppose we write the flow equation as
\[
\frac{\partial}{\partial s}f=X,
\]
where $X^\perp=H$. Moreover, at the edge we must have $X\in T\tilde{\Sigma}$, and so
\[
X=X^{(1)}\mathring{\tilde{e}}_{(1)}+X^{(2)}\mathring{\tilde{e}}_{(2)}
=X^{(1)}\mathring{e}_{(1)}+X^{(2)}(\cosh B\;\mathring{e}_{(2)}+\sinh B \;\mathring{f}_{(1)}),
\]
thus
\[
X^\perp=X^{(2)}\sinh B \;\mathring{f}_{(1)}=H=H^{({1})}\mathring{f}_{(1)}+H^{({2})}\mathring{f}_{(2)}.
\]
So we have $H^{(2)}=0$ as claimed and $X^{(2)}=H^{({1})}/\sinh B$. Note also that we can parameterize the edge so that
$X^{(1)}=0$, that is, the deformation vector is perpendicular to the edge in the boundary surface.
\end{Note}

\vspace{0.1in}

We now write down the evolution of the edge in graph coordinates.

\begin{Prop}
In graph coordinates, the normal flow of the edge of the flowing disc is
\[
\frac{\partial \xi}{\partial s}=\frac{\lambda iH^\xi-\bar{\sigma}H^{\bar{\xi}}}{\rho-\tilde{\rho}}.
\]
\end{Prop}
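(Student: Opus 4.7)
The plan is to derive this formula by differentiating the condition that the edge remains on $\tilde\Sigma$ and combining the result with the graph form of mean curvature flow from the preceding Proposition. Writing $s\mapsto \xi(s)$ for a moving edge point, the constraint $f_s(\partial D)\subset\tilde\Sigma$ reads $F_s(\xi(s),\bar\xi(s))=\tilde F(\xi(s),\bar\xi(s))$. At such a point $F=\tilde F$, so the $\bar\xi F/(1+\xi\bar\xi)$ contributions in the definition of $\rho$ cancel, giving
\[
\partial F-\partial\tilde F=\rho-\tilde\rho,\qquad \bar\partial F-\bar\partial\tilde F=-(\bar\sigma-\bar{\tilde\sigma}).
\]
Differentiating the edge constraint in $s$ and substituting $\dot F_s|_\xi=-2i\lambda H^\xi+2\bar\sigma H^{\bar\xi}$ (the equivalent form of the flow equation (\ref{e:floweq}) produced in the proof of the preceding Proposition) yields the complex identity
\[
(\rho-\tilde\rho)\dot\xi - (\bar\sigma-\bar{\tilde\sigma})\dot{\bar\xi} = 2i\lambda H^\xi - 2\bar\sigma H^{\bar\xi}. \qquad (\star)
\]

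I would then fix the tangential reparameterization freedom of the edge by imposing the normal-flow convention. The tangent vector $t\partial_\xi+\bar t\partial_{\bar\xi}$ to the base image of the edge annihilates $d(F_s-\tilde F)$, yielding $(\rho-\tilde\rho)t=(\bar\sigma-\bar{\tilde\sigma})\bar t$. Requiring the base velocity $\dot\xi\partial_\xi+\dot{\bar\xi}\partial_{\bar\xi}$ to be perpendicular to this tangent in the Euclidean sense, $\dot\xi\bar t+\dot{\bar\xi}t=0$, and using the tangent relation to eliminate $t$ produces the companion identity
\[
(\rho-\tilde\rho)\dot\xi + (\bar\sigma-\bar{\tilde\sigma})\dot{\bar\xi} = 0.
\]
Adding this to $(\star)$ eliminates $\dot{\bar\xi}$ and gives $2(\rho-\tilde\rho)\dot\xi=2i\lambda H^\xi-2\bar\sigma H^{\bar\xi}$, which is the stated formula.

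The main obstacle is pinning down the precise meaning of \emph{normal flow of the edge}. Raw differentiation of the edge constraint produces only the single complex relation $(\star)$, and its left-hand side is invariant under tangential reparameterizations $(\dot\xi,\dot{\bar\xi})\mapsto(\dot\xi+\mu t,\dot{\bar\xi}+\mu\bar t)$ with $\mu$ real, the extra contribution cancelling exactly because of the tangent relation. This reparameterization freedom means that $(\star)$ alone cannot determine $\dot\xi$ as a scalar; it is the normal-flow convention that selects the canonical representative perpendicular to the edge tangent and effectively halves the right-hand side of $(\star)$, yielding the scalar closed form asserted in the Proposition.
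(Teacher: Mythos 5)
Your argument is correct and follows essentially the same route as the paper: differentiate the intersection condition $F_s=\tilde F$ tangentially and in time along the edge, use the normal-flow convention (the paper's choice $\dot\xi=ai\xi'$, i.e. base velocity Euclidean-perpendicular to the edge tangent) to obtain the companion relation $(\rho-\tilde\rho)\dot\xi+(\bar\sigma-\bar{\tilde\sigma})\dot{\bar\xi}=0$, and combine with $\dot F=-2\lambda iH^\xi+2\bar\sigma H^{\bar\xi}$. The only difference is cosmetic — you substitute the expression for $\dot F$ before eliminating $\dot{\bar\xi}$ rather than after — and your remark that the raw time-differentiated constraint is invariant under tangential reparameterizations correctly identifies why the orthogonality convention is needed, just as in the paper's Note on the edge parameterization.
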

\begin{proof}
Let the flowing edge be parameterized by $u\rightarrow \xi(u,s)$. Then the tangent to the edge is given by
\[
\frac{\partial}{\partial u}=\frac{\partial \xi}{\partial u}\frac{\partial}{\partial \xi}
                        +\frac{\partial \bar{\xi}}{\partial u}\frac{\partial}{\partial \bar{\xi}}
   =\xi'\frac{\partial}{\partial \xi}
                        +\bar{\xi}'\frac{\partial}{\partial \bar{\xi}},
\]
and the flow is
\[
\frac{\partial}{\partial s}=\frac{\partial \xi}{\partial s}\frac{\partial}{\partial \xi}
                        +\frac{\partial \bar{\xi}}{\partial s}\frac{\partial}{\partial \bar{\xi}}
   =\dot{\xi}\frac{\partial}{\partial \xi}
                        +\dot{\bar{\xi}}\frac{\partial}{\partial \bar{\xi}}.
\]
In order to factor out reparameterization of the edge, let us choose one such that the flow is orthogonal to the edge (see Note \ref{n:H}). 
This means that $\dot{\xi}=ai\xi'$ for some real function $a$.

Along the edge we have the intersection condition
\[
F_s(\xi(u,s),\bar{\xi}(u,s))=\tilde{F}(\xi(u,s),\bar{\xi}(u,s)).
\]
Differentiating this w.r.t. $u$ we find that
\[
\partial(F-\tilde{F})\xi'+\bar{\partial}(F-\tilde{F})\bar{\xi}'=0,
\]
or in terms of the shear, divergence and twist:
\[
(\rho-\tilde{\rho})\xi'-(\bar{\sigma}-\bar{\tilde{\sigma}})\bar{\xi}'=0.
\]
Substituting $\dot{\xi}=ai\xi'$ in this we have
\[
(\rho-\tilde{\rho})\dot{\xi}+(\bar{\sigma}-\bar{\tilde{\sigma}})\dot{\bar{\xi}}=0.
\]
On the other hand, differentiating the intersection condition in time we get
\[
\dot{F}+(\rho-\tilde{\rho})\dot{\xi}-(\bar{\sigma}-\bar{\tilde{\sigma}})\dot{\bar{\xi}}=0.
\]
Combining these last two equations yields
\[
\dot{\xi}=-\frac{\dot{F}}{2(\rho-\tilde{\rho})}.
\]
Note that by fixing the hyperbolic angle $\rho\neq\tilde{\rho}$, and so this is well-defined. 

Finally, since $\dot{F}=-2\lambda iH^\xi+2\bar{\sigma}H^{\bar{\xi}}$ we get the stated result.

\end{proof}

\vspace{0.2in}


\section{{\bf Existence Results for the I.B.V.P.}}

In this section we establish sufficient conditions, namely smallness of the initial angle and aholomorphicity along the edge, for which
long-time existence of the {\bf I.B.V.P.} holds. We then prove the existence of a holomorphic disc with edge lying on a totally real
Lagrangian hemisphere.

\subsection{Short-time existence}

Short-time existence of quasilinear parabolic equations of various types can be established by comparison with Schauder theory for linear equations.  
In what follows we prove this fact for the {\bf I.B.V.P.} 

Consider then a 1-parameter family of maps over a fixed domain $f_s:D\rightarrow TS^2$ which has local coordinate expression 
$\nu\mapsto(\xi(s,\nu,\bar{\nu}),\eta(s,\nu,\bar{\nu}))$. Let $\tilde{\Sigma}$ be a positive surface in $TS^2$ which is given as a graph 
$\eta=\tilde{F}(\xi,\bar{\xi})$ and $\Sigma_0$ some initial positive surface with edge lying in $\tilde{\Sigma}$.

We can now establish short-time existence for the {\bf I.B.V.P.}:

\begin{Thm}\label{t:ste}
Let $f_0:D\rightarrow TS^2$ be a smooth positive section whose edge lies in a fixed positive section $\tilde{\Sigma}$. Assume, in addition, that 
$\tilde{\Sigma}$ is totally real along the edge $f_0(\partial D)$.
 
Then there exists a unique family of positive sections $f_s(D)$ with
\[
f_s\in C^{2+\alpha}(\overline{D}\times[0,s_0)),
\]
satisfying the {\bf I.B.V.P.} on an interval $0\leq s< s_0$. 
\end{Thm}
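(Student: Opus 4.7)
My plan is to convert the \textbf{I.B.V.P.} into a quasilinear parabolic system for the graph function $F_s$ and then to reduce it to linear Schauder theory via the standard linearization/fixed-point scheme. Because $\Sigma_0$ and $\tilde{\Sigma}$ are positive sections, they are both graphs $\eta=F_s(\xi,\bar{\xi})$ and $\eta=\tilde{F}(\xi,\bar{\xi})$, and the normal-projection mean curvature flow has already been computed in equation~(\ref{e:floweq}) as
\[
\frac{\partial F}{\partial s} \;=\; g^{jk}\partial_j\partial_k F + \mathrm{l.o.t.}(\xi,F,\partial F,\bar{\partial}F),
\]
where $g^{jk}$ depends smoothly on $(F,\partial F,\bar{\partial}F)$ and is elliptic precisely on the open subset where the induced metric of Proposition~\ref{p:ts2indmet} is positive definite (equivalently where $|\sigma|<|\lambda|$ in the Lorentz convention there). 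I would then translate the three geometric boundary conditions into analytic conditions on the boundary 1-jet of $F$: (ii) becomes a zeroth order relation $F=\tilde{F}$ on $\partial D$; (iii) becomes a first order scalar relation obtained from the closed-form expression for $\cosh B$ in Proposition~\ref{p:hypang1} in terms of $\sigma,\rho,\tilde{\sigma},\tilde{\rho}$; and (iv) becomes the first order scalar relation $|\bar{\partial}F|=C/(1+s)$. For the complex-valued unknown $F$ this yields the correct count of four real scalar boundary conditions.

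Next I would linearize at $F_0$ writing $F_s=F_0+f$. The resulting linear problem is a parabolic system for $f$ with three boundary operators whose principal symbols are read off from the differentials at $F_0$ of the relations above; the key step, and the main obstacle I anticipate, is the verification of the Lopatinskii--Shapiro complementing condition. To do this I would pass to a half-space model at each boundary point $p\in\partial D$ using the adapted canonical frames of Proposition~\ref{p:frames} on $\Sigma_0$ and $\tilde{\Sigma}$. The Dirichlet part coming from (ii) contributes a straightforward complementing pair; the oblique-derivative parts from (iii) and (iv) contribute symbols which, after freezing coefficients, can be analyzed by using the explicit hyperbolic-angle formula~(\ref{e:hypangle}) and the transversality consequence in Corollary~\ref{c:multi}. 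The total reality of $\tilde{\Sigma}$ along $f_0(\partial D)$ assumed in the theorem is essential precisely here, because at complex points the induced metric on $\tilde{\Sigma}$ degenerates by Proposition~\ref{p:indmet}, making the normal-bundle splitting used to define the hyperbolic angle collapse; away from such points the Lopatinskii determinant is nonzero by direct computation.

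With the complementing condition in hand, standard parabolic Schauder theory (Solonnikov, Ladyzhenskaya--Solonnikov--Ural'tseva) gives existence, uniqueness, and a priori $C^{2+\alpha,1+\alpha/2}$ estimates for the linearized problem on any compatible $C^{2+\alpha}$ data on any time interval $[0,s_0]$, together with dependence of the constants only on $\|F_0\|_{C^{2+\alpha}}$, $\|\tilde F\|_{C^{2+\alpha}}$ and the Lopatinskii constant. I would then treat the nonlinear problem by defining a map $\Phi\colon f\mapsto f^\star$, where $f^\star$ solves the linear system whose coefficients, lower order terms and boundary data are frozen at $F_0+f$, and show that on a small ball in $C^{2+\alpha,1+\alpha/2}(\overline{D}\times[0,s_0])$ with $s_0$ sufficiently small, $\Phi$ is a contraction. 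A fixed point yields a unique $C^{2+\alpha}$ solution of the nonlinear \textbf{I.B.V.P.}

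Finally I would check that positivity and the required compatibility are preserved. Continuity of the induced metric in $F\in C^{2+\alpha}$ and positivity of $\Sigma_0$ ensure that on a possibly shorter interval $[0,s_0')$ the flowing graph remains positive, so the system stays uniformly parabolic in the regime where the above scheme applies. The $C^{2+\alpha}$ regularity of $\tilde{\Sigma}$ hypothesized in the theorem enters exactly where the boundary conditions (iii)--(iv) equate second derivatives of $F_s$ to expressions built from $\tilde F$ and $\partial\tilde F$, matching the regularity demanded by Schauder theory; this is the origin of the $C^{3+\alpha}$ versus $C^{2+\alpha}$ drop noted in the introduction.
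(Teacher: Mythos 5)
There is a genuine structural gap in your reduction. You recast the \textbf{I.B.V.P.} as a scalar quasilinear equation for the graph function $F_s$ over a \emph{fixed} disc and translate condition (ii) into the Dirichlet relation $F=\tilde F$ on a fixed $\partial D$. But (ii) only requires $f_s(\partial D)\subset\tilde\Sigma$: the edge is free to slide along $\tilde\Sigma$, and in graph coordinates its projection to the base genuinely moves (this is exactly the content of the proposition in Section \ref{s:bdryinit} computing $\partial\xi/\partial s=(\lambda iH^\xi-\bar\sigma H^{\bar\xi})/(\rho-\tilde\rho)$ along the edge). So in the graph picture the problem is a \emph{free-boundary} problem, not a fixed-boundary one, and your boundary-condition count fails: a single complex (two real) second-order parabolic unknown $F$ on a fixed domain supports two real boundary conditions, whereas you impose four (two from $F=\tilde F$, one from the angle, one from aholomorphicity); the system as you set it up is over-determined, and the Lopatinskii--Shapiro analysis you propose would not be carried out on a well-posed problem. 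The paper avoids this by \emph{not} reducing to the graph function: it keeps the parameterized map $f_s:\;\nu\mapsto(\xi(s,\nu,\bar\nu),\eta(s,\nu,\bar\nu))$ over the fixed disc, giving four real unknowns matched by four real boundary conditions (the linearized (ii) is the complex condition $\hat\eta=\delta\tilde F$, while (iii) and (iv) are real first-order conditions), verifies Lopatinskii--Shapiro by Fourier--Laplace transforming the linearization and showing the boundary determinant $|\hat M|\neq0$, and closes the nonlinear step with the Schauder fixed-point theorem via compactness of $B^T_R$ in $C^{1+\alpha}$ (rather than a contraction in $C^{2+\alpha}$, which would additionally require Lipschitz dependence of the frozen-coefficient solution operator). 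If you want to salvage a graph formulation you must carry the unknown edge curve as an extra boundary unknown, which is precisely the complication the parameterized formulation is designed to absorb.

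A secondary inaccuracy: your explanation of where total reality enters is off. $\tilde\Sigma$ is a positive section, so its induced metric is non-degenerate whether or not it has complex points (Proposition \ref{p:indmet} concerns Lagrangian and holomorphic surfaces, not positive ones); the normal splitting does not collapse at a complex point of a positive surface. In the paper's computation total reality is needed because the Lopatinskii determinant is proportional to the aholomorphicity factor $C$ of the edge defined in (\ref{e:def1}): one needs $C\neq0$ initially (and it stays nonzero by condition (iv)), i.e.\ the edge of the initial disc must be totally real, for $|\hat M|\neq0$. Also, conditions (iii)--(iv) are first order in $F_s$; the matching of second derivatives of the flowing surface with those of $\tilde F$ belongs to the long-time boundary estimates (Theorem \ref{t:bdryest3}), not to the short-time existence argument.
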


\vspace{0.1in}

\begin{proof}
We are dealing with a parabolic system with mixed non-linear boundary conditions  (see \cite{Freire} for the codimension one case).  

Following section I.2.3 in \cite {Eid}, we first consider the linearization of the system at $f=(\xi,\eta)$, namely:
\[
\left(\frac{\partial }{\partial s}-a_{\alpha\beta}(\nabla f)D^\alpha D^\beta\right)\hat{f}=g,
\]
where the linearized initial and boundary conditions for $\hat{f}=(\hat{\xi},\hat{\eta})$ are
\begin{enumerate}
\item[(i)]  $\hat{f}(s=0)=f_0$,
\item[(ii)] $\hat{\eta}=\delta\tilde{F}$,
\item[(iii)] $4{\mathbb I}m(\bar{\beta}\partial_v\hat{\xi}+\alpha\partial_v\bar{\hat{\eta}})=0$,
\item[(iv)] ${\mathbb R}e [Ce^{-i\phi}(\partial_u\hat{\xi}\partial_v\eta-\partial_v\hat{\xi}\partial_u\eta
    +\partial_u\xi\partial_v\hat{\eta}-\partial_v\xi\partial_u\hat{\eta})]=0$,
\end{enumerate}
and, for brevity, we have introduced
\begin{equation}\label{e:def1}
Ce^{i\phi}=\partial_u\xi\partial_v\eta-\partial_v\xi\partial_u\eta.
\end{equation}

To show that this is a parabolic system we must check that the Lopatinskii-Shapiro conditions (equations (2.18) to (2.20) of \cite{Eid}) hold. 
We do this as follows.

Fix a point $p\in\partial D$ and, by an isometry of $TS^2$, set $\xi(f(p))=\eta(f(p))=0$. We retain the 
freedom to rotate $(\xi,\eta)\rightarrow (e^{i\theta}\xi,e^{-2i\theta}\eta)$, which we will implement shortly.

In addition, let us choose a parameterization $\nu=u+iv$ of the domain $D$ such that at the point $p$, the edge is given by $v=0$, and 
\[
\mathring{e}_{(1)}=f_*\left(\frac{\partial}{\partial u}\right),
\qquad\qquad
\mathring{e}_{(2)}=f_*\left(\frac{\partial}{\partial v}\right),
\]
form an orthonormal basis for $T_{f(p)}f(D)$ and
\[
\triangle_p=\left.\frac{\partial^2}{\partial u^2}\right|_p+\left.\frac{\partial^2}{\partial v^2}\right|_p.
\]
Note that orthonormality of the frame at $p$ implies that
\begin{equation}\label{e:onf1}
{\textstyle{\frac{1}{2i}}}(\partial_u\xi\partial_u\bar{\eta}-\partial_u\bar{\xi}\partial_u\eta)=1,
\end{equation}
\begin{equation}\label{e:onf2}
{\textstyle{\frac{1}{2i}}}(\partial_v\xi\partial_v\bar{\eta}-\partial_v\bar{\xi}\partial_v\eta)=1,
\end{equation}
\begin{equation}\label{e:onf3}
\partial_u\xi\partial_v\bar{\eta}-\partial_u\bar{\xi}\partial_v\eta-\partial_v\bar{\xi}\partial_u\eta+\partial_v\xi\partial_u\bar{\eta}=0.
\end{equation}
An orthonormal frame for $T_{f(p)}\tilde{\Sigma}$ is given by
\[
\tilde{e}_1=e_1,
\qquad\qquad
\tilde{e}_2=2{\mathbb{R}}e\left(\alpha\frac{\partial}{\partial \xi}+\beta\frac{\partial}{\partial \eta}\right),
\]
for some $\alpha,\beta\in{\mathbb C}$ where positivity of $\tilde{\Sigma}$ implies that $\beta\neq 0$. The fixed angle condition is 
\[
{\mathbb G}(e_2,\tilde{e}_2)=4{\mathbb I}m(\bar{\beta}\partial_v\xi+\alpha\partial_v\bar{\eta}) =\cosh B.
\]
On the other hand, the asymptotic holomorphicity boundary condition is
\[
|\partial_u\xi\partial_v\eta-\partial_v\xi\partial_u\eta|^2=C^2(1+s)^{-2}.
\]

Take the Fourier transform in the variable $u$ and the Laplace transform in the variable $s$. If $w$ and $t$ are the transformed variables,
respectively, we get the transformed ODE's
\[
\frac{\partial^2 \hat{\xi}}{\partial v^2}-(t+w^2)\hat{\xi}=0,
\qquad\qquad
\frac{\partial^2 \hat{\eta}}{\partial v^2}-(t+w^2)\hat{\eta}=0.
\]
We must now verify that these equations have a solution that decay for $v\rightarrow-\infty$ and satisfy the transformed boundary conditions
\begin{enumerate}
\item[(ii)]  $\hat{\eta}(0)=\delta\tilde{F}$,
\item[(iii)] $4{\mathbb I}m(\bar{\beta}\partial_v\hat{\xi}+\alpha\partial_v\bar{\hat{\eta}})(0)=0$,
\item[(iv)]  
\begin{align}
&Ce^{-i\phi}(iw\hat{\xi}\partial_v\eta-\partial_v\hat{\xi}\partial_u\eta+\partial_u\xi\partial_v\hat{\eta}-iw\partial_v\xi\hat{\eta})(0)\nonumber\\
&\qquad+Ce^{i\phi}(iw\bar{\hat{\xi}}\partial_v\bar{\eta}-\partial_v\bar{\hat{\xi}}\partial_u\bar{\eta}
           +\partial_u\bar{\xi}\partial_v\bar{\hat{\eta}}-iw\partial_v\bar{\xi}\bar{\hat{\eta}})(0)=0.\nonumber
\end{align}
\end{enumerate}
To do this, first solve the ODE's
\[
\hat{\xi}=\hat{\xi}(0)\exp(v\sqrt{t+w^2}),
\qquad\qquad
\hat{\eta}=\hat{\eta}(0)\exp(v\sqrt{t+w^2}),
\]
and then substituting this in the boundary conditions (i) to (iii), we are led to the following linear system for 
$\hat{U}=[\hat{\xi}(0),\bar{\hat{\xi}}(0),\hat{\eta}(0),\bar{\hat{\eta}}(0)]^T$:
\[
\hat{M}\hat{U}=\hat{V},
\]
where
\[
\hat{M}=\left[
\begin{array}{cccc}
0&0&1&0 \\
0&0&0&1 \\
\bar{\beta}&-\beta&-\bar{\alpha}&\alpha \\
Ce^{-i\phi}(iw\partial_v\eta-\sqrt{t+w^2}\partial_u\eta)&Ce^{i\phi}(iw\partial_v\bar{\eta}-\sqrt{t+w^2}\partial_u\bar{\eta})& * & * \\
\end{array}
\right].
\]
Thus the linearized system has a unique solution iff the determinant of this matrix is non-zero. Now
\[
|\hat{M}|=\bar{\beta}Ce^{i\phi}(iw\partial_v\bar{\eta}-\sqrt{t+w^2}\partial_u\bar{\eta})
      +\beta Ce^{-i\phi}(iw\partial_v\eta-\sqrt{t+w^2}\partial_u\eta).
\]
Clearly a necessary condition for this expression to be non-zero is that $C\neq0$. However, this is true by our asymptotic holomorphicity
condition, so long as it is true initially. Thus the edge of the initial disc must be totally real - which we now assume. 

Motivated by this expression, let us now exhaust our coordinate freedom by using the rotation $(\xi,\eta)\rightarrow (e^{i\theta}\xi,e^{-2i\theta}\eta)$
to set (recalling definition (\ref{e:def1}))
\[
\bar{\beta}Ce^{i\phi}\partial_v\bar{\eta}=-\beta Ce^{-i\phi}\partial_v\eta.
\]
Then
\begin{align}
|\hat{M}|=&-\sqrt{t+w^2}(\bar{\beta}Ce^{i\phi}\partial_u\bar{\eta} +\beta Ce^{-i\phi}\partial_u\eta) \nonumber\\
&=\frac{\sqrt{t+w^2}}{\partial_v\bar{\eta}}\beta Ce^{-i\phi}(\partial_v\eta\partial_u\bar{\eta}-\partial_u\eta\partial_v\bar{\eta}).\nonumber
\end{align}
This is well-defined since, by equation (\ref{e:onf2}) we have $\partial_v\eta\neq 0$. 

Now, for the sake of contradiction, suppose that $|\hat{M}|=0$. Then from the above expression and the fact that $\beta\neq 0$ and $C\neq 0$, 
we see that this implies that
$\partial_u\eta=\lambda\partial_v\eta$ for some non-zero $\lambda\in{\mathbb R}$. Substituting this in equations (\ref{e:onf1}) and
(\ref{e:onf3}) we find that
\begin{equation}\label{e:onf4}
{\textstyle{\frac{1}{2i}}}(\partial_u\xi\partial_v\bar{\eta}-\partial_u\bar{\xi}\partial_v\eta)={\textstyle{\frac{1}{\lambda}}},
\end{equation}
\begin{equation}\label{e:onf5}
(\partial_u\xi+\lambda\partial_v\xi)\partial_v\bar{\eta}-(\partial_u\bar{\xi}+\lambda\partial_v\bar{\xi})\partial_v\eta=0.
\end{equation}
Now adding equation (\ref{e:onf4}) to $\lambda$ times equation (\ref{e:onf5}) yields
\[
{\textstyle{\frac{1}{2i}}}[(\partial_u\xi+\lambda\partial_v\xi)\partial_v\bar{\eta}-(\partial_u\bar{\xi}+\lambda\partial_v\bar{\xi})\partial_v\eta]=
   \lambda+{\textstyle{\frac{1}{\lambda}}}.
\]
Comparing this with (\ref{e:onf5}) we find that $\lambda+\lambda^{-1}=0$ which is impossible. We conclude that $|\hat{M}|\neq0$, and hence the 
Lopatinskii-Shapiro conditions hold. Thus the  boundary conditions (ii) to (iv)
of the system {\bf I.B.V.P} satisfy the complementarity condition, and therefore,
coupled with Cauchy initial data (i), is strongly parabolic \cite{Eid}. 

The proof of short-time existence now proceeds as follows.
Consider the set $Q_T=\overline{D}\times[0,T]$ and the space 
\[
B_R^T=\{f\in C^{2+\alpha}(Q_T, {\mathbb R}^4)\;|\; f(s=0)=f_0,\; \|f-f_0\|_{1+\alpha}<R  \}.
\]

Given $f\in B_R^T$, we can solve the linearized initial boundary value problem in the beginning
of the proof to obtain $\hat{f}$, 
which by strict parabolicity and Theorem VI.21 of \cite{Eid} satisfies 
the following a priori estimate:

\[
\|\hat{f}\|_{C^{2+\alpha}(Q_T)}\leq c\left(\|g\|_{C^{\alpha}(Q_T)}
+\|f_0\|_{C^{\alpha}(D)}+\sum_q \|h^q\|_{C^{2+\alpha-r_q}(\partial D\times[0,T])}   \right),
\]
\noindent
where $g(x,t)$ depends on the induced metric and ambient Christoffel symbols at $f$
literally as in (5.1) of \cite{LaS}, and $h$ on the boundary data at $f$ as they appear in (ii) - (iv)
above. Note that if $\hat{f}=f$, then $f$ is a solution of the quasilinear system, and therefore we seek 
fixed points  of the map ${\mathcal G}: f\rightarrow\hat{f}$.

\begin{Lem}
There exists an $R$ and $T$ depending on $f_0$ such that ${\mathcal G}: B_R^T\rightarrow B_R^T$.
\end{Lem}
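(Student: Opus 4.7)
The plan is to use the a priori Schauder estimate together with an interpolation in the time variable to obtain the desired invariance. First I would fix $R_0>0$ small enough that every $f\in B_{R_0}^T$ (for any $T\leq 1$) has the following uniform properties inherited from the initial datum $f_0$: (a) the induced metric on $f(D)$ is positive definite with a definite lower bound on its determinant, (b) the edge $f(\partial D)$ lies in the totally real locus of $\tilde{\Sigma}$ so that the quantity $C$ appearing in \eqref{e:def1} is bounded below, and (c) the hyperbolic angle between $Tf(D)$ and $T\tilde{\Sigma}$ stays in a compact interval. Since these are all open conditions in the $C^{1+\alpha}$ topology and hold strictly for $f_0$, they persist for $R_0$ chosen small enough and this in turn guarantees both strong parabolicity of the linearization and the Lopatinskii--Shapiro complementarity condition uniformly in $f\in B_{R_0}^T$.

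Next, for $f\in B_{R_0}^T$ I would apply the Schauder estimate quoted from Theorem VI.21 of \cite{Eid} to $\hat{f}={\mathcal G}(f)$. Because the coefficients $a_{\alpha\beta}(\nabla f)$ and the inhomogeneities $g,h^q$ depend continuously on $\nabla f$ and are evaluated on a bounded set in $C^{\alpha}$, there is a constant $K_0=K_0(f_0)$, independent of the particular $f\in B_{R_0}^T$ and of $T\leq 1$, such that
\[
\|\hat{f}\|_{C^{2+\alpha}(Q_T)}\leq K_0.
\]
This is the step where I want the uniformity built into the previous paragraph to pay off: without it the constant in the Schauder estimate could degenerate as $f$ varies, and the fixed point argument would fail.

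To close the argument, observe that $\hat{f}(s=0)=f_0$, so the difference $\hat{f}-f_0$ vanishes identically on $D\times\{0\}$. Standard parabolic interpolation in the time variable on $Q_T$ then gives, for some exponent $\delta=\delta(\alpha)>0$,
\[
\|\hat{f}-f_0\|_{C^{1+\alpha}(Q_T)}\leq C\,T^{\delta}\bigl(\|\hat{f}\|_{C^{2+\alpha}(Q_T)}+\|f_0\|_{C^{2+\alpha}(D)}\bigr)\leq C\,T^{\delta}(K_0+\|f_0\|_{C^{2+\alpha}}).
\]
Given any $R\in(0,R_0]$, choosing $T=T(R,f_0)$ small enough makes the right-hand side less than $R$, which shows $\hat{f}\in B_R^T$ and hence ${\mathcal G}(B_R^T)\subset B_R^T$.

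The main obstacle I expect is the uniformity of the Schauder constant across $B_R^T$: one needs the Lopatinskii--Shapiro condition verified in the main proof to remain valid, with a uniform lower bound on $|\hat{M}|$, as $f$ moves in a $C^{1+\alpha}$ neighbourhood of $f_0$. This is a question of continuity of the determinant computation at the end of the Lopatinskii--Shapiro verification, and handling it cleanly is what forces us to work in $C^{1+\alpha}$ rather than a weaker norm and to shrink $R_0$ before choosing $T$. Once this uniformity is in place, the interpolation step is routine.
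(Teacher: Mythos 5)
Your proposal is correct and takes essentially the same route as the paper: a uniform Schauder bound on $\hat f={\mathcal G}(f)$ over $B_R^T$, combined with $\hat f(s=0)=f_0$ and interpolation in the time variable, gives $\|\hat f-f_0\|_{C^{1+\alpha}(Q_T)}\leq c_1 T^{\epsilon}$, and one then shrinks $T$. Your additional attention to the uniformity of the Schauder constant (keeping the Lopatinskii--Shapiro determinant bounded away from zero on a $C^{1+\alpha}$ neighbourhood of $f_0$) makes explicit a point the paper leaves implicit, but it is the same argument.
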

\begin{proof} 
By the above Schauder estimate, we have a uniform $C^{2+\alpha}$ - bound for every $ {\mathcal G}(f)$ 
for all $f \in B^T_R $.  As a result, we may bound the $C^{1+\alpha}$ - norm of $\hat f$
by a $c_1 T^\epsilon$. Therefore, choosing $T$ sufficiently small, the claim follows.
\end{proof}
Short-time existence now follows from an application of the Schauder fixed point theorem
since when considered in the $C^{1+\alpha} $ - norm, $B_R^T$ is compact and convex.
\end{proof}

\vspace{0.1in}

\subsection{Long-time existence}

Let $\Sigma$ be a totally real Lagrangian section of $TS^2$ that projects to a hemisphere and $\gamma_0\in\Sigma$ be the pole. 
By an isometry we can make $\gamma_0$ 
have Gauss coordinate $\xi=0$. Assume $\Sigma$ is totally real, that is, it is nowhere holomorphic: $|\sigma|\neq0$.

Suppose we add a holomorphic twist (as in Proposition \ref{p:linholtwist}):
\[
\tilde{F}=F_0-iC_0\xi,
\]
to form the section $\tilde{\Sigma}$. While the aholomorphicity remains unchanged ($|\tilde{\sigma}|=|\sigma|$), $\tilde{\Sigma}$ is 
not Lagrangian in the open unit hemisphere, since
\[
\tilde{\lambda}=-C_0\frac{1-R^2}{1+R^2},
\]
where $R=|\xi|$. Fix $C_0>|\tilde{\sigma}(0)|$ and, as before, denote the set on which the induced metric is positive by
\[
\tilde{\Lambda}_{C_0}'=\left\{\;\gamma\in\tilde{\Sigma}\;\;\left|\;\; |\tilde{\sigma}(\gamma)|<|\tilde{\lambda}(\gamma)|\;\right.\right\}.
\] 
Clearly $\tilde{\Lambda}_{C_0}'$ is non-empty since it contains $0$. Denote the connected component of $0$ in $\tilde{\Lambda}_{C_0}'$ 
by $\tilde{\Lambda}_{C_0}$. Thus, $\tilde{\Lambda}_{C_0}$ is a positive section over an open subset in the unit hemisphere. 

In what follows we take the sup and inf of $|\tilde{\sigma}|$ over the hemisphere, while for other quantities, such as $|\tilde{\lambda}|$, sup and inf
will be over the positive region $\tilde{\Lambda}_{C_0}$.

Denote the maximum
Gauss radius of the positive region by
\[
R_0=\sup_{\tilde{\Lambda}_{C_0}} |\xi|.
\]
The following picture illustrates these definitions.

\vspace{0.1in}
\includegraphics{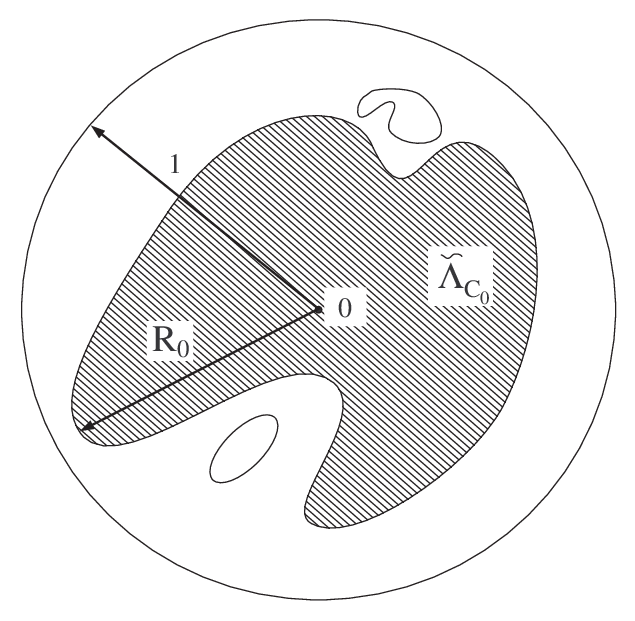}
\vspace{0.1in}

Note that $0<R_0<1$ and 
\[
\sup_{\tilde{\Lambda}_{C_0}}|\tilde{\lambda}|=C_0,
\qquad\qquad
\inf_{\tilde{\Lambda}_{C_0}}|\tilde{\lambda}|=C_0\frac{1-R_0^2}{1+R_0^2}.
\]

In this section we prove the following:

\begin{Thm}\label{t:lte}
Let $\tilde{\Sigma}$, $C_0$ and $R_0$ be as above. Then there exist constants $B_1$ and $C_1$ s.t. $\forall B,C$ satisfying
\[
0<B<B_1(C_0,R_0,\sup|\tilde{\sigma}|,\inf|\tilde{\sigma}|)
\qquad\qquad
0<C<C_1(C_0,R_0,\sup|\tilde{\sigma}|,\inf|\tilde{\sigma}|),
\]
the {\bf I.B.V.P.} with initial constants $B$ and $C$ has a solution for all time.
\end{Thm}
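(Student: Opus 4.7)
The strategy is to argue by contradiction: Theorem~\ref{t:ste} provides a unique $C^{2+\alpha}$ solution on some maximal interval $[0,s_0)$, and it suffices to show that under the smallness hypotheses no finite $s_0$ can be maximal. Following the extension criterion of Theorem~\ref{t:indflow} adapted to the boundary setting, the flow extends past $s_0$ provided we have uniform control over (i) the location of $f_s(D)$ in $TS^2$, (ii) the positivity gap $1-\mu=1-|\sigma|/|\lambda|>\delta_0>0$, and (iii) the second fundamental form and its derivatives. The paper's roadmap splits (ii) into an interior estimate, handled by the methods of Theorem~\ref{t:indflow}, and a boundary estimate, which is where the smallness of $B$ and $C$ must be exploited.

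For the interior I would apply the parabolic maximum principle directly to the quantity $|\sigma|^2/(\lambda^2-|\sigma|^2)$, whose evolution is computed in equation~\eqref{e:holconv}. The first two terms on the right-hand side are manifestly non-positive (negative multiples of $\|\lambda\,d|\sigma|-|\sigma|\,d\lambda\|^2$ and $\|d\phi\|^2$), and the remaining $H_{11}$ term is of lower order and can be absorbed by the gradient terms via Young's inequality once $C^1$ control of $\lambda$ and $|\sigma|$ is available. This is the analogue in $(TS^2,\mathbb{G})$ of the timelike curvature condition~\eqref{e:tcc} used in Theorem~\ref{t:indflow}. Containment in a compact region follows in parallel from Proposition~\ref{p:chiflow}: once $\mu<1-\delta_0$, the right-hand side of the evolution of $\chi^2$ is bounded, the edge values $\chi^2|_{\partial D}$ are controlled since the edge lies in the bounded set $\tilde{\Lambda}_{C_0}$, and the maximum principle gives $\sup_D\chi^2(s)\le \sup_D\chi^2(0)+Ks$ on any finite interval. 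Thus the interior estimate reduces to a bound on $\mu|_{\partial D}$.

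The boundary estimate is the crux and is where both $B$ and $C$ enter. Along the edge, the intersection condition~\eqref{e:intersectionconstraint} and Corollary~\ref{c:ineq2} give the pointwise inequality
\[
\frac{1}{(1-\mu^2)^{1/2}}\;\le\;\frac{\cosh B+\tilde{\mu}\sinh B}{(1-\tilde{\mu}^2)^{1/2}},
\]
so that $\mu|_{\partial D}$ is bounded away from $1$ whenever $B<B_1$ is small and the edge remains on a compact subset of $\tilde{\Lambda}_{C_0}$ where $\tilde{\mu}$ is uniformly less than $1$. The remaining task, which is the main obstacle, is to prevent the edge from drifting out to the degenerate boundary $\{\tilde{\mu}=1\}$ of $\tilde{\Lambda}_{C_0}$. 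For this I would integrate the edge flow equation
\[
\dot{\xi}=\frac{i\lambda H^\xi-\bar{\sigma}H^{\bar{\xi}}}{\rho-\tilde{\rho}}
\]
of Section~\ref{s:mcfts2}, using Proposition~\ref{p:bdsff} to bound $|H|$, the asymptotic holomorphicity condition $|\bar{\partial}f_s|=C/(1+s)$ to control $|\sigma|$ on the edge and make its $L^1$-in-$s$ contribution small, and the fixed-angle condition together with $\inf_{\tilde{\Lambda}_{C_0}}|\tilde{\lambda}|\ge C_0(1-R_0^2)/(1+R_0^2)$ to bound $|\rho-\tilde{\rho}|$ below. Choosing $C<C_1$ small in terms of $R_0-\sup_{f_0(\partial D)}|\xi|$ makes the total edge displacement strictly smaller than its initial distance to $\partial\tilde{\Lambda}_{C_0}$.

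With uniform positivity established, Proposition~\ref{p:bdsff} yields $|A|_+^2\le C_2(1+s^{-1})$, and higher derivative bounds follow by differentiating~\eqref{e:floweq} and the boundary conditions (ii)--(iv), applying the Schauder theory used in the proof of Theorem~\ref{t:ste}, and running the parabolic bootstrap of~\cite{EaH}. The $C^{2+\alpha}$ regularity of $\tilde{\Sigma}$ is used exactly to close the boundary Schauder estimates: derivatives of the fixed-angle and aholomorphicity conditions relate second derivatives of $f_s$ to second derivatives of $\tilde{F}$. Consequently $f_s$ extends smoothly past $s_0$, contradicting maximality, so $s_0=\infty$. The hardest step is the edge-drift argument, which is a quantitative geometric statement rather than a maximum principle one, and is the source of the dependence of $B_1$ and $C_1$ on $C_0$, $R_0$, $\sup|\tilde{\sigma}|$, and $\inf|\tilde{\sigma}|$ asserted in the statement.
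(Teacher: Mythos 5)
Your overall skeleton (maximal-time contradiction, interior estimates via the timelike curvature condition and containment, edge positivity via the angle relations, then bootstrapping) matches the paper, but the step you yourself identify as the crux — keeping the edge away from the degenerate set $\{\tilde{\mu}=1\}$ — is handled by an argument that does not close. You propose to integrate the edge equation $\dot{\xi}=(i\lambda H^\xi-\bar{\sigma}H^{\bar{\xi}})/(\rho-\tilde{\rho})$ and make the total displacement small by choosing $C$ small. But the available bound on the mean curvature (Proposition \ref{p:bdsff}) is only $|H|_+^2\leq C(1+s^{-1})$, i.e.\ $O(1)$ for large $s$, and the dominant numerator term $\lambda H^\xi$ is neither proportional to $C$ nor decaying: asymptotic holomorphicity only controls $|\sigma|\leq C/(1+s)$, not $H$, which involves derivatives of the 1-jet. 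Hence $\int_0^{s}|\dot{\xi}|\,ds'$ grows without bound as $s\to\infty$ and cannot be beaten by the initial distance to $\partial\tilde{\Lambda}_{C_0}$, no matter how small $C$ is; a displacement bound would in any case have to be uniform over the (unknown) lifespan, which is circular. The paper avoids dynamics entirely here: by Corollary \ref{c:nulltogether} the flowing disc would have to degenerate ($\mu=1$) at the same point where $\tilde{\mu}=1$, and plugging $\mu=\tilde{\mu}=1$ into the angle formula \eqref{e:hypang2} forces $\cosh B\geq\frac14\bigl[(|\sigma|/|\tilde{\sigma}|)^{1/2}+(|\tilde{\sigma}|/|\sigma|)^{1/2}-1\bigr]$, which is incompatible with the fixed angle once $C<\inf|\tilde{\sigma}|/(1+4\cosh B)^2$ (Proposition \ref{p:posregion}). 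The constant-angle condition is thus a pointwise algebraic barrier, not something to be controlled by a drift estimate, and this is precisely where the smallness of $C$ relative to $\inf|\tilde{\sigma}|$, $\sup|\tilde{\sigma}|$ enters.

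A second gap: your final paragraph invokes Proposition \ref{p:bdsff} and ``differentiating the boundary conditions plus Schauder theory'' to get second-order control, but Proposition \ref{p:bdsff} is an interior maximum-principle estimate and says nothing at a boundary maximum, while the interior gradient estimate (Proposition \ref{p:gradest}) that you rely on for uniform positivity already presupposes a global bound on $|H|$, including at the edge — so your order of argument is circular there. The paper closes this by an explicit edge 2-jet analysis: it first pins the frame angles $\theta,\psi$ in $(0,\pi/2)$ (Proposition \ref{p:anglecontrol}), which is where the third smallness condition \eqref{e:smallness3} on $C$ and the condition \eqref{e:smallness2} on $B$ are needed, and then inverts the linear systems coming from the differentiated intersection, constant-angle and aholomorphicity conditions (Theorem \ref{t:bdryest3}), the angle control being exactly what keeps the relevant determinants away from zero. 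Your proposal names neither the angle-control step nor the extra smallness conditions it requires, so the asserted dependence of $B_1,C_1$ on $C_0,R_0,\sup|\tilde{\sigma}|,\inf|\tilde{\sigma}|$ is not actually produced by your argument.
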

\begin{proof}
In order to prove long-time existence for the {\bf I.B.V.P.} we show that it is uniformly parabolic and, since it is quasilinear,
we need a global gradient estimate. 

In section \ref{s:compactmcf} we proved long-time existence for compact flowing submanifolds, so long as the
ambient space satisfies the timelike curvature condition and the submanifold stays in a compact region of the ambient space. The proof was based on the 
maximum principle to bound the gradient and the mean curvature. These arguments extend to the case where these functions have an interior maximum 
and so, to prove this theorem, we extend the estimates to the edge.

More specifically, we ensure that the timelike curvature condition holds for the flow and that the surface stays in a compact region of $TS^2$.
This we do in Propositions \ref{p:tcc} and \ref{p:compact} below, respectively.

We also find estimates for the gradient and mean curvature vector at the edge. The first of these is established by ensuring that the edge
stays in a positive region of the boundary surface (Proposition \ref{p:posregion}) and the first derivative remains bounded 
(Proposition \ref{p:bdryest1}). To establish the latter, we first control the angles which the edge makes with the canonical orthonormal frame
(Proposition \ref{p:anglecontrol}) and then extract bounds on the 2-jet of the flowing surface in terms of the 2-jet of the boundary surface 
(Theorem \ref{t:bdryest3}). Thus we bound $|H|$ and have established the gradient estimate (see Proposition \ref{p:gradest}).

Once we have the global gradient estimate, the proof proceeds as follows. By the short-time existence Theorem \ref{t:ste} we know that there exists 
an $s_0$ such that there is a solution to initial boundary value problem with
\[
f\in C^{2+\alpha}(\overline{D}\times[0,s_0)). 
\]
By interior and boundary regularity as in \cite{thorpe} \cite{white}, 
the gradient estimate implies that this solution can be extended to
\[
f\in C^{2+\alpha}(\overline D\times[0,s_0]). 
\]
Let $s_{max}$ be the maximum time of existence for the initial boundary problem:
\[ 
s_{max}={\mbox{inf}}_{U\subset\subset D}\{ {\mbox{sup}}\;s \in{\mathbb R}_{\geq0}\;|\;  f\in C^{2+\alpha}(U\times[0,s)) \}.
\]
Assume for the sake of contradiction that $s_{max}<\infty$. That is, there exists $U\subset\subset D$ and a sequence of times 
$\{s_j\}\rightarrow s_{max}$ such that 
\[
\lim_{j\rightarrow\infty}\|f_{s_j}\|_{C^{2+\alpha}(U)}\rightarrow\infty.
\]
Given the a priori gradient estimate, this contradicts the regularity result of \cite{thorpe} and so we have that $s_{max}$ is 
infinite.

\end{proof}

\subsubsection{Interior gradient estimate}

We start by showing that the timelike curvature condition holds along the flow and that the flow stays in a compact region. First,

\begin{Prop}\label{p:tcc}
The timelike curvature condition (\ref{e:tcc}) holds for as long as the flow exists.
\end{Prop}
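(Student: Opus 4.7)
The plan is to verify the inequality (\ref{e:tcc}) for the ambient neutral K\"ahler curvature of $TS^2$, restricted to the compact region in which the flow takes place. Since the timelike curvature condition is a pointwise property of the ambient Riemann tensor $\overline{R}$, the proposition reduces to showing that the inequality holds with a uniform positive constant $k$ at every configuration $(P,X)$ of spacelike plane $P$ and orthogonal timelike $X$ arising along the flow.

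First I would invoke the containment result (Proposition \ref{p:compact}), which via the evolution equation for $\chi$ in Proposition \ref{p:chiflow} confines the flow to a compact region $K\subset TS^2$. On $K$, the smooth curvature tensor $\overline{R}$ associated with the expression (\ref{e:metric}) is uniformly bounded. The remaining subtlety is that the Grassmannian of spacelike $2$-planes in a $(++--)$-metric is non-compact, so one cannot directly invoke compactness of the frame bundle to obtain a uniform $k$. However, in the application of (\ref{e:tcc}) in Proposition \ref{p:gradest}, the planes $P$ are precisely the tangent planes $T_{\gamma}f_s(D)$ of the positive flowing disc, whose positivity condition $|\sigma|<|\lambda|$ uniformly bounds their tilt relative to any fixed spacelike reference frame via the canonical frame formulas of Proposition \ref{p:frames}.

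Second, I would compute the components of $\overline{R}$ using the explicit coordinate expression (\ref{e:metric}) and express $\mathbb{G}(\overline{R}(X,\tau_i)X,\tau_i)$ in terms of the slopes $\sigma,\rho$ of the flowing section and the Gauss coordinates $(\xi,\bar\xi)$. The neutral K\"ahler identity of Proposition \ref{p:callib} and the scalar-flat, conformally flat character of $\mathbb{G}$ simplify the resulting expression considerably, so that it becomes a polynomial in the quantities $\xi$, $\bar\xi$, $\sigma$, $\bar\sigma$, $\lambda$, $\vartheta$ together with the frame coefficients from Proposition \ref{p:dualbasis}.

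The main obstacle is essentially bookkeeping in indefinite signature: because $\mathbb{G}(X,X)<0$ for timelike $X$, the required lower bound $\mathbb{G}(\overline{R}(X,\tau_i)X,\tau_i)\geq k\,\mathbb{G}(X,X)$ is a bound by a negative quantity, and one must check the correct sign before concluding. Once the explicit curvature expression is in hand, combining the pointwise bounds from compactness of $K$ with the positivity-driven bound on the tilt of $Tf_s(D)$ yields a uniform $k=k(K)>0$ valid along the flow, which is the content of the proposition.
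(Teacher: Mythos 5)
Your reduction to the tangent planes of the flowing disc is the right starting point, and the computation you propose matches the opening of the paper's proof: in the adapted frame the relevant curvature component is exactly $\frac{2|\sigma|}{\lambda^2-|\sigma|^2}\bigl(-(X^{(1)})^2+(X^{(2)})^2\bigr)$, so (\ref{e:tcc}) with a uniform $k$ amounts to a uniform upper bound on $|\sigma|/(\lambda^2-|\sigma|^2)$ along the flow. The genuine gap is your claim that the positivity condition $|\sigma|<|\lambda|$ \emph{uniformly} bounds the tilt of $Tf_s(D)$. Positivity is an open, non-quantitative condition: the factor $2|\sigma|/(\lambda^2-|\sigma|^2)$ blows up as $\lambda^2-|\sigma|^2\to 0$ while the surface remains positive, and confining the flow to a compact region $K\subset TS^2$ (Proposition \ref{p:compact}) bounds only the position of the surface, not its $1$-jet, so it cannot rule out this degeneration. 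A static ``compactness of $K$ plus boundedness of $\overline{R}$'' argument therefore cannot produce a constant $k$ valid for as long as the flow exists; the quantity to be bounded is not a function on $TS^2$ but on the space of tangent planes of the evolving disc.

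What is missing is a dynamical preservation-of-uniform-positivity argument, which is precisely how the paper proceeds. First, the evolution inequality of Proposition \ref{p:shearest} gives $\bigl(\frac{\partial}{\partial s}-{\mathbb G}^{jk}\partial_j\partial_k\bigr)\bigl(|\sigma|^2/(\lambda^2-|\sigma|^2)\bigr)\le 0$ (the right-hand side there is nonpositive because $\lambda<0$ along the flow), whence the a priori bound $|\sigma|^2/(\lambda^2-|\sigma|^2)\le C_1$. Second, one needs the lower bound $\Delta=\lambda^2-|\sigma|^2\ge C_2>0$: at the edge this comes from Corollary \ref{c:bdryests}, which itself rests on the smallness hypotheses (\ref{e:smallness1})--(\ref{e:smallness3}), and at an interior minimum one inserts $\partial\Delta=0$ into the evolution equation of Corollary \ref{c:flowdet} and completes a square to obtain $\bigl(\frac{\partial}{\partial s}-{\mathbb G}^{jk}\partial_j\partial_k\bigr)\Delta\ge -4\lambda\ge 0$, so the maximum principle applies. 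Combining, $\bigl(|\sigma|/(\lambda^2-|\sigma|^2)\bigr)^2\le C_1/C_2$, which yields the uniform $k$. Without these flow estimates (and noting also the sign point you raise: since ${\mathbb G}(X,X)<0$, the required inequality is an \emph{upper} bound on $2|\sigma|/(\lambda^2-|\sigma|^2)$), your plan cannot close.
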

\begin{proof}
Given a timelike plane $P$, chose an adapted orthonormal frame $E_{(\mu)}=\{e_{1)},e_{(2)},f_{(1)},f_{(2)}\}$ and for a timelike vector 
$X=X^{(1)}f_{(1)}+X^{(2)}f_{(2)}$ compute
\[
<\overline{R}(X,e_{(a)})X,e_{(a)}>=\frac{2|\sigma|}{\lambda^2-|\sigma|^2}(-(X^{(1)})^2+(X^{(2)})^2)\geq\frac{2|\sigma|}{\lambda^2-|\sigma|^2}{\mathbb G}(X,X).
\]
From Proposition \ref{p:shearest} we have that
\[
\left(\frac{\partial }{\partial s}-{\mathbb G}^{jk}\partial_j\partial_k\right)\left(\frac{|\sigma|^2}{\lambda^2-|\sigma|^2}\right)\leq0,
\]
and so we have the a priori bound
\[
\frac{|\sigma|^2}{\lambda^2-|\sigma|^2}\leq C_1.
\]
In addition, we claim that under the flow
\[
\Delta=\lambda^2-|\sigma|^2\geq C_2.
\]
To see this, note that at the edge this follows from Corollary \ref{c:bdryests}. On the other hand at an interior minimum of $\Delta$ we have that 
$\partial\Delta=2\lambda\partial\lambda-\sigma\partial\bar{\sigma}-\bar{\sigma}\partial\sigma=0$.
Substituting this in the flow for $\Delta$, as expressed in Corollary \ref{c:flowdet}, we find that
\begin{align}
\left(\frac{\partial }{\partial s}-{\mathbb G}^{jk}\partial_j\partial_k\right)\Delta&=\frac{2\Delta}{\lambda^2}\|da\|^2
    +\frac{2ai(1+\xi\bar{\xi})^2}{\lambda}[\bar{\partial}a(\partial\phi-{\textstyle{\frac{2i\bar{\xi}}{1+\xi\bar{\xi}}}})
          -\partial a(\bar{\partial}\phi+{\textstyle{\frac{2i\xi}{1+\xi\bar{\xi}}}})]\nonumber\\
&\qquad\qquad\qquad\qquad+a^2\|d\phi-2j\;d\ln(1+\xi\bar{\xi})\|^2-4\lambda,
\end{align}
where we have introduced $\sigma=ae^{i\phi}$ and $\|da\|^2={\Bbb{G}}^{jk}\partial_ja\;\partial_ka$ is the square norm 
of the gradient.

Now, we have that
\[
i[\bar{\partial}a(\partial\phi-{\textstyle{\frac{2i\bar{\xi}}{1+\xi\bar{\xi}}}})-\partial a(\bar{\partial}\phi+{\textstyle{\frac{2i\xi}{1+\xi\bar{\xi}}}})]d\xi\wedge d\bar{\xi}=(d\phi-2j\;d\ln(1+\xi\bar{\xi}))\wedge da,
\]
and taking the Hodge star operator with respect to the  metric ${\Bbb{G}}$ we get
\[
\frac{i(1+\xi\bar{\xi})^2}{2\sqrt{\Delta}}[\bar{\partial}a(\partial\phi-{\textstyle{\frac{2i\bar{\xi}}{1+\xi\bar{\xi}}}})-\partial a(\bar{\partial}\phi+{\textstyle{\frac{2i\xi}{1+\xi\bar{\xi}}}})]=\star[(d\phi-2j\;d\ln(1+\xi\bar{\xi}))\wedge da].
\]
By elementary geometry we have that 
\[
\star[(d\phi-2j\;d\ln(1+\xi\bar{\xi}))\wedge da]\geq-\|d\phi-2j\;d\ln(1+\xi\bar{\xi})\|.\|da\|,
\]
and substituting this, we find
that
\begin{align}
\left(\frac{\partial }{\partial s}-{\mathbb G}^{jk}\partial_j\partial_k\right)\Delta&\geq\frac{\Delta}{\lambda^2}\|da\|^2
-\frac{2a\sqrt{\Delta}}{\lambda}\|d\phi-2j\;d\ln(1+\xi\bar{\xi})\|.\|da\|\nonumber\\
&\qquad\qquad\qquad\qquad   +a^2\|d\phi-2j\;d\ln(1+\xi\bar{\xi})\|^2-4\lambda\nonumber\\
&=\left(\frac{\sqrt{\Delta}}{\lambda}\|da\|-a\|d\phi-2j\;d\ln(1+\xi\bar{\xi})\|\right)^2-4\lambda\geq0.\nonumber
\end{align}
The interior bound on $\Delta$ follows.

Therefore
\[
\left(\frac{|\sigma|}{\lambda^2-|\sigma|^2}\right)^2=\frac{|\sigma|^2}{\lambda^2-|\sigma|^2}\frac{1}{\lambda^2-|\sigma|^2}\leq \frac{C_1}{C_2}.
\]
Thus
\[
{\mathbb G}(\overline{R}(X,e_{(a)})X,e_{(a)}))\;\geq k|X|^2,
\]
as claimed.
\end{proof}

\vspace{0.1in}

We now turn to showing that the flow stays in a compact region. We do so by showing that the graph function is bounded in the fibre directions of
$TS^2$ by bounding the perpendicular distance function $\chi$ (see Definition \ref{d:perpdist}).

\begin{Prop}\label{p:compact}
Under mean curvature flow the perpendicular distance function satisfies
\[
\chi\leq C_1(\chi_0,\tilde{\chi}),
\]
where $\chi_0$ and $\tilde{\chi}$ are the initial and boundary perpendicular distances to the origin.
\end{Prop}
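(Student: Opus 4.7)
The plan is to bound $\chi^2$ by the parabolic maximum principle on $\overline{D}\times[0,s_0]$, using strict positivity of the flowing surface to exclude any interior maximum.

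First I record the parabolic boundary data. On the initial slice $\overline{D}\times\{0\}$ the flowing disc coincides with $\Sigma_0$, so $\chi\leq\chi_0$ there by definition; along the edge $f_s(\partial D)\subset\tilde{\Sigma}$, every edge point's perpendicular distance is bounded by the corresponding value on $\tilde{\Sigma}$, so $\chi\leq\tilde{\chi}$. Hence $\chi^2\leq\max(\chi_0,\tilde{\chi})^2=:C_1^2$ on the parabolic boundary.

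Next I suppose, for contradiction, that $\chi^2$ exceeds $C_1^2$ at some interior space-time point $(p_1,s_1)\in D^{\circ}\times(0,s_0]$. At such a point $\partial\chi^2=0$. A direct computation from $\chi^2=4F\bar{F}/(1+\xi\bar{\xi})^2$, using $\partial F=\rho+2F\bar{\xi}/(1+\xi\bar{\xi})$ and $\partial\bar{F}=-\sigma$ from Definition \ref{d:spinco}, gives
\[
\partial\chi^2=\frac{4(\rho\bar{F}-F\sigma)}{(1+\xi\bar{\xi})^2},
\]
so the critical condition reduces to the algebraic identity $\rho\bar{F}=F\sigma$. If $F(p_1,s_1)=0$ then $\chi^2(p_1,s_1)=0$ and the bound holds trivially; otherwise, taking moduli yields $|\rho|=|\sigma|$, and since $|\rho|^2=\vartheta^2+\lambda^2$, this forces $\vartheta^2+\lambda^2=|\sigma|^2$.

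This last equality directly contradicts strict positivity $\lambda^2-|\sigma|^2>0$ of the induced metric (Proposition \ref{p:ts2indmet}), which would require $\vartheta^2<0$. Hence no interior maximum exists and $\chi\leq C_1$. The key step — and only mild obstacle — is the short computation that collapses the vanishing of $\partial\chi^2$ to the equality $|\rho|=|\sigma|$; once this is in hand, incompatibility with positivity is immediate. It is worth noting that the full evolution equation of Proposition \ref{p:chiflow} is not actually required for the bound: under the constraint $\rho\bar{F}=F\sigma$ the three groups of terms in its numerator each collapse to zero, so the heat-type inequality degenerates to equality at interior critical points rather than a strict sign, consistent with — but no stronger than — the purely spatial argument above.
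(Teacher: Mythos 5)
Your proof is correct, and it reaches the bound by a different mechanism than the paper. Both arguments begin with the same identity $\partial\chi^2=4(\bar F\rho-F\sigma)/(1+\xi\bar\xi)^2$, but the paper then feeds it into the evolution equation of Proposition \ref{p:chiflow}: at an interior maximum the first-order terms drop out and the heat operator applied to $\chi^2$ reduces to $4\lambda(\rho\bar\rho-\sigma\bar\sigma)/(\lambda^2-\sigma\bar\sigma)\le 0$ (using $\lambda<0$ and $\rho\bar\rho\ge\lambda^2>\sigma\bar\sigma$ on a positive section), so the parabolic maximum principle gives the bound. You instead observe that on a strictly positive graph the spatial gradient of $\chi^2$ cannot vanish anywhere that $F\ne0$, since $\bar F\rho=F\sigma$ with $F\ne0$ forces $\vartheta^2+\lambda^2=|\sigma|^2<\lambda^2$, which is absurd; hence for each fixed $s$ the maximum of $\chi$ over $\overline{D}$ sits on the edge (or equals zero), and no evolution equation or parabolic maximum principle is needed. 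Your route is more elementary, uses only positivity of each time-slice, and in fact yields the slightly stronger conclusion $\chi\le\sup_{\partial}\tilde\chi$ with no dependence on the initial slice; the paper's parabolic argument relies on exactly the same positivity hypotheses, so you lose no generality. Two small points of wording: at the putative bad point you should say that the maximum of $\chi^2$ over the compact cylinder $\overline{D}\times[0,s_0]$ is attained there (merely ``exceeding $C_1^2$'' at a point does not give $\partial\chi^2=0$); and your closing remark that all three groups in the numerator of Proposition \ref{p:chiflow} collapse under the constraint is true only when $F\ne0$ --- when $F=0$ the third group is strictly negative --- though neither point affects the validity of the argument.
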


\begin{proof}
At the edge $f(\partial D)$ the Dirichlet condition means that $F_s=\tilde{F}$ so that
\[
\chi(\partial D)=\tilde{\chi}(\partial D)\leq C_1.
\]
Thus we need only consider the interior.

First note that
\[
\partial\chi^2=4\bar{F}\partial\left(\frac{F}{(1+\xi\bar{\xi})^2}\right)+\frac{4F}{(1+\xi\bar{\xi})^2}\partial\bar{F}
    =4\frac{\bar{F}\rho-F\sigma}{(1+\xi\bar{\xi})^2},
\]
and so the expression in Proposition \ref{p:chiflow} can be rewritten
\[
\left(\frac{\partial }{\partial s}-{\mathbb G}^{jk}\partial_j\partial_k\right)\chi^2=\frac{i(F\partial\chi^2-\bar{F}\bar{\partial}\chi^2)
   -i(\sigma\xi\bar{\partial}\chi^2-\bar{\sigma}\bar{\xi}\partial\chi^2)(1+\xi\bar{\xi})+4\lambda(\rho\bar{\rho}-\sigma\bar{\sigma})}
  {\lambda^2-\sigma\bar{\sigma}}.
\]
Thus, at an interior maximum of $\chi$, $\partial\chi=\bar{\partial}\chi=0$ and so
\[
\left(\frac{\partial }{\partial s}-{\mathbb G}^{jk}\partial_j\partial_k\right)\chi^2
=\frac{4\lambda(\rho\bar{\rho}-\sigma\bar{\sigma})}{\lambda^2-\sigma\bar{\sigma}}\leq0.
\]
By the maximum principle, the result follows.

\end{proof}

\vspace{0.1in}

Having established these two propositions, we move on to the edge estimates.

\vspace{0.1in}

\subsubsection{Boundary estimates}

In this section we establish two things: a gradient estimate and an estimate on the norm of the mean curvature vector at the edge of the flowing disc.

To start, we show that the flowing disc cannot become degenerate at the edge by becoming tangent to a fibre of $TS^2\rightarrow S^2$.

\begin{Prop}\label{p:bdryest1}
At the edge, $|dF|^2<C_1(|d\tilde{F}|^2)$.
\end{Prop}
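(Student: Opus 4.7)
The plan is to establish this edge estimate by exploiting the identification between $|dF|$ at the edge and the complex slopes $\sigma$ and $\rho$ of the flowing section, then bootstrapping the uniform angle bounds of Section~4 to transfer control from $\tilde{F}$ to $F$.

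First I would note two immediate Dirichlet-type observations on $\partial D$. Since $f_s(\partial D)\subset\tilde{\Sigma}$ and both surfaces are graphical, $F=\tilde{F}$ pointwise along the edge, so in particular $|F|$ is bounded by the $C^0$-norm of $\tilde{F}$. Moreover, differentiating the equality $F=\tilde{F}$ along the edge curve controls the component of $dF$ tangent to the edge by the corresponding component of $d\tilde{F}$. The content of the proposition is therefore to bound the component of $dF$ transverse to the edge curve, which is equivalent to bounding $\sigma$ and $\vartheta=\mathrm{Re}\,\rho$ (since $\bar{\partial}F=-\bar{\sigma}$ and $\partial F=\rho+2\bar{\xi}F/(1+\xi\bar{\xi})$, with $\lambda=\mathrm{Im}\,\rho$ already packaged into $\rho$).

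Next I would run the angle machinery of Section~\ref{s:neutral}. The hyperbolic angle $B$ between $Tf_s(D)$ and $T\tilde{\Sigma}$ is constant along the edge by the boundary condition (iii). Applying Corollary~\ref{c:ineq2} in the second form
\[
\frac{\cosh B-\tilde{\mu}\sinh B}{(1-\tilde{\mu}^2)^{1/2}}\leq\frac{1}{(1-\mu^2)^{1/2}}\leq\frac{\cosh B+\tilde{\mu}\sinh B}{(1-\tilde{\mu}^2)^{1/2}},
\]
together with $\tilde{\mu}=|\tilde{\sigma}|/|\tilde{\lambda}|<1$ (positivity of $\tilde{\Sigma}$), yields a uniform upper bound $\mu\leq\mu_{\max}<1$ depending only on $B$ and on $|d\tilde{F}|$. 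This is the crucial non-degeneracy: $\mu$ stays bounded away from $1$. Feeding this back into the intersection constraint \eqref{e:intersectionconstraint}, and using $1\pm\mu\leq 1+\mu\cos 2\theta\leq 1+\mu$, I can solve for $|\lambda|$ in terms of $|\tilde{\lambda}|$, $\mu$, $\tilde{\mu}$, producing a two-sided bound on $|\lambda|$ controlled by $|d\tilde{F}|$ and $B$. Since $|\sigma|=\mu|\lambda|$, this simultaneously bounds $|\sigma|$. Finally, equation \eqref{e:hypangcon3} expresses $\vartheta-\tilde{\vartheta}$ as an algebraic combination of the already-bounded quantities $\mu,\tilde{\mu},|\lambda|,|\tilde{\lambda}|$ with coefficients $\sin 2\theta,\sin 2\tilde{\theta}$ of modulus $\le 1$, yielding a bound on $\vartheta$. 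Combining the bounds on $\sigma$, $\lambda$, $\vartheta$, and $F$ then gives $|dF|^2\leq C_1$.

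The main obstacle I anticipate is that the identities in Proposition~\ref{p:hypang1} are stated pointwise in terms of the auxiliary angles $\theta,\tilde{\theta}$ (and $\psi,\tilde{\psi}$), which record how the tangent to the edge curve sits inside the canonical tangent frames on $Tf_s(D)$ and $T\tilde{\Sigma}$. The estimate must be uniform over these auxiliary angles, since we have no a priori control of them along the flow. The saving grace is that the two-sided inequalities in Corollary~\ref{c:ineq2} are obtained by maximizing and minimizing over precisely these angles, so they already incorporate the worst case; the same phenomenon occurs in the intersection constraint via the elementary bounds on $1+\mu\cos 2\theta$. Once this $\theta$-uniformity is handled, everything else is algebra in the bounded domain $\{\mu,\tilde{\mu}<1-\epsilon\}$.
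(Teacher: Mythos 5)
Your route is genuinely different from the paper's, and it has a gap. The paper's proof never touches the angle machinery: it bounds $|\sigma|$ at the edge directly from the asymptotic holomorphicity boundary condition (iv), $|\sigma|=C/(1+s)\leq C$, and then differentiates the intersection condition $F_s=\tilde{F}$ along the edge to obtain $\rho=\tilde{\rho}+(\bar{\sigma}-\bar{\tilde{\sigma}})e^{i\beta}$, whence $|\rho|\leq|\tilde{\rho}|+|\sigma|+|\tilde{\sigma}|$. That is the whole argument. You instead discard condition (iv) and try to recover $|\sigma|$ and $|\lambda|$ from the constant-angle condition via Corollary \ref{c:ineq2}, the intersection constraint (\ref{e:intersectionconstraint}) and equation (\ref{e:hypangcon3}).

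The gap is your claim that these give a bound $\mu\leq\mu_{\max}<1$, and then bounds on $|\lambda|$, $|\sigma|$, $\vartheta$, ``depending only on $B$ and $|d\tilde{F}|$.'' Every one of those inequalities degenerates as $\tilde{\mu}\to 1$: the right-hand side of Corollary \ref{c:ineq2} contains $(1-\tilde{\mu}^2)^{-1/2}$, the intersection constraint has the factor $(1-\tilde{\mu}^2)$ in a denominator when solved for $|\lambda|$, and the denominators $-\tilde{\lambda}+|\tilde{\sigma}|\cos 2\tilde{\theta}\geq|\tilde{\lambda}|-|\tilde{\sigma}|$ in (\ref{e:hypangcon3}) vanish in that limit. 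An upper bound on $|d\tilde{F}|$ bounds $|\tilde{\sigma}|$ and $|\tilde{\lambda}|$ from above but gives no lower bound on $|\tilde{\lambda}|-|\tilde{\sigma}|$: on the fixed boundary surface, $\tilde{\mu}=1$ precisely on the boundary of the positive region $\tilde{\Lambda}_{C_0}$, and nothing at this stage prevents the flowing edge from approaching it. The uniform bound $\tilde{\mu}<C_7<1$ at the edge is exactly the content of Proposition \ref{p:posregion} and Corollary \ref{c:bdryests}, which come after this Proposition, require the smallness assumption (\ref{e:smallness1}), and themselves use the bound $|\sigma|\leq C$ supplied by condition (iv); indeed Corollary \ref{c:bdryests} cites the present Proposition as an input, so importing it here would be circular. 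Your opening observations (Dirichlet condition and its tangential derivative) are sound and coincide with the paper's use of the intersection condition, but to close the argument you need the direct $|\sigma|$ bound from boundary condition (iv) rather than the hyperbolic-angle algebra.
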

\begin{proof}
To bound the gradient of the graph function $F(\xi,\bar{\xi})$ we need to bound the slopes $\rho=\vartheta+i\lambda$ and $\sigma$. 
The asymptotic holomorphicity condition at the edge says that
\[
|\sigma|=\frac{C}{1+s}\leq C,
\]
and so we have a bound on $|\sigma|$. On the other hand the intersection condition implies 
that
\[
\rho=\tilde{\rho}+(\bar{\sigma}-\bar{\tilde{\sigma}})e^{i\beta},
\]
for some $\beta$, and so $|\rho|\leq |\tilde{\rho}|+|\sigma|+|\tilde{\sigma}|<C$, since the gradient of the graph function $\tilde{F}(\xi,\bar{\xi})$
of the boundary surface is bounded.
\end{proof}

\vspace{0.1in}

To complete the gradient estimate at the edge we must now ensure that the flowing surface does not pick up a degenerate direction which is not
tangent to the fibre of $TS^2$. That is, we must ensure that $\lambda^2-|\sigma|^2>0$ at the edge. 

To ensure this we note that, by Corollary \ref{c:nulltogether}, $\lambda^2-|\sigma|^2=0$ can only happen if the edge of the flowing disc hits a 
degenerate point on the boundary surface $\tilde{\Sigma}$,  i.e. if $\tilde{\lambda}^2-|\tilde{\sigma}|^2=0$. Thus, if we can constrain
the flowing edge to lie in a positive disc $\tilde{\Lambda}\subset \tilde{\Sigma}$, we will have the required gradient estimate.

\begin{Prop}\label{p:posregion}
Suppose 
\begin{equation}\label{e:smallness1}
C<\min\left\{\frac{\inf|\tilde{\sigma}|}{(1+4\cosh B)^2},\frac{\inf|\tilde{\sigma}|}{\sup|\tilde{\sigma}|}\right\}.
\end{equation}
If $f_s(\partial D)\subset\tilde{\Lambda}_{C_0}$ for $s=0$, then $f_s(\partial D)\subset\tilde{\Lambda}_{C_0}$ for all $s$.
\end{Prop}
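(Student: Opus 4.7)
The plan is to run a continuity argument, ruling out the existence of a first time $s_1>0$ and a point $p_1\in\partial D$ at which the edge $f_{s_1}(p_1)$ reaches the boundary $\partial\tilde\Lambda_{C_0}$; at such a point $\tilde\mu(p_1):=|\tilde\sigma(p_1)|/|\tilde\lambda(p_1)|=1$. By combining the asymptotic holomorphicity condition (iv), which gives $|\sigma|\leq C$ at the edge, with the edge intersection relation $|\rho-\tilde\rho|=|\sigma-\tilde\sigma|$ derived above (whence $|\lambda-\tilde\lambda|\leq|\sigma|+|\tilde\sigma|$), one obtains, so long as $\tilde\mu$ stays strictly below $1$, a positive lower bound on $|\lambda|$ and hence an upper bound $\mu\leq C/|\lambda|$ strictly less than $1$.

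I would then invoke the angle inequality of Corollary \ref{c:ineq2} to couple $\mu$ and $\tilde\mu$ via the fixed hyperbolic angle $B$. The symmetric form
\[
(1-\mu\tilde\mu)^2\leq\cosh^2 B\,(1-\mu^2)(1-\tilde\mu^2)
\]
shows that $\tilde\mu\to 1$ forces $\mu\to 1$; solving the underlying quadratic for $\tilde\mu$ in terms of $\mu$ yields the upper bound
\[
\tilde\mu\leq\frac{\mu+\cosh B\sinh B\,(1-\mu^2)}{\mu^2+\cosh^2 B\,(1-\mu^2)},
\]
which at $\mu=0$ is $\tanh B<1$ and, more generally, bounds $\tilde\mu$ strictly away from $1$ whenever $\mu$ is small. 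Combined with the previous bound $\mu\leq C/|\lambda|$ and the precise smallness hypotheses $C<\inf|\tilde\sigma|/(1+4\cosh B)^2$ and $C<\inf|\tilde\sigma|/\sup|\tilde\sigma|$, this will provide a uniform gap between $\tilde\mu$ and $1$ depending only on boundary data, contradicting the assumption that $\tilde\mu(p_1)=1$ in the limit $s\to s_1^-$.

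The main obstacle will be the sharp bookkeeping of these estimates near $\partial\tilde\Lambda_{C_0}$, where the boundary twist $|\tilde\lambda|$ approaches $|\tilde\sigma|$ and the naive triangle-inequality lower bound $|\lambda|\geq|\tilde\lambda|-C-|\tilde\sigma|$ degenerates exactly where it is needed. To prevent $|\lambda|$ and $|\tilde\lambda|-|\tilde\sigma|$ from vanishing simultaneously, I would replace the triangle inequality by the exact angle-induced intersection equality (\ref{e:intersectionconstraint}), which ties the ratio $|\tilde\lambda|/|\lambda|$ to $(1-\mu^2)/(1-\tilde\mu^2)$; Corollary \ref{c:nulltogether} then closes the bootstrap by identifying the degeneracy of the flowing disc with that of the boundary surface along the edge. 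The numerical factor $(1+4\cosh B)^2$ in the smallness hypothesis reflects the need to iterate this coupling twice and to absorb the quadratic terms produced when solving the angle identity for $\tilde\mu$.
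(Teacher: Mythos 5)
Your setup---arguing at a first point and time where $\tilde{\mu}=1$, using $|\sigma|\le C$ from condition (iv), and coupling $\mu$ and $\tilde{\mu}$ through the fixed hyperbolic angle---matches the paper's framework, but the step that actually produces the contradiction is missing. Your plan hinges on keeping $\mu$ uniformly away from $1$ via $\mu\le C/|\lambda|$, which requires a positive lower bound on $|\lambda|$ along the edge. As you yourself note, the triangle-inequality bound $|\lambda|\ge|\tilde{\lambda}|-|\sigma|-|\tilde{\sigma}|$ degenerates exactly at $\partial\tilde{\Lambda}_{C_0}$, and the patch you propose does not repair it: Corollary \ref{c:nulltogether} and the intersection identity (\ref{e:intersectionconstraint}) only say that the flowing disc and the boundary surface degenerate \emph{together}. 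At a joint degeneracy $(\mu,\tilde{\mu})\to(1,1)$ the identity (\ref{e:intersectionconstraint}) is of $0/0$ type (the factors $1-\mu^2$ and $1-\tilde{\mu}^2$, and possibly $1+\mu\cos 2\theta$, $1+\tilde{\mu}\cos 2\tilde{\theta}$, all vanish, and the angles $\theta,\tilde{\theta}$ are not controlled at this stage), so it imposes no constraint on $|\tilde{\lambda}|/|\lambda|$ and certainly no lower bound on $|\lambda|$; indeed at the putative degenerate point $|\lambda|=|\sigma|\le C$, so $\mu\le C/|\lambda|$ gives nothing. In other words the bootstrap is circular: the scenario you want to exclude is perfectly consistent with Corollaries \ref{c:nulltogether} and \ref{c:ineq2} alone.

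What is needed---and what the paper's proof supplies---is a quantitative relation that becomes impossible at the joint degeneracy. Setting $\mu=\tilde{\mu}=1$ in the hyperbolic-angle formula (\ref{e:hypang2}) eliminates the twists in favour of the shears and yields, at that point, $\cosh B\ge\frac{1}{4}\bigl(|\sigma/\tilde{\sigma}|^{1/2}+|\tilde{\sigma}/\sigma|^{1/2}-1\bigr)$; since $|\sigma|\le C$ and hypothesis (\ref{e:smallness1}) forces $(\inf|\tilde{\sigma}|/C)^{1/2}>1+4\cosh B$, this is incompatible with the fixed angle $B$, which is the contradiction. Your reading of the constant $(1+4\cosh B)^2$ as reflecting ``iterating the coupling twice'' is therefore not accurate: it comes directly from rearranging this degenerate-limit inequality. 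To complete your argument you would have to replace the $\mu\le C/|\lambda|$ mechanism by this evaluation of the angle formula at $\mu=\tilde{\mu}=1$ (or by an equivalent statement that, at fixed $B$, a fully degenerate intersection forces $|\sigma|$ and $|\tilde{\sigma}|$ to be comparable with constant depending only on $B$).
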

\begin{proof}
Assume for the sake of contradiction that there exists a point and time $(\gamma_0,s_0)$ such that $\tilde{\mu}(\gamma_0,s_0)=1$. Then, by 
Corollary \ref{c:nulltogether}, $\mu(\gamma_0,s_0)=1$. Substituting these into equation (\ref{e:hypang2}) we find that at that point
\begin{align}
\cosh B&=\frac{\left|\frac{\sigma}{\tilde{\sigma}}\right|^{\scriptstyle{\frac{1}{2}}}
   +\left|\frac{\tilde{\sigma}}{\sigma}\right|^{\scriptstyle{\frac{1}{2}}}-\sin 2\theta\sin 2\tilde{\theta}}
{(1+\cos 2\theta)(1+\cos 2\tilde{\theta})}\geq {\textstyle{\frac{1}{4}}}\left[\left|\frac{\sigma}{\tilde{\sigma}}\right|^{\scriptstyle{\frac{1}{2}}}
   +\left|\frac{\tilde{\sigma}}{\sigma}\right|^{\scriptstyle{\frac{1}{2}}}-1\right]\nonumber\\
&\geq {\textstyle{\frac{1}{4}}}\left[\left(\frac{|\sigma|}{\sup|\tilde{\sigma}|}\right)^{\scriptstyle{\frac{1}{2}}}
   +\left(\frac{\inf|\tilde{\sigma}|}{|\sigma|}\right)^{\scriptstyle{\frac{1}{2}}}-1\right]> {\textstyle{\frac{1}{4}}}\left[
\left(\frac{C}{\sup|\tilde{\sigma}|}\right)^{\scriptstyle{\frac{1}{2}}}
   +\left(\frac{\inf|\tilde{\sigma}|}{C}\right)^{\scriptstyle{\frac{1}{2}}}-1\right],\nonumber
\end{align}
where the final inequality follows from the assumption
\[
|\sigma|\leq C<\frac{\inf|\tilde{\sigma}|}{\sup|\tilde{\sigma}|}.
\]
Rearranging the inequality, we find that at $(\gamma_0,s_0)$ 
\begin{equation}\label{e:eqq1}
1+4\cosh B-\frac{C^{\scriptstyle{\frac{1}{2}}}}{\sup|\tilde{\sigma}|^{\scriptstyle{\frac{1}{2}}}}
   -\frac{\inf|\tilde{\sigma}|^{\scriptstyle{\frac{1}{2}}}}{C^{\scriptstyle{\frac{1}{2}}}}>0.
\end{equation}
But, by assumption
\[
C<\frac{\inf|\tilde{\sigma}|}{(1+4\cosh B)^2},
\]
and so
\[
1+4\cosh B-\frac{C^{\scriptstyle{\frac{1}{2}}}}{\sup|\tilde{\sigma}|^{\scriptstyle{\frac{1}{2}}}}
   -\frac{\inf|\tilde{\sigma}|^{\scriptstyle{\frac{1}{2}}}}{C^{\scriptstyle{\frac{1}{2}}}}
<-\frac{C^{\scriptstyle{\frac{1}{2}}}}{\sup|\tilde{\sigma}|^{\scriptstyle{\frac{1}{2}}}}<0,
\]
which contradicts (\ref{e:eqq1}). Thus the flow never reaches such a point $(\gamma_0,s_0)$.
\end{proof}
\vspace{0.1in}

\begin{Cor}\label{c:bdryests}
Propositions \ref{p:bdryest1} and \ref{p:posregion} imply that if the initial aholomorphicity 
is chosen to satisfy
inequality (\ref{e:smallness1}), there exist positive constants $C_1,...,C_7$ depending only on boundary 
and initial quantities such that, for as long as the flow exists, we have at the edge
\[
C_1<\lambda^2-|\sigma|^2<C_2, \qquad 0\leq\mu<C_3<1,
\]
\[
C_4<\tilde{\lambda}^2-|\tilde{\sigma}|^2<C_5, \qquad C_6<\tilde{\mu}<C_7<1.
\]
\end{Cor}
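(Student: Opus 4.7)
The plan is to read off the six stated bounds at the edge by combining the two preceding propositions with the smallness condition (\ref{e:smallness1}) and the fixed nature of the boundary surface $\tilde\Sigma$. Throughout, the key point is that $\tilde\sigma$ and $\tilde\lambda$ are functions on the fixed surface $\tilde\Sigma$, while Proposition \ref{p:posregion} confines the edge to the positive region $\tilde\Lambda_{C_0}$.

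First I would handle the boundary-surface quantities $\tilde\lambda^2 - |\tilde\sigma|^2$ and $\tilde\mu$. The upper bound $|\tilde\lambda|\leq C_0$ is explicit from the linear holomorphic twist, and the bounds $0 < \inf|\tilde\sigma| \leq |\tilde\sigma| \leq \sup|\tilde\sigma|$ come directly from the hypothesis that $\Sigma$ was totally real Lagrangian. These immediately give $\tilde\lambda^2 - |\tilde\sigma|^2 \leq C_5$ and $\tilde\mu \geq \inf|\tilde\sigma|/C_0 =: C_6 > 0$. The nontrivial direction is the strict upper bound $\tilde\mu \leq C_7 < 1$. I would revisit the contradiction argument in the proof of Proposition \ref{p:posregion}, replacing the hypothesis $\tilde\mu(\gamma_0,s_0)=1$ by $\tilde\mu(\gamma_0,s_0) = 1-\epsilon$ and tracking how the estimate degrades. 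The strict inequalities in (\ref{e:smallness1}) leave a definite margin, which by continuity in $\tilde\mu$ and $\mu$ of the hyperbolic angle equation (\ref{e:hypang2}) translates into a uniform upper bound $\tilde\mu \leq C_7 < 1$; this then gives $\tilde\lambda^2-|\tilde\sigma|^2 \geq (1-C_7^2)(\inf|\tilde\sigma|/C_7)^2 =: C_4$.

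Next I would transfer these to the flowing surface quantities $\lambda^2-|\sigma|^2$ and $\mu$. The asymptotic holomorphicity condition gives $|\sigma|\leq C$, and Proposition \ref{p:bdryest1} bounds $|\rho|$ at the edge, hence $|\lambda|$ from above; together these produce $\lambda^2 - |\sigma|^2 \leq C_2$. For the strict bound $\mu \leq C_3 < 1$, I would apply the inequality from Corollary \ref{c:ineq2},
\[
\frac{1}{(1-\mu^2)^{1/2}} \leq \frac{\cosh B + \tilde\mu \sinh B}{(1-\tilde\mu^2)^{1/2}},
\]
together with $\tilde\mu \leq C_7 < 1$, to get $1 - \mu^2 \geq (1-C_7^2)/(\cosh B + \sinh B)^2 > 0$. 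For the lower bound on $\lambda^2 - |\sigma|^2$, I would use the intersection relation $\rho - \tilde\rho = (\bar\sigma - \bar{\tilde\sigma})e^{i\beta}$ from the proof of Proposition \ref{p:bdryest1}, which yields $|\lambda - \tilde\lambda| \leq |\sigma| + |\tilde\sigma| \leq C + \sup|\tilde\sigma|$. Combined with the lower bound on $|\tilde\lambda|$ inherited from confinement to a definite compact subset of $\tilde\Lambda_{C_0}$ (which is precisely what the quantitative form of Proposition \ref{p:posregion} provides), this gives $|\lambda|$ bounded below, and then $\lambda^2 - |\sigma|^2 = \lambda^2(1-\mu^2) \geq C_1 > 0$.

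The main obstacle is the quantitative upgrade of Proposition \ref{p:posregion}: its stated conclusion only guarantees $\tilde\mu < 1$ pointwise via contradiction, whereas the corollary demands a uniform margin $\tilde\mu \leq C_7 < 1$ for as long as the flow exists. The fact that (\ref{e:smallness1}) is a strict inequality, together with the monotonicity of the contradiction inequality derived in that proof as $\tilde\mu \nearrow 1$, should supply the definite margin; but one has to verify that the dependence of $\theta$, $\tilde\theta$, $\phi$ and $\tilde\phi$ in (\ref{e:hypang2}) cannot erode this margin as the edge potentially drifts toward the degenerate locus of $\tilde\Sigma$. Once this uniform margin is secured, the remaining bounds follow by the direct chain described above.
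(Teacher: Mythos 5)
The paper states this Corollary without a written proof, so you are reconstructing an argument the authors leave implicit; your reconstruction is consistent with their toolkit, and your central observation is the right one: confinement of the edge to the open set $\tilde{\Lambda}_{C_0}$ (Proposition \ref{p:posregion}) only gives $\tilde{\mu}<1$ pointwise, whereas the Corollary needs a uniform margin $\tilde{\mu}\leq C_7<1$ valid for all time. Your proposed fix — rerunning the contradiction argument of Proposition \ref{p:posregion} with $\tilde{\mu}$ merely close to $1$ — does work, and your worry about the frame angles eroding the margin can be laid to rest: the estimate in that proof already discards the $\theta,\tilde{\theta}$ dependence through the uniform bounds $(1+\cos 2\theta)(1+\cos 2\tilde{\theta})\leq 4$ and $-\sin 2\theta\sin 2\tilde{\theta}\geq -1$ (using also Corollary \ref{c:nulltogether} to force $\mu$ close to $1$ along with $\tilde{\mu}$), so the slack left by the strict inequality (\ref{e:smallness1}) gives a margin depending only on $B$, $C$, $\sup|\tilde{\sigma}|$, $\inf|\tilde{\sigma}|$. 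The bounds $C_2$, $C_3$, $C_5$, $C_6$ and your derivation of $C_4$ from $C_7$ are then fine, with $C_3$ correctly obtained from (\ref{e:ineq3}).

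The one step that does not go through as written is the lower bound $C_1$ on $\lambda^2-|\sigma|^2$. You argue $|\lambda|\geq|\tilde{\lambda}|-|\sigma|-|\tilde{\sigma}|\geq \inf_{\tilde{\Lambda}_{C_0}}|\tilde{\lambda}|-(C+\sup|\tilde{\sigma}|)$, but nothing in (\ref{e:smallness1}) guarantees this quantity is positive: $\sup|\tilde{\sigma}|$ is taken over the whole hemisphere and may well exceed $\inf_{\tilde{\Lambda}_{C_0}}|\tilde{\lambda}|=C_0(1-R_0^2)/(1+R_0^2)$, and even the pointwise refinement $|\lambda|\geq|\tilde{\lambda}|(1-\tilde{\mu})-C$ requires a smallness of $C$ relative to $C_0$, $R_0$ and $C_7$ that the hypotheses of the Corollary do not supply. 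The robust route is to avoid the triangle inequality altogether and use the intersection constraint (\ref{e:intersectionconstraint}), which gives $|\lambda|=|\tilde{\lambda}|\,(1-\tilde{\mu}^2)(1+\mu\cos 2\theta)\big/\big[(1-\mu^2)(1+\tilde{\mu}\cos 2\tilde{\theta})\big]\geq \tfrac{1}{2}|\tilde{\lambda}|(1-C_7^2)(1-C_3)$; equivalently one can read the same conclusion off the second term of (\ref{e:hypang2}), since $\cosh B$ is fixed and forces $|\lambda|$ to be comparable to $|\tilde{\lambda}|$ once $\mu\leq C_3<1$ and $\tilde{\mu}\leq C_7<1$. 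Combined with $\inf_{\tilde{\Lambda}_{C_0}}|\tilde{\lambda}|=C_0(1-R_0^2)/(1+R_0^2)$ and $\lambda^2-|\sigma|^2=\lambda^2(1-\mu^2)\geq\lambda^2(1-C_3^2)$, this yields $C_1>0$ and closes the argument.
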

\vspace{0.1in}

We finally turn to establishing a bound on the norm of the mean curvature at the edge. This is necessary because the interior gradient estimate 
from section \ref{s:compactmcf} depends on a bound on $|H|$ (see Proposition \ref{p:gradest}), and while we have an interior estimate for $|H|$
thanks to Proposition \ref{p:bdsff}, we still need to show that $|H|$ does not blow-up at the edge. In fact, we will bound all of the second 
derivatives.

To this end, working at a point on $f_s(\partial D)\subset\tilde{\Lambda}_{C_0}$, the tangent
space of the ambient manifold splits in two different ways:
\[
TTS^2=TD\oplus ND=T\tilde{\Lambda}_{C_0}\oplus N\tilde{\Lambda}_{C_0},
\]
where we drop the $f_s$ on the flowing disc for ease of notation here and in what follows. Let $\{\mathring{e}_{(a)},\mathring{f}_{(b)}\}$ 
be an adapted orthonormal frame for the first splitting with co-frame $\{\mathring{e}^{(a)},\mathring{f}^{(b)}\}$ and similarly define 
$\{\mathring{\tilde{e}}_{(a)},\mathring{\tilde{f}}_{(b)}\}$ and  
$\{\mathring{\tilde{e}}^{(a)},\mathring{\tilde{f}}^{(b)}\}$ for the second splitting. 

Chose these frames so that the first
tangent vectors align with the common intersection of the tangent planes, while the second normal vectors align with the common intersection of the 
normal planes. From Corollary \ref{c:multi} this means that, if $B$ is the hyperbolic angle between the planes, the following relations hold:
\[
\mathring{\tilde{e}}^{(1)}=\mathring{e}^{(1)},
\qquad
\mathring{\tilde{e}}^{(2)}=\cosh B\;\mathring{e}^{(2)}+\sinh B \;\mathring{f}^{(1)},
\]
\[
\mathring{\tilde{f}}^{(1)}=\sinh B\;\mathring{e}^{(2)}+\cosh B \;\mathring{f}^{(1)},
\qquad
\mathring{\tilde{f}}^{(2)}=\mathring{f}^{(2)},
\]
and
\[
\mathring{e}_{(1)}=\mathring{\tilde{e}}_{(1)},
\qquad
\mathring{e}_{(2)}=\cosh B\;\mathring{\tilde{e}}_{(2)}+\sinh B \;\mathring{\tilde{f}}_{(1)},
\]
\[
\mathring{f}_{(1)}=\sinh B\;\mathring{\tilde{e}}_{(2)}+\cosh B \;\mathring{\tilde{f}}_{(1)},
\qquad
\mathring{f}_{(2)}=\mathring{\tilde{f}}_{(2)}.
\]

Suppose that these aligned frames are related to the canonical frames in Proposition \ref{p:frames} by angles  
$\theta,\tilde{\theta},\psi,\tilde{\psi} $: 

\[
\mathring{e}_{(1)}=\cos\theta {e}_{(1)}-\sin\theta {e}_{(2)},
\qquad\qquad
\mathring{e}_{(2)}=\sin\theta {e}_{(1)}+\cos\theta {e}_{(2)},
\] 
\[
\mathring{f}_{(1)}=\cos\psi {f}_{(1)}+\sin\psi {f}_{(2)},
\qquad\qquad
\mathring{f}_{(2)}=-\sin\psi {f}_{(1)}+\cos\psi {f}_{(2)},
\] 
\[
\mathring{\tilde{e}}_{(1)}=\cos\tilde{\theta} {\tilde{e}}_{(1)}-\sin\tilde{\theta} {\tilde{e}}_{(2)},
\qquad\qquad
\mathring{\tilde{e}}_{(2)}=\sin\tilde{\theta} {\tilde{e}}_{(1)}+\cos\tilde{\theta} {\tilde{e}}_{(2)},
\]
\[
\mathring{\tilde{f}}_{(1)}=\cos\tilde{\psi} {\tilde{f}}_{(1)}+\sin\tilde{\psi} {\tilde{f}}_{(2)},
\qquad\qquad
\mathring{\tilde{f}}_{(2)}=-\sin\tilde{\psi} {\tilde{f}}_{(1)}+\cos\tilde{\psi} {\tilde{f}}_{(2)}.
\]

We now set out to control the values of these angles under the flow. First
\begin{Prop}
If we chose $B$ satisfying
\begin{equation}\label{e:smallness2}
B<\tanh^{-1}\left(\frac{\inf|\tilde{\sigma}|}{C_0}\right),
\end{equation}
then $\tilde{\mu}>\tanh B$ at $f_s(\partial D)$ for all $s$.
\end{Prop}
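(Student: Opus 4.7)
The plan is to observe that $\tilde\mu=|\tilde\sigma|/|\tilde\lambda|$ is an intrinsic quantity of the boundary surface $\tilde\Sigma$, so the problem reduces to controlling \emph{where} the edge $f_s(\partial D)$ sits on $\tilde\Sigma$ rather than any time-dependent behaviour of $\tilde\mu$ itself. The chain of reasoning is then very short.

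First, I would invoke Proposition~\ref{p:posregion}: under the smallness assumption~(\ref{e:smallness1}) on $C$ already imposed for long-time existence, the edge $f_s(\partial D)$ remains in the positive region $\tilde\Lambda_{C_0}\subset\tilde\Sigma$ for all $s\geq 0$. This is the step that uses the dynamical content; from here on the argument is static.

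Next I would read off the bound $|\tilde\lambda|\leq C_0$ on $\tilde\Lambda_{C_0}$. This is explicit from the formula
\[
\tilde\lambda=-C_0\frac{1-\xi\bar\xi}{1+\xi\bar\xi},
\]
derived in the proof of Proposition~\ref{p:linholtwist}: since $\tilde\Lambda_{C_0}$ sits in the open unit hemisphere $|\xi|<1$, we have $|\tilde\lambda|\leq C_0$ pointwise, with $\sup_{\tilde\Lambda_{C_0}}|\tilde\lambda|=C_0$ as noted just before the statement. Combining this with the trivial bound $|\tilde\sigma|\geq \inf|\tilde\sigma|$ (taken over the whole hemisphere, as per the convention fixed earlier) gives
\[
\tilde\mu=\frac{|\tilde\sigma|}{|\tilde\lambda|}\;\geq\;\frac{\inf|\tilde\sigma|}{C_0}
\]
at every point of the edge, for every $s$.

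Finally, the hypothesis $B<\tanh^{-1}\bigl(\inf|\tilde\sigma|/C_0\bigr)$ rearranges, using that $\tanh$ is increasing, to $\tanh B<\inf|\tilde\sigma|/C_0$. Chaining this with the previous inequality yields $\tilde\mu>\tanh B$ at $f_s(\partial D)$ for all $s$, as claimed. There is no real obstacle here: the entire substance has been offloaded to Proposition~\ref{p:posregion}, which is where confinement to the positive region was established; the present proposition simply repackages that confinement as a quantitative lower bound on $\tilde\mu$ calibrated against the hyperbolic angle $B$, so that subsequent estimates (on the canonical-frame angles and ultimately on the 2-jet of the flowing surface at the edge) can proceed.
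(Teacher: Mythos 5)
Your proof is correct and follows essentially the same route as the paper: the paper's own proof is exactly the chain $\tilde{\mu}=|\tilde{\sigma}|/|\tilde{\lambda}|\geq\inf|\tilde{\sigma}|/C_0>\tanh B$, using $|\tilde{\lambda}|\leq C_0$ on the hemisphere. Your extra appeal to Proposition~\ref{p:posregion} is harmless but not needed, since the bound $|\tilde{\lambda}|\leq C_0$ holds wherever the edge lies on $\tilde{\Sigma}$.
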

\begin{proof}
This follows simply from
\[
\tilde{\mu}=\frac{|\tilde{\sigma}|}{|\tilde{\lambda}|}\geq\frac{\inf|\tilde{\sigma}|}{C_0}>\tanh B
\]
\end{proof}

\begin{Prop}\label{p:anglecontrol}
If, in addition to the inequalities (\ref{e:smallness1}) and (\ref{e:smallness2}), the following hold:
\begin{equation}\label{e:smallness3}
C<C_0\frac{1-R_0^2}{1+R_0^2}\min\left\{1-\cosh B \left(1-{\textstyle{\frac{\inf|\tilde{\sigma}|^2}{C_0^2}}}\right)^{\scriptstyle{\frac{1}{2}}},
\cosh B \left(1-{\textstyle{\frac{\sup|\tilde{\sigma}|^2(1+R_0^2)^2}{C_0^2(1-R_0^2)^2}}}\right)^{\scriptstyle{\frac{1}{2}}}\right\},
\end{equation}
and if $0<\theta<{\textstyle{\frac{\pi}{2}}}$ and  $0<\psi<{\textstyle{\frac{\pi}{2}}}$ initially, then these bounds on the angles 
hold for as long as the flow exists.
\end{Prop}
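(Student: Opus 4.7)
The plan is a first-exit-time (continuity) argument: assume for contradiction that one of the four angles $\theta,\tilde{\theta},\psi,\tilde{\psi}$ tracked along the edge first leaves the open quadrant $(0,\pi/2)$ at some time $s_0>0$, and derive a contradiction with the smallness hypothesis (\ref{e:smallness3}). By continuity at $s_0$, and without loss of generality, we have $\sin 2\theta=0$ and $\cos 2\theta=\epsilon\in\{+1,-1\}$; the case where $\psi$ exits first is handled identically using the second form of (\ref{e:hypang2}) together with the $\psi$-version of the intersection constraint stated at the end of the proof of Proposition \ref{p:hypang1}.

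First I would substitute $\sin 2\theta=0$ and $\cos 2\theta=\epsilon$ into equation (\ref{e:hypang2}) and into the intersection constraint (\ref{e:intersectionconstraint}). Eliminating $\tilde{\theta}$ between these two relations collapses $\cosh B$ to a function of $\mu$ and $\tilde{\mu}$ alone, and the resulting identity is precisely the equality case of one of the two inequalities of Corollary \ref{c:ineq2}, namely
\[
\mu=\mu_{\pm}(\tilde{\mu},B):=\frac{\tilde{\mu}\pm\cosh B\,\sinh B\,(1-\tilde{\mu}^2)}{\tilde{\mu}^2+\cosh^2 B\,(1-\tilde{\mu}^2)},
\]
with the sign determined by $\epsilon$. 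The hyperbolic angle $B$ is constant in $s$ by boundary condition (iii), so this identity must hold at $s_0$ with the current values of $\mu$ and $\tilde{\mu}$.

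Next I would convert each of these equalities into a lower bound on $C$. The asymptotic holomorphicity condition gives $|\sigma|\le C$; Proposition \ref{p:linholtwist} gives $C_0(1-R_0^2)/(1+R_0^2)\le|\tilde{\lambda}|\le C_0$; and the intersection relation $\rho=\tilde{\rho}+(\bar{\sigma}-\bar{\tilde{\sigma}})e^{i\beta}$ controls $|\lambda|$ from below in terms of $|\tilde{\lambda}|$, $|\sigma|$ and $|\tilde{\sigma}|$. Inserting these bounds into $\mu=\mu_{\pm}$ and rearranging produces, in one sign, a bound of the form
\[
C\ge C_0\tfrac{1-R_0^2}{1+R_0^2}\Bigl(1-\cosh B\,\bigl(1-\inf|\tilde{\sigma}|^2/C_0^2\bigr)^{1/2}\Bigr),
\]
and in the other sign a bound of the form
\[
C\ge C_0\tfrac{1-R_0^2}{1+R_0^2}\,\cosh B\,\Bigl(1-\sup|\tilde{\sigma}|^2(1+R_0^2)^2/\bigl(C_0^2(1-R_0^2)^2\bigr)\Bigr)^{1/2}.
\]
These are exactly the two entries of the minimum in (\ref{e:smallness3}), so the strict inequality assumed there yields the contradiction.

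The main obstacle is the intermediate algebraic reduction, in which the $\epsilon=\pm 1$ branches must be matched with the two extremes $\mu_\pm$ and then with the two pieces of the minimum in (\ref{e:smallness3}). The cleanest route I see is to work throughout in the variables $\alpha=(1-\mu^2)^{-1/2}$ and $\tilde{\alpha}=(1-\tilde{\mu}^2)^{-1/2}$, in which the equalities $\mu=\mu_\pm$ linearize to $\alpha=\tilde{\alpha}(\cosh B\mp\tilde{\mu}\sinh B)^{-1}$; the two signs then correspond transparently to the two entries of (\ref{e:smallness3}), and the bound $|\sigma|\le C$ translates directly into a bound on $\alpha$ via the auxiliary lower bound on $|\lambda|$.
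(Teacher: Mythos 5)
Your overall strategy (a first-exit contradiction at $\theta\in\{0,\pi/2\}$ or $\psi\in\{0,\pi/2\}$, matched against the two entries of the minimum in (\ref{e:smallness3})) is the same as the paper's, but the central algebraic claim in your reduction is wrong, and it is exactly the step that lets the constant $C$ enter. Setting $\sin 2\theta=0$, $\cos 2\theta=\pm1$ in (\ref{e:hypang2}) and eliminating $\tilde{\theta}$ via (\ref{e:intersectionconstraint}) does \emph{not} collapse $\cosh B$ to a function of $\mu$ and $\tilde{\mu}$ alone. What one actually obtains is
\[
\cosh B=\frac{(1+\mu)\bigl(1-\tfrac{|\sigma|}{|\tilde{\lambda}|}\bigr)}{(1-\mu^2)^{1/2}(1-\tilde{\mu}^2)^{1/2}}\quad(\theta=\tfrac{\pi}{2}),
\qquad
\cosh B=\frac{(1-\mu)\bigl(1+\tfrac{|\sigma|}{|\tilde{\lambda}|}\bigr)}{(1-\mu^2)^{1/2}(1-\tilde{\mu}^2)^{1/2}}\quad(\theta=0),
\]
in which the ratio $|\sigma|/|\tilde{\lambda}|$ survives as an independent quantity (it is not determined by $\mu=|\sigma|/|\lambda|$ and $\tilde{\mu}=|\tilde{\sigma}|/|\tilde{\lambda}|$, because $\lambda\neq\tilde{\lambda}$ along the edge). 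Consequently the exit-time relation is \emph{not} the equality case $\mu=\mu_{\pm}(\tilde{\mu},B)$ of Corollary \ref{c:ineq2}: those equalities arise from extremizing over both $\theta$ and $\tilde{\theta}$, whereas fixing $\theta$ alone leaves a residual degree of freedom ($\tilde{\theta}$, equivalently $\lambda/\tilde{\lambda}$, equivalently $|\sigma|/|\tilde{\lambda}|$). If the identity really were $\mu=\mu_{\pm}(\tilde{\mu},B)$, it would contain no trace of $C$, and smallness of $C$ could not contradict it in the clean form you state — in particular for the minus sign $\mu_-$ can be arbitrarily close to $0$ as $\tilde{\mu}\downarrow\tanh B$, so $|\sigma|\le C$ gives no immediate contradiction. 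The two entries of (\ref{e:smallness3}) are calibrated precisely to the surviving $|\sigma|/|\tilde{\lambda}|$ term, which your collapse erases.

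The paper's argument keeps that term and treats the two cases asymmetrically: for $\theta=\pi/2$ one solves the displayed relation for $\mu$ and shows that the first entry of the minimum forces $\cosh B(1-\tilde{\mu}^2)^{1/2}<1-|\sigma|/|\tilde{\lambda}|$, hence $\mu<0$, contradicting $\mu\ge0$ (no use of Corollary \ref{c:ineq2} at all); for $\theta=0$ one combines the solved-for $\mu$ with the \emph{general} left-hand inequality of Corollary \ref{c:ineq2} and rearranges to get $C\ge C_0\tfrac{1-R_0^2}{1+R_0^2}\cosh B\bigl(1-\sup|\tilde{\sigma}|^2(1+R_0^2)^2/(C_0^2(1-R_0^2)^2)\bigr)^{1/2}$, contradicting the second entry; the bounds $|\sigma|\le C$, $C_0\tfrac{1-R_0^2}{1+R_0^2}\le|\tilde{\lambda}|\le C_0$, $\tilde{\mu}^2\ge\inf|\tilde{\sigma}|^2/C_0^2$ enter through $|\sigma|/|\tilde{\lambda}|$ and $(1-\tilde{\mu}^2)^{1/2}$, not through a lower bound on $|\lambda|$ as you propose. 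The identical relations for $\psi,\tilde{\psi}$ then dispose of the $\psi$ cases. A further, smaller, issue: the proposition only hypothesizes and only asserts control of $\theta$ and $\psi$; your scheme also tracks exits of $\tilde{\theta}$ and $\tilde{\psi}$, for which no initial confinement to $(0,\pi/2)$ is assumed, so those cases cannot even be set up and, fortunately, are not needed.
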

\begin{proof}
Suppose for the sake of contradiction that at some point and time $(\gamma_0,s_0)$ we have $\theta={\textstyle{\frac{\pi}{2}}}$. Note that the
following argument holds even if $s_0=\infty$. The expression of
the hyperbolic angle (\ref{e:hypangle}) and equation (\ref{e:intersectionconstraint}) means that at $(\gamma_0,s_0)$,
\[
\cosh B=\frac{(1+\mu)\left(1-{\textstyle{\frac{|\sigma|}{|\tilde{\lambda}|}}}\right)}
{(1-\mu^2)^{\scriptstyle{\frac{1}{2}}}(1-\tilde{\mu}^2)^{\scriptstyle{\frac{1}{2}}}},
\]
or, after rearrangement
\[
\mu=\frac{\cosh^2B(1-\tilde{\mu}^2)-\left(1-{\textstyle{\frac{|\sigma|}{|\tilde{\lambda}|}}}\right)^2}
{\left(1-{\textstyle{\frac{|\sigma|}{|\tilde{\lambda}|}}}\right)^2+\cosh^2B(1-\tilde{\mu}^2)}.
\]
But
\begin{align}
\cosh B(1-\tilde{\mu}^2)^{\scriptstyle{\frac{1}{2}}}-1+{\textstyle{\frac{|\sigma|}{|\tilde{\lambda}|}}}
&\leq\cosh B(1-\tilde{\mu}^2)^{\scriptstyle{\frac{1}{2}}}-1+{\textstyle{\frac{C}{\inf|\tilde{\lambda}|}}}\nonumber\\
&\leq\cosh B\left(1-{\textstyle{\frac{\inf|\tilde{\sigma}|^2}{C_0^2}}}\right)^{\scriptstyle{\frac{1}{2}}}-1
+{\textstyle{\frac{C(1+R_0^2)}{C_0(1-R_0^2)}}}<0,\nonumber
\end{align}
by our assumption (\ref{e:smallness3}) and so $\mu<0$. But this contradicts $\mu\geq0$ and so $\theta$ is never equal to ${\textstyle{\frac{\pi}{2}}}$.

On the other hand, suppose for the sake of contradiction that at some point and time $(\gamma_0,s_0)$ we have $\theta=0$. In this case the hyperbolic
angle turns out to be
\[
\cosh B=\frac{(1-\mu)\left(1+{\textstyle{\frac{|\sigma|}{|\tilde{\lambda}|}}}\right)}{(1-\mu^2)^{\scriptstyle{\frac{1}{2}}}(1-\tilde{\mu}^2)^{\scriptstyle{\frac{1}{2}}}},
\]
which means that at $(\gamma_0,s_0)$ 
\[
\mu=\frac{\left(1-{\textstyle{\frac{|\sigma|}{|\tilde{\lambda}|}}}\right)^2-\cosh^2B(1-\tilde{\mu}^2)}{\left(1-{\textstyle{\frac{|\sigma|}{|\tilde{\lambda}|}}}\right)^2+\cosh^2B(1-\tilde{\mu}^2)}.
\]
Putting this into the left-hand inequality of (\ref{e:ineq2}) we find
\begin{align}
0&\leq \mu-\frac{\tilde{\mu}-\cosh B\sinh B\;(1-\tilde{\mu}^2)}{\tilde{\mu}^2+\cosh^2 B\;(1-\tilde{\mu}^2)}\nonumber\\
&=\frac{\left(1-{\textstyle{\frac{|\sigma|}{|\tilde{\lambda}|}}}\right)^2-\cosh^2B(1-\tilde{\mu}^2)}{\left(1-{\textstyle{\frac{|\sigma|}{|\tilde{\lambda}|}}}\right)^2+\cosh^2B(1-\tilde{\mu}^2)}-\frac{\tilde{\mu}-\cosh B\sinh B\;(1-\tilde{\mu}^2)}{\tilde{\mu}^2+\cosh^2 B\;(1-\tilde{\mu}^2)}\nonumber\\
&\leq\frac{\left[\left(1-{\textstyle{\frac{|\sigma|}{|\tilde{\lambda}|}}}\right)^2-\cosh^2B(1-\tilde{\mu}^2)\right](\tilde{\mu}^2+\cosh^2 B\;(1-\tilde{\mu}^2))}{(1+\cosh^2B(1-\tilde{\mu}^2))^2}\nonumber\\
&\qquad\qquad\qquad-\frac{(\tilde{\mu}-\cosh B\sinh B\;(1-\tilde{\mu}^2))\left[\left(1-{\textstyle{\frac{|\sigma|}{|\tilde{\lambda}|}}}\right)^2+\cosh^2B(1-\tilde{\mu}^2)\right]}{(1+\cosh^2B(1-\tilde{\mu}^2))^2}\nonumber\\
&=\frac{\left(1-{\textstyle{\frac{|\sigma|}{|\tilde{\lambda}|}}}\right)^2(\tilde{\mu}^2+\cosh^2 B\;(1-\tilde{\mu}^2)-\tilde{\mu}+\cosh B\sinh B\;(1-\tilde{\mu}^2))}{(1+\cosh^2B(1-\tilde{\mu}^2))^2}\nonumber\\
&\qquad\qquad\qquad-\frac{\cosh^2B(1-\tilde{\mu}^2)(\tilde{\mu}^2+\cosh^2 B\;(1-\tilde{\mu}^2)+\tilde{\mu}-\cosh B\sinh B\;(1-\tilde{\mu}^2))}{(1+\cosh^2B(1-\tilde{\mu}^2))^2}.\nonumber
\end{align}
Noting that for $0<\tilde{\mu}<1$ we have $\tilde{\mu}^2+\cosh^2 B\;(1-\tilde{\mu}^2)-\tilde{\mu}+\cosh B\sinh B\;(1-\tilde{\mu}^2)>0$, we can rearrange the last inequality and conclude that at $(\gamma_0,s_0)$ 
\begin{align}
C&\geq|\tilde{\lambda}|\cosh B(1-\tilde{\mu}^2)^{\scriptstyle{\frac{1}{2}}}\left[\frac{\tilde{\mu}^2+\cosh^2 B\;(1-\tilde{\mu}^2)+\tilde{\mu}-\cosh B\sinh B\;(1-\tilde{\mu}^2)}{\tilde{\mu}^2+\cosh^2 B\;(1-\tilde{\mu}^2)-\tilde{\mu}+\cosh B\sinh B\;(1-\tilde{\mu}^2)}\right]^{\scriptstyle{\frac{1}{2}}}\nonumber\\
&\geq|\tilde{\lambda}|\cosh B(1-\tilde{\mu}^2)^{\scriptstyle{\frac{1}{2}}}\nonumber\\
&\geq C_0\frac{1-R_0^2}{1+R_0^2}\cosh B \left(1-{\textstyle{\frac{\sup|\tilde{\sigma}|^2(1+R_0)^2}{C_0^2(1-R_0)^2}}}\right)^{\scriptstyle{\frac{1}{2}}}.\nonumber
\end{align}
But this contradicts assumption (\ref{e:smallness3}). Thus $\theta$ is never 0.

These exact same arguments go through upon replacing $\theta$ by $\psi$, since all of the relevant equations also hold for $\psi$ (see the proof of 
Proposition \ref{p:hypang1}). Thus 
$\psi\neq 0,{\textstyle{\frac{\pi}{2}}}$.
\end{proof}

\vspace{0.1in}

Having established control of the angles between the intersection and the canonical frames we now set about bounding the 2-jet of the flowing surface. 

\begin{Prop}
The time and space derivatives of the intersection and constant angle conditions yield the following relationships between the flowing and boundary
surfaces along the edge:
\begin{equation}\label{e:dbc1}
\nabla_1H=-H(\tilde{\Gamma}_{(212)}+\coth B\;\tilde{A}^{(\hat{1})}_{(12)}),
\qquad
\nabla_2H=-{\textstyle{\frac{H}{\sinh B}}}\tilde{A}_{(22)}^{(\hat{1})},
\end{equation}
\begin{equation}\label{e:dbc2}
C_{(\hat{1}\hat{2})}^{(1)}={\textstyle{\frac{1}{\sinh B}}}\tilde{A}^{(\hat{2})}_{(12)},
\qquad
C_{(\hat{1}\hat{2})}^{(2)}=-\tilde{C}_{(\hat{1}\hat{2})}^{(2)}+\coth B\tilde{A}^{(\hat{2})}_{(22)},
\end{equation}
\begin{equation}\label{e:dbc3}
A^{(\hat{1})}_{(12)}=\tilde{A}^{(\hat{1})}_{(12)},
\qquad
A^{(\hat{2})}_{(11)}=\tilde{A}^{(\hat{2})}_{(11)},
\end{equation}
\begin{equation}\label{e:dbc4}
\Gamma_{(112)}=\cosh B\;\tilde{\Gamma}_{(112)}+\sinh B\; \tilde{A}^{(\hat{1})}_{(11)},
\end{equation}
\begin{equation}\label{e:dbc5}
A^{(\hat{1})}_{(11)}=\sinh B\;\tilde{\Gamma}_{(112)}+\cosh B\; \tilde{A}^{(\hat{1})}_{(11)},
\end{equation}
\begin{equation}\label{e:dbc6}
A^{(\hat{2})}_{(12)}=\cosh B\;\tilde{A}^{(\hat{2})}_{(12)}+\sinh B\; \tilde{C}^{\;\;(\hat{2})}_{(1\hat{1})},
\end{equation}
\begin{equation}\label{e:dbc7}
C^{\;\;(\hat{2})}_{(1\hat{1})}=-\sinh B\;\tilde{A}^{(\hat{2})}_{(12)}-\cosh B\; \tilde{C}^{\;\;(\hat{2})}_{(1\hat{1})}.
\end{equation}
Here and throughout, $A$, $\Gamma$ and $C$ are 
the second fundamental form, tangent and normal connections of the flowing disc, in components of frames adapted to the intersection, so for example, $A_{(ab)}^{(\hat{c})}=<A(\mathring{e}_{(a)},\mathring{e}_{(b)}),\mathring{f}^{(c)}>$, where a hat on a frame
index means it is in the normal direction. Similar notation holds for tilded quantities.

\end{Prop}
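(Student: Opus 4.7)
Plan. The strategy is to differentiate the two geometric constraints that hold along $f_s(\partial D)$ — the intersection condition (ii) and the constant-hyperbolic-angle condition (iii) — tangentially along the edge and in time, then extract scalar identities by decomposing in the aligned adapted frames. The frame identities from Corollary \ref{c:multi},
\[
\mathring{\tilde e}_{(1)}=\mathring e_{(1)}, \qquad \mathring{\tilde f}_{(2)}=\mathring f_{(2)},
\]
\[
\mathring{\tilde e}_{(2)}=\cosh B\,\mathring e_{(2)}+\sinh B\,\mathring f_{(1)}, \qquad \mathring{\tilde f}_{(1)}=\sinh B\,\mathring e_{(2)}+\cosh B\,\mathring f_{(1)},
\]
furnish the dictionary that converts between the two bases.

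Equations (\ref{e:dbc3})--(\ref{e:dbc7}) are purely spatial. Apply the ambient Levi--Civita connection $\overline{\nabla}_{\mathring e_{(1)}}$ to each of the two algebraic frame identities $\mathring e_{(1)}=\mathring{\tilde e}_{(1)}$ and $\mathring f_{(2)}=\mathring{\tilde f}_{(2)}$, and expand both sides via the Gauss--Weingarten splittings (\ref{e:connsplit1})--(\ref{e:connsplit2}). From $\overline{\nabla}_{\mathring e_{(1)}}\mathring e_{(1)}$ one obtains an equation of the shape $\Gamma_{(112)}\mathring e_{(2)} - A^{(\hat b)}_{(11)}\mathring f_{(\hat b)}$ equal to its tilded analogue on $\tilde\Sigma$, and from $\overline{\nabla}_{\mathring e_{(1)}}\mathring f_{(2)}$ one obtains a parallel Weingarten identity. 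Substituting the hyperbolic dictionary on the right-hand side of each and equating the coefficients of $\mathring e_{(2)}$, $\mathring f_{(1)}$, $\mathring f_{(2)}$ then yields precisely the six scalar identities (\ref{e:dbc3})--(\ref{e:dbc7}) in the order listed.

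For (\ref{e:dbc1}), time-differentiate the flow. Along the edge, Note \ref{n:H} already gives $H=H^{(1)}\mathring f_{(1)}$ with $H^{(1)}=X^{(2)}\sinh B$, where $X\in T\tilde\Sigma$ is the tangential velocity. Applying $\overline{\nabla}_{\mathring e_{(1)}}$ to the tangency condition $\partial_s f-H\in T\tilde\Sigma$ and projecting onto $\mathring f_{(2)}$ produces a relation between $\nabla_1 H$, the normal connection of $D$ along the edge, and $\tilde\Gamma$, $\tilde A$; the already-derived identities (\ref{e:dbc4})--(\ref{e:dbc5}) can then be used to eliminate untilded Christoffel symbols, giving the first half of (\ref{e:dbc1}). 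The $\nabla_2 H$ formula comes from repeating the calculation in the direction $\mathring{\tilde e}_{(2)}$ rather than $\mathring e_{(2)}$, since only the former is genuinely tangent to $\tilde\Sigma$; the $1/\sinh B$ factor records the hyperbolic tilt between the two. The identities (\ref{e:dbc2}) on the normal connection follow from the same procedure applied to the decomposition of $\overline{\nabla}_{\mathring f_{(1)}}\mathring f_{(2)}$, via any smooth extension of the adapted frames off the edge.

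The main obstacle is bookkeeping, not content: the neutral signature $(++--)$ forces careful sign tracking when raising or lowering normal indices (raising a normal index flips sign, per section \ref{s:compactmcf}), and the Gauss--Weingarten formulas must be applied consistently with the convention that $\mathring f_{(\hat b)}$ has squared norm $-1$. A secondary subtlety is that the frame identities $\mathring e_{(1)}=\mathring{\tilde e}_{(1)}$ and $\mathring f_{(2)}=\mathring{\tilde f}_{(2)}$ only hold on the one-dimensional edge, so only the $\overline{\nabla}_{\mathring e_{(1)}}$-derivative is literally available; the transverse and time identities must instead be extracted from differentiating the underlying intersection and constant-angle conditions, which do extend to a neighbourhood of the edge inside $\tilde\Sigma$, and then restricting back.
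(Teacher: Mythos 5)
Your overall plan---differentiate the intersection and constant-angle conditions along the edge and in time, decompose with the splittings (\ref{e:connsplit1})--(\ref{e:connsplit2}), and translate between the two adapted frames with the hyperbolic dictionary of Corollary \ref{c:multi}---is the paper's strategy. But two of your concrete steps do not deliver the stated identities. For the spatial part, differentiating only the equalities $\mathring e_{(1)}=\mathring{\tilde e}_{(1)}$ and $\mathring f_{(2)}=\mathring{\tilde f}_{(2)}$ along $\mathring e_{(1)}$ yields five, not six, nontrivial relations: the components of $\overline{\nabla}_{\mathring e_{(1)}}\mathring e_{(1)}$ give (\ref{e:dbc4}), (\ref{e:dbc5}) and the second equation of (\ref{e:dbc3}), while those of $\overline{\nabla}_{\mathring e_{(1)}}\mathring f_{(2)}$ give (\ref{e:dbc6}) and (\ref{e:dbc7}) (its $\mathring f_{(2)}$-component is $0=0$). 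The first equation of (\ref{e:dbc3}), $A^{(\hat1)}_{(12)}=\tilde A^{(\hat1)}_{(12)}$, involves $\overline{\nabla}_{\mathring e_{(1)}}\mathring e_{(2)}$ paired against the $\mathring f_{(1)}$-direction, and since neither $\mathring e_{(2)}$ nor $\mathring f_{(1)}$ equals a tilded frame vector it cannot appear in your two identities; you must additionally differentiate a constant-angle inner product such as $<\mathring e_{(2)},\mathring{\tilde e}_{(2)}>=\cosh B$ along the edge. That omission is easily repaired.

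The more serious gap is in (\ref{e:dbc1})--(\ref{e:dbc2}). These identities contain quantities transverse to the edge: $\tilde\Gamma_{(212)}$, $\tilde A^{(\hat1)}_{(22)}$, $\tilde A^{(\hat2)}_{(22)}$, $\nabla_2H$ and $C^{(2)}_{(\hat1\hat2)}$. A fixed-time derivative $\overline{\nabla}_{\mathring e_{(1)}}$ applied to any condition holding on the edge can only produce tilded quantities with derivative index $(1)$, so it can never generate $\tilde\Gamma_{(212)}$, $\tilde A_{(22)}$ or $\nabla_2H$; and computing $\overline{\nabla}_{\mathring f_{(1)}}\mathring f_{(2)}$ from ``any smooth extension'' is extension-dependent and is in any case not the normal connection $C^{(a)}_{(\hat1\hat2)}$, whose derivative index is tangential to the disc (note also that neither the intersection nor the angle condition extends off the edge inside $\tilde\Sigma$ at fixed time, so your proposed ``extend and restrict'' is not available spatially---the conditions persist only in the time direction). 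What a correct argument must supply, and what the paper does, is the genuinely temporal input: (i) the evolution of the disc's adapted frame under the flow, $\partial_s\mathring f_{(a)}=\bigl(\nabla^{(b)}H_{(a)}+H^{(c)}C^{(b)}_{(\hat c\hat a)}\bigr)\mathring e_{(b)}+L_{(a)}^{(b)}\mathring f_{(b)}$ and its analogue for $\mathring e^{(a)}$ (this is where $\nabla_1H$, $\nabla_2H$ and the $C$-components enter); (ii) the convection of the boundary frame along the edge velocity $X=-\frac{H}{\sinh B}\mathring{\tilde e}_{(2)}$ of Note \ref{n:H}, which is precisely what produces the index-$2$ tilded quantities; and (iii) the vanishing of $\partial_s$ of the alignment and angle inner products. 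For example the first of (\ref{e:dbc1}) falls directly out of $\partial_s<\mathring f_{(1)},\mathring{\tilde e}^{(1)}>=0$, with no elimination via (\ref{e:dbc4})--(\ref{e:dbc5}). Finally, your ``tangency condition'' is misstated: along the edge one has $\partial_sf\in T\tilde\Sigma$ and $(\partial_sf)^\perp=H$, not $\partial_sf-H\in T\tilde\Sigma$.
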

\begin{proof}
In order to differentiate the constant angle condition we need to compute the evolution of the adapted frames. Starting with the frame 
adapted to the boundary surface we compute in  normal coordinates:
\begin{align}
\frac{\partial}{\partial s}\mathring{\tilde{e}}_{(b)}&=X^{(d)}\partial_{(d)}\mathring{\tilde{e}}_{(b)}
  =X^{(d)}\overline{\nabla}_{(d)}\mathring{\tilde{e}}_{(b)}
= X^{(d)}[\tilde{\nabla}_{(d)}\mathring{\tilde{e}}_{(b)}-\tilde{A}^{(\hat{c})}_{(db)}\mathring{\tilde{f}}_{(c)}]\nonumber\\
&=-\frac{H}{\sinh B}[\tilde{\nabla}_{(2)}\mathring{\tilde{e}}_{(b)}-\tilde{A}^{(\hat{c})}_{(2b)}\mathring{\tilde{f}}_{(c)}]
=-{\textstyle{\frac{H}{\sinh B}}}\tilde{\Gamma}_{(2b)}^{\;\;\;(d)}\mathring{\tilde{e}}_{(d)}
+{\textstyle{\frac{H}{\sinh B}}}\tilde{A}_{(2b)}^{(\hat{d})}\mathring{\tilde{f}}_{(d)},\nonumber
\end{align}
where we have used the splitting (\ref{e:connsplit1}) and (\ref{e:connsplit2}) of the ambient connection and assumed that the flow at the 
edge is orthogonal to the intersection in $\tilde{\Lambda}$ (see Note \ref{n:H}). 

Similarly we find that
\[
\frac{\partial}{\partial s}\mathring{\tilde{f}}_{(b)}=-{\textstyle{\frac{H}{\sinh B}}}\tilde{A}_{(2\hat{b})}^{\;\;\;(d)}\mathring{\tilde{e}}_{(d)}
-{\textstyle{\frac{H}{\sinh B}}}\tilde{C}_{(2\;\;\hat{b})}^{\;\;(\hat{d})}\mathring{\tilde{f}}_{(d)},
\]
\[
\frac{\partial}{\partial s}\mathring{\tilde{e}}^{(b)}=-{\textstyle{\frac{H}{\sinh B}}}\tilde{\Gamma}_{(2\;\;d)}^{\;\;(b)}\mathring{\tilde{e}}^{(d)}
+{\textstyle{\frac{H}{\sinh B}}}\tilde{A}_{(2\hat{d})}^{(b)}\mathring{\tilde{f}}^{(d)},
\]
\[
\frac{\partial}{\partial s}\mathring{\tilde{f}}^{(b)}=-{\textstyle{\frac{H}{\sinh B}}}\tilde{A}_{(2d)}^{(\hat{b})}\mathring{\tilde{e}}^{(d)}
-{\textstyle{\frac{H}{\sinh B}}}\tilde{C}_{(2\hat{d})}^{\;\;(\hat{b})}\mathring{\tilde{f}}^{(d)}.
\]

We now compute the evolution of the (co)frame of the flowing surface. To this end let
\[
\frac{\partial}{\partial s}\mathring{e}^{(a)}=M_{(b)}^{(a)}\mathring{e}^{(b)}+N_{(b)}^{(a)}\mathring{f}^{(b)},
\]
\[
\frac{\partial}{\partial s}\mathring{f}_{(a)}=K_{(a)}^{(b)}\mathring{e}_{(b)}+L_{(a)}^{(b)}\mathring{f}_{(b)},
\]
where $K,L,M$ and $N$ are $2\times 2$ matrices to be determined.

Differentiating $<\mathring{e}^{(a)},\mathring{f}_{(b)}>=0$ with respect to time implies that $N_{(b)}^{(a)}=-K_{(b)}^{(a)}$ and a standard
calculation (see e.g. \cite{chenli}) shows that
\[
K_{(b)}^{(a)}=\mathring{e}^{(a)j}\nabla_jH_{(b)}+H^{(c)}C^{(a)}_{(\hat{c}\hat{b})},
\]
and so we have 
\[
\frac{\partial}{\partial s}\mathring{e}^{(a)}=M_{(b)}^{(a)}\mathring{e}^{(b)}
-\left[\nabla^{(a)}H_{(b)}+HC^{(a)}_{(\hat{1}\hat{b})}\right]\mathring{f}^{(b)},
\]
\[
\frac{\partial}{\partial s}\mathring{f}_{(a)}=\left[\nabla^{(b)}H_{(a)}+HC^{(b)}_{(\hat{1}\hat{a})}\right]\mathring{e}_{(b)}
+L_{(a)}^{(b)}\mathring{f}_{(b)},
\]
with $L$ and $M$ to be determined. This we do by the time derivative of the intersection and constant angle conditions in adapted frames:
\[
0=\frac{\partial}{\partial s}<\mathring{e}^{(a)},\mathring{\tilde{e}}_{(b)}>=\frac{\partial}{\partial s}<\mathring{e}^{(a)},\mathring{\tilde{f}}_{(b)}>
=\frac{\partial}{\partial s}<\mathring{f}_{(a)},\mathring{\tilde{e}}^{(b)}>=\frac{\partial}{\partial s}<\mathring{f}_{(a)},\mathring{\tilde{f}}^{(b)}>.
\] 
For example, consider
\begin{align}
0&=\frac{\partial}{\partial s}<\mathring{f}_{(1)},\mathring{\tilde{e}}^{(1)}>=<\frac{\partial}{\partial s}\mathring{f}_{(1)},\mathring{\tilde{e}}^{(1)}>
+<\mathring{f}_{(1)},\frac{\partial}{\partial s}\mathring{\tilde{e}}^{(1)}>\nonumber\\
&=<(\nabla^{(b)}H_{(1)}+HC^{(b)}_{(\hat{1}\hat{1})})\mathring{e}_{(b)}+L_{(1)}^{(b)}\mathring{f}_{(b)},\mathring{\tilde{e}}^{(1)}>
-{\textstyle{\frac{H}{\sinh B}}}<\mathring{f}_{(1)},\tilde{\Gamma}_{(2\;\;d)}^{\;\;(1)}\mathring{\tilde{e}}^{(d)}
-\tilde{A}_{(2\hat{d})}^{(1)}\mathring{\tilde{f}}^{(d)}>\nonumber\\
&=\nabla^{(1)}H_{(1)}-{\textstyle{\frac{H}{\sinh B}}}\left[\sinh B\; \tilde{\Gamma}_{(2\;\;2)}^{\;\;(1)}-\cosh B\;\tilde{A}_{(2\hat{1})}^{(1)}\right]\nonumber\\
&=-\nabla_{(1)}H-H\left[\tilde{\Gamma}_{(212)}+\coth B\;\tilde{A}_{(12)}^{(\hat{1})}\right],\nonumber
\end{align}
which is the first of equations (\ref{e:dbc1}).
Similarly we find that
\[
0=L_{(1)}^{(1)}=L_{(2)}^{(2)}=M_{(1)}^{(1)}=M_{(2)}^{(2)},
\]
\[
L_{(1)}^{(2)}=-L_{(2)}^{(1)}=H(\tilde{A}^{(\hat{2})}_{(22)}-\coth B\;\tilde{C}_{(2\hat{1}\hat{2})}),
\]
\[
M_{(1)}^{(2)}=-M_{(2)}^{(1)}=H(\coth B\;\tilde{\Gamma}_{(212)}+\tilde{A}^{(\hat{1})}_{(12)}),
\]
and the rest of equations (\ref{e:dbc1}) and (\ref{e:dbc2}).

Equations (\ref{e:dbc3}) to (\ref{e:dbc6}) follow from the space derivatives of the intersection and constant angle conditions.
For example taking the covariant derivative of this w.r.t. the ambient connection we find
\[
<\overline{\nabla}_{\mathring{e}_{(1)}}\mathring{e}_{(a)},\mathring{\tilde{e}}^{(b)}>
+<\mathring{e}_{(a)},\overline{\nabla}_{\mathring{e}_{(1)}}\mathring{\tilde{e}}^{(b)}>=0.
\]
From decomposing the ambient connection as in equation (\ref{e:connsplit1}), for $a=2$ and $b=1$ we get
\[
<\overline{\nabla}_{\mathring{e}_{(1)}}\mathring{e}_{(2)},\mathring{\tilde{e}}^{(1)}>=<\nabla_{\mathring{e}_{(1)}}\mathring{e}_{(2)}
-A_{(12)}^{(\hat{c})}\mathring{f}_{(c)},\mathring{e}^{(1)}>=\Gamma_{(12)}^{\;\; (1)},
\]
and equation (\ref{e:connsplit1}) along with the relationship between the two sets of frames also gives
\begin{align}
<\mathring{e}_{(2)},\nabla_{\mathring{e}_{(1)}}\mathring{\tilde{e}}^{(1)}>&=<\cosh B\;\mathring{\tilde{e}}_{(2)}
+\sinh B \;\mathring{\tilde{f}}_{(1)},\tilde{\nabla}_{\mathring{\tilde{e}}_{(1)}}\mathring{\tilde{e}}^{(1)}
    -\tilde{A}_{(11)}^{(\hat{c})}\mathring{\tilde{f}}_{(c)}>\nonumber\\
&=\cosh B\; \tilde{\Gamma}_{\;\;(11)}^{\;\; (2)}-\sinh B \;\tilde{A}_{(11)}^{(\hat{1})}.\nonumber
\end{align}
Putting these together we find
\[
\Gamma_{(12)}^{\;\; (1)}+\cosh B\; \tilde{\Gamma}_{(11)}^{\;\; (2)}+\sinh B \;\tilde{A}_{(11)}^{(\hat{1})}=0,
\]
which can be rearranged to give equation (\ref{e:dbc4}). The other equations follow similarly by covariantly differentiating the inner product of 
other frame vectors.
\end{proof}
\vspace{0.1in}

We now bound the second derivatives  of the flowing surface by those of the boundary surface along the edge.

\begin{Thm}\label{t:bdryest3}
Suppose that the initial conditions $B$ and $C$ are chosen to satisfy inequalities (\ref{e:smallness1}), (\ref{e:smallness2}) and (\ref{e:smallness3})
and $0<\theta<\pi/2$, $0<\psi<\pi/2$. 
Then for as long as the flow exists $|d^2F_s|<C_1(|d^2\tilde{F}|)$ at the edge.
\end{Thm}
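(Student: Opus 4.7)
The plan is to reduce the sought bound on $|d^2F_s|$ to a pointwise bound on the norm of the mean curvature vector $|H|$ at the edge, and then obtain the latter by a maximum-principle argument. The second derivatives of $F_s$ at a boundary point are equivalent, modulo the first-order data already controlled by Proposition \ref{p:bdryest1}, to the components of the second fundamental form $A^{(\hat a)}_{(bc)}$, the intrinsic Christoffel symbols $\Gamma_{(abc)}$, and the normal connection coefficients $C^{(\hat b)}_{(a\hat c)}$, all expressed in the intersection-adapted frame. The angle control of Proposition \ref{p:anglecontrol} keeps the rotations $\theta,\psi$ relating this frame to the canonical frame strictly inside $(0,\pi/2)$ for as long as the flow exists, so a bound in either frame yields a bound in the other with conversion constants depending only on the initial data and on $|d^2\tilde F|$.

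Next, I would invoke the algebraic boundary relations (\ref{e:dbc3})--(\ref{e:dbc7}). These directly express $A^{(\hat 1)}_{(12)}$, $A^{(\hat 2)}_{(11)}$, $A^{(\hat 1)}_{(11)}$, $A^{(\hat 2)}_{(12)}$, $\Gamma_{(112)}$, $C^{(1)}_{(\hat 1\hat 2)}$, $C^{(2)}_{(\hat 1\hat 2)}$ and $C^{(\hat 2)}_{(1\hat 1)}$ in terms of $\tilde A$, $\tilde\Gamma$ and $\tilde C$ on $\tilde\Sigma$, each of which is controlled by $|d^2\tilde F|$ since $\tilde F\in C^{2+\alpha}$. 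The vanishing $H^{(\hat 2)}=0$ from Note \ref{n:H} gives $A^{(\hat 2)}_{(22)}=-\tilde A^{(\hat 2)}_{(11)}$, so the entire $A^{(\hat 2)}$-block is bounded. The only component not immediately controlled is
\[
A^{(\hat 1)}_{(22)} = H^{(\hat 1)} - A^{(\hat 1)}_{(11)},
\]
so the full estimate reduces to a uniform bound on $|H|=|H^{(\hat 1)}|$ at the edge.

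For the bound on $|H|$, the plan is to combine the interior parabolic estimate $|H|_+^2\leq C(1+s^{-1})$ from Proposition \ref{p:bdsff} with the linear Neumann-type boundary relation
\[
\nabla_\nu H \;=\; -\,\frac{H}{\sinh B}\,\tilde A^{(\hat 1)}_{(22)}
\]
from (\ref{e:dbc1}), in which the coefficient $\beta:=-\tilde A^{(\hat 1)}_{(22)}/\sinh B$ is bounded by $|d^2\tilde F|/\sinh B$. Applied to the test function $w=|H|^{2}e^{-\mu g}$, where $g$ is a fixed smooth nonnegative function on $\bar D$ with $\partial_\nu g\equiv 1$ on $\partial D$ and $\mu>2\sup|\beta|$, the Hopf lemma forbids any boundary maximum of $w$: at such a point one would need $\partial_\nu|H|^{2}\geq\mu|H|^{2}$, whereas the Neumann relation forces $\partial_\nu|H|^{2}=2\beta|H|^{2}$. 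Hence every maximum of $w$ must be interior, where the parabolic evolution of $|H|^{2}$ together with Proposition \ref{p:bdsff} furnish the bound.

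The main obstacle is precisely the sign ambiguity of $\beta$: without a definite sign on $\tilde A^{(\hat 1)}_{(22)}$ one cannot apply Hopf directly to $|H|^{2}$, which is what forces the introduction of the barrier $e^{-\mu g}$ and makes the final estimate depend on a pointwise bound of $\tilde A^{(\hat 1)}_{(22)}$. This is also the point at which the $C^{2+\alpha}$-regularity of $\tilde F$ is used in an essential way: a merely $C^{1+\alpha}$ boundary surface would not deliver a pointwise bound on $\tilde A^{(\hat 1)}_{(22)}$, the Neumann coefficient $\beta$ would cease to be bounded, and the barrier argument, together with long-time existence, would break down. This is the source of the drop in differentiability going from $\mathbb{E}^3$ to $TS^2$ in the statement of the Main Theorem.
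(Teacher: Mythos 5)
Your reduction has a genuine gap at its first step. You treat the right-hand sides of the boundary relations (\ref{e:dbc3})--(\ref{e:dbc7}) as ``controlled by $|d^2\tilde F|$'', but only the relations (\ref{e:dbc3}) are purely tensorial. The quantities $\tilde\Gamma_{(112)}$ and $\tilde C^{\;\;(\hat 2)}_{(1\hat 1)}$ appearing in (\ref{e:dbc4})--(\ref{e:dbc7}) are connection components in the frame \emph{adapted to the intersection}, so they contain the tangential derivatives $\tilde\theta_{|1}$, $\tilde\psi_{|1}$ of the alignment angles between the canonical frames of $\tilde\Sigma$ and the intersection-adapted frames; these depend on how the (unknown, evolving) edge curve sits inside $\tilde\Sigma$ and are coupled to the 2-jet of the flowing disc, not bounded a priori by boundary data. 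The same issue appears on the left: passing from adapted-frame components back to $|d^2F_s|$ involves $\theta_{|1},\psi_{|1}$. The paper's proof of Theorem \ref{t:bdryest3} consists precisely of solving for these unknowns: first $\tilde\psi_{|1}$ is bounded by combining (\ref{e:dbc2}) and (\ref{e:dbc7}); then the remaining relations give a linear system for $\mu_{|1},\phi_{|1},\theta_{|1},\psi_{|1},\tilde\theta_{|1}$ whose reduced $3\times 3$ determinant $\cos\psi\sin\theta(1+\mu)+\cos\theta\sin\psi(1-\mu)$ is kept positive exactly by the hypotheses $0<\theta,\psi<\pi/2$ together with (\ref{e:smallness1})--(\ref{e:smallness3}); finally the normal derivatives $\mu_{|2},\phi_{|2}$ are obtained from the Codazzi--Mainardi identity (\ref{e:id1}) and the compatibility condition (\ref{e:compat}) via a second linear system with non-vanishing determinant. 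Your proposal never uses these invertibility conditions (the angle hypotheses enter only to compare frames), which is the sign that the core difficulty has been bypassed; concretely, $A^{(\hat 1)}_{(11)}$, and hence $A^{(\hat 1)}_{(22)}$ even if you had $|H|$, is not controlled by your argument.

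Two further cautions. The Hopf-lemma/barrier argument for $|H|$ is not the paper's route (there $|H|$ falls out of the full 2-jet bound), and as stated it leans on Proposition \ref{p:bdsff} as an ``interior estimate'', which it is not: that proposition is proved by a global maximum principle on closed submanifolds, so in the presence of an edge you must either localize it or fold the boundary relation into a single global test-function computation including the good quartic term $-2|H\cdot A|_+^2$ (otherwise the parabolic inequality only yields a bound growing in $s$, not the time-uniform constant $C_1(|d^2\tilde F|)$ claimed). Even if carried out, this supplies only one of the missing components and does not repair the gap above.
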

\begin{proof}
The idea of the proof is to use equations (\ref{e:dbc2}) to (\ref{e:dbc7}), together with the spatial derivative of the 
asymptotic holomorphicity condition, to bound the
derivatives of the 1-jet, as expressed by the quantities $\lambda$, $\mu$, $\phi$ and $\vartheta$. Note that by the identity (\ref{e:id1}) the
derivatives of $\vartheta={\mathbb R}{\mbox{e}}(\rho)$ are determined by those of $\lambda$, $\mu=|\sigma|/|\lambda|$ and 
$\phi={\mbox {arg}}(\sigma)$ - an artifact of partial derivatives commuting.

The 2-jet of the flowing surface will ultimately be bounded by the 2-jet of the boundary surface. 
As we proceed we gather these edge 2-jet terms together on the right-hand side and denote them by a generic function $\tilde{\mathcal F}$ which
is bounded for a $C^2$ boundary surface. That is, we write  $\tilde{\mathcal F}$ (possibly with a subscript) for quantities that only depend on 
the slopes $\tilde{\lambda}$, $\tilde{\mu}$, $\tilde{\phi}$ and $\tilde{\vartheta}$ and their first derivatives.

A complicating
factor in this scheme is that derivatives of the angles $\theta,\tilde{\theta},\psi,\tilde{\psi}$ also enter into some computations and have to
be controlled. 

The estimates will hold for any point $\gamma\in f_s(\partial D)$ and for computational simplicity we translate and rotate so that the point 
is $\xi=\eta=0$. The derivatives at such a point naturally split into normal and tangential components relative to the edge of the flowing disc 
and for any function $\Phi:D\rightarrow{\mathbb R}$ we denote the tangential and normal derivatives by
\[
\Phi_{|1}=\mathring{e}_{(1)}^j\frac{\partial \Phi}{\partial x^j},
\qquad \qquad
\Phi_{|2}=\mathring{e}_{(2)}^j\frac{\partial \Phi}{\partial x^j},
\]
respectively, where $\{\mathring{e}_{(1)},\mathring{e}_{(2)}\}$ are the adapted frames.

The expressions which link equations (\ref{e:dbc2}) to (\ref{e:dbc7}) to the derivatives of the 1-jet are the projections of the submanifold equations
(\ref{e:connsplit1}) and (\ref{e:connsplit2}):
\begin{equation}\label{e:defsff}
A^{(\hat{c})}_{(ab)}=\mathring{f}^{(c)}_{j}\;^\perp P_{k}^j\mathring{e}_{(a)}^{\;l}\overline{\nabla}_l\mathring{e}_{(b)}^{\;k},
\end{equation}
\begin{equation}\label{e:defconns}
C^{\;\;(\hat{2})}_{(a)\;\;(\hat{1})}=\mathring{e}_{(a)}^{j}\;^\parallel P_j^k\mathring{f}^{(2)}_{l}\overline{\nabla}_k\mathring{f}_{(1)}^{\;l},
\qquad\qquad
\Gamma^{\;\;(2)}_{(a)\;\;(1)}=\mathring{e}_{(a)}^{j}\;^\parallel P_j^k\mathring{e}^{(2)}_{l}\overline{\nabla}_k\mathring{e}_{(1)}^{\;l},
\end{equation}
the expression for the aligned frames
\[
\mathring{e}_{(1)}=\cos\theta {e}_{(1)}-\sin\theta {e}_{(2)},
\qquad\qquad
\mathring{e}_{(2)}=\sin\theta {e}_{(1)}+\cos\theta {e}_{(2)},
\] 
\[
\mathring{f}_{(1)}=\cos\psi {f}_{(1)}+\sin\psi {f}_{(2)},
\qquad\qquad
\mathring{f}_{(2)}=-\sin\psi {f}_{(1)}+\cos\psi {f}_{(2)},
\] 
and the coordinate expressions in Proposition \ref{p:frames} for the canonical frames. Similar expressions also hold
for the boundary surface with tilded quantities.

We want to bound $\lambda_{|1}$, $\lambda_{|2}$, $\mu_{|1}$, $\mu_{|2}$, $\phi_{|1}$ and $\phi_{|2}$, as well as the gauge terms
$\theta_{|1}$, $\tilde{\theta}_{|1}$, $\psi_{|1}$ and $\tilde{\psi}_{|1}$.

This last term can be bounded by noting that, by the first of equations (\ref{e:dbc2}) and (\ref{e:dbc7}) we have
\[
{\textstyle{\frac{1}{\sinh B}}}\tilde{A}^{(\hat{2})}_{(12)}=C_{(\hat{1}\hat{2})}^{(1)}=C^{\;\;(\hat{2})}_{(1\hat{1})}
=-\sinh B\;\tilde{A}^{(\hat{2})}_{(12)}-\cosh B\; \tilde{C}^{\;\;(\hat{2})}_{(1\hat{1})}.
\]
Now since $\tilde{A}^{(\hat{2})}_{(12)}$ consists 2-jet terms of the boundary and by the first expression in
(\ref{e:defconns}) we find that
\[
\tilde{C}^{\;\;(\hat{2})}_{(1\hat{1})}=\tilde{\psi}_{|1}+\tilde{\mathcal F}_1=-\coth B \tilde{A}^{(\hat{2})}_{(12)}=\tilde{\mathcal F}_2.
\]
Thus we have 
\[
\tilde{\psi}_{|1}=\tilde{\mathcal F}_3,
\]
and our first bound.

The tangential derivative of the asymptotic holomorphic condition can be written
\begin{equation}\label{e:dlam01}
\lambda_{|1}={\textstyle{\frac{\lambda}{\mu}}}\mu_{|1}.
\end{equation}
Moreover, we have from equations (\ref{e:dbc6}) and (\ref{e:dbc3}) that
\begin{align}
\sin{\scriptstyle{(\theta-\psi)}}A^{(\hat{1})}_{(12)} -\cos{\scriptstyle{(\theta-\psi)}}A^{(\hat{2})}_{(12)}
&=\sin{\scriptstyle{(\theta-\psi)}}\tilde{A}^{(\hat{1})}_{(12)}  -\cos{\scriptstyle{(\theta-\psi)}}
[\cosh B\;\tilde{A}^{(\hat{2})}_{(12)}+\sinh B\; \tilde{C}^{\;\;(\hat{2})}_{(1\hat{1})}]\nonumber\\
&=\tilde{\mathcal{F}}.\nonumber
\end{align}
Now using the explicit expression in Proposition \ref{p:2ndff} for the second fundamental form in terms of the 2-jet we find that we have
\begin{equation}\label{e:dlam02}
\lambda_{|2}=\frac{\lambda}{1-\mu^2}\left[(1+\mu-2\sin^2\theta)\mu_{|2}
-2\cos\theta\sin\theta\mu(1-\mu^2)^{\scriptstyle{\frac{1}{2}}}\phi_{|2}\right]+\tilde{\mathcal F}.
\end{equation}
Equations (\ref{e:dlam01}) and (\ref{e:dlam02}) bound the derivatives of $\lambda$ by the other derivatives and so we can reduce our list of derivatives, 
leaving us with $\mu_{|1}$, $\phi_{|1}$, $\theta_{|1}$, $\tilde{\theta}_{|1}$, $\psi_{|1}$, $\mu_{|2}$ and  $\phi_{|2}$. These split naturally
into tangent and normal derivatives which we now exploit.

The five equations (\ref{e:dbc2}) to (\ref{e:dbc7}) give a linear system for the five tangential derivatives which is of the form 
\[
\left[\begin{matrix}
*&* &0 &0 &0 \\
*&* &0 &0 &* \\
*&* &0 &0 &* \\
*&* &0 &1 &0 \\
*&* &-1 &0 &* 
\end{matrix}
\right]
\left[\begin{matrix}
{\textstyle{\frac{1}{2\mu(1-\mu^2)}}}\mu_{|1}\\
{\textstyle{\frac{\mu}{2\sqrt{1-\mu^2}}}}\phi_{|1}\\
\theta_{|1}\\
\psi_{|1}\\
\sinh B\tilde{\theta}_{|1}
\end{matrix}
\right]=\tilde{\mathcal F}.
\]
Note that $|d\sigma|^2=d|\sigma|^2+|\sigma|^2|d\phi|^2$ and so to bound the gradients of $\sigma$ we
need only bound $d|\sigma|$ and $|\sigma|d\phi=|\lambda|\mu\phi$.

The last two equations imply that bounds on $\mu_{|1}$, $\phi_{|1}$ and $\tilde{\theta}_{|1}$ yield bounds on $\theta_{|1}$ and $\psi_{|1}$. Thus 
we are left with a $3\times 3$ system for $\mu_{|1}$, $\phi_{|1}$ and $\tilde{\theta}_{|1}$ with matrix of coefficients
\[
\left[\begin{matrix}
1-\cos 2\theta\mu&\sin 2\theta &0 \\
 & & \\
\sin 2\psi +(\sin 2\theta \cos 2\psi+\sin 2(\theta+\psi))\mu&\cos 2\theta\cos 2\psi+\cos 2(\theta+\psi)&\cos(\theta+\psi) \\
 & & \\
\cos 2\psi-(\sin 2\theta\sin 2\psi -\cos 2(\theta+\psi))\mu&- \cos 2\theta\sin 2\psi -\sin 2(\theta+\psi) &-\sin(\theta+\psi)
\end{matrix}
\right].
\]

The determinant of this matrix is 
\[
\cos\psi\sin\theta(1+\mu)+\cos\theta\sin\psi(1-\mu)>0,
\]
where the inequality follows from (\ref{e:smallness1}), (\ref{e:smallness2}) and (\ref{e:smallness3}). We can therefore invert 
the system to yield bounds on the tangential derivatives: $\mu_{|1}$, $\phi_{|1}$ and $\tilde{\theta}_{|1}$. The final tangential derivative we 
have to bound is $\vartheta_{|1}$ and this follows readily from differentiating equation (\ref{e:hypangcon3}) along the edge.

We turn now to bounding the normal derivatives $\mu_{|2}$ and $\phi_{|2}$. These are controlled by the tangential derivatives due to the
generalized Coddazzi-Mainardi equation (\ref{e:id1}) and the condition (\ref{e:compat}). To see this, introduce
\[
\alpha_\theta=\cos\theta\alpha_1-\sin\theta\alpha_2
={\textstyle{\frac{1}{\sqrt{2}}}}\left(\frac{\cos\theta}{(|\lambda|-|\sigma|)^{\scriptstyle{\frac{1}{2}}}}
+\frac{i\sin\theta}{(|\lambda|+|\sigma|)^{\scriptstyle{\frac{1}{2}}}}\right)e^{-{\scriptstyle{\frac{i}{2}}}\phi+{\scriptstyle{\frac{i}{4}}}\pi},
\]
so that the tangential and normal derivative operators are
\[
\mathring{e}_{(1)}=\alpha_\theta\frac{\partial}{\partial \xi}+\overline{\alpha}_\theta\frac{\partial}{\partial \bar{\xi}},
\qquad\qquad
\mathring{e}_{(2)}=\alpha_{\theta-{\scriptstyle{\frac{\pi}{2}}}}\frac{\partial}{\partial \xi}
+\overline{\alpha}_{\theta-{\scriptstyle{\frac{\pi}{2}}}}\frac{\partial}{\partial \bar{\xi}},
\]
which can be inverted to
\[
\frac{\partial}{\partial \xi}=
-i(\lambda^2-|\sigma|^2)^{\scriptstyle{\frac{1}{2}}}\left[\overline{\alpha}_{\theta-{\scriptstyle{\frac{\pi}{2}}}}\mathring{e}_{(1)}
-\overline{\alpha}_\theta\mathring{e}_{(2)}\right].
\]

Equation (\ref{e:id1}) says that
\[
\partial\bar{\sigma}-\bar{\partial}\vartheta-i\bar{\partial}\lambda=\tilde{\mathcal F},
\]
which relates certain tangential and normal derivatives. Thus
\[
\vartheta_{1}=\alpha_\theta\partial\vartheta+\overline{\alpha}_\theta\bar{\partial}\vartheta
=\alpha_\theta(i\partial\lambda+\bar{\partial}\sigma)+\overline{\alpha}_\theta\bar{\partial}
(-i\bar{\partial}\lambda+\partial\bar{\sigma})+\tilde{\mathcal F}.
\]
Now a direct computation reduces this to
\[
-(1-\cos\theta)(\mu+\cos 2\theta)\mu_{|2}+\sin 2\theta(1-\cos\theta)\mu(1-\mu^2)^{\scriptstyle{\frac{1}{2}}}\phi_{|2}=\tilde{\mathcal F}.
\]
On the other hand equation (\ref{e:compat}) implies that
\[
\cos(\theta+\psi)\mu_{|2}-\sin(\theta+\psi)\mu(1-\mu^2)^{\scriptstyle{\frac{1}{2}}}\phi_{|2}=\tilde{\mathcal F}.
\]
The last two equations form a linear system for $\mu_{|2}$ and $\phi_{|2}$ whose coefficients have determinant
\[
-(1-\cos\theta)\mu(1-\mu^2)^{\scriptstyle{\frac{1}{2}}}[\cos\psi\cos\theta(1+\mu)+\sin\psi\sin\theta(1-\mu)].
\]
Inequalities (\ref{e:smallness1}), (\ref{e:smallness2}) and (\ref{e:smallness3}) ensure that this remains strictly negative during the flow and 
hence we can invert the system and get bounds for $\mu_{|2}$ and $\phi_{|2}$. This completes the proof.
\end{proof}

\vspace{0.1in}

\subsection{{Existence of a holomorphic disc}}

In this section we prove that we can attach a holomorphic disc to the set of oriented normals of any non-umbilic convex hemisphere in 
${\mathbb E}^3$, considered as a surface in $TS^2$.

We first prove a final estimate.

\begin{Prop}\label{p:shearest}
Under the mean curvature flow we have the following estimate:
\[
\left(\frac{\partial }{\partial s}-{\mathbb G}^{jk}\partial_j\partial_k\right)\left(
\frac{|\sigma|^2}{\lambda^2-|\sigma|^2}\right)\leq
    \frac{4\lambda}{|\sigma|^2}\frac{|\sigma|^4}{(\lambda^2-|\sigma|^2)^2},
\]
\end{Prop}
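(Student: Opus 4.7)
The plan is to start from the explicit evolution identity (\ref{e:holconv}) for $|\sigma|^2/(\lambda^2-|\sigma|^2)$ derived in the preceding proposition, and to bound the source term $\mathrm{Re}[H_{11}]$ by absorbing its first-order part into the two manifestly non-positive squared-norm terms that are already present.

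First I would note that on any positive flowing section $\lambda^2-|\sigma|^2>0$, so the first two summands on the right hand side of (\ref{e:holconv}), namely
\[
-2\frac{\lambda^2+|\sigma|^2}{(\lambda^2-|\sigma|^2)^3}\,\big\|\lambda\, d|\sigma|-|\sigma|\,d\lambda\big\|^2
\qquad\text{and}\qquad -2\frac{\lambda^2|\sigma|^2}{(\lambda^2-|\sigma|^2)^2}\,\|d\phi\|^2,
\]
are both non-positive and may be discarded (up to absorbing small fractions) in the inequality. So the essential task reduces to estimating $\frac{4\lambda|\sigma|}{(\lambda^2-|\sigma|^2)^3}\mathrm{Re}[H_{11}]$ from above by $\frac{4\lambda|\sigma|^2}{(\lambda^2-|\sigma|^2)^2}$ plus quantities controlled by the two non-positive terms.

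Next, split $H_{11}=H_{11}^{(1)}+H_{11}^{(0)}$, where $H_{11}^{(1)}=(1+\xi\bar{\xi})[i\lambda\bar{\sigma}\bar{\xi}\,\partial|\sigma|-i|\sigma|\bar{\sigma}\bar{\xi}\,\partial\lambda+2\lambda|\sigma|(i\lambda\xi-\bar{\sigma}\bar{\xi})\,\partial\phi]$ contains all first derivatives and $H_{11}^{(0)}=-2i\lambda\sigma|\sigma|\xi^2-2|\sigma|^3+2(1+2\xi\bar{\xi})\lambda^2|\sigma|$ is algebraic. For the first-order piece, factor $\bar{\sigma}\bar{\xi}$ out of the $\partial|\sigma|$ and $\partial\lambda$ terms so that the combination $i(\lambda\,\partial|\sigma|-|\sigma|\,\partial\lambda)$ appears, and then apply Cauchy-Schwarz with respect to the induced metric (using the explicit form of $g^{-1}$ from Proposition \ref{p:ts2indmet}) followed by Young's inequality $ab\leq\epsilon a^2+b^2/(4\epsilon)$, choosing $\epsilon$ so that the squared-gradient residue is bounded by a small multiple of the $\|\lambda\,d|\sigma|-|\sigma|\,d\lambda\|^2$ and $\|d\phi\|^2$ terms. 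These residues are then absorbed, leaving only a polynomial remainder of the form $C\,R^2|\sigma|^2(1+|\sigma|^2/\lambda^2\,)$ (with $R=|\xi|\le 1$ on the positive region). For the zeroth-order piece, take the real part and use $|\sigma\xi^2|\leq R^2|\sigma|$ to obtain $\mathrm{Re}[H_{11}^{(0)}]\leq 2(R^2-1)|\sigma|^3+2(1+2R^2)\lambda^2|\sigma|$; the first summand is non-positive, and what remains after multiplying by $\frac{4\lambda|\sigma|}{(\lambda^2-|\sigma|^2)^3}$ rearranges (using $\lambda^2=(\lambda^2-|\sigma|^2)+|\sigma|^2$) into precisely $\frac{4\lambda|\sigma|^4}{|\sigma|^2(\lambda^2-|\sigma|^2)^2}$ modulo a term already absorbed.

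The main obstacle will be the Young's inequality bookkeeping. The prefactor $\frac{4\lambda|\sigma|}{(\lambda^2-|\sigma|^2)^3}$ carries the same denominator power as the coefficient of $\|\lambda\,d|\sigma|-|\sigma|\,d\lambda\|^2$, so there is essentially no slack: the grouping inside $H_{11}^{(1)}$ must produce exactly the combination $\lambda\,\partial|\sigma|-|\sigma|\,\partial\lambda$ (with a lower-order remainder) and the numerical constants must match the factor $\lambda^2+|\sigma|^2$ available in front of the negative squared norm. Verifying that the algebra conspires so that the residues after Young's inequality are controlled without leaving a positive gradient term is the delicate bookkeeping step; the remainder of the argument is then a direct algebraic identification of the surviving algebraic terms with $\frac{4\lambda|\sigma|^2}{(\lambda^2-|\sigma|^2)^2}$.
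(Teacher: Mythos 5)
Your overall strategy --- absorb the first--order part of $H_{11}$ into the two non-positive squared-gradient terms and then handle an algebraic remainder --- is in spirit the route the paper takes, except that the paper performs the absorption by an explicit completion of squares against the conformal-factor gradient $d(1+\xi\bar{\xi})$ and passes to the flat metric via Lemma \ref{l:metrics} to keep all constants explicit. However, there is a genuine gap in your treatment of the zeroth-order piece, and it is not a bookkeeping detail. Along the flow one has $\lambda<-|\sigma|<0$ (positivity of the induced metric is arranged with negative twist; see the frames of Proposition \ref{p:frames}, which require $-\lambda-|\sigma|>0$, and the Note following the definition of $\tilde{\Lambda}_{C_0}$), so the prefactor $4\lambda|\sigma|/(\lambda^2-|\sigma|^2)^3$ multiplying ${\mathbb R}{\mbox{e}}[H_{11}]$ is \emph{negative}. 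Consequently the upper bound you derive for ${\mathbb R}{\mbox{e}}[H_{11}^{(0)}]$ yields a \emph{lower} bound on its contribution --- the inequality points the wrong way. Note that the right-hand side of the Proposition, $4\lambda|\sigma|^2/(\lambda^2-|\sigma|^2)^2$, is itself negative (this negativity is exactly what Proposition \ref{p:asymhol} later exploits), and it arises from retaining, with its sign, the dominant positive term $2(1+2\xi\bar{\xi})\lambda^{2}|\sigma|$ of $H_{11}^{(0)}$ against the negative prefactor; it cannot be produced by discarding the negative gradient terms and estimating what is left from above. (Your explicit bound is also off: $\bigl|2i\lambda\sigma|\sigma|\xi^{2}\bigr|=2|\lambda|\,|\sigma|^{2}R^{2}$, not $2R^{2}|\sigma|^{3}$.)

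Even with the signs corrected, the oscillatory term $-i|\sigma|\lambda^{2}(\xi^{2}e^{i\phi}-\bar{\xi}^{2}e^{-i\phi})$ and the residue of the completion-of-squares step leave a positive remainder proportional to $\xi\bar{\xi}$ that is not removed by ``choosing $\epsilon$'': the paper disposes of it using the quantitative, flow-invariant pinching $|\lambda|\geq 3|\sigma|$ (Corollary \ref{c:bdryests} with $C_3<1/3$), which makes the leftover term $\lambda^{2}|\sigma|^{2}(\lambda^{3}+2\lambda^{2}|\sigma|-2\lambda|\sigma|^{2}-|\sigma|^{3})\xi\bar{\xi}/[(\lambda^{2}-|\sigma|^{2})^{3}(\lambda^{2}+|\sigma|^{2})]$ non-positive. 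Your proposal never invokes this a priori bound on $\mu=|\sigma|/|\lambda|$, and without it the ``no slack'' absorption you yourself flag does indeed fail. So the skeleton is right, but as proposed the proof does not close: you must estimate ${\mathbb R}{\mbox{e}}[H_{11}^{(0)}]$ from \emph{below} (because of the negative prefactor), keep its leading term exactly, and finish the absorption with Corollary \ref{c:bdryests}.
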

\begin{proof}
Our starting point is equation (\ref{e:holconv}), which we rewrite in the form
\[
\left(\frac{\partial }{\partial s}-{\mathbb G}^{jk}\partial_j\partial_k\right)\left(
\frac{|\sigma|^2}{\lambda^2-|\sigma|^2}\right)=I_1+I_2+I_3+I_4,
\]
where
\[
I_1=-2\frac{(\lambda^2+|\sigma|^2)}{(\lambda^2-|\sigma|^2)^3}
    \Big\|\lambda d|\sigma|-|\sigma|d\lambda -\frac{\lambda^2-|\sigma|^2}{\lambda^2+|\sigma|^2}\frac{\lambda|\sigma|}{1+\xi\bar{\xi}}d(1+\xi\bar{\xi})\Big\|^2,
\]
\[
I_2=2\frac{(1+\xi\bar{\xi})\lambda^2|\sigma|}{(\lambda^2-|\sigma|^2)^3}<d(1+\xi\bar{\xi}),\lambda d|\sigma|-|\sigma|d\lambda>,
\]
\[
I_3=-2\frac{\lambda^2|\sigma|^2}{(\lambda^2-|\sigma|^2)^3}\Big\|d\phi-2(1+\xi\bar{\xi})^{-1}j[d(1+\xi\bar{\xi})]\Big\|^2,
\]
\[
I_4=\frac{|\sigma|^2}{2(\lambda^2-|\sigma|^2)^2(\lambda^2+|\sigma|^2)}\Big\{-i|\sigma|\lambda^2
    (\xi^2e^{i\phi}-\bar{\xi}^2e^{-i\phi})+4\lambda\{[2-\xi\bar{\xi}]\lambda^2+2|\sigma|^2\}\Big\}.
\]
Here $\sigma=|\sigma|e^{i\phi}$ and we have introduced the flat complex structure $j(d\xi)=id\xi$.

Introduce the flat metric
\[
<d\xi,d\bar{\xi}>=1, \qquad \qquad <d\xi,d\xi>=<d\bar{\xi},d\bar{\xi}>=0,
\]
on $\Sigma$ via its coordinate $\xi$. Denote the flat norm and inner product by $|.|$ and $<\cdot,\cdot>$, and the norm and inner product 
of $g$ by $\|.\|$ and $<<\cdot,\cdot>>$. The following estimates will prove useful:

\begin{Lem}\label{l:metrics}
\[
\frac{(1+\xi\bar{\xi})^2(-\lambda-|\sigma|)}{\lambda^2-|\sigma|^2}|X|^2\leq\|X\|^2\leq\frac{(1+\xi\bar{\xi})^2(-\lambda+|\sigma|)}{\lambda^2-|\sigma|^2}|X|^2.
\]
\end{Lem}

\vspace{0.1in}

First we estimate $I_1$ using the flat metric and Lemma \ref{l:metrics}:
\[
I_1\leq-2\frac{(1+\xi\bar{\xi})^2(\lambda^2+|\sigma|^2)}{(\lambda^2-|\sigma|^2)^3(-\lambda+|\sigma|)}
    \Big|\lambda d|\sigma|-|\sigma|d\lambda -\frac{\lambda^2-|\sigma|^2}{\lambda^2+|\sigma|^2}\frac{\lambda|\sigma|}{1+\xi\bar{\xi}}
   d(1+\xi\bar{\xi})\Big|^2,
\]
and so after completing the squares
\begin{align}
I_1+I_2&\leq-2{\textstyle{\frac{(1+\xi\bar{\xi})^{2}(\lambda^2+|\sigma|^2)}{(\lambda^2-|\sigma|^2)^3(-\lambda+|\sigma|)}}}\Bigg\{
    \Big|\lambda d|\sigma|-|\sigma|d\lambda +{\textstyle{\frac{(-\lambda+|\sigma|)(-\lambda+2|\sigma|)\lambda|\sigma|}{2(\lambda^2+|\sigma|^2)(1+\xi\bar{\xi})}}}d(1+\xi\bar{\xi})\Big|^2\nonumber\\
&\qquad\qquad\qquad\qquad-{\textstyle{\frac{\lambda(-\lambda+|\sigma|)^2(-3\lambda-4|\sigma|)}{4(\lambda^2+|\sigma|^2)^2}\frac{\lambda^2|\sigma|^2}{(1+\xi\bar{\xi})^2}}}
    \Big|d(1+\xi\bar{\xi})\Big|^2\Bigg\}\nonumber.
\end{align}

Clearly $I_3$ is negative, so we discard it. To estimate $I_4$ we use
\[
-2\xi\bar{\xi}\leq i(\xi^2e^{i\phi}-\bar{\xi}^2e^{-i\phi})\leq 2\xi\bar{\xi}.
\]
Thus
\[
I_4\leq{\textstyle{\frac{|\sigma|^2}{2(\lambda^2-|\sigma|^2)^2(\lambda^2+|\sigma|^2)}}}
    \Big\{2|\sigma|\lambda^2\xi\bar{\xi}+2\lambda\{[4-2\xi\bar{\xi}]\lambda^2+4|\sigma|^2\}\Big\}.
\]
Combining the estimates
\[
\left(\frac{\partial }{\partial s}-{\mathbb G}^{jk}\partial_j\partial_k\right)\left(\frac{|\sigma|^2}{\lambda^2-|\sigma|^2}\right)\leq\frac{4\lambda|\sigma|^2}{(\lambda^2-|\sigma|^2)^2}
    +\frac{\lambda^2|\sigma|^2(\lambda^3+2\lambda^2|\sigma|-2\lambda|\sigma|^2-|\sigma|^3)\xi\bar{\xi}}{(\lambda^2-|\sigma|^2)^3(\lambda^2+|\sigma|^2)}.
\]
In fact, we can achieve $|\lambda|\geq3|\sigma|$ throughout the flow (see Corollary \ref{c:bdryests} with $C_3<1/3$), so that
\begin{align}
\left(\frac{\partial }{\partial s}-{\mathbb G}^{jk}\partial_j\partial_k\right)\left(\frac{|\sigma|^2}{\lambda^2-|\sigma|^2}\right)&\leq\frac{4\lambda|\sigma|^2}{(\lambda^2-|\sigma|^2)^2}
    +\frac{\lambda^5|\sigma|^2\xi\bar{\xi}}{9(\lambda^2-|\sigma|^2)^3(\lambda^2+|\sigma|^2)}\nonumber\\
& \leq\frac{4\lambda}{|\sigma|^2}\frac{|\sigma|^4}{(\lambda^2-|\sigma|^2)^2}\nonumber.
\end{align}
This completes the proof of the Proposition.
\end{proof}

\vspace{0.1in}

We now show that our flow is asymptotically holomorphic:

\begin{Prop}\label{p:asymhol}
The mean curvature flow satisfies:
\[
|\sigma|^2\rightarrow 0 \qquad\qquad{\mbox as} \qquad s\rightarrow\infty.
\]
\end{Prop}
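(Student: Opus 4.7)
The plan is to turn Proposition \ref{p:shearest} into an autonomous differential inequality for the quantity
\[
w = \frac{|\sigma|^{2}}{\lambda^{2}-|\sigma|^{2}},
\]
and then extract decay by a maximum-principle comparison with an ODE, using the asymptotic holomorphicity condition to control the edge.

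\textbf{Step 1 (algebraic rewrite).} From the definition of $w$ one has $|\sigma|^{2} = w\lambda^{2}/(1+w)$, hence $1/|\sigma|^{2} = (1+w)/(w\lambda^{2})$. Substituting this into the right-hand side of Proposition \ref{p:shearest}, the factor $|\sigma|^{-2}$ combines with $|\sigma|^{4}/(\lambda^{2}-|\sigma|^{2})^{2} = w^{2}$ to give
\[
\left(\frac{\partial}{\partial s}-\mathbb{G}^{jk}\partial_{j}\partial_{k}\right) w \;\leq\; \frac{4\lambda}{|\sigma|^{2}}\,w^{2} \;=\; -\frac{4w(1+w)}{|\lambda|},
\]
where I use the sign convention $\lambda<0$ required by positivity (cf.\ the Note after Definition of $\tilde{\Lambda}_{C_{0}}$). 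Dropping the non-negative term $4w\cdot w/|\lambda|$ and bounding $|\lambda|\leq|\lambda|_{\max}$ (which is uniform thanks to Propositions \ref{p:bdryest1}, \ref{p:posregion} and the interior gradient estimate) yields the clean inequality
\[
\left(\frac{\partial}{\partial s}-\Delta_{g}\right) w \;\leq\; -\kappa\,w^{2}, \qquad \kappa = \frac{4}{|\lambda|_{\max}}>0.
\]

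\textbf{Step 2 (boundary control).} Set $W(s)=\sup_{D}w(\cdot,s)$. The boundary condition (iv) of the \textbf{I.B.V.P.} is $|\bar{\partial}f_{s}|=C/(1+s)$, equivalently $|\sigma|_{\partial D}\leq C/(1+s)$. By Corollary \ref{c:bdryests}, $\lambda^{2}-|\sigma|^{2}$ is bounded below by a positive constant $C_{1}$ along the edge, so
\[
\left. w\right|_{\partial D\times\{s\}} \;\leq\; \frac{C^{2}}{C_{1}(1+s)^{2}}\;\longrightarrow\;0.
\]

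\textbf{Step 3 (maximum-principle comparison).} I will compare $w$ with the spatially constant supersolution $\varphi(s) = \bigl(W(0)^{-1}+\kappa s\bigr)^{-1}$ of the ODE $\dot\varphi = -\kappa\varphi^{2}$. If at some time $s_{0}$ and interior point $p_{0}$ one has $w(p_{0},s_{0})=\varphi(s_{0})$ as a first touching from below of $w$ by $\varphi$, then $\Delta_{g}w\leq 0$ and $\partial_{s}(w-\varphi)\geq 0$ at $(p_{0},s_{0})$, contradicting $(\partial_{s}-\Delta_{g})w\leq -\kappa w^{2} = \dot\varphi(s_{0})$ combined with the strict inequality obtained by a standard $\varepsilon$-shift. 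Therefore either the supremum of $w$ is attained on $\partial D$, in which case Step 2 gives $W(s)\to 0$, or it stays in the interior and is controlled by $\varphi(s)=O(1/s)$. In both cases $W(s)\to 0$.

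\textbf{Step 4 (conclusion).} Since $w\to 0$ uniformly and $\lambda^{2}-|\sigma|^{2}$ is bounded above, $|\sigma|^{2}=(\lambda^{2}-|\sigma|^{2})w\to 0$.

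The only delicate point is Step 3: one must know that the maximum of $w$ is attained (i.e.\ uniform regularity up to the edge), but this is exactly what the long-time existence Theorem \ref{t:lte} and its a priori $C^{2+\alpha}$ bounds provide. The rest is the standard parabolic comparison argument; the nonlinearity $-\kappa w^{2}$ is favourable (dissipative), so no further structural input is needed. I expect the main obstacle to be purely notational: making sure the signs of $\lambda$ and the constants $\kappa,C_{1}$ from Corollary \ref{c:bdryests} are traced consistently through the rewrite in Step 1.
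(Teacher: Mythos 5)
Your argument is essentially the paper's own: both rest on Proposition \ref{p:shearest} recast as a Riccati-type inequality $(\partial_s-\triangle)w\le -\kappa w^2$ for $w=|\sigma|^2/(\lambda^2-|\sigma|^2)$, with the edge handled by boundary condition (iv) together with Corollary \ref{c:bdryests} and the interior by the maximum principle giving $w=O(1/s)$; the paper merely packages the comparison via the Ecker--Huisken substitution $g=sw$ rather than your explicit supersolution $\varphi(s)=(W(0)^{-1}+\kappa s)^{-1}$. One small slip in Step 1: to obtain $-\kappa w^2$ you must discard the (negative) linear term $-4w/|\lambda|$ and keep $-4w^2/|\lambda|$, not the other way round as written --- though keeping the linear term instead would only yield the stronger exponential decay, so nothing breaks.
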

\begin{proof}
At the edge this follows from the second Neumann condition. To consider the interior, recall the following estimate:
\[
\left(\frac{\partial }{\partial s}-{\mathbb G}^{jk}\partial_j\partial_k\right)\left(
\frac{|\sigma|^2}{\lambda^2-|\sigma|^2}\right)\leq
    \frac{4\lambda}{|\sigma|^2}\frac{|\sigma|^4}{(\lambda^2-|\sigma|^2)^2}.
\]
This is of the form
\[
\left(\frac{\partial }{\partial s}-{\mathbb G}^{jk}\partial_j\partial_k\right)f\leq-C_1^2f^2,
\]
for the positive function $f$ given by 
\[
f=\frac{|\sigma|^2}{\lambda^2-|\sigma|^2}, 
\]
and $C_1$ is a constant such that
\[
\frac{4|\lambda|}{|\sigma|^2}\geq C_1^2,
\]
which exists by Corollary \ref{c:bdryests}.
Following Ecker and Huisken \cite{EaH}, let $g=sf$ and compute
\[
\left(\frac{\partial }{\partial s}-{\mathbb G}^{jk}\partial_j\partial_k\right)g\leq gs^{-1}(1-C_1^2g).
\]
Now suppose that the maximum of $g$ occurs in the interior of the disc. Then, by the maximum principle we must have $1-C_1^2g\geq0$ or, equivalently,
$f\leq C_1^{-2}s^{-1}$. Returning to our notation, this means that
\[
\frac{|\sigma|^2}{\lambda^2-|\sigma|^2} \leq \frac{1}{C_1^2s},
\]
or 
\[
|\sigma|^2\leq\frac{\lambda^2-|\sigma|^2}{C_1^2s} \leq \frac{C_2}{s}.
\]
That is, the flow is asymptotically holomorphic.
\end{proof}

\vspace{0.1in}

Finally, drawing the results together, the existence of the holomorphic disc is established as follows.

\vspace{0.1in}

\begin{Thm}\label{t:ashol}
Let $S$ be a $C^{3+\alpha}$ smooth open convex surface in ${\mathbb E}^3$ without umbilic points 
and suppose that the Gauss image of $S$ contains a closed hemisphere. 
Let $\Sigma\subset TS^2$ be the oriented normals of $S$. 

Then $\exists f:D\rightarrow TS^2$ with $f\in C^{1+\alpha}_{loc}(D)\cap C^0(\overline{D})$ satisfying
\begin{enumerate}
\item[(i)] $f$ is holomorphic,
\item[(ii)] $f(\partial D)\subset\Sigma$.
\end{enumerate}
\end{Thm}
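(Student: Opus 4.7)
The plan is to construct the holomorphic disc as the infinite-time limit of the mean curvature flow governed by the \textbf{I.B.V.P.}, with the flowing disc's edge pinned to a positive perturbation of $\Sigma$, and then to remove the perturbation by an ambient biholomorphism.

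\emph{Step 1 (setup).} Since $S$ has no umbilic points, Proposition \ref{p:index} tells us that $\Sigma$ is a totally real Lagrangian section: the defining function $\eta=F(\xi,\bar\xi)$ satisfies $\lambda=0$ and $|\sigma|>0$ throughout the Gauss hemisphere. As $\Sigma$ is Lorentz (Proposition \ref{p:indmet}) it cannot be used directly as a boundary condition for flowing positive discs, so I introduce the linear holomorphic twist $\tilde F_{C_0}=F-iC_0\xi$ of Proposition \ref{p:linholtwist} and choose $C_0$ large enough that the positive region $\tilde\Lambda_{C_0}$ of $\tilde\Sigma_{C_0}$ contains a neighbourhood of the closed Gauss hemisphere. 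The twist being holomorphic, $\tilde\sigma=\sigma$, so $\tilde\Sigma_{C_0}$ is still totally real on $\tilde\Lambda_{C_0}$, with $|\tilde\sigma|$ bounded between positive constants.

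\emph{Step 2 (long-time flow).} I fix a smooth positive initial disc $\Sigma_0$ whose edge sits on $\tilde\Sigma_{C_0}$ together with constants $B,C>0$ chosen small enough to satisfy the smallness conditions (\ref{e:smallness1}), (\ref{e:smallness2}), (\ref{e:smallness3}), and such that the compatibility condition (\ref{e:compat}) of Note \ref{n:H} is satisfied on $f_0(\partial D)$. Theorem \ref{t:ste} gives short-time existence of the \textbf{I.B.V.P.}, and Theorem \ref{t:lte} upgrades this to a global solution $f_s:D\to TS^2$, $s\in[0,\infty)$. Propositions \ref{p:tcc}, \ref{p:compact}, \ref{p:gradest}, \ref{p:bdsff} control the interior geometry, and the boundary estimates (Propositions \ref{p:posregion}, \ref{p:bdryest1}, \ref{p:anglecontrol} together with Theorem \ref{t:bdryest3}) control the flow up to the edge. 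Combining these with standard parabolic bootstrapping yields uniform $C^{2+\alpha}$ bounds on $f_s$ independent of $s$.

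\emph{Step 3 (infinite-time limit).} By Proposition \ref{p:asymhol} the shear of the flowing disc tends uniformly to zero, $|\sigma|(s)\to 0$ as $s\to\infty$. The uniform $C^{2+\alpha}$ bound together with Arzel\`a--Ascoli lets me extract a subsequence $s_j\to\infty$ with $f_{s_j}\to f_\infty$ in $C^{1+\alpha}_{\mathrm{loc}}(D)\cap C^0(\overline{D})$. Since $\bar\partial f_s\to 0$ along the sequence, the limit satisfies $\bar\partial f_\infty=0$, i.e.\ $f_\infty$ is $\mathbb J$-holomorphic, while the Dirichlet condition passes to the limit to give $f_\infty(\partial D)\subset\tilde\Sigma_{C_0}$.

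\emph{Step 4 (undoing the twist).} In the affine chart of section \ref{s:nkm} the map $\Phi:TS^2\to TS^2$, $\Phi(\xi,\eta)=(\xi,\eta+iC_0\xi)$, is biholomorphic for $\mathbb J$, and by construction $\Phi(\tilde\Sigma_{C_0})=\Sigma$. Therefore $f:=\Phi\circ f_\infty:D\to TS^2$ is holomorphic, lies in $C^{1+\alpha}_{\mathrm{loc}}(D)\cap C^0(\overline D)$, and satisfies $f(\partial D)\subset\Sigma$, which is the desired conclusion. The principal obstacle is in Step 2: obtaining a uniform $C^{2+\alpha}$ bound all the way to the edge for all $s\in[0,\infty)$. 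This is precisely where the edge angle control of Proposition \ref{p:anglecontrol} and the bound on the 2-jet of the flowing surface by the 2-jet of the boundary surface in Theorem \ref{t:bdryest3} are indispensable, and where the $C^{3+\alpha}$ hypothesis on $S$ (equivalently $C^{2+\alpha}$ on $\Sigma$) enters in an essential way.
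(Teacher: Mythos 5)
Your proposal is correct and follows essentially the same route as the paper: add the linear holomorphic twist to make the boundary surface positive, run the \textbf{I.B.V.P.} for all time via Theorems \ref{t:ste} and \ref{t:lte}, use Proposition \ref{p:asymhol} plus Arzel\`a--Ascoli to extract a $C^{1+\alpha}_{loc}(D)\cap C^0(\overline{D})$ holomorphic limit, and remove the twist by the fibrewise biholomorphism. The only caveat is your claim of uniform $C^{2+\alpha}$ bounds independent of $s$ up to the edge, which is stronger than what the paper establishes (and than what it expects, since $C^{2+\alpha}$ convergence would over-prescribe the limit disc at the edge); the paper instead uses interior parabolic Schauder estimates giving $C^{1+\alpha}_{loc}$ control, which is also all your Step 3 actually uses.
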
 
\begin{proof}
By a rotation in ${\mathbb E}^3$ and the induced action on $TS^2$  we can take
the north pole of $\Sigma$ to $\xi=0$. Now deform $\Sigma$ to $\tilde{\Sigma}$ by adding a 
holomorphic twist. Thus if $\Sigma$ is given by the graph
function $\eta=F(\xi,\bar{\xi})$, then $\tilde{\Sigma}$ is given by the graph function $\eta=\tilde{F}(\xi,\bar{\xi})=F(\xi,\bar{\xi})-iC_0\xi$.
We choose $C_0>0$ large enough so that $\tilde{\Sigma}$ is positive at the pole.

We can now apply Theorem \ref{t:lte} to find a long-time solution 
$f\in C^{2+\alpha}_{loc}(D\times{\mathbb R}_{\geq0})\cap C^1(\overline{D}\times{\mathbb R}_{\geq0})$ to mean curvature flow with edge 
in $\tilde{\Lambda}$, so long as the initial conditions $B$ and $C$ are chosen small enough. Moreover, in Proposition \ref{p:asymhol} we 
showed that this solution is asymptotically holomorphic in time:
\[
|\sigma|^2\rightarrow 0 \qquad\qquad{\mbox as} \qquad s\rightarrow\infty.
\]
Now by parabolic Schauder estimates, (see e.g. \cite{LaS} section 6, and \cite{JJ} page 79), we have 
\[
||f(\cdot,t)||_{C_{loc}^{1+\alpha}(D)} \le 
C(||H||_{L^\infty(D \times [0,\infty))} + ||\overline{K}(\cdot,t)||_{L^\infty(D)}).
\]
Here, $\overline{K}$ involves ambient metric, Christoffel symbols, and the
gradient $f$ (all of which are bounded since the evolution takes place
in a relatively compact subset of $TS^2$), and on the mean curvature vector $H$,
which is bounded for all time. 
The right hand side being bounded in time, and using the gradient bound from
uniform positivity, we can by Arzela-Ascoli extract a subsequence 
$t_j \to \infty$ limit disc 
$\tilde{f}_\infty(D)$, where  
$\tilde{f}_\infty \in C_{loc}^{1+\alpha'}(D)\cap C^0(\overline{D})$, for $\alpha' < \alpha$. 
From asymptotic holomorphicity, proved in the next section, it now 
follows that $\tilde{f}_\infty(D)$ is holomorphic with respect to ${\mathbb J}$.
Note that we do {\it not} have smooth convergence up to the boundary, and
that (in general), the angle condition (iii) in I.B.V.P. is not retained
by the holomorphic limit $\tilde{f}_\infty(D)$.

Finally, the holomorphic disc $f_\infty(D)$ with edge lying on $\Sigma$ can now be obtained by subtracting the holomorphic twist.

\end{proof}

\vspace{0.1in}


\section{{\bf Concluding Remarks}}

\noindent{\bf Background}

The affirmation of such a venerable conjecture is deserving of some general remarks on the methods employed. 
The basic outline of the proof is as follows. 

The Carath\'eodory conjecture is an under-determined hyperbolic problem about the global nature of solutions for the Codazzi-Mainardi system on the 
sphere. By transferring to the enlarged geometric 
setting of the space of oriented lines, the neutral K\"ahler metric allows us to deform it to an elliptic boundary value problem. 
Mean curvature flow then allows us not only to solve this problem, but to get to the most rigid of elliptic objects: holomorphic curves. The 
proof of the Conjecture then follows from the global consequences of the flexibility of the totally real boundary condition in the neutral setting.

Thus, our proof brings together two currents in contemporary PDE: parabolic flows and the rigidity of holomorphic curves, both in a 
new geometric setting. That these techniques are just sufficient to prove the Carath\'eodory conjecture gives an indication of the depth at which it lies.
The geometric setting (that of neutral K\"ahler surfaces) is sufficiently new that results from both of these fields require modification. In what 
follows we sketch the salient features of the proof, highlighting the key points and offering an enlarged perspective on the work.

\vspace{0.1in}

\noindent{\bf Mean curvature flow and holomorphic discs}

In the case of the mean curvature flow, we must establish a priori gradient estimates for long-time existence in the indefinite setting with higher
codimension. In this instance the indefinite signature of the metric assists the analysis: as long as the flowing submanifold remains in a compact set, 
a mild curvature assumption ensures that singularity formation does not occur. Thus many of the difficulties associated with flowing in the definite 
case are avoided, and a general result on mean curvature flow of positive surfaces in indefinite manifolds is established. This ``good'' sign for
mean curvature flow has also been exploited in other contexts \cite{LaS}.

However, the higher codimension significantly complicates the gradient estimates required for long-time existence. In addition,
rather than working with the more usual case of compact submanifolds, we must consider the flow with mixed Dirichlet and Neumann boundary 
conditions and therefore all quantities have had to be controlled at the edge. 

One can interpret our {\bf I.B.V.P.} as a capillary problem in codimension two. Previous research on this classical problem in codimension one
uses a variety of techniques to ensure long-time existence and convergence. In the higher codimension case many of these techniques are 
inadequate. For example, the Hopf maximum principle at the edge employed in \cite{stahl} fails, as we have no obvious notion of convexity.
Similarly, barrier arguments are not available as we need arbitrary totally real free boundary conditions for which explicit barriers with edge
are difficult to construct. 

From this perspective, our assumptions on initial and boundary data can be interpreted as increasing the hyperbolic adhesion along the intersection 
of the flowing surface and the boundary surface. 
This allows us to stop the flow from leaving the boundary hemisphere and ensures that it is well-defined for all time. 

In general we do not expect convergence of the flow in $C^{2+\alpha}$, as this would over-prescribe the limit holomorphic disc at the edge. 
Passing to a convergent subsequence, with the drop in differentiability along the edge, is therefore unavoidable.

The existence of a holomorphic disc satisfying the Dirichlet condition has a topological implication for the boundary, as is well-known in symplectic 
geometry.

\vspace{0.1in}

\noindent{\bf Neutral K\"ahler surfaces}

Let us now turn to the geometric setting. The metric employed in the proof was first defined, as far as the authors are aware, by Study \cite{study}. 
As shown in \cite{gak4}, it extends to a neutral K\"ahler structure and is canonical in the sense that
it is the unique  metric on the space of oriented lines (up to addition of a spherical element) that is invariant under the action of the Euclidean group
\cite{salvai}. 

Such an invariant metric exists on the space of oriented geodesics of any 3-dimensional space form and so our method may well 
extend to a proof of the Carath\'eodory conjecture in the 3-sphere and hyperbolic 3-space. In fact, there exist such invariant metrics on
geodesic spaces of many symmetric spaces \cite{agk}. 

Perhaps some insight into the difficulty of the Conjecture in Euclidean 3-space is afforded by the following observation. The K\"ahler metric
is not K\"ahler-Einstein and so mean curvature flow does not preserve the Lagrangian condition. Thus, viewed in ${\mathbb E}^3$, our flow
twists the normal lines and we lose the orthogonal surface. It is in this extended context that we find the flexibility to prove the Conjecture.

\vspace{0.1in}

\noindent{\bf Local version of the Conjecture}

Previous published efforts at proving the Carath\'eodory conjecture have been focused on establishing a local index conjecture due to Hamburger. 
Guided by the topological fact that the sum of the indices of isolated umbilic points on a closed convex body must be 2, Hamburger conjectured and then 
sought to establish a bound on the winding number of any isolated umbilic point \cite{Ham}. 

Since isolated umbilic points with all indices less than or equal to 1 are 
easy to construct, the conjecture of Hamburger, often attributed to Loewner, is that the winding number of an isolated umbilic point 
must be less than or equal to 1 (recall that the index takes values in ${\textstyle{\frac{1}{2}}}{\mathbb Z}$). 

Historically, most approaches to the 
Carath\'eodory conjecture attempt to prove this local version in the 
case where the surface is real analytic - a recent attempt to improve the exposition of Hamburger's work can be found in \cite{Ivan}. Other work 
on isolated umbilic points without the assumption of real analyticity include \cite{GaS} and \cite{SaX}.

The methods employed in this paper can be extended to bound the index of an isolated umbilic point on a $C^{3+\alpha}$ surface. We briefly summarize 
the argument - details can be found in \cite{gak10}. 

For the sake of contradiction, suppose we have a convex surface with an isolated umbilic point of index $I=2+k/2$ for $k\geq0$. Extend the surface to 
a closed convex surface. There will be other umbilic points on the surface and the sum of their indices will be $-k/2$. By moving into general position
near these points we get $k$ umbilics each of index $-1/2$.

In $TS^2$ we would therefore have an embedded Lagrangian surface $\Sigma$ with one complex point of index $4+k$, and $k$ hyperbolic complex points each of 
index $-1$. Now a totally real version of blowing-up $\Sigma$ at these hyperbolic points yields $\tilde{\Sigma}=\Sigma\#k({\mathbb R}P^2)$. That is 
we attach an ${\mathbb R}P^2$ in place of each index $-1$ complex point and these cross-caps can be made to be totally real. In fact, $\tilde{\Sigma}$ is 
\begin{itemize}
\item compact and embedded, 
\item Lagrangian away from the cross-caps,
\item totally real except at the single complex point of index $4+k$.
\end{itemize}

Now all of the arguments from this paper can be applied: the space of Lagrangian deformations fixing the unique complex point is a Banach manifold and is
transverse for the $\bar{\partial}$ operator. Thus on generic deformations of the surface $\tilde{\Sigma}$ there cannot exist holomorphic curves 
with edge on $\tilde{\Sigma}$ unless they encircle a complex point. 

Once again, the existence of holomorphic discs which do {\it not} encircle complex points is proven by mean curvature flow 
and the contradiction implies that $k<0$, or the index of an isolated umbilic point on a convex $C^{3+\alpha}$ surface in Euclidean 3-space is 
less than 2. This raises the intriguing possibility of the existence of surfaces with an {\it exotic} umbilic point: a smooth (but not real analytic) 
surface with an isolated umbilic point of index 3/2.

Full details of these arguments can be found in \cite{gak10} and a summary of the arguments is given in the accompanying video.

\vspace{0.2in}

\appendix
\section{Notation}

Here we summarize the symbols we use:

\vspace{0.1in}
 
\noindent${\mathbb E}^3$ \hspace{0.2in} Euclidean 3-space

\noindent$TS^2$ \hspace{0.2in} total space of the tangent bundle to the 2-sphere

\noindent${\mathbb J}$ \hspace{0.2in} complex structure

\noindent$\Omega$ \hspace{0.2in} symplectic 2-form

\noindent${\mathbb G}$ \hspace{0.2in} neutral metric

\noindent$S$ \hspace{0.2in} surface in ${\mathbb E}^3$

\noindent$\Sigma$ \hspace{0.2in} surface in $TS^2$

\noindent$\pi$ \hspace{0.2in} projection $TS^2\rightarrow S^2$

\noindent${\mathbb M}$ \hspace{0.2in} $n+m$-dimensional manifold

\noindent$\gamma$ \hspace{0.2in} point in $TS^2$, oriented line in ${\mathbb E}^3$

\noindent$p$ \hspace{0.2in} point in ${\mathbb E}^3$

\noindent$i(p)$ \hspace{0.2in} index of isolated umbilic point $p\in S$ 

\noindent$\mu(TS^2,T\Sigma)$ \hspace{0.2in} Keller-Maslov index of a curve on $\Sigma$

\noindent$(\xi,\eta)$ \hspace{0.2in} canonical coordinates on $TS^2-\pi^{-1}(\infty)$

\noindent$(R,\theta)$ \hspace{0.2in} polar coordinates for $\xi$

\noindent$D$ \hspace{0.2in} the open unit disc in ${\mathbb C}$

\noindent$\partial D$ \hspace{0.2in} the unit circle in ${\mathbb C}$

\noindent$\bar{\partial}$ \hspace{0.2in} the Cauchy-Riemann operator

\noindent$C^{k+\alpha}$ \hspace{0.2in} H\"older space 

\noindent$H^{s}$ \hspace{0.2in} Sobolev space

\noindent${\mathcal L}ag$ \hspace{0.2in} set of entire Lagrangian sections

\noindent${\mathcal L}ag_0$ \hspace{0.2in} set of entire Lagrangian sections with fixed point

\noindent${\mathcal F}$ \hspace{0.2in} set of functions with edge on a surface

\noindent${\mathcal M}$ \hspace{0.2in} set of holomorphic discs with edge on a totally real surface

\noindent$I$ \hspace{0.2in} analytic index of an elliptic boundary value problem

\noindent$s$ \hspace{0.2in} parabolic time variable

\noindent$f_s$ \hspace{0.2in} flowing submanifold 

\noindent$H_\alpha$ \hspace{0.2in} mean curvature vector

\noindent$\tilde{\Sigma}$ \hspace{0.2in} boundary surface in $TS^2$

\noindent$\overline{\nabla}$ \hspace{0.2in} ambient Levi-Civita connection

\noindent$\overline{Ric}$ \hspace{0.2in} ambient Ricci tensor

\noindent$\overline{R}_{ijkl}$ \hspace{0.2in} ambient Riemann tensor

\noindent$\nabla^{\parallel}$ \hspace{0.2in} induced Levi-Civita connection

\noindent$\nabla^\bot$ \hspace{0.2in} normal connection

\noindent$\{e_i,T_\alpha\}_{i,\alpha=1}^{n,m}$ \hspace{0.2in} background orthonormal frame in ${\mathbb M}$

\noindent$\{\tau_i,\nu_\alpha\}_{i,\alpha=1}^{n,m}$ \hspace{0.2in} orthonormal frame adapted to flowing submanifold

\noindent$A_{ij\alpha}$ \hspace{0.2in} second fundamental form

\noindent$C_{i\alpha}^\beta$ \hspace{0.2in} normal connection coefficients

\noindent$\triangle$ \hspace{0.2in} Laplacian of induced connection

\noindent$ ^\| P_i^j$ \hspace{0.2in} parallel projection operator

\noindent$ ^\bot P_i^j$ \hspace{0.2in} perpendicular projection operator

\noindent$\nu$ \hspace{0.2in} generalised tilt function

\noindent$F$ \hspace{0.2in} graph function of a section of $TS^2\rightarrow S^2$ 

\noindent$\sigma$ \hspace{0.2in} shear of a 2-parameter family of oriented lines

\noindent$\phi$ \hspace{0.2in} argument of $\sigma$

\noindent$\vartheta$ \hspace{0.2in} divergence of a 2-parameter family of oriented lines

\noindent$\lambda$ \hspace{0.2in} twist of a 2-parameter family of oriented lines

\noindent$\Delta$ \hspace{0.2in} $\Delta=\lambda^2-|\sigma|^2$

\noindent$\mu $ \hspace{0.2in} $\mu=|\sigma|/|\lambda|$

\noindent$\{E_{(\mu)}\}_{\mu=1}^4$ \hspace{0.2in} orthonormal frame on $TS^2$

\noindent$\{e_{(a)}\}_{a=1}^2$ \hspace{0.2in} orthonormal frame of tangent bundle $T\Sigma$

\noindent$\{f_{(a)}\}_{a=1}^2$ \hspace{0.2in} orthonormal frame of normal bundle $N\Sigma$

\noindent$\{\mathring{e}_{(a)}\}_{a=1}^2$ \hspace{0.2in} adapted orthonormal frame to tangent bundle $T\Sigma$

\noindent$\{\mathring{f}_{(a)}\}_{a=1}^2$ \hspace{0.2in} adapted orthonormal frame to normal bundle $N\Sigma$

\noindent$M_{(\mu)}^{(\nu)}$ \hspace{0.2in} matrix of angles

\noindent$B$ \hspace{0.2in} hyperbolic angle between two intersecting positive surfaces

\noindent$\chi$ \hspace{0.2in} perpendicular distance of an oriented line to the origin

\end{document}